\newcommand{\defnword}[1]{\textbf{#1}}
\newcommand{\comment}[1]{}
\newcommand{\on}[1]{\operatorname{#1}}
\newcommand{\bb}[1]{\mathbb{#1}}
\newcommand{\mb}[1]{\mathbf{#1}}
\newcommand{\mf}[1]{\mathfrak{#1}}
\newcommand{\ord}{\on{ord}}
\numberwithin{equation}{subsection}
\newtheorem{introthm}{Theorem}
\newtheorem{introcor}[introthm]{Corollary}
\newtheorem{prp}[subsection]{Proposition}
\newtheorem{thm}[subsection]{Theorem}
\newtheorem*{thm*}{Theorem}
\newtheorem{lem}[subsection]{Lemma}
\newtheorem*{lem*}{Lemma}
\newtheorem{corollary}[subsection]{Corollary}
\theoremstyle{definition}
\newtheorem{defn}[subsection]{Definition}
\theoremstyle{remark}
\newtheorem{rem}[subsection]{Remark}
\newtheorem{assump}[subsection]{Assumption}
\newtheorem*{assump*}{Assumption}
\newcommand{\Square}[8]{\begin{diagram}
  #1&\rTo^{#2}&#3\\
  \dTo^{#4}&&\dTo_{#5}\\
  #6&\rTo_{#7}&#8
\end{diagram}.
}
\newcommand{\Real}{\mathbb{R}}
\newcommand{\Int}{\mathbb{Z}}
\newcommand{\Comp}{\mathbb{C}}
\newcommand{\Adele}{\mathbb{A}}
\newcommand{\Field}{\mathbb{F}}
\newcommand{\Rat}{\mathbb{Q}}
\newcommand{\rk}{\on{rk}}
\newcommand{\im}{\on{im}}
\newcommand{\gr}{\on{gr}}
\newcommand{\into}{\hookrightarrow}
\newcommand{\Def}{\on{Def}}
\newcommand{\pr}{\bm{\pi}}
\newcommand{\Rg}{\mathscr{O}}
\newcommand{\Reg}[1]{\Rg_{#1}}
\newcommand{\Hom}{\on{Hom}}
\newcommand{\End}{\on{End}}
\newcommand{\Aut}{\on{Aut}}
\newcommand{\Gal}{\on{Gal}}
\newcommand{\Pic}{\on{Pic}}
\newcommand{\cris}{\on{cris}}
\newcommand{\dR}{\on{dR}}
\newcommand{\Bcris}{B_{\cris}}
\newcommand{\Bdr}{B_{\dR}}
\newcommand{\pow}[1]{[\vert#1\vert]}
\newcommand{\Spec}{\on{Spec}}
\newcommand{\Spf}{\on{Spf}}
\newcommand{\et}{\on{\acute{e}t}}
\newcommand{\KS}{\on{KS}}
\newcommand{\Fil}{\on{Fil}}
\newcommand{\Fr}{\on{Fr}}
\newcommand{\mx}{\mathfrak{m}}
\newcommand{\dual}[1]{{#1}^{\vee}}
\newcommand{\Sh}{\on{Sh}}
\newcommand{\Ss}{\mathscr{S}}
\newcommand{\an}{\on{an}}
\newcommand{\nr}{\on{nr}}
\newcommand{\GL}{\on{GL}}
\newcommand{\SO}{\on{SO}}
\newcommand{\GSpin}{\on{GSpin}}
\begin{document}
\title[The Tate conjecture for K3 surfaces]{The Tate conjecture for K3 surfaces in odd characteristic}
\author{Keerthi Madapusi Pera}
\address{Keerthi Madapusi Pera\\%
Department of Mathematics\\%
1 Oxford St\\%
Harvard University\\%
Cambridge, MA 02118\\%
USA}
\email{keerthi@math.harvard.edu}

\begin{abstract}
  We show that the classical Kuga-Satake construction gives rise, away from characteristic $2$, to an open immersion from the moduli of primitively polarized K3 surfaces (of any fixed degree) to a certain regular integral model for a Shimura variety of orthogonal type. This allows us to attach to every polarized K3 surface in odd characteristic an abelian variety such that divisors on the surface can be identified with certain endomorphisms of the attached abelian variety. In turn, this reduces the Tate conjecture for K3 surfaces over finitely generated fields of odd characteristic to a version of the Tate conjecture for certain endomorphisms on the attached Kuga-Satake abelian variety, which we prove. As a by-product of our methods, we also show that the moduli stack of primitively polarized K3 surfaces of degree $2d$ is quasi-projective and, when $d$ is not divisible by $p^2$, is geometrically irreducible in characteristic $p$. We indicate how the same method applies to prove the Tate conjecture for co-dimension $2$ cycles on cubic fourfolds.
\end{abstract}

\maketitle

\section*{Introduction}\label{sec:intro}

The goal of this paper is to prove:
\begin{introthm}\label{intro:thm:main}
Let $X$ be a K3 surface over a finitely generated field $k$ of characteristic not equal to $2$. Then the Tate conjecture holds for $X$.

That is, for any prime $\ell$ invertible in $k$, the $\ell$-adic Chern class map
\[
 \Pic(X)\otimes\Rat_{\ell}\xrightarrow{\on{ch}}H^2_{\et}\bigl(X_{k^{\on{sep}}},\Rat_{\ell}(1)\bigr)^{\Gamma}
\]
is an isomorphism. Here, $k^{\on{sep}}$ is a separable closure of $k$ and $\Gamma=\Gal(k^{\on{sep}}/k)$ is the associated absolute Galois group.
\end{introthm}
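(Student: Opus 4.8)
The plan is to reduce the Tate conjecture for a K3 surface $X$ over a finitely generated field $k$ with $\chr k\neq 2$ to a statement about endomorphisms of an associated abelian variety, to which Kisin's theorem on the Tate conjecture for abelian varieties (and the attendant understanding of their $\ell$-adic and crystalline realizations) can be applied. The engine is the main geometric result announced in the abstract: the Kuga-Satake period morphism realizes the moduli stack $\M_{2d}$ of primitively polarized K3 surfaces of degree $2d$ as an open substack of a normal integral model $\Ss_{K}$ for an orthogonal Shimura variety $\Sh_{K}(\GSpin(L_{2d}))$, away from characteristic $2$. Pulling back the Kuga-Satake abelian scheme over $\Ss_K$, one attaches to $X$ an abelian variety $A$ over a finite extension of $k$, together with canonical isomorphisms of the primitive part of $H^2_{\et}(X_{k^{\sep}},\Rat_\ell(1))$ with a sub-$\Gamma$-representation of $\End(H^1_{\et}(A_{k^{\sep}},\Rat_\ell))$ cut out by the $\GSpin$-structure, compatibly for all $\ell\neq p$ and — crucially — with a $p$-adic (crystalline/de Rham) companion when $k$ has positive characteristic.

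The key steps, in order, are as follows. \textbf{Step 1.} Reduce to the case $k$ finitely generated over its prime field and, by a standard spreading-out and specialization argument (Noether normalization plus the fact that the Picard rank can only jump under specialization, combined with an argument comparing with a positive-characteristic closed point via $\ell$-adic comparison of cycle classes), reduce the case $\chr k=0$ to $\chr k=p>2$; one may further reduce to $k$ finite or to $k$ a function field over a finite field. \textbf{Step 2.} For $k$ finitely generated of characteristic $p>2$, after replacing $k$ by a finite extension so that $X$ admits a primitive polarization of some degree $2d$ and lifts to a $k$-point of $\M_{2d}$, use the open immersion $\M_{2d}\hookrightarrow\Ss_K$ to obtain the Kuga-Satake abelian variety $A/k$ and the comparison isomorphisms (including the $p$-adic one) identifying $P^2_\ell := H^2_{\et}(X_{k^{\sep}},\Rat_\ell(1))_{\mathrm{prim}}$ with a Galois-stable direct summand of $\End\bigl(H^1_{\et}(A_{k^{\sep}},\Rat_\ell)\bigr)(0)$, naturally in the $\GSpin$-torsor. \textbf{Step 3.} Apply Kisin's theorem: the Tate conjecture holds for $A$ over $k$, hence $\End(A)\otimes\Rat_\ell \xrightarrow{\sim} \End_\Gamma(H^1_{\et}(A_{k^{\sep}},\Rat_\ell))$ for all $\ell$ invertible in $k$, and moreover the $p$-adic analogue holds using the de Rham/crystalline cohomology. \textbf{Step 4.} Translate $\Gamma$-invariant classes in $P^2_\ell$ into the relevant space of "special endomorphisms" of $A$ (those commuting with the $\GSpin$-structure and satisfying the trace-zero condition). \textbf{Step 5.} Show that every special endomorphism of $A$ is algebraic, i.e.\ comes from a divisor on $X$: this is where one identifies the lattice of special endomorphisms with the primitive Picard lattice of $X$, using that the Kuga-Satake construction and the comparison isomorphisms match the cycle-class map on $X$ with the map sending a special endomorphism to its class. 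Combining Steps 3--5 with the non-degeneracy of the intersection form on the image gives surjectivity of $\on{ch}$; injectivity is classical (the cycle-class map on divisors is injective on a smooth projective surface by the Hodge index / pairing argument).

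The main obstacle is \textbf{Step 5}, together with the positive-characteristic input feeding into it: one must know not merely that $\Gamma$-invariant \'etale classes correspond to $\ell$-adic special endomorphisms of $A$ for each $\ell$, but that such an endomorphism is \emph{rational} — defined over $\Int$ rather than $\Rat_\ell$ — and then \emph{algebraic}. Rationality requires the $p$-adic companion: one must check that a $\Gamma$-invariant class in $P^2$ produces a special endomorphism that is simultaneously $\ell$-adic for all $\ell\neq p$ and crystalline at $p$, which is exactly where the compatibility of the Kuga-Satake comparison isomorphisms with the crystalline realization (built into the normal integral model $\Ss_K$ and its $\GSpin$-torsor) is essential; over a finite field this is Tate's theorem plus Kisin's crystalline comparison, and over a global function field one further needs an argument (e.g.\ via a curve and Galois descent, or via a result on the Brauer group / the Artin–Tate formula) promoting "algebraic at every place" to "algebraic". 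A secondary technical point is checking the degree-$2d$ hypothesis does not obstruct the reduction: since every K3 surface over $k$ is projective, after a finite extension it carries a primitive polarization, and the family of Shimura data for varying $d$ suffices to cover all of them; once the Tate conjecture is known for primitively polarized K3 surfaces of arbitrary degree, it follows for all K3 surfaces.
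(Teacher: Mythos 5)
Your proposal follows the same overall architecture as the paper: extend the Kuga--Satake period map to an open immersion into an integral model of an orthogonal Shimura variety, identify the primitive Picard lattice with the special endomorphisms of the Kuga--Satake abelian variety (your Step 5; this is part (4) of (\ref{ks:thm:ksconst}), proved by deforming a special endomorphism into characteristic $0$ along a flat component and invoking Lefschetz $(1,1)$ --- and in fact only the inclusion $L(A^{\KS}_s)\subset\Pic(X)_{\Int_{(p)}}$ is needed for the Tate conjecture), and then prove that $L(A^{\KS}_s)\otimes\Rat_\ell\to\bm{P}^2_{\ell,s}(1)^{\Gamma}$ is surjective. The genuine gap is in your Steps 3--4. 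The statement $\End(A)\otimes\Rat_\ell\xrightarrow{\sim}\End_{\Gamma}\bigl(H^1_{\et}(A_{k^{\on{sep}}},\Rat_\ell)\bigr)$ is Tate's theorem (and Zarhin's extension), not Kisin's; it shows that a $\Gamma$-invariant class $v\in\bm{V}_{\ell,s}^{\Gamma}$ comes from $\End(A^{\KS}_s)\otimes\Rat_\ell$, but nothing in it forces $v$ to lie in the $\Rat_\ell$-span of \emph{special} endomorphisms, because being special is a simultaneous condition on the realizations at all primes $\ell'\neq p$ and at $p$. You correctly flag this rationality problem as the main obstacle, but the tool you propose to close it (``Kisin's crystalline comparison'') is not the relevant one. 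What the paper actually invokes is \cite{mp:reg}*{7.4} for finite $k$ and \cite{mp:reg}*{7.12} for infinite $k$, resting on Kisin's work on mod $p$ points of Shimura varieties: as sketched in the paper's introductory heuristic, one shows that for the reductive group $I$ of self-quasi-isogenies of $A^{\KS}_s$ preserving the special structure, the quotient $I(\Rat_\ell)\backslash I_\ell(\Rat_\ell)$ is compact (via finiteness of a suitable isogeny class), whence $I_{\Rat_\ell}\xrightarrow{\simeq}I_\ell$ and the equality of dimensions $\dim_\Rat L(A^{\KS}_s)_{\Rat}=\dim_{\Rat_\ell}\bm{V}_{\ell,s}^{\Gamma}$. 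Without this input the surjectivity does not follow.

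A secondary point: your Step 1 reduction of characteristic $0$ to characteristic $p$ by spreading out and specializing runs in the wrong direction --- the Picard rank can only jump under specialization, so knowing the conjecture at a closed point of positive characteristic does not formally yield it at the generic point. The characteristic-$0$ case is simply known independently (Tate, Andr\'e), and the paper makes no such reduction; its new content is entirely in characteristic $p>2$.
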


\comment{When $k$ is finite, the following statements (and many more) are equivalent to the Tate conjecture for $X$~\cites{tate:motives1,milne:artin-tate}:
\begin{enumerate}
  \item The crystalline Chern class map
  \[
   \Pic(X)\otimes\Rat_p\xrightarrow{\on{ch}}H^2_{\cris}\bigl(X/W(k)\bigr)^{F=p}_{\Rat}
  \]
  is an isomorphism.
  \item For some prime $\ell$, the $\ell$-torsion $\on{Br}(X)[\ell]$ of the Brauer group of $X$ is finite.
  \item The Brauer group $\on{Br}(X)$ is finite.
  \item The order of the pole at $s=1$ of the zeta function $Z(X,q^{-s})$ is equal to the rank of $\Pic(X)$.
\end{enumerate}
}
Work of Lieblich-Maulik-Snowden~\cite{lms} shows that Theorem \ref{intro:thm:main} implies:
\begin{introcor}\label{intro:cor:lms}
There are only finitely many isomorphism classes of K3 surfaces over a finite field of odd characteristic.
\end{introcor}\comment{In \emph{loc. cit}, one finds the restriction that the characteristic $p$ is at least $5$. This is not necessary; cf.~(\ref{ks:cor:lms}).}

The following cases of Theorem \ref{intro:thm:main} are already known:
\begin{enumerate}
  \item When the field $k$ is of characteristic $0$: cf.~\cite{tate:motives1}*{Theorem 5.6(a)} or \cite{andre:shaf}.
  \item When $k$ is finite of characteristic at least $5$: This is due to Nygaard and Nygaard-Ogus~\cites{nygaard,nygaard_ogus}\footnote{The result of \cite{nygaard} for ordinary K3 surfaces does not appear to have any restriction on the characteristic.} for K3 surfaces of finite height, and Maulik~\cite{maulik} and Charles~\cite{charles} for supersingular K3 surfaces. Maulik's work utilizes the case of elliptic K3 surfaces, due to Artin-Swinnerton-Dyer~\cite{artin-sdyer}, but Charles's is independent of it, being an application of a general result for reductions of holomorphic symplectic varieties.
\end{enumerate}

The main contribution of this article is an unconditional proof of the conjecture in odd characteristic. Our methods are independent of the results above, but owe a substantial spiritual debt to the proof in characteristic $0$, which combines the classical Kuga-Satake construction with Deligne's theory of absolute Hodge cycles and Faltings's isogeny theorem.
\comment{
\subsection*{Heuristic for the proof}
\comment{Theorem \ref{intro:thm:main} follows from Theorem \ref{intro:thm:ksconst} and \cite{mp:reg}*{7.12}; cf.~(\ref{ks:subsec:mainproof}). Though we will not present the details of the latter result in this paper}Here is a heuristic argument for the Tate conjecture, inspired by ideas of Kisin~\cite{kis:langrap}:

For simplicity, we assume that $(X,\xi)$ is a K3 surface over a finite field $k$. We can attach to it the motive $\mathsf{p}^2(X,\xi)$ (in the sense of motives defined by algebraic correspondences up to numerical equivalence; cf.~\cite{jannsen}) whose realizations are expected to be the various primitive cohomology groups of $(X,\xi)$. Let $I$ be the algebraic $\Rat$-group of units in the semi-simple algebra $\End(\mathsf{p}^2(X,\xi))$: it is a reductive group over $\Rat$. Conjecturally, for each prime $\ell\neq p$, $I_{\Rat_{\ell}}$ acts faithfully and $\Gamma$-equivariantly on $PH^2_{\et}(X_{\overline{k}},\Rat_{\ell}(1))$, and the Chern class map
\[
  \Pic(X)\otimes\Rat_{\ell}\supset\langle\xi\rangle^{\perp}\otimes\Rat_{\ell}\xrightarrow{\on{ch}}PH^2_{\et}(X_{\overline{k}},\Rat_{\ell}(1))^{\Gamma}
\]
is $I_{\Rat_{\ell}}$-equivariant.

Let $I_{\ell}\subset\SO\bigl(PH^2_{\et}(X_{\overline{k}},\Rat_{\ell}(1))\bigr)$ be the $\Rat_{\ell}$-sub-group consisting of $\Gamma$-equivariant automorphisms. It is the commutator of the Frobenius automorphism, which is known to be semi-simple~\cite{deligne:k3weil}, and is thus a reductive group over $\Rat_{\ell}$. The target of the Chern class map is an irreducible representation of $I_{\ell}$. So, if we knew that $I_{\Rat_{\ell}}\xrightarrow{\simeq}I_{\ell}$ (which would be implied by the general Tate conjecture), then it would follow that the Chern class map is an isomorphism (assuming $\langle\xi\rangle^{\perp}$ is non-zero, which should be true after changing scalars to a quadratic extension of $k$). In fact, it is enough to do this for one prime $\ell$, and so we can assume that $I_{\ell}$ is a split reductive group over $\Rat_{\ell}$. In this case, a group theoretic lemma~\cite{kis:langrap} shows that it is sufficient to prove the compactness of the space $I(\Rat_{\ell})\backslash I_{\ell}(\Rat_{\ell})$. In turn, it is sufficient to show that, for a suitable compact open $U_{\ell}\subset I_{\ell}(\Rat_{\ell})$, the double coset space
\[
 I(\Rat)\backslash I_{\ell}(\Rat_{\ell})/U_{\ell}
\]
is finite. If one had a good notion of an $\ell$-power isogeny of polarized K3 surfaces, one could try to do this by identifying this double coset space with a sub-set of the `$\ell$-power isogeny class' of $(X,\xi)$ defined over $k$.
}
\subsection*{Kuga-Satake construction}
In characteristic $0$, the Kuga-Satake construction attaches to every polarized K3 surface $(X,\xi)$ a polarized abelian variety $A$ such that the primitive cohomology group $PH^2(X,\xi)$ embeds within $H^1(A)\otimes H^1(A)$ as a sub-Hodge structure. One can extend this construction to finite characteristic as in \cite{deligne:k3weil}, by lifting to characteristic $0$, applying the Kuga-Satake construction, and taking its reduction. The crystalline compatibility (up to isogeny) of such a construction is shown in \cite{ogus:duke}*{\S~7}. We make two improvements to this: First, we show the crystalline compatibility on an \emph{integral} level. Second, we show that the Kuga-Satake construction sees enough geometry to allow us to view divisors on the K3 surface $X$ as endomorphisms of $A$. This is of course predicted by the conjecturally motivic nature of the construction.

In particular, we can reduce the Tate conjecture for $X$ to a refined version of Tate's theorem for endomorphisms of $A$.

The reader is directed to (\ref{ks:thm:ksconst}) in the body of the paper for a precise version of the following result:
\begin{introthm}\label{intro:thm:ksconst}
Given any field $k$ of odd characteristic $p$ and a polarized K3 surface $(X,\xi)$ over $k$, there exists a finite separable extension $k'/k$ and an abelian variety $A$ over $k'$, the \defnword{Kuga-Satake} abelian variety such that the $\Int_{\ell}$ and crystalline realizations of the primitive cohomology $PH^2(X,\xi)$ embed naturally within those of $H^1(A)\otimes H^1(A)$. Moreover, there is a canonical inclusion
\[
\Pic(X_{k'})\supset\langle\xi\rangle^{\perp}\into\End(A)
\]
compatible, via the cycle class maps, with the corresponding embeddings of cohomology groups. Its image consists of those endomorphisms whose cohomological realizations in $H^1(A)\otimes H^1(A)(1)$ lie in the image of $PH^2(X,\xi)(1)$.
\end{introthm}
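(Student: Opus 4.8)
The plan is to realise $A$ as the fibre of a universal Kuga--Satake abelian scheme, pulled back along an extension of the classical period map from the integral canonical model of a $\GSpin$ Shimura variety. \emph{Set-up.} To the degree $2d$ one attaches a quadratic $\Int$-lattice $L_d$ of signature $(2,19)$ --- the primitive part of the K3 lattice --- the reductive $\Rat$-group $G=\GSpin(L_d)$, and a Shimura datum of Hodge type; the spin representation embeds $G$ into $\GL(C(L_d))$, hence the Shimura variety $\mathsf{S}_K$ (for $K\subset G(\Adele_f)$ compact open, hyperspecial at $p$, which one may arrange so that $p\nmid\disc L_d$ after replacing $d$ by $dm^2$ if necessary) into a Siegel modular variety. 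Thus $\mathsf{S}_K$ carries a canonical \emph{Kuga--Satake abelian scheme} $\mathcal{A}^{\KS}$, with $H^1(\mathcal{A}^{\KS})$ identified with the Clifford-algebra representation $C(L_d)$, together with the tautological embedding $L_d\into\SEnd(\mathcal{A}^{\KS})$ given by left Clifford multiplication and cut out by absolute Hodge/Tate/crystalline tensors. By Kisin's theorem~\cite{kis:langrap}, $\mathsf{S}_K$ has a smooth integral canonical model over $\Int_{(p)}$ ($p$ odd) over which $\mathcal{A}^{\KS}$, the sub-local-system $\mb{L}_{\Int_\ell}\subset\SEnd(H^1_\et(\mathcal{A}^{\KS}))$, and the sub-$F$-crystal $\mb{L}_{\cris}\subset\SEnd(H^1_{\dR}(\mathcal{A}^{\KS}))$ all extend; these are the integral structures realising $L_d$ that I would use.

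\emph{Construction of $A$.} Over $\Rat$ the classical Kuga--Satake construction gives a morphism $\iota_\Rat\colon\widetilde{M}_{2d,\Rat}\to\mathsf{S}_{K,\Rat}$, where $\widetilde{M}_{2d}\to M_{2d}$ is a finite \'etale cover of the moduli stack of primitively polarized K3 surfaces of degree $2d$ trivialising the level and orientation data, characterised by $\iota_\Rat^\ast(\text{universal }L_d)\cong PH^2(\text{the universal K3})(1)$. By the deformation theory of polarized K3 surfaces, $M_{2d}$ --- and, for $K$ small, the scheme $\widetilde{M}_{2d}$ --- is smooth over $\Int[1/2]$, so by the N\'eron-type extension property characterising the integral canonical model, $\iota_\Rat$ extends uniquely to $\iota\colon\widetilde{M}_{2d}\otimes\Int_{(p)}\to\mathsf{S}_K$. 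Put $\mathcal{A}^{\KS}_{\widetilde{M}_{2d}}:=\iota^\ast\mathcal{A}^{\KS}$. Now $(X,\xi)$ defines a point of $M_{2d}(k)$; lifting it along $\widetilde{M}_{2d}\to M_{2d}$ costs a finite separable extension $k'/k$, and I take $A$ to be the fibre of $\mathcal{A}^{\KS}_{\widetilde{M}_{2d}}$ over the resulting $k'$-point. Equivalently --- this is Deligne's original recipe --- one may lift $(X,\xi)$ to a DVR of mixed characteristic, form the complex Kuga--Satake variety of the generic fibre, and specialise, good reduction after enlarging $k'$ following from N\'eron--Ogg--Shafarevich applied to the Kuga--Satake $\ell$-adic representation.

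\emph{Realizations.} For $\ell\ne p$, $\iota^\ast\mb{L}_{\Int_\ell}$ and $PH^2_\et(\text{the universal K3})(1)$ are lisse $\Int_\ell$-sheaves on the normal flat stack $\widetilde{M}_{2d}\otimes\Int[1/2\ell]$ that agree over $\Rat$; since the generic fibre is schematically dense the identification propagates, and passing to a separable closure of $k'$ yields the asserted embedding $PH^2(X,\xi)_{\Int_\ell}\into H^1_\et(A)\otimes H^1_\et(\dual{A})$. The crystalline statement $PH^2_{\cris}(X/W(k))\xrightarrow{\ \sim\ }\iota^\ast\mb{L}_{\cris}$ ($k$ perfect; in general one works relatively) must be proved \emph{integrally}: Ogus's comparison~\cite{ogus:duke}*{\S 7} gives it only up to isogeny, so to remove the isogeny I would compare the two $F$-crystals on $\widetilde{M}_{2d}\otimes W$ directly --- realising $PH^2_{\cris}$ of a single lifted K3 as a Breuil--Kisin module, matching it with the module carved out of $H^1_{\dR}(\mathcal{A}^{\KS})$ by the de Rham Kuga--Satake comparison in characteristic $0$, and using that both are torsion-free crystals on a smooth $W$-base agreeing after inverting $p$. \emph{This is the step I expect to be the main obstacle}: it forces one to control the $W$-lattice $PH^2_{\cris}(X/W)$ through integral $p$-adic Hodge theory and through the integral structure of the $\GSpin$ model simultaneously.

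\emph{Divisors.} Finally, $\mathsf{S}_K$ over $\Int_{(p)}$ carries a sheaf of \emph{special endomorphisms} of $\mathcal{A}^{\KS}$: sections of $\SEnd(\mathcal{A}^{\KS})$, normalised against the $C(L_d)$-action, whose $\Int_\ell$- and crystalline realizations lie in $\mb{L}_{\Int_\ell}$, resp.\ $\mb{L}_{\cris}$; this sheaf is representable by a scheme unramified over the base, so a special endomorphism is determined by any single one of its realizations being integral of the correct type. Given $\mathcal{L}$ on $X_{k'}$ with $c_1(\mathcal{L})\perp\xi$, the crystalline and $\ell$-adic Chern classes of $\mathcal{L}$ are integral classes in $\iota^\ast\mb{L}_{\cris}$, resp.\ $\iota^\ast\mb{L}_{\Int_\ell}$, which correspond under the comparison isomorphisms above; they are therefore the realizations of a unique special endomorphism, i.e.\ of an honest element of $\End(A)$, and the resulting map $\langle\xi\rangle^\perp\cap\Pic(X_{k'})\to\End(A)$ is by construction compatible with the cycle class maps. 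That such an $\mathcal{L}$ --- which typically does not lift together with $X$ --- still produces a genuine endomorphism rather than merely a quasi-isogeny rests once more on the integral crystalline comparison, which is thus the crux of the whole argument.
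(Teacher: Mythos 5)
Your set-up, construction of $A$, and treatment of the $\ell$-adic realizations follow the paper's route, and you correctly isolate the integral crystalline comparison as a key difficulty (the paper resolves it not via Breuil--Kisin modules but by proving integrality at ordinary points using the slope filtration together with Bloch--Kato's integral Hodge--Tate decomposition, and then propagating to all of $\mathsf{M}^{\circ}_{2d,K,\Field_p}$ by density of the ordinary locus and a codimension-$2$ extension argument on the normal integral model; note also that ``two torsion-free crystals agreeing after inverting $p$'' does not by itself force the lattices to coincide, so your sketch of that step is not yet an argument). The genuine gap, however, is in your \emph{Divisors} paragraph. Unramifiedness of the scheme of special endomorphisms gives you \emph{injectivity} of the realization map --- a special endomorphism is determined by its cohomological realization --- but it gives no \emph{existence}: the fact that $\on{ch}_{\cris}(\mathcal{L})$ and $\on{ch}_{\ell}(\mathcal{L})$ are integral classes landing in $\bm{L}_{\cris}$ and $\bm{L}_{\Int_\ell}$ does not produce an endomorphism of $A$. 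Asserting that every such system of cohomology classes is the realization of an actual endomorphism is essentially a Tate-conjecture-type statement in characteristic $p$, i.e.\ the kind of thing the whole construction is meant to feed into, not something one gets for free. The paper's proof instead \emph{lifts to characteristic $0$}: given $\eta\in\langle\xi\rangle^{\perp}\subset\Pic(X)$, Deligne's theorem shows the deformation space of $(X,\xi,\eta)$ has a component flat over $W$, so the triple lifts; over the characteristic-$0$ lift, Lefschetz $(1,1)$ together with the absolute-Hodge identification $\bm{L}_s(-1)\simeq\bm{P}^2_s\otimes\Int_{(p)}$ converts the lifted line bundle into a Hodge class in $\End(H^1)$ of the Kuga--Satake abelian variety, hence into an honest special endomorphism, which one then specializes back. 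The reverse inclusion $L(A^{\KS}_s)\cap\End(A^{\KS}_s)\into\langle\xi\rangle^{\perp}$ requires the symmetric input that the deformation space of a special endomorphism has a flat component (\cite{mp:reg}*{6.32}), plus the \'etaleness of the period map to transport that deformation statement to the K3 side. Without this two-way lifting argument your map $\langle\xi\rangle^{\perp}\to\End(A)$ is not constructed.

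A secondary but real problem: you cannot ``arrange $p\nmid\disc L_d$ after replacing $d$ by $dm^2$'' --- the degree $2d$ is imposed by the given polarization, and $\disc L_d=2d$, so when $p\mid d$ the level at $p$ is not hyperspecial and Kisin's theorem does not directly apply. The paper handles this by embedding $L_{d,\Int_{(p)}}$ into a larger self-dual lattice $\widetilde{L}$, realizing $\Ss_{d,K_{\on{sp}}}$ inside a special-cycle scheme $Z_{K_{\on{sp}}}(v_d)$ over the smooth integral model for $\widetilde{L}$, and working with the (merely healthy regular, and when $\nu_p(d)=1$ non-smooth) locus $\Ss^{\on{pr}}_{d,K_{\on{sp}}}$. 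Any complete proof must either restrict to $p\nmid d$ or supply this extra layer of the construction.
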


It is essential for our method that we work with \emph{families} of K3 surfaces: We view the Kuga-Satake correspondence in characteristic $0$ as a period map from the moduli of polarized K3 surfaces to an appropriate orthogonal Shimura variety, and use the theory of integral models from~\cite{mp:reg} to extend it to period map over $\Int[2^{-1}]$. The integral crystalline compatibility of this construction shows that the period map is \'etale. This permits us to prove the inclusion $\langle\xi\rangle^{\perp}\into\End(A)$ of the theorem by lifting---one divisor at a time---to characteristic $0$, where we can appeal to the Lefschetz (1,1) theorem.

\subsection*{The Tate conjecture for special endomorphisms}
Given the above theorem, it is natural to make the following definition: A \defnword{special endomorphism} of the Kuga-Satake abelian variety $A$ is an element $f\in\End(A)$ whose cohomological realizations in $H^1(A)\otimes H^1(A)(1)$ lie in the image of $PH^2(X,\xi)(1)$. We will write $L(A)$ for the space of special endomorphisms. When $k$ is finitely generated, the Tate conjecture for $(X,\xi)$ now reduces to the statement that $L(A)$ has the expected rank.

This last assertion is best viewed in the setting of motives attached to points of orthogonal Shimura varieties. Such varieties are attached to quadratic lattices over $\Int$ of signature $(n,2)$. For instance, the one that appears as the target of the period map mentioned above is attached to the primitive cohomology lattice of a polarized K3 surface; it has signature $(19,2)$. Given a lattice $L$ of signature $(n,2)$, the associated Shimura variety $\Sh(L)$ is $n$-dimensional and defined over $\Rat$. The theory of \cite{mp:reg}, which builds on work of Kisin~\cite{kis3}, provides us with a regular integral model $\Ss(L)$ over $\Int\left[\frac{1}{2}\right]$.

To every geometric point $s\to\Ss(L)$, we can attach an abelian variety $A^{\KS}_s$, again called the Kuga-Satake abelian variety. Suppose that $k(s)$ has characteristic $p>2$. Then this abelian variety comes equipped with a right action of the Clifford algebra $C(L)$, as well as the following additional structure:
\begin{itemize}
  \item For every $\ell\neq p$, a distinguished sub-space
 \[
      \bm{V}_{\ell,s}\subset\End_{C(L)}\bigl(H^1_{\et}(A^{\KS}_s,\Rat_{\ell})\bigr)
 \]
  such that, for all $f\in\bm{V}_{\ell,s}$, the composition $f\circ f$ is a scalar. The space $\bm{V}_{\ell,s}$ with the quadratic form $f\mapsto f\circ f$ is isometric to $L\otimes\Rat_{\ell}$.
  \item A distinguished sub-$F$-isocrystal
   \[
      \bm{V}_{\cris,s}\subset\End_{C(L)}\bigl(H^1_{\cris}(A^{\KS}_s/W(k(s)))\bigr)_{\Rat}
   \]
  such that, for all $f\in\bm{V}_{\cris,s}$, the composition $f\circ f$ is a scalar. The space $\bm{V}_{\cris,s}$ with the quadratic form $f\mapsto f\circ f$ is isometric to $L\otimes W(k(s))$.
\end{itemize}
We now define the space of special endomorphisms $L(A^{\KS}_s)$ to be the sub-space of $\End\bigl(A^{\KS}_s\bigr)$ consisting of those elements whose cohomological realizations land in the distinguished sub-spaces given above.

When $L$ is the primitive cohomology lattice of a polarized K3 surface and $s$ arises from a polarized K3 $(X,\xi)$, $A^{\KS}_s$ is just the associated Kuga-Satake abelian variety $A$, and the distinguished sub-spaces $\bm{V}_{\ell,s}$ and $\bm{V}_{\cris,s}$ can be identified with the realizations of the primitive cohomology $PH^2(X,\xi)$. So the general definition recovers our definition from this special case.

Suppose now that $s$ is defined over a finitely generated extension $k$. Then the distinguished sub-spaces $\bm{V}_{\ell,s}$ are stable under the action of $\Gamma$, the absolute Galois group of $k$. We will assume that $A^{\KS}_s$ and all of its endomorphisms are also defined over $k$. The key technical result of this paper is:
\begin{introthm}\label{intro:thm:tatespecial}
Under a certain $\ell$-independence condition, for every $\ell\neq p$, the natural map of $\ell$-adic vector spaces
\[
 L(A^{\KS}_s)\otimes\Rat_{\ell}\to \bm{V}_{\ell,s}^{\Gamma}
\]
is an isomorphism.
\end{introthm}
The $\ell$-independence condition essentially says that the dimension of the invariant sub-spaces $\bm{V}_{\ell,s}^{\Gamma}$ does not depend on $\ell$; cf. Section~\ref{sec:special}. In the situation of the Kuga-Satake abelian variety attached to a polarized K3 surface, this condition always holds, and so we obtain the Tate conjecture for K3s as a consequence.

The most important case of the theorem is when $s$ is defined over a finite field. We can deduce the general result from this by invoking Zarhin's theorem for endomorphisms of abelian varieties over finitely generated fields and a specialization argument.

For the proof in the finite field case, we begin with a simple observation. Let $I$ be the largest algebraic sub-group of $\underline{\Aut}^{\circ}_{C(L)}(A^{\KS}_s)$ (viewed as the algebraic group attached to the group of units of the algebra $\End_{C(L)}\bigl(A^{\KS}_s\bigr)_{\Rat}$) that stabilizes the distinguished sub-spaces $\bm{V}_{\ell,s}$ and $\bm{V}_{\cris,s}$. Then, for every $\ell\neq p$, the map considered in Theorem~\ref{intro:thm:tatespecial} is a map of representations of the $\Rat_{\ell}$-group $I_{\Rat_{\ell}}$. We will be done if we can prove two assertions: First, for \emph{some} $\ell\neq p$, $\bm{V}_{\ell,s}^{\Gamma}$ is irreducible as a representation of $I_{\Rat_{\ell}}$. Second, $L(A^{\KS}_s)\neq 0$.

\comment{For $\ell\neq p$, we define $I_{\ell}$ to be the stabilizer in $G_{\ell}=\GSpin(L\otimes\Rat_{\ell})$ of the Frobenius endomorphism of $A^{\KS}_s$ (with respect to $k$): It is a Levi sub-group of $G_{\ell}$ and acts irreducibly on $V_{\ell,s}^{\Gamma}$. Then $I_{\Rat_{\ell}}$ is naturally a sub-group of $I_{\ell}$. So, to finish the proof of the first assertion, we only need to find an $\ell$ such that $I_{\Rat_{\ell}}=I_{\ell}$. Since $I_{\Rat_{\ell}}$ is reductive, it suffices to show that it contains a Borel sub-group of $I_{\ell}$.}
When $\ell$ is a prime such that $G_{\ell}$ is split, we show the first assertion using a result of Kisin~\cite{kis:langrap}.\comment{It is a group-theoretic reinterpretation of the original argument of Tate in the proof of his conjecture for endomorphisms of abelian varieties over finite fields~\cite{tate:end}, and uses in an essential way Kisin's construction of integral canonical models.}

Our method for showing that $L(A^{\KS}_s)$ is non-zero is indirect, and uses the validity of the first assertion for points valued in arbitrary orthogonal Shimura varieties. We direct the reader to Section \ref{sec:special} for details.

\subsection*{Moduli of K3 surfaces and the period map}
As mentioned above, a key component of this paper is a period map for K3 surfaces in odd characteristic. The classical Torelli map for K3 surfaces can be viewed as a map
\[
 \iota_{\Comp}:\tilde{\mathsf{M}}^{\circ}_{2d,\Comp}\to\Sh(L_d)_{\Comp},
\]
where $\mathsf{M}^{\circ}_{2d}$ is the moduli space (over $\Int[2^{-1}]$) of degree $2d$ primitively polarized K3 surfaces and $\tilde{\mathsf{M}}^{\circ}_{2d}$ is a certain $2$-fold `orientation' cover. $\Sh(L_d)$ is the associated orthogonal Shimura variety over $\Rat$.

Results of Rizov~\cite{rizov:cm} show that the period map descends over $\Rat$:
\[
 \iota_{\Rat}:\tilde{\mathsf{M}}^{\circ}_{2d,\Rat}\to\Sh(L_d).
\]

The following theorem is a positive characteristic analogue of the Torelli theorem for K3 surfaces.
\begin{introthm}\label{intro:thm:open}
There exists a regular integral model $\Ss(L_d)$ for $\Sh(L_d)$ over $\Int[2^{-1}]$ such that $\iota_{\Rat}$ extends to an \'etale map
\[
\iota_{\Int[2^{-1}]}:\tilde{\mathsf{M}}^{\circ}_{2d,\Int[2^{-1}]}\to\Ss(L_d).
\]
\end{introthm}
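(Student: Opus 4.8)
The plan is to build $\Ss_K^{\on{pr}}$ as the normalization of a Siegel integral model along the spinor (Kuga--Satake) embedding, to extend $\iota_{\Rat}$ using the positive-characteristic Kuga--Satake construction of (\ref{intro:thm:ksconst}), and then to show the extension is \'etale and universally injective, whence an open immersion. For the models: let $L$ be the quadratic lattice attached to a primitive degree $2d$ polarization, and fix an embedding of the associated $\GSpin$ Shimura datum into a Siegel datum. The Siegel variety carries a smooth integral model $\mathcal A$ over $\Int_{(p)}$, and we set $\Ss_K^{\on{pr}}$ to be the normalization of $\mathcal A$ in $\Sh_K$. When $p\nmid\disc L$ this is, by Kisin's theorem, the smooth integral canonical model of $\Sh_K$; in general it is normal and flat over $\Int_{(p)}$, and one checks it is regular by analyzing the local structure at $p$ (working, when $p\mid 2d$, with a maximal lattice in place of a self-dual one). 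On the other side, $\mathsf{M}^{\circ}_{2d,K,\Int_{(p)}}$ is smooth over $\Int_{(p)}$ because deformations of a polarized K3 surface are unobstructed away from characteristic $2$ and adding prime-to-$p$ level structure preserves smoothness.

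To extend $\iota_{\Rat}$: over $\mathsf{M}^{\circ}_{2d,K,\Int_{(p)}}$ the family version of Theorem~\ref{intro:thm:ksconst} yields a Kuga--Satake abelian scheme with its polarization and $\GSpin$-level structure, i.e.\ exactly the data classified by $\mathcal A$; this produces a morphism $\mathsf{M}^{\circ}_{2d,K,\Int_{(p)}}\to\mathcal A$ whose generic fibre factors through $\iota_{\Rat}\colon\mathsf{M}^{\circ}_{2d,K}\to\Sh_K$, and since the source is normal the universal property of normalization makes it factor through $\Ss_K^{\on{pr}}$, defining $\iota_{\Int_{(p)}}$.

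Since $\iota_{\Rat}$ is an open immersion and both sides are $\Int_{(p)}$-smooth of the same relative dimension, $\iota_{\Int_{(p)}}$ is \'etale as soon as it induces an isomorphism on tangent spaces at the geometric points of the special fibre. This is where the \emph{integral} crystalline compatibility of the Kuga--Satake construction enters: the deformation theory of a polarized K3 surface over an Artinian $\Field_p$-algebra is controlled, via the crystalline period map and Kodaira--Spencer (Ogus's theory of K3 crystals), by the Hodge filtration on the primitive second crystalline cohomology of the surface, and under the spinor embedding this linear-algebra datum matches — by Grothendieck--Messing for the Kuga--Satake abelian scheme together with its crystalline tensors — the datum governing the deformations of the corresponding point of $\Ss_K^{\on{pr}}$. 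Hence $\iota_{\Int_{(p)}}$ is \'etale; in particular its residue field extensions are separable. To get an open immersion it remains to prove universal injectivity, i.e.\ injectivity on $\Omega$-points for every algebraically closed field $\Omega$. In characteristic $0$ this is the injectivity of the open immersion $\iota_{\Rat}$. In characteristic $p$: given a point $x\colon\Spec\Omega\to\Ss_K^{\on{pr}}$, smoothness of $\Ss_K^{\on{pr}}$ lifts it to $\tilde x$ over the Witt vectors $W(\Omega)$; any two points $y_1,y_2$ of $\mathsf{M}^{\circ}_{2d,K,\Int_{(p)}}$ above $x$ lift \emph{uniquely}, by \'etale-ness, to $W(\Omega)$-points $\tilde y_1,\tilde y_2$ with $\iota_{\Int_{(p)}}\circ\tilde y_i=\tilde x$; their generic fibres have the same image in $\Sh_K$, hence agree because $\iota_{\Rat}$ is injective, so $\tilde y_1=\tilde y_2$ by separatedness and therefore $y_1=y_2$. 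Thus $\iota_{\Int_{(p)}}$ is \'etale and universally injective, hence an open immersion.

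The substantive difficulty is the combination of the \'etale-ness step with the construction of a \emph{regular} $\Ss_K^{\on{pr}}$ at the bad primes $p\mid 2d$, where Kisin's smoothness theorem does not apply. One must pass to a maximal lattice at $p$, show that the normalization of the corresponding Siegel model is regular, and verify that the integral crystalline Kuga--Satake dictionary — and with it the matching of Hodge filtrations, polarizations and level structures underlying the tangent-space isomorphism — is preserved under this enlargement and does not disturb injectivity. This careful analysis at ramified primes, rather than the overall shape of the argument, is the real work.
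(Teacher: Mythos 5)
Your overall shape (build an integral model, extend the period map, match tangent spaces via integral crystalline compatibility, conclude open immersion) is the right one, but there are two genuine gaps and one structural error. First, the circularity: you extend $\iota_{\Rat}$ by invoking "the family version of Theorem~\ref{intro:thm:ksconst}" to produce a Kuga--Satake abelian scheme over $\mathsf{M}^{\circ}_{2d,K,\Int_{(p)}}$ in mixed characteristic. But the positive-characteristic Kuga--Satake construction is itself \emph{deduced} from the extended period map (one defines $A^{\KS}_s$ by first mapping $s$ into the integral model); it cannot be used to build that map. The paper avoids this by showing $\mathsf{M}_{2d,K,\Int_{(p)}}$ is healthy regular (via the Vasiu--Zink criterion applied to Ogus's explicit deformation rings) and then invoking the \emph{extension property} of the integral canonical model $\Ss_{d,K_p}$: the map extends for purely formal reasons, with no abelian scheme over the moduli space needed in advance. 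Second, the decisive technical input --- that the isomorphism of filtered $F$-isocrystals $\eta_{\dR,W_{\Rat}}:\bm{L}_{\dR,W_{\Rat}}(-1)\xrightarrow{\simeq}\bm{P}^2_{\dR,W_{\Rat}}$ carries the integral lattice $\bm{L}_{\dR,W}(-1)$ onto $\bm{P}^2_{\dR,W}$ --- is asserted ("this linear-algebra datum matches") rather than proved. This is the heart of the theorem: the paper proves it first at ordinary points using the integral Hodge--Tate decomposition of Bloch--Kato together with the slope filtration and strong divisibility, and then propagates it to all of $\mathsf{M}_{2d,K,\Field_p}$ by density of the ordinary locus and the codimension-$2$ extension property of vector bundle maps over a normal space. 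You correctly flag this as "the real work," but a proof that defers exactly the real work has a gap.

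There are also problems with your model and with the final step. The normalization of the Siegel model in $\Sh_K$ (essentially $\Ss_{d,K}$) is \emph{not} regular when $\nu_p(d)\geq 2$, and passing to a maximal lattice only handles $\nu_p(d)\leq 1$. The paper's $\Ss^{\on{pr}}_{d,K}$ is a strictly smaller open subscheme, cut out inside a special-cycle scheme $Z_K(v_d)$ over a larger self-dual lattice by the non-vanishing of the de Rham realization of the tautological special endomorphism; regularity fails without this restriction. Relatedly, $\mathsf{M}^{\circ}_{2d,K,\Int_{(p)}}$ is \emph{not} smooth when $\nu_p(d)=1$ (superspecial points give quadratic singularities $\sum t_iu_i=\deg\xi_0$), so your universal-injectivity argument --- which lifts an $\Omega$-point of the target to $W(\Omega)$ using smoothness and then lifts the fibre uniquely by \'etaleness --- does not apply at those points. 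The paper sidesteps injectivity entirely: by Zariski's main theorem in the form of Laumon--Moret-Bailly, the \'etale map factors through a dense open immersion into a scheme finite over $\Ss^{\on{pr}}_{d,K}$, and normality of the target plus the fact that $\iota_{\Rat}$ is already an open immersion forces that finite scheme to be a union of connected components.
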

Over $\Int[(2d)^{-1}]$, this construction of the map is essentially due to Rizov~\cite{rizov:kugasatake}; cf. also \cite{maulik}*{\S 5}. With the same condition on $p$, a construction by Vasiu can be found in \cite{vasiu:k3}.
\comment{
To prove Theorem \ref{intro:thm:open}, one needs the \emph{integral} crystalline compatibility of the Kuga-Satake construction. In \cite{maulik}, this is shown using Fontaine-Laffaille theory, which is partly responsible for the assumption $p\geq 5$ there, as well as in \cite{charles}. Here, we present a proof that also works for $p=3$, exploiting the de Rham property of absolute Hodge cycles on abelian varieties, guaranteed us by a theorem of Blasius-Wintenberger~\cite{blasius}. We first prove integral compatibility directly for ordinary K3 surfaces by appealing to an integral Hodge-Tate comparison isomorphism of Bloch-Kato~\cite{bloch_kato}. We then use the density of the ordinary locus to propagate integrality to all of $\mathsf{M}^{\circ}_{2d,K,\overline{\Field}_p}$.
}
As a consequence of Theorem~\ref{intro:thm:open}, we get:
\begin{introcor}\label{intro:cor:moduli}
For any prime $p>2$, the moduli stack $\mathsf{M}^{\circ}_{2d,\Field_p}$ is quasi-projective. If $p^2\nmid d$, then $\mathsf{M}^{\circ}_{2d,\Field_p}$ is geometrically irreducible.
\end{introcor}
The quasi-projectivity was also proven in \cite{maulik}*{\S 5} for $p\geq 5$ with $p\nmid d$.

\subsection*{Further remarks}
There remains the question of extending these results to characteristic $2$. A major hindrance is the lack of a good theory of integral models of orthogonal Shimura varieties over $2$-adic rings of integers; cf.~\cite{mp:toroidal}*{4.6.5} for a discussion. Once such a theory is available, it should be straightforward to extend the ideas here to the situation where $2\nmid d$, though highly $2$-divisible $d$ are likely to present new difficulties.

In characteristic $0$, it is known that the period map is surjective, once extended to the moduli of quasi-polarized K3 surfaces. We expect the same assertion to hold in characteristic $p$. This question is intimately related that of the existence of a Ner\'on-Ogg-Shafarevich type criterion for the good reduction of K3 surfaces over discrete valuation fields of characteristic $p$. Such a criterion is available in characteristic $0$~\cites{kulikov,persson_pinkham}, and for certain K3 surfaces in finite characteristic~\cite{matsumoto}.

The Kuga-Satake construction has appeared in many other contexts in characteristic $0$: cf.~\cites{voisin:cubic,rapoport:appendix,andre:shaf,lyons}. It is likely that the methods of this paper will permit us to extend the construction into positive characteristic in these cases as well, enabling us to also prove the Tate conjecture in these contexts. Certainly, for cubic fourfolds, the Torelli theorem from \cite{voisin:cubic} allows us to apply our methods in rather straightforward fashion, and we indicate this briefly in (\ref{special:subsec:cubicfourfolds}; cf. also~\cite{levin} and \cite{charles}*{Corollary 6}.

\comment{
\subsection*{Tour of contents}
We begin in Section~\ref{sec:hodge} with a review of the theory of motives attached to absolute Hodge cycles, since this gives us a very powerful framework in which to place the Kuga-Satake correspondence. In particular, it permits us to show its compatibility with cohomological realizations in a rather natural way.

Section~\ref{sec:k3} is a quick review of the theory of moduli of (quasi-)polarized K3 surfaces, and in Section~\ref{sec:ortho}, we review what we need from \cite{mp:reg} about Shimura varieties of Spin and orthogonal type and their integral models.

In Section~\ref{sec:ks}, we use results from the preceding sections to extend the Kuga-Satake map over $\Int_{(p)}$, and

Section~\ref{sec:special} is the heart of the paper. Here, we prove the Tate conjecture for special endomorphisms, and }

\subsection*{Notational conventions}
For any prime $\ell$, $\nu_{\ell}$ will be the $\ell$-adic valuation satisfying $\nu_{\ell}(\ell)=1$. $\Adele_f$ will denote the ring of finite ad\'eles over $\Rat$, and $\widehat{\Int}\subset\Adele_f$ will be the pro-finite completion of $\Int$. Given a rational prime $p$, $\Adele_f^p$ will denote the ring of prime-to-$p$ finite ad\'eles; that is, the restricted product $\prod'_{\ell\neq p}\Rat_{\ell}$. Moreover, $\widehat{\Int}^p\subset\Adele_f^p$ will be the closure of $\Int$. Given a perfect field $k$ of finite characteristic, $W(k)$ will denote its ring of Witt vectors, and $\sigma:W(k)\to W(k)$ will be the canonical lift of the Frobenius automorphism of $k$. For any group $G$, $\underline{G}$ will denote the locally constant \'etale sheaf (over a base that will be clear from context) with values in $G$.

\subsection*{Acknowledgements}
We thank Anand Deopurkar, Mark Kisin, Davesh Maulik, George Pappas, Peter Scholze and Junecue Suh for helpful comments and conversations. We also thank an anonymous referee for some clarifying remarks. This work was partially supported by NSF Postdoctoral Research Fellowship DMS-1204165 and an AMS Simons Travel Grant.

\setcounter{tocdepth}{1}
\tableofcontents

\section{Motives}\label{sec:hodge}

Throughout this section (and only here), all fields will be assumed to be embeddable in $\Comp$, and all varieties will be smooth, projective. Our main reference for this section is \cite{dmos}.

\subsection{}
Given a field $k$ in characteristic $0$, denote by $\mb{Mot}_{\on{AH}}(k)$ the neutral $\Rat$-linear Tannakian category of motives over $k$ for absolute Hodge cycles; cf.~\cite{panchishikin}*{\S~2}, where it is denoted $\mathscr{M}_k$. Its objects are triples $M=(X,n,\varpi)$, where $X$ is a smooth projective variety over $k$, $n\in\Int$, and $\varpi$ is an idempotent absolutely Hodge self-correspondence of $X$. Given such an $M$ and $m\in\Int$, we will write $M(m)$ for the Tate twist $(X,n+m,\varpi)$. Write $h(X)$ for the motive $(X,0,\on{id})$.

For each embedding $\sigma:k\into\Comp$, Betti cohomology gives us a realization functor $\omega_{\sigma}$ for $\mb{Mot}_{\on{AH}}(k)$ into $\Rat$-vector spaces. For each prime $\ell$, $\ell$-adic cohomology gives us a realization functor $\omega_{\ell}$ into $\Rat_{\ell}$-vector spaces.\footnote{One also needs an additional choice of an algebraically closed field containing $k$, which we suppress.} In fact, the varying $\ell$-adic cohomology theories can be put together to obtain a realization functor $\omega_{\Adele_f}$ into $\Adele_f$-vector spaces. Finally, de Rham cohomology gives us a realization functor $\omega_{\dR}$ into $k$-vector spaces. For $?=\sigma,\ell,\Adele_f,\dR$, and $M\in\mb{Mot}_{\on{AH}}(k)$, we will write $M_{?}$ for the realization $\omega_{?}(M)$, especially when we want to call attention to additional structure: that of a Hodge structure, Galois-module, or filtered vector space, respectively.

For each variety $X$, the K\"unneth decomposition on $X\times X$ allows us to attach to each $d\in\Int_{\geq 0}$, an object $h^d(X)\in\mb{Mot}_{\on{AH}}(k)$ such that $\omega_{?}(h^d(X))=H^d_{?}(X)$, for $?=\sigma,\ell,\Adele_f,\dR$. If $H\in\on{CH}^1(X)$ is a hyperplane section, then the Lefschetz decomposition gives us an object $p^d(X)\in\mb{Mot}^+{\on{AH}}(k)$ such that $\omega_{?}(p^d(X))=PH^d_{?}(X)$, the primitive cohomology group associated with $H$; cf.~\cite{dmos}*{\S II.6}.

The following result is shown in \cite{dmos}*{II.6.7}.
\begin{prp}\label{hodge:prp:basechange}
For any extension $L/k$, there is a natural, faithful functor of Tannakian categories compatible with fiber functors:
  \[
   \_\otimes_kL: \mb{Mot}_{\on{AH}}(k)\rightarrow\mb{Mot}_{\on{AH}}(L).
  \]
  If $k$ is algebraically closed in $L$, then this functor is also full. In general, for motives $M,N\in\mb{Mot}_{\on{AH}}(k)$, a map $f:M\otimes_kL\rightarrow N\otimes_kL$ is defined over $k$ if and only if, for some prime $\ell$, its $\ell$-adic realization $f_{\ell}$ commutes with $\Aut(L/k)$.
\end{prp}
\qed

\comment{For any variety $X$ over $k$, let $\overline{X}$ denote the $\overline{k}$-variety $X\otimes_k{\overline{k}}$. Given an embedding $\sigma:k\into\Comp$, let $\sigma X$ denote the $\Comp$-variety $X\otimes_{k,\sigma}\Comp$.

For any pair of integers $d,m\in\Int_{\geq 0}$, let $H^d_{\Adele_f}(X)(m)$ be the $m$-twisted degree $d$ \'etale cohomology of $\overline{X}$ with coefficients in $\Adele_f(m)$: here, $\Adele_f(-1)=H^2_{\Adele_f}\bigl(\bb{P}^1_{\overline{k}}\bigr)$, and $\Adele_f(m)=\Adele_f(-1)^{\otimes -m}$. Similarly, let $H^d_{\dR}(X)(m)$ denote the $m$-twisted degree $d$ de Rham cohomology of $X$ over $k$: as a filtered $k$-vector space, it is the tensor product $H^d_{\dR}(X)\otimes_kk(-1)^{\otimes -m}$, where $k(-1)=H^2_{\dR}\bigl(\bb{P}^1_k\bigr)$. Finally, if $\sigma:k\into\Comp$, let $H^d_{\sigma}(X)(m)=H^d_B(\sigma X)(m)$, where $H^d_B(\sigma X)$ is the degree $d$ singular or Betti cohomology of $\sigma X$ with coefficients in $\Rat$: as a rational Hodge structure, it is the tensor product $H^d_{\sigma}(X)\otimes\Rat(-1)^{\otimes -m}$, where $\Rat(-1)=H^2_B(\bb{P}^1_{\Comp})$. More generally, for any $r,s,m\in\Int_{\geq 0}$, set
\begin{align*}
 H^{r,s}_{\Adele_f}(X)(m)&=\bigl(H^r_{\Adele_f}(X)\otimes\dual{H^s_{\Adele_f}(X)}\bigr)(m);\\
 H^{r,s}_{\dR}(X)(m)&=\bigl(H^r_{\dR}(X)\otimes\dual{H^s_{\dR}(X)}\bigr)(m).
\end{align*}

Given an embedding $\sigma:k\into\Comp$, and an extension $\overline{\sigma}:\overline{k}\into\Comp$ of $\sigma$, we have natural isomorphisms
\begin{align*}
 H^{r,s}_{\Adele_f}(X)(m)&\xrightarrow{\simeq}H^{r,s}_{\Adele_f}(\sigma X)(m);\\
 H^{r,s}_{\dR}(X)(m)\otimes_{k,\sigma}\Comp&\xrightarrow{\simeq}H^{r,s}_{\dR}(\sigma X)(m).
\end{align*}
This gives us a comparison isomorphism
\begin{align*}
  \gamma_{\overline{\sigma}}:H^{r,s}_{\sigma}(X)(m)\otimes_{\Rat}\bigl(\Adele_f\times\Comp)&\xrightarrow{\simeq}H^{r,s}_{\Adele_f}(X)(m)\times H^{r,s}_{\dR}(X)(m)\otimes_{k,\sigma}\Comp.
\end{align*}

\begin{defn}\label{hodge:defn:abshodge}
  An element $s=(s_{\Adele_f},s_{\dR})\in H^{r,s}_{\Adele_f}(X)(m)\times H^{r,s}_{\dR}(X)(m)$ is \defnword{rational} with respect to an embedding $\overline{\sigma}:\overline{k}\into\Comp$ if $\gamma_{\overline{\sigma}}^{-1}(s)$ lies in $H^{r,s}_\sigma(X)(m)$. It is \defnword{Hodge} with respect to $\overline{\sigma}$ if it is rational with respect to $\overline{\sigma}$, and if $s_{\dR}\in F^0H^{r,s}_{\dR}(X)(m)$. It is \defnword{absolutely Hodge} if it is rational with respect to every embedding $\overline{\sigma}:\overline{k}\into\Comp$. This last notion does not depend on the choice of algebraic closure $\overline{k}$.

  We denote the space of absolute Hodge cycles in $H^{r,s}_{\Adele_f}(X)(m)\times H^{r,s}_{\dR}(X)(m)$  by $\on{AH}^{r,s}(X(m))$. Note that the space can be non-zero only when $2m=r-s$. We will write $\on{AH}^r(X(m))$ for $\on{AH}^{r,0}(X(m))$.
\end{defn}

\begin{prp}\label{hodge:prp:abshodge}
\mbox{}
\begin{enumerate}
\item Let $\on{CH}^d(X)$ be the Chow group of co-dimension $d$ cycles on $X$; then there is a natural cycle class map
  \[
   \on{cl}:\on{CH}^d(X)\rightarrow\on{AH}^{2d}(X(d)).
  \]
  \item For every $r,s,m\in\Int_{\geq 0}$, $\on{AH}^{r,s}(X(m))$ is a finite dimensional $\Rat$-vector space.
  \item The natural map
  \[
  \on{AH}^{r,s}(X(m))\rightarrow\on{AH}^{r,s}(\overline{X}(m))^{\Aut(\overline{k}/k)}
  \]
  is an isomorphism. In fact, $s\in\on{AH}^{r,s}(\overline{X}(m))$ belongs to $\on{AH}^{r,s}(X(m))$ if and only if, for some (hence any) prime $\ell$, its $\ell$-adic realization $s_{\ell}$ is $\Aut(\overline{k}/k)$-invariant.
  \item If $L\supset k$ is an algebraically closed field, then, for any embedding $\overline{k}\into L$ of extensions of $k$, the natural map
  \[
   \on{AH}^{r,s}(\overline{X}(m))\rightarrow\on{AH}^{r,s}\bigl((X\otimes_kL)(m)\bigr)
  \]
  is an isomorphism. In particular, given an embedding $\overline{\sigma}:\overline{k}\into\Comp$, we can identify $\on{AH}^{r,s}(\overline{X}(m))$ with the space of $(0,0)$-tensors in $H^{r,s}_{\sigma}(X)(m)$.
\end{enumerate}
\end{prp}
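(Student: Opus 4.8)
The plan is to deploy Deligne's formalism of absolute Hodge cycles, essentially as in \cite{dmos}*{Ch.\ I}; once the comparison isomorphisms $\gamma_{\overline\sigma}$ and the compatibility of the cycle class maps in \'etale, de Rham and Betti cohomology are in hand, all four parts are formal. I would first record the structural fact that, for a \emph{fixed} embedding $\overline\sigma:\overline k\into\Comp$, an element of $H^{r,s}_{\Adele_f}(X)(m)\times H^{r,s}_{\dR}(X)(m)$ is rational with respect to $\overline\sigma$ exactly when it lies in the $\Rat$-subspace $\gamma_{\overline\sigma}\bigl(H^{r,s}_\sigma(X)(m)\bigr)$; hence $\on{AH}^{r,s}(X(m))$, being the intersection of these over all $\overline\sigma$, is a $\Rat$-subspace, and $\gamma_{\overline\sigma}^{-1}$ carries it isomorphically onto a $\Rat$-subspace of the finite-dimensional space $H^{r,s}_\sigma(X)(m)$. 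This gives (2) at once; the vanishing of $\on{AH}^{r,s}(X(m))$ unless $2m=r-s$, and the identification in the last sentence of (4) of $\on{AH}^{r,s}(\overline X(m))$ with a space of $(0,0)$-tensors via $\gamma_{\overline\sigma}^{-1}$, are then part of the basic theory (\cite{dmos}*{Ch.\ I}). I would also isolate a \emph{rigidity lemma}: an absolute Hodge cycle $s$ is determined by its $\ell$-adic realization for any single $\ell$, since if $s$ is rational with respect to $\overline\sigma$ then $\gamma_{\overline\sigma}^{-1}(s)$ is a Betti class whose image under the $\ell$-adic comparison \emph{isomorphism} is $s_\ell$, so $s_\ell=0$ forces $s=0$.

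Part (1) is the compatibility of cycle class maps: the Betti cycle class $\on{cl}_B(Z)$ of an algebraic cycle $Z$ on $X$ is a rational class of Hodge type $(0,0)$, and the Betti--\'etale and Betti--de Rham comparisons carry it to the $\ell$-adic and de Rham cycle classes of $Z$; since $Z$ is defined over $k$, this one rational Betti class witnesses rationality with respect to \emph{every} $\overline\sigma$, so the pair of cycle classes lies in $\on{AH}^{2d}(X(d))$. For (3): the groups $H^{r,s}_{\Adele_f}(X)(m)$ and $H^{r,s}_{\Adele_f}(\overline X)(m)$ coincide, and $H^{r,s}_{\dR}(\overline X)(m)=H^{r,s}_{\dR}(X)(m)\otimes_k\overline k$ with $\Aut(\overline k/k)$ acting through the second factor, so Galois descent identifies the invariants of the latter with $H^{r,s}_{\dR}(X)(m)$; a direct check that, for $\sigma=\overline\sigma|_k$, the comparison $\gamma_{\overline\sigma}$ for $\overline X$ agrees with that for $X$ shows both that the natural map lands in the invariants (the $\Adele_f$-component being automatically invariant, again by rigidity) and that a pair over $\overline k$ is absolutely Hodge there iff rational with respect to every $\overline\sigma:\overline k\into\Comp$, whence the asserted bijection. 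The final clause of (3) follows from rigidity: if $s\in\on{AH}^{r,s}(\overline X(m))$ has $\Aut(\overline k/k)$-invariant $\ell$-adic realization, then for each $\tau$ the difference $s-\tau(s)$ is absolutely Hodge (the absolute Hodge property survives precomposing every $\overline\sigma$ with $\tau$) and has trivial $\ell$-adic realization, hence vanishes, so $s$ is $\Aut(\overline k/k)$-invariant and lies in $\on{AH}^{r,s}(X(m))$.

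The part I expect to require the real work is the surjectivity in (4); its injectivity is immediate from rigidity together with the invariance of \'etale cohomology under extension of algebraically closed base fields. Given an absolute Hodge cycle $t=(t_{\Adele_f},t_{\dR})$ on $X\otimes_kL$, one has $t_{\Adele_f}\in H^{r,s}_{\Adele_f}(\overline X)(m)$ automatically, and must show that $t_{\dR}\in H^{r,s}_{\dR}(\overline X)(m)\otimes_{\overline k}L$ in fact lies in $H^{r,s}_{\dR}(\overline X)(m)\otimes 1$ and that the descended pair remains absolutely Hodge. When $\overline k$ has transcendence degree smaller than that of $\Comp$ I would argue directly: every $\overline\sigma_0:\overline k\into\Comp$ extends to some $\overline\tau:L\into\Comp$, any two such extensions induce the same comparison isomorphism and (by rigidity) the same Betti class, so the image of $t_{\dR}$ in $H^{r,s}_{\dR}(\overline X)(m)\otimes_{\overline k,\overline\sigma_0}\Comp$ obtained by extending scalars along $\overline\tau$ does not depend on $\overline\tau$; writing $t_{\dR}$ in a $\overline k$-basis of $H^{r,s}_{\dR}(\overline X)(m)$ and using that a transcendental element of $L$ over $\overline k$ can be moved by a suitable such extension, the coefficients are forced into $\overline k$, and the same bookkeeping shows the descended pair is rational with respect to every $\overline\sigma_0$, hence absolutely Hodge over $\overline k$. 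To cover $L$ of arbitrary cardinality one instead spreads $t$ out over a smooth connected $\overline k$-variety and specializes at a $\overline k$-point, invoking smooth and proper base change and the horizontality of the de Rham component for the Gauss--Manin connection, as in \cite{dmos}*{Ch.\ I, \S2}; this spreading-out step is the genuine obstacle, and in the write-up I would either carry it out or simply cite \cite{dmos} for (4), the remaining parts being equally classical.
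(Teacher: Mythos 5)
Your proposal is correct and follows essentially the same route as the paper's (much terser) proof: part (2) from finite-dimensionality of Betti cohomology via $\gamma_{\overline\sigma}$, part (3) from the injectivity and Galois-equivariance of the $\ell$-adic realization map (your ``rigidity lemma''), and part (4) resting ultimately on \cite{dmos}*{I.2.9}, whose spreading-out argument you sketch but could equally just cite, as the paper does. No gaps; you have simply expanded the standard arguments the paper leaves implicit.
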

\begin{proof}
The first assertion is clear and the second follows from the finite dimensionality of Betti cohomology. As for the third, the first part is immediate from the definition, and the second follows, since the map $\on{AH}^{r,s}(\overline{X}(m))\to H^{r,s}_{\ell}(\overline{X})(m)$ is injective and $\Gal(\overline{k}/k)$-equivariant. The last assertion follows from \cite{dmos}*{I.2.9}.
\end{proof}

\subsection{}\label{hodge:subsec:motives}
Let us now briefly recall the construction of the $\Rat$-linear neutral Tannakian category $\mb{Mot}_{\on{AH}}(k)$ of motives over $k$ for absolute Hodge cycles. We first consider the $\Rat$-linear category whose objects are $h(X)$, where $X$ is a (smooth, projective) $k$-variety, and $h(X)$ is a formal symbol attached to it. We then decree that, for two $k$-varieties $X,Y$, $\Hom(h(X),h(Y))=\on{AH}^{\dim X}\bigl((X\times Y)(\dim X)\bigr)$, with composition given by cup-product. This category has a monoidal structure given by $h(X)\otimes h(Y)=h(X\times Y)$ and an additive structure given by $h(X)\oplus h(Y)=h(X\sqcup Y)$.

Then we consider the category $\mb{Mot}^+_{\on{AH}}(k)$ of pairs $(h(X),\pi)$, where $\pi\in\End(h(X))$ is an idempotent; here,
\[
\Hom\bigl((h(X),\pi),(h(Y),\varpi)\bigr)=\varpi\Hom(h(X),h(Y))\pi\subset\Hom(h(X),h(Y)).
\]
This is a $\Rat$-linear tensor category, and there are natural realization functors $\omega_{?}:\mb{Mot}_{\on{AH}}(k)\to\Rat_{?}$, for $?=\ell,\dR,\sigma$; here $\Rat_{\dR}=k$ and $\Rat_{\sigma}=\Rat$. For each variety $X$, the K\"unneth decomposition on $X\times X$ allows us to attach to each $d\in\Int_{\geq 0}$, an object $h^d(X)\in\mb{Mot}^+_{\on{AH}}(k)$ such that $\omega_{?}(h^d(X))=H^d_{?}(X)$, for $?=\ell,\dR,\sigma$. If $H\in\on{CH}^1(X)$ is a hyperplane section, then the Lefschetz decomposition gives us an object $p^d(X)\in\mb{Mot}^+{\on{AH}}(k)$ such that $\omega_{?}(p^d(X))=PH^d_{?}(X)$, the primitive cohomology group associated with $H$; cf.~\cite{dmos}*{\S II.6}.

In particular, we have the Lefschetz object $L=h^2\left(\bb{P}^1_k\right)\in\mb{Mot}^+_{\on{AH}}(k)$. We obtain $\mb{Mot}_{\on{AH}}(k)$ by formally inverting $L$. That is, its objects are pairs $(M,n)$, where $M\in\mb{Mot}^+{\on{AH}}(k)$ and $n\in\Int$, and morphisms are given by:
\[
 \Hom\bigl((M_1,n_1),(M_2,n_2)\bigr)=\Hom(M_1\otimes L^{N-n_1},M_2\otimes L^{N-n_2}),
\]
where $N$ is any integer such that $N\geq n_1,n_2$. Moreover,
\[
(M_1,n_1)\otimes(M_2,n_2)=(M_1\otimes M_2,n_1+n_2).
\]
We will denote the object $(M,n)$ by $M(n)$. 

The semi-simplicity of this category rests on the existence, for each variety $X$ and each $d\in\Int_{\geq 0}$, of a perfect, polarization pairing (cf.~\cite{dmos}*{II.6.2}):
\[
 h^d(X)\otimes h^d(X)\rightarrow L^{\otimes d}.
\]
In particular, we can identify $\dual{h^d(X)}=h^d(X)(-d)$. Note that, to obtain a Tannakian structure on $\mb{Mot}_{\on{AH}}(k)$, one needs to modify the natural commutativity constraint as in \cite{panchishikin}*{p. 470}. We will refer to objects in this category simply as \defnword{motives} from now on.

\begin{thm}\label{hodge:thm:motives}
\mbox{}
\begin{enumerate}
  \item $\mb{Mot}_{\on{AH}}(k)$ is a neutral $\Rat$-linear Tannakian category, and for $?=\ell,\dR,\sigma$, the natural realization functor $\omega_{?}$ is a fiber functor.
  \item For any extension $L/k$, there is a natural, faithful functor of Tannakian categories compatible with fiber functors:
  \[
   \_\otimes_kL: \mb{Mot}_{\on{AH}}(k)\rightarrow\mb{Mot}_{\on{AH}}(L).
  \]
  If $k$ is algebraically closed in $L$, then this functor is also full. In general, for motives $M,N\in\mb{Mot}_{\on{AH}}(k)$, a map $f:M\otimes_kL\rightarrow N\otimes_kL$ is defined over $k$ if and only if, for some prime $\ell$, its $\ell$-adic realization $f_{\ell}$ commutes with $\Aut(L/k)$.
\end{enumerate}
\end{thm}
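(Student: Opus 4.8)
The plan is to deduce the theorem from the formal properties of absolute Hodge cycles recorded in Proposition~\ref{hodge:prp:abshodge}, together with the polarization pairings mentioned above, following \cite{dmos}.

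For part (1), I would first observe that $\mb{Mot}^+_{\on{AH}}(k)$ is pseudo-abelian by construction, that $\End(\mathbf{1})=\Rat$ (with $\mathbf{1}=h(\Spec k)$), and that every Hom-space is a finite-dimensional $\Rat$-vector space by Proposition~\ref{hodge:prp:abshodge}(2). The essential input is that, for $M=h^d(X)$, the polarization pairing $M\otimes M\to L^{\otimes d}$ induces on $\End(M)$ a positive involution — this can be checked after applying the Betti realization, where it becomes the Rosati involution of a polarization of Hodge structures — so $\End(M)$ is a semisimple $\Rat$-algebra. Hence $\mb{Mot}^+_{\on{AH}}(k)$, and then $\mb{Mot}_{\on{AH}}(k)$ obtained by inverting the invertible object $L$, is a semisimple abelian $\Rat$-linear category in which $\otimes$ is bi-additive, hence bi-exact. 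Rigidity follows from the identification $\dual{h^d(X)}=h^d(X)(-d)$ furnished by the same pairing, once the commutativity constraint has been modified by the weight grading, as in \cite{panchishikin}, so that the realization functors are symmetric monoidal (equivalently, so that the ranks of the $h^d(X)$ come out non-negative, equal to the Betti numbers). Each realization $\omega_?$ ($?=\ell,\dR,\sigma$) is a tensor functor into $\Rat_\ell$-, $k$-, resp.\ $\Rat$-vector spaces; it is exact because the source is semisimple, and faithful because, by the definition of $\on{AH}$ and the comparison isomorphisms $\gamma_{\overline\sigma}$, a morphism of motives that vanishes in one realization vanishes in all of them. Thus each $\omega_?$ is a fiber functor, and the existence of the $\Rat$-valued fiber functor $\omega_\sigma$ makes $\mb{Mot}_{\on{AH}}(k)$ a neutral $\Rat$-linear Tannakian category by Deligne's recognition theorem.

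For part (2), the functor $\_\otimes_kL$ sends $h(X)$ to $h(X_L)$, and on morphisms it is the base-change map on absolute Hodge cycles provided by Proposition~\ref{hodge:prp:abshodge}(3),(4); its compatibility with the three fiber functors amounts to smooth base change for \'etale cohomology, flat base change for de Rham cohomology along $k\to L$, and the choice of a common embedding of $L$ into $\Comp$ for the Betti realization. Faithfulness is immediate: a morphism $f$ over $k$ whose base change to $L$ has trivial $\ell$-adic realization already has $f_\ell=0$, since \'etale cohomology is insensitive to extensions of algebraically closed base fields, whence $f=0$ by the faithfulness proved in (1).

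The only substantial point is fullness, which I would obtain from the sharper criterion stated last in the theorem. The implication ``defined over $k$ $\Rightarrow$ the $\ell$-adic realization commutes with $\Aut(L/k)$'' is clear, the $\ell$-adic realization of a motive over $k$ being Galois-equivariant. For the converse I would reduce, by a limit argument, to the case of a finitely generated extension $L/k$, and then factor $\overline{L}/k$ through $\overline{k}$: Proposition~\ref{hodge:prp:abshodge}(4) identifies $\on{AH}(\overline{X})$ with $\on{AH}(X_{\overline{L}})$, Proposition~\ref{hodge:prp:abshodge}(3) characterizes the subspaces descending to $k$ and to $L$ in terms of Galois invariance of $\ell$-adic realizations, and chasing these identifications against the surjection $\Aut(\overline{L}/L)\to\Aut(\overline{k}/k)$ yields the equivalence. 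Finally, when $k$ is algebraically closed in $L$ one has, in characteristic $0$, that $\overline{k}$ and $L$ are linearly disjoint over $k$, so that restriction $\Aut(\overline{L}/L)\to\Aut(\overline{k}/k)$ is surjective and $\Aut(L/k)$-invariance is automatic for any morphism pulled back from $k$; hence the criterion gives fullness. This descent step — the core of \cite{dmos}*{II.6} — is the one I expect to demand the most care; the rest is formal bookkeeping.
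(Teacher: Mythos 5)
Your outline is the argument of \cite{dmos}*{Ch.~II, \S 6}, which is precisely what the paper's ``proof'' consists of (it is a bare citation to II.6.7): semisimplicity of $\End(h^d(X))$ from the positivity of the involution induced by the polarization pairing (checked on the Betti realization), rigidity after the modification of the commutativity constraint, faithfulness of the realizations from the injectivity of $\on{AH}^{r,s}(X(m))\to H^{r,s}_{\sigma}(X)(m)$, neutrality via $\omega_\sigma$, and descent via Proposition~\ref{hodge:prp:abshodge}(3),(4). Two wrinkles in your part (2) should be repaired. First, the restriction map $\Aut(\overline{L}/L)\to\Aut(\overline{k}/k)$ is \emph{not} surjective for a general extension $L/k$ (its image is $\Aut(\overline{k}/\overline{k}\cap L)$), so you cannot ``chase against the surjection'' to establish the general criterion; but you do not need to: once Proposition~\ref{hodge:prp:abshodge}(4) identifies $\on{AH}$ over $\overline{k}$ with $\on{AH}$ over $\overline{L}$ and Proposition~\ref{hodge:prp:abshodge}(3) detects membership in $\on{AH}$ over $k$ by $\Aut(\overline{k}/k)$-invariance of the $\ell$-adic realization, the criterion is immediate. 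Second, for fullness the invariance must be verified for an \emph{arbitrary} morphism $f$ over $L$, not one already pulled back from $k$; the correct chain is that $f_{\ell}$ automatically commutes with $\Aut(\overline{L}/L)$ because $f$ is defined over $L$, and when $k$ is algebraically closed in $L$ the restriction map $\Aut(\overline{L}/L)\to\Aut(\overline{k}/k)$ \emph{is} surjective (linear disjointness of $\overline{k}$ and $L$ over $k$), so $f_{\ell}$ is $\Aut(\overline{k}/k)$-invariant and the criterion applies. With those two points fixed, the argument is complete and coincides with the cited one.
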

\begin{proof}
  Cf.~\cite{dmos}*{II.6.7}.
\end{proof}
}
The following can be easily deduced from the main result of~\cite{dmos}*{Ch. I}.
\begin{thm}[Deligne]\label{hodge:thm:abshodge}
Let $\mb{Mot}_{\on{Ab}}(k)\subset\mb{Mot}_{\on{AH}}(k)$ be the full Tannakian sub-category generated by the motives attached to abelian varieties. Let $\mb{Hdg}_{\Rat}$ be the Tannakian category of $\Rat$-Hodge structures. Then, for any embedding $\sigma:k\into\Comp$, the functor
\begin{align*}
 \mb{Mot}_{\on{Ab}}(k)&\rightarrow\mb{Hdg}_{\Rat}\\
 M&\mapsto M_{\sigma}
\end{align*}
is faithful. If $k$ is algebraically closed, then it is in fact fully faithful.
\end{thm}
\qed

\subsection{}\label{hodge:subsec:rstruc}
We will need a mildly refined notion of a motive: Let $R\subset\Rat$ be a sub-ring. A \defnword{motive with $R$-structure} or an \defnword{$R$-motive} is a motive $M$ equipped with an $\Aut(\overline{k}/k)$-stable $R\otimes\widehat{\Int}$-lattice $M_{\widehat{R}}\subset M_{\Adele_f}$. Here, we write $M_{\Adele_f}$ for the . For example, if $R=\Int$, then $M_{\widehat{R}}$ is a $\widehat{\Int}$-lattice; and, if $R=\Int_{(p)}$, then giving $M_{\widehat{R}}$ amounts to giving a $\Aut(\overline{k}/k)$-stable $\Int_p$-lattice $M_{\Int_p}\subset M_p$.

A morphism $f:(M,M_{\widehat{R}})\to (N,N_{\widehat{R}})$ of $R$-motives is a map $f:M\to N$ of motives such that the $\Adele_f$-realization $f_{\Adele_f}$ carries $M_{\widehat{R}}$ into $N_{\widehat{R}}$.

Suppose that $M_{R}=(M,M_{\widehat{R}})$ is an $R$-motive. For any embedding $\sigma:k\into\Comp$, this also gives us a canonical $R$-lattice $M_{R,\sigma}\subset M_{\sigma}$ obtained as follows. Choose an extension $\overline{\sigma}:\overline{k}\into\Comp$ of $\sigma$. This gives us a comparison isomorphism
\[
 M_{\sigma}\otimes\Adele_f\xrightarrow{\simeq}M_{\Adele_f}.
\]
We now take $M_{R,\sigma}$ to be the intersection of the pre-image of $M_{\widehat{R}}$ with $M_{\sigma}$. Since $M_{\widehat{R}}$ is $\Aut(\overline{k}/k)$-stable, this does not depend on the choice of $\overline{\sigma}$. Clearly, for any map $f:M_{R}\to N_{R}$ of $R$-motives, the Betti realization $f_\sigma$ respects the $R$-lattice $M_{R,\sigma}$.

Given an $R$-motive $M$, and a prime $p$ not invertible in $R$, we will write $M_p$ for its associated $\Int_p$-representation of $\Aut(\overline{k}/k)$, and, for any $\sigma:k\into\Comp$, we will write $M_{\sigma}$ for the associated $R$-Hodge structure.\comment{Note that, for each variety, $X$, $h(X)$ has a natural $R$-structure. In particular, the Lefschetz motive $L$ underlies a natural $R$-motive which we will continue to denote by $L$.}

For any $R$-motive $M$, write $\on{AH}(M)$ for the $R$-module of \defnword{cycles on $M$}: This is the space of maps $\Hom(\mb{1},M)$, where $\mb{1}$ is the identity object; that is $\mb{1}=h(\on{pt})$ with its natural $R$-structure. If $R\into R'$ is an inclusion of sub-rings of $\Rat$, then there is a natural functor $\_\otimes_RR'$ from $R$-motives to $R'$-motives such that
\[
 \on{AH}(M)\otimes_RR'=\on{AH}(M\otimes_RR'),
\]
for any $R$-motive $M$.

\begin{defn}\label{hodge:defn:polarization}
  An $R$-motive $M$ is \defnword{pure of weight $d$}, for some $d\in\Int$, if, for one (hence all) $\sigma:k\into\Comp$, $M_{\sigma}$ is a pure Hodge structure of weight $d$. A \defnword{polarization} on a $R$-motive $M$ that is pure of weight $d$ is a pairing
  \[
   \psi:M\otimes M\rightarrow L(-d)
  \]
  such that, for any $\sigma:k\into\Comp$, $\psi$ induces a polarization of the $\Rat$-Hodge structure $M_{\sigma}\otimes_R\Rat$.
\end{defn}

\subsection{}\label{hodge:subsec:admotives}
One problem with absolute Hodge cycles is that they do not have an analogue in positive characteristic. We will deal with this in somewhat \emph{ad hoc} fashion. We now assume that the field $k$ is equipped with a discrete valuation $\nu:k^\times\to\Int$ such that the residue field $k(\nu)$ is perfect of characteristic $p>0$. Let $k_{\nu}$ be the completion of $k$ along $\nu$, and let $\Reg{\nu}$ be its ring of integers. Let $\Bdr$ be Fontaine's ring of de Rham periods for $k_{\nu}$. For any smooth projective variety over $k$, and for $d\in\Int_{\geq 0}$, we have the de Rham comparison isomorphism:
\begin{align*}
  \gamma_{\dR}:H^d_p(X)\otimes_{\Rat_p}\Bdr&\xrightarrow{\simeq}H^d_{\dR}(X/k)\otimes_k\Bdr.
\end{align*}
Here, we write $H^d_p(X)$ for the $p$-adic cohomology group $H^d_{\et}(X_{\overline{k}},\Rat_p)$, where $\overline{k}$ is an algebraic closure of $k$.

\begin{defn}\label{hodge:defn:absderham}
An absolutely Hodge cycle $s$ on $X$ with $p$-adic realization $s_p$ and de Rham realization $s_{\dR}$ is \defnword{de Rham} (with respect to $\nu$), if
\[
 \gamma_{\dR}(s_p\otimes 1)=s_{\dR}\otimes 1.
\]
\end{defn}

Let $\mb{Mot}_{\on{AD},\nu}(k)$ be the category defined exactly as $\mb{Mot}_{\on{AH}}(k)$ is in \cite{panchishikin}*{\S~2}, except that we only allow absolutely de Rham cycles as morphisms. It is easy to see that this is a sub-category of $\mb{Mot}_{\on{AH}}(k)$. The analogue of \cite{dmos}*{II.6.2} holds in this setting, so $\mb{Mot}_{\on{AD},\nu}(k)$ is semi-simple and in fact Tannakian.

\begin{thm}[Blasius-Wintenberger]\label{hodge:thm:blasius}
Let $\mb{Mot}_{\on{Ab},\nu}(k)$ be the Tannakian sub-category of $\mb{Mot}_{\on{AD},\nu}(k)$ generated by the motives attached to abelian varieties. Then the natural functor
\[
 \mb{Mot}_{\on{Ab},\nu}(k)\to\mb{Mot}_{\on{Ab}}(k)
\]
is an equivalence of categories.
\end{thm}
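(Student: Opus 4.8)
The plan is to build the quasi-inverse of the natural functor $\mb{Mot}_{\on{Ab},\nu}(k)\to\mb{Mot}_{\on{Ab}}(k)$ by showing that every morphism of motives between abelian-variety motives over $k$ is automatically de Rham with respect to $\nu$. Since both categories have the same objects (the objects being determined by the underlying varieties and the idempotents, which live in the subcategory of de Rham cycles by the very construction), and since the functor is faithful and fully faithful (by Corollary~\ref{hodge:cor:abelian} applied after base change to $\overline k$, plus the Galois-descent criterion in Theorem~\ref{hodge:thm:motives}(2) and Proposition~\ref{hodge:prp:abshodge}(3)), what remains is essentially surjectivity on Hom-sets: I must show $\on{AH}^{r,s}(X(m))=\on{AD}^{r,s}(X(m))$ for $X$ a product of abelian varieties. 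By the standard Tannakian reduction — replacing $\Hom(M,N)$ by $\Hom(\mb 1,\dual M\otimes N)$ and using that $\dual M\otimes N$ is again cut out by an absolutely-Hodge (indeed algebraic, Künneth) idempotent from the motive of a product of abelian varieties — this reduces to: \emph{every absolute Hodge cycle on a product of abelian varieties is de Rham}.

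The heart of the matter is therefore the Blasius--Wintenberger theorem in its original form: for an absolute Hodge cycle $s$ on an abelian variety $X$ over $k$, with $\nu$ a discrete valuation with perfect residue field of characteristic $p$, the de Rham comparison isomorphism $\gamma_{\dR}$ carries $s_p\otimes 1$ to $s_{\dR}\otimes 1$. I would simply cite Blasius's paper~\cite{blasius} for this — that is the content being invoked — and the only work on my side is to package it into the present formalism. Concretely: first reduce to $k$ finitely generated over $\Rat$ (the cycle, the variety, and the valuation all descend to a finitely generated subfield, and the de Rham property can be checked there since all the comparison isomorphisms are compatible with base change along $k\into k_\nu$); then, having arranged that $k$ embeds into $\Comp$ so that the absolute Hodge formalism of \S\ref{sec:hodge} literally applies, invoke the $p$-adic comparison theorem of Faltings together with Blasius's compatibility result to conclude that the $p$-adic and de Rham realizations of $s$ match up under $\gamma_{\dR}$. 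Note that absolute-Hodgeness (rationality with respect to every embedding, in particular a $p$-adically compatible one) is exactly what feeds into this argument; a merely Hodge cycle would not suffice, which is why Theorem~\ref{hodge:thm:abshodge} is used first to know the cycles we care about are absolutely Hodge.

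Once $\on{AD}=\on{AH}$ on products of abelian varieties is established, the functor $\mb{Mot}_{\on{Ab},\nu}(k)\to\mb{Mot}_{\on{Ab}}(k)$ is fully faithful and essentially surjective, hence an equivalence; essential surjectivity is immediate because every object of $\mb{Mot}_{\on{Ab}}(k)$ is a direct summand, cut out by an algebraic (hence de Rham) idempotent, of a Tate twist of a motive of the form $h(X)$ for $X$ a product of abelian varieties, and all such objects already lie in $\mb{Mot}_{\on{Ab},\nu}(k)$. I expect the main obstacle — or at least the only non-formal input — to be precisely the reduction to and application of Blasius's theorem: one must be careful that the chosen discrete valuation on the finitely generated subfield is compatible with $\nu$, that the residue field stays perfect (or pass to a completion where this is automatic), and that the normalizations of $\gamma_{\dR}$ here and in~\cite{blasius} agree up to the relevant Tate twists. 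Everything else is bookkeeping within the Tannakian formalism already set up in this section.
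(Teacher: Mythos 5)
Your proposal is correct and follows exactly the paper's route: the paper's entire proof is the observation that the statement reduces to showing every absolute Hodge cycle on an abelian variety is de Rham, which is the main result of \cite{blasius}. The additional Tannakian bookkeeping and the reduction to finitely generated subfields that you spell out are implicit in the paper's one-line argument.
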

\begin{proof}
  This reduces to showing that every (absolutely) Hodge cycle on an abelian variety is de Rham, which is the main result of \cite{blasius}.
\end{proof}

\subsection{}
We will now work with pairs $(X,\mf{X})$, where $X$ is a $k$-variety and $\mf{X}$ is a smooth proper $\Reg{\nu}$-scheme equipped with an identification $\mf{X}\otimes_{\Reg{\nu}}k(\nu)=X\otimes_kk(\nu)$. Write $\mf{X}_0$ for the special fiber $\mf{X}\otimes_{\Reg{\nu}}k(\nu)$. Set $W=W(k(\nu))$; then the crystalline cohomology $H^d_{\cris}(\mf{X}_0/W)$ is an $F$-crystal over $W$.

Let $W(-1)=H^2_{\cris}(\bb{P}^1_{k(v)}/W)$, and let $W(1)$ be its dual; note that $W(1)\left[\frac{1}{p}\right]$ has the structure of an $F$-isocrystal over $W\left[\frac{1}{p}\right]$, but that $W(1)$ is not $F$-stable. 

Let $\Bcris$ be Fontaine's ring of crystalline periods for $k_{\nu}$. For $d\in\Int_{\geq 0}$ and $m\in\Int$, we have natural comparison isomorphisms:
\begin{align*}
 \gamma_{\on{B-O}}:H^d_{\cris}(\mf{X}_0/W)(m)\otimes_Wk_{\nu}&\xrightarrow{\simeq}H^d_{\dR}(X)(m)\otimes_kk_{\nu};\\
 \gamma_{\cris}: H^d_p(X)(m)\otimes_{\Rat_p}\Bcris&\xrightarrow{\simeq}H^d_{\cris}(\mf{X}_0/W)(m)\otimes_W\Bcris.
\end{align*}
These isomorphisms are compatible in the sense that
\[
 \gamma_{\on{B-O}}\circ(\gamma_{\cris}\otimes 1)=\gamma_{\dR}.
\]

\begin{defn}\label{hodge:defn:abscrys}
An absolutely Hodge cycle $s\in H^d_{\Adele_f}(X)(m)\times H^d_{\dR}(X)(m)$ is \defnword{Tate} (with respect to $\mf{X}$ and $\nu$), if
\[
\gamma_{\on{B-O}}^{-1}(s_{\dR}\otimes 1)\in H^d_{\cris}(\mf{X}_0/W)(m)\otimes_Wk(\nu)
\]
is an $F$-invariant element of $H^d_{\cris}(\mf{X}_0/W)(m)\left[\frac{1}{p}\right]$. We will denote this $F$-invariant element by $s_{\cris}$: it is the \emph{crystalline} realization of $s$.

We say that $s$ is \defnword{crystalline} (with respect to $\mf{X}$ and $\nu$) if it is Tate, and if $\gamma_{\cris}(s_p\otimes 1)=s_{\cris}\otimes 1$.

Since the comparison isomorphisms are compatible with cycle classes and Poincar\'e duality, we see that algebraic cycle classes are crystalline. Similar statements hold for the K\"unneth and Lefschetz decompositions.
\end{defn}

\begin{lem}\label{hodge:lem:abscrys}
The notion of being Tate or crystalline does not depend on the choice of model $\mf{X}$. In fact, an absolutely Hodge cycle is crystalline if and only if it is de Rham. Moreover, the $F$-isocrystal $H^d_{\cris}(\mf{X}_0/W)\left[\frac{1}{p}\right]$ is also independent of the choice of model $\mf{X}$.
\end{lem}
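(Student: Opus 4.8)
The plan is to deduce all three assertions formally from the compatibility $\gamma_{\on{B-O}}\circ(\gamma_{\cris}\otimes 1)=\gamma_{\dR}$ recorded above, together with two standard facts about the crystalline comparison isomorphism which are not spelled out in the text: that $\gamma_{\cris}$ is equivariant for the Frobenius (acting through $\Bcris$ on the source $H^d_p(X)(m)\otimes_{\Rat_p}\Bcris$, and as $F\otimes\varphi$ on the target $H^d_{\cris}(\mf{X}_0/W)(m)\otimes_W\Bcris$) and for the local Galois group $G_{k_{\nu}}=\Gal(\overline{k_{\nu}}/k_{\nu})$, which acts trivially on crystalline and de Rham cohomology. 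First I would record the consequence that, since $\Bcris^{G_{k_{\nu}}}=W[\tfrac1p]$, taking $G_{k_{\nu}}$-invariants identifies $\bigl(H^d_{\cris}(\mf{X}_0/W)(m)\otimes_W\Bcris\bigr)^{G_{k_{\nu}}}$ with the isocrystal $H^d_{\cris}(\mf{X}_0/W)(m)[\tfrac1p]$, the Frobenius $F\otimes\varphi$ restricting to the crystalline Frobenius $F$; and I would note that $\gamma_{\dR}$, hence the subspace $\on{AD}^{r,s}(X(m))\subset\on{AH}^{r,s}(X(m))$, depends only on $X\otimes_kk_{\nu}$ and not on any choice of model.

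Next I would prove $\on{AC}^{r,s}(X(m))=\on{AD}^{r,s}(X(m))$ for a fixed model $\mf{X}$, by a two-way chase of the compatibility. If $s$ is crystalline, then applying $\gamma_{\on{B-O}}$ to $\gamma_{\cris}(s_p\otimes 1)=s_{\cris}\otimes 1$ and recalling $s_{\cris}=\gamma_{\on{B-O}}^{-1}(s_{\dR}\otimes 1)$ gives $\gamma_{\dR}(s_p\otimes 1)=s_{\dR}\otimes 1$, so $s$ is de Rham. Conversely, if $s$ is de Rham then $\gamma_{\dR}^{-1}(s_{\dR}\otimes 1)=s_p\otimes 1$, and the compatibility yields, inside $H^d_{\cris}(\mf{X}_0/W)(m)\otimes_W\Bdr$,
\[
 \gamma_{\on{B-O}}^{-1}(s_{\dR}\otimes 1)=(\gamma_{\cris}\otimes 1)(s_p\otimes 1)=\gamma_{\cris}(s_p\otimes 1).
\]
Since $s_p$ is $G_{k_{\nu}}$-invariant and fixed by the Frobenius on the source, $\gamma_{\cris}(s_p\otimes 1)$ is a $G_{k_{\nu}}$-invariant, Frobenius-fixed element of $H^d_{\cris}(\mf{X}_0/W)(m)\otimes_W\Bcris$; by the identification above it therefore lies in $H^d_{\cris}(\mf{X}_0/W)(m)[\tfrac1p]$ and is $F$-fixed. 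Hence $\gamma_{\on{B-O}}^{-1}(s_{\dR}\otimes 1)$ lies in the isocrystal and is $F$-fixed, so $s$ is Tate with $s_{\cris}=\gamma_{\cris}(s_p\otimes 1)$, whence $s$ is crystalline. In particular $\on{AC}^{r,s}(X(m))$ equals the model-free space $\on{AD}^{r,s}(X(m))$, and is therefore itself independent of $\mf{X}$.

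It then remains to address the model-independence of the notions of \emph{Tate} cycle and crystalline realization, and of the isocrystal $H^d_{\cris}(\mf{X}_0/W)[\tfrac1p]$ itself. Given two models $\mf{X},\mf{X}'$, I would consider $\gamma'_{\cris}\circ\gamma_{\cris}^{-1}$, which is $\Bcris$-linear and equivariant for both the Frobenius and $G_{k_{\nu}}$; passing to $G_{k_{\nu}}$-invariants yields a canonical isomorphism of $F$-isocrystals
\[
 \Phi\colon H^d_{\cris}(\mf{X}_0/W)(m)[\tfrac1p]\xrightarrow{\simeq}H^d_{\cris}(\mf{X}'_0/W)(m)[\tfrac1p],
\]
and untwisting by the constant isocrystal $W(1)^{\otimes m}[\tfrac1p]$ this is the last sentence of the lemma. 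Using the compatibility for both $\mf{X}$ and $\mf{X}'$ together with the model-independence of $\gamma_{\dR}$, one checks that $\Phi$, extended to $k_{\nu}$-coefficients, intertwines $\gamma_{\on{B-O}}$ with $\gamma'_{\on{B-O}}$; consequently $\Phi$ sends $\gamma_{\on{B-O}}^{-1}(s_{\dR}\otimes 1)$ to $(\gamma'_{\on{B-O}})^{-1}(s_{\dR}\otimes 1)$, preserves membership in the isocrystal and $F$-fixedness, and (being compatible with $\gamma_{\cris}$ and $\gamma'_{\cris}$) transports the identity $\gamma_{\cris}(s_p\otimes 1)=s_{\cris}\otimes 1$ to $\gamma'_{\cris}(s_p\otimes 1)=\Phi(s_{\cris})\otimes 1$. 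Hence $s$ is Tate (resp. crystalline) with respect to $\mf{X}$ if and only if it is so with respect to $\mf{X}'$, and the crystalline realizations correspond under $\Phi$ — which is exactly the claimed independence.

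I do not expect a genuine obstacle here: the lemma is a formal unwinding of the comparison isomorphisms, and the only inputs beyond the compatibility displayed in the text — the Frobenius- and $G_{k_{\nu}}$-equivariance of $\gamma_{\cris}$ and the value $\Bcris^{G_{k_{\nu}}}=W[\tfrac1p]$ — are imported verbatim from $p$-adic Hodge theory. The one point demanding care is the bookkeeping of the nested coefficient rings $W\subset W[\tfrac1p]\subset k_{\nu}\subset\Bcris\subset\Bdr$ and of which of the Frobenius, the $G_{k_{\nu}}$-action and the crystalline $F$ act on which tensor factor, so that equalities such as $\gamma_{\on{B-O}}^{-1}(s_{\dR}\otimes 1)=\gamma_{\cris}(s_p\otimes 1)$ are read in the correct ambient module.
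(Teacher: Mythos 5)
Your proposal is correct and follows essentially the same route as the paper: the paper's (much terser) proof likewise reduces everything to the compatibility $\gamma_{\on{B-O}}\circ(\gamma_{\cris}\otimes 1)=\gamma_{\dR}$ together with the $F$-invariance of $\gamma_{\cris}(s_p\otimes 1)$, concluding that crystalline is equivalent to de Rham, and obtains model-independence of the isocrystal by identifying $H^d_{\cris}(\mf{X}_0/W)\left[\frac{1}{p}\right]$ with the $\Gal(\overline{k}_{\nu}/k_{\nu})$-invariants of $H^d_p(X)\otimes_{\Rat_p}\Bcris$ — exactly your map $\Phi$ obtained by passing to Galois invariants. Your write-up simply makes explicit the Frobenius- and Galois-equivariance inputs and the coefficient-ring bookkeeping that the paper leaves implicit.
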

\begin{proof}
  Since $\gamma_{\dR}$ is compatible with $\gamma_{\on{B-O}}$ and $\gamma_{\cris}$, and since $\gamma_{\cris}(s_p\otimes 1)$ is always $F$-invariant, $s$ is crystalline if and only if $\gamma_{\dR}(s_p\otimes 1)=s_{\dR}\otimes 1$; that is, if and only if $s$ is de Rham. From this, the first two assertions are immediate.

  For the third, we now only have to note that $H^d_{\cris}(\mf{X}_0/W)\left[\frac{1}{p}\right]$ is identified with the $\Gal(\overline{k}_{\nu}/k_{\nu})$-invariants of $H^d_p(X)\otimes_{\Rat_p}\Bcris$.
\end{proof}
\comment{
From (\ref{hodge:thm:blasius}), we now have:
\begin{corollary}\label{hodge:cor:abvarcrys}
If $X$ is an abelian variety over $k$ with good reduction at $\nu$, then we have:
\[
 \on{AC}^{r,s}(X(m))=\on{AD}^{r,s}(X(m))=\on{AH}^{r,s}(X(m)).
\]
\end{corollary}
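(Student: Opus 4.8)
The plan is to obtain the two equalities separately. For $\on{AC}^{r,s}(X(m))=\on{AD}^{r,s}(X(m))$ there is nothing new to do: an abelian variety $X$ with good reduction at $\nu$ extends to an abelian scheme $\mf{X}$ over $\Reg{\nu}$, so a smooth proper model exists and Lemma \ref{hodge:lem:abscrys} applies verbatim. It therefore suffices to prove $\on{AH}^{r,s}(X(m))=\on{AD}^{r,s}(X(m))$; since $\on{AD}^{r,s}(X(m))\subseteq\on{AH}^{r,s}(X(m))$ by definition, the only point is to show that every absolutely Hodge cycle on an abelian variety is de Rham with respect to $\nu$.

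For this I would translate the statement into the language of \S\ref{hodge:subsec:motives}. Set $M=\bigl(h^r(X)\otimes\dual{h^s(X)}\bigr)(m)$. Because $X$ is an abelian variety, $h(X)$ — hence every K\"unneth summand of it, its dual, and every Tate twist of it (the Lefschetz motive $L$ being a summand of $h(E)$ for an elliptic curve $E$) — lies in the full Tannakian subcategory $\mb{Mot}_{\on{Ab}}(k)$; by the same token $M\in\mb{Mot}_{\on{Ab},\nu}(k)$. Unwinding Definitions \ref{hodge:defn:abshodge} and \ref{hodge:defn:absderham}, we have $\on{AH}^{r,s}(X(m))=\Hom_{\mb{Mot}_{\on{Ab}}(k)}(\mb{1},M)$ and $\on{AD}^{r,s}(X(m))=\Hom_{\mb{Mot}_{\on{Ab},\nu}(k)}(\mb{1},M)$. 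Now Theorem \ref{hodge:thm:blasius} asserts precisely that the natural functor $\mb{Mot}_{\on{Ab},\nu}(k)\to\mb{Mot}_{\on{Ab}}(k)$ is an equivalence; in particular it induces a bijection on $\Hom(\mb{1},M)$, which is the equality we want. Concretely, this fullness is nothing but the main theorem of \cite{blasius}: every Hodge cycle on an abelian variety is de Rham.

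I do not expect a real obstacle here: all of the substance is imported from \cite{blasius} via Theorem \ref{hodge:thm:blasius}, together with Lemma \ref{hodge:lem:abscrys}. The only thing to verify with any care is the identification of the two cycle groups with the two $\Hom$-spaces displayed above, and the fact that $M$ genuinely lies in the abelian-variety-generated subcategories; both are immediate from the constructions recalled in \S\ref{hodge:subsec:motives}, using that the K\"unneth and Lefschetz projectors involved are algebraic (a fortiori de Rham), so that passing to direct summands does not leave $\mb{Mot}_{\on{Ab},\nu}(k)$.
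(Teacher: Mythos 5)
Your proposal is correct and follows the paper's intended argument exactly: the corollary is stated as an immediate consequence of Theorem \ref{hodge:thm:blasius} (which, via Blasius's result that Hodge cycles on abelian varieties are de Rham, gives $\on{AD}^{r,s}(X(m))=\on{AH}^{r,s}(X(m))$) together with Lemma \ref{hodge:lem:abscrys} (which gives $\on{AC}^{r,s}(X(m))=\on{AD}^{r,s}(X(m))$ once good reduction supplies a smooth proper model). Your Tannakian reformulation via $\Hom(\mb{1},M)$ is just a slightly more explicit unwinding of the same step.
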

\qed
}
\subsection{}\label{hodge:subsec:acah}
Let $\mb{Mot}_{\on{AC},\nu}(k)\subset \mb{Mot}_{\on{AH}}(k)$ be the sub-category whose objects are triples $(X,m,\pi)$, where $X$ has good reduction at $\nu$, and $\pi$ is crystalline. Morphisms are given as before, except that we restrict ourselves to absolutely crystalline cycles. Just like $\mb{Mot}_{\on{AD},\nu}(k)$,  $\mb{Mot}_{\on{AC},\nu}(k)$ is also Tannakian. Note that, by (\ref{hodge:lem:abscrys}) above, any object $M$ of $\mb{Mot}_{\on{AC},\nu}$ has a canonical crystalline realization $M_{\cris}$ that is an $F$-isocrystal over $W\left[\frac{1}{p}\right]$ and is equipped with a natural isomorphism of $k_{\nu}$-vector spaces
\[
 M_{\cris}\otimes_{W\left[\frac{1}{p}\right]}k_{\nu}\xrightarrow{\simeq}M_{\dR}\otimes_kk_{\nu}.
\]

The next result follows easily from (\ref{hodge:lem:abscrys}) and (\ref{hodge:thm:blasius}):
\begin{prp}\label{hodge:prp:abelian}
Let $\mb{Mot}^{\circ}_{\on{Ab},\nu}(k)$ (resp. $\mb{Mot}^{\circ}_{\on{Ab},\nu,\cris}(k)$) be the full sub-category of $\mb{Mot}_{\on{AH}}(k)$ (resp. $\mb{Mot}^{\circ}_{\on{AC},\nu}(k)$) generated by the motives attached to abelian varieties with good reduction at $\nu$. Then the natural functor
\[
 \mb{Mot}^{\circ}_{\on{Ab},\nu,\cris}(k)\to\mb{Mot}_{\on{Ab}}(k)
\]
is fully faithful and its essential image is $\mb{Mot}^{\circ}_{\on{Ab},\nu}(k)$.
\end{prp}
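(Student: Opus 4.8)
The plan is to derive the proposition from the equality of cycle groups in Corollary~(\ref{hodge:cor:abvarcrys}) together with the Blasius-Wintenberger theorem~(\ref{hodge:thm:blasius}). The point is that $\mb{Mot}^{\circ}_{\on{Ab},\nu,\cris}(k)$ and $\mb{Mot}^{\circ}_{\on{Ab},\nu}(k)$ are built by the identical recipe out of the motives of abelian varieties with good reduction at $\nu$, the only difference being that the first permits only absolutely crystalline cycles, both as morphisms and as the idempotents used to cut out objects; so once those two classes of cycles are seen to agree on the varieties that actually occur, the rest is formal. The first step is to extend~(\ref{hodge:cor:abvarcrys}) harmlessly: if $Z$ is a finite product of abelian varieties with good reduction at $\nu$ and copies of $\bb{P}^1$, then $\on{AC}^{r,s}(Z(m))=\on{AH}^{r,s}(Z(m))$ for all $r,s,m$. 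Such a $Z$ has good reduction at $\nu$, so the left-hand side is defined and, by~(\ref{hodge:lem:abscrys}), equals $\on{AD}^{r,s}(Z(m))$; since $h(\bb{P}^1)=\mb{1}\oplus L$ and $L=h^2(E)$ is a direct summand of $h(E)$ for an elliptic curve $E$ over $k$ with good reduction at $\nu$ (for instance, the generic fiber of a lift of any elliptic curve over $k(\nu)$), the motive $h(Z)$ lies in the category $\mb{Mot}_{\on{Ab},\nu}(k)$ of~(\ref{hodge:thm:blasius}), whence $\on{AD}^{r,s}(Z(m))=\on{AH}^{r,s}(Z(m))$.

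For full faithfulness I would first treat the twisted motives $h(X)(a)$ of abelian varieties $X$ with good reduction at $\nu$. Unwinding the construction of $\mb{Mot}_{\on{AH}}(k)$ in~(\ref{hodge:subsec:motives}) --- the formal inversion of $L$, with $L$ realized as $(h(\bb{P}^1),p_2)$ for the algebraic Künneth projector $p_2$, and the definition of $\Hom$ in $\mb{Mot}^{+}$ --- identifies, in either category, $\Hom(h(X)(a),h(Y)(b))$ with the subspace cut out by an algebraic (hence crystalline) projector of a cycle group $\on{AH}^{r,s}\bigl((X\times Y\times(\bb{P}^1)^{j})(e)\bigr)$: absolutely Hodge cycles in $\mb{Mot}_{\on{Ab}}(k)$, absolutely crystalline ones in $\mb{Mot}^{\circ}_{\on{Ab},\nu,\cris}(k)$. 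By the extension of~(\ref{hodge:cor:abvarcrys}) these cycle groups coincide, and so do the two $\Hom$-groups. A general object of $\mb{Mot}^{\circ}_{\on{Ab},\nu,\cris}(k)$ is a direct summand, cut out by a crystalline idempotent, of a finite sum of such $h(X)(a)$; since the idempotents lie in the (now common) endomorphism rings, the $\Hom$-groups between arbitrary objects agree as well. This is full faithfulness.

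For the essential image: the functor is a faithful tensor functor sending each generator $h(A)$ ($A$ abelian with good reduction at $\nu$) to itself, so the image of any object of $\mb{Mot}^{\circ}_{\on{Ab},\nu,\cris}(k)$ --- a subquotient, inside the semisimple category $\mb{Mot}_{\on{AH}}(k)$, of a finite sum of Tate twists of tensor products of such $h(A)$ --- lies in $\mb{Mot}^{\circ}_{\on{Ab},\nu}(k)$. Conversely, given $N\in\mb{Mot}^{\circ}_{\on{Ab},\nu}(k)$, write $N=\im(e)$ with $e\in\End_{\mb{Mot}_{\on{AH}}(k)}(P)$ an idempotent and $P$ a finite sum of Tate twists of tensor products of motives of abelian varieties with good reduction at $\nu$. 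Then $P$ already lies in $\mb{Mot}^{\circ}_{\on{Ab},\nu,\cris}(k)$ (its Tate twists being built from $L$, present there as above), and by the $\Hom$-computation $e\in\End_{\mb{Mot}^{\circ}_{\on{Ab},\nu,\cris}(k)}(P)$; since this category is Tannakian, hence abelian, $\im(e)$ exists there, and its image under the functor is isomorphic to $N$. Hence $N$ lies in the essential image.

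The entire mathematical content is Corollary~(\ref{hodge:cor:abvarcrys}) --- equivalently the Blasius-Wintenberger theorem~(\ref{hodge:thm:blasius}) that Hodge cycles on abelian varieties are de Rham --- which is granted to us; everything else is formal manipulation of the Tannakian constructions of~(\ref{hodge:subsec:motives}). I therefore do not anticipate a genuine obstacle. The one place calling for care, and the closest thing to one, is the bookkeeping above: one must verify that inverting the Lefschetz motive only ever introduces $\bb{P}^1$-factors and Tate twists accompanied by algebraic (hence automatically crystalline) projectors, so that the reduction to products of abelian varieties with good reduction is indeed harmless.
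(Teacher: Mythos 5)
Your proposal is correct and follows the same route as the paper, which simply asserts that the proposition ``follows easily from (\ref{hodge:cor:abvarcrys})'' and leaves the verification to the reader; you have supplied exactly the formal bookkeeping (reduction to products of abelian varieties with good reduction and $\bb{P}^1$-factors, where $\on{AC}=\on{AD}=\on{AH}$ by (\ref{hodge:cor:abvarcrys}) and (\ref{hodge:thm:blasius}), plus the Tannakian manipulations) that the paper elides. No gaps.
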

\qed

\section{Moduli of K3 surfaces}\label{sec:k3}

Our main references for this section are \cites{rizov:moduli,rizov:cm,maulik,ogus:ss}.

\subsection{}
A \defnword{K3 surface} over a scheme $S$ is an algebraic space $f:X\to S$ over $S$ that is proper, smooth and whose geometric fibers are K3 surfaces. A \defnword{polarization} (resp. a \defnword{quasi-polarization}) of a K3 surface $X\to S$ is a section $\xi\in\underline{\Pic}(X/S)(S)$ whose fiber at each geometric point $s\to S$ is a polarization (resp. a quasi-polarization); that is, the class of an ample (resp. big and nef\footnote{`big' equals being the tensor product of an ample line bundle with an effective one.}) line bundle, of the K3 surface $X_s$ over $k(s)$. There is an intersection pairing on $\underline{\Pic}(X/S)$ with values in the locally constant sheaf $\underline{\Int}$; the \defnword{degree} $\deg(\xi)\in H^0(S,\underline{\Int})$ of a (quasi-)polarization $\xi$ is the value of its pairing with itself. The restriction of $\deg(\xi)$ to any connected component of $S$ is a non-zero positive integer. A section $\xi$ of $\underline{\Pic}(X/S)$ is \defnword{primitive} if, for all geometric points $s\to S$, $\xi(s)$ is primitive; that is, $\xi(s)$ is not a non-trivial multiple of any element of $\Pic(X_s)$.

Fix an integer $d\in\Int_{>0}$, and let $\mathsf{M}_{2d}$ (resp. $\mathsf{M}^{\circ}_{2d}$) be the moduli problem over $\Int\left[\frac{1}{2}\right]$ that assigns to every $\Int\left[\frac{1}{2}\right]$-scheme $S$ the groupoid of tuples $(f:X\to S,\xi)$, where $X\to S$ is a K3 surface and $\xi$ is a primitive quasi-polarization (resp. polarization) of $X$ with $\deg(\xi)=2d$.
\begin{prp}\label{k3:prp:representability}
The natural map $\mathsf{M}^{\circ}_{2d}\to\mathsf{M}_{2d}$ is an open immersion of Deligne-Mumford stacks of finite type over $\Int$, fiber-by-fiber dense. Moreover, $\mathsf{M}^{\circ}_{2d}$ is separated.
\end{prp}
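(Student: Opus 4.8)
The plan is to prove the three assertions in (\ref{k3:prp:representability}) more or less in the order they are stated: representability of $\mathsf{M}_{2d}$ by a Deligne--Mumford stack of finite type over $\Int$, openness (and fiber-by-fiber density) of the sublocus $\mathsf{M}^{\circ}_{2d}$, and separatedness of $\mathsf{M}^{\circ}_{2d}$.

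For representability, I would first recall that K3 surfaces, being smooth proper surfaces with trivial canonical bundle and $h^1=0$, have unobstructed--up to the usual $H^2(X,T_X)$--deformation theory; more precisely, the key input is that $H^0(X,T_X)=0$, so that a polarized (or quasi-polarized) K3 surface has no infinitesimal automorphisms, and that the Hilbert polynomial of $(X,\xi^{\otimes n})$ for $n\gg 0$ is fixed once $\deg(\xi)=2d$ is fixed. Concretely, for $n\geq 3$ the line bundle $\xi^{\otimes n}$ associated to a quasi-polarization is globally generated with vanishing higher cohomology (this is a standard consequence of Saint-Donat-type results, valid in all characteristics), so one gets a map to a fixed projective space $\bb{P}^N$ with $N$ depending only on $d$ and $n$; passing to a suitable Hilbert scheme of such embedded surfaces, taking the locus where the surface is a K3 and $\Rg_{\bb{P}^N}(1)$ restricts to $\xi^{\otimes n}$, and quotienting by $\PGL_{N+1}$ realizes $\mathsf{M}_{2d}$ as a quotient stack. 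Since automorphisms act with finite (indeed trivial infinitesimal) stabilizers, this quotient is a separated Deligne--Mumford stack; finite type over $\Int[\tfrac12]$ follows since the Hilbert scheme is. I would be slightly careful to work with algebraic spaces rather than schemes for $X\to S$, as the definition permits, but this does not affect the argument. Alternatively one can cite \cite{rizov:moduli} directly for this.

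For the second assertion, openness of $\mathsf{M}^{\circ}_{2d}\to\mathsf{M}_{2d}$: ampleness is an open condition in families over a locally noetherian base (a big and nef class on the fibers is ample on an open subset of the base), so the locus of $S$-points where the quasi-polarization is fiberwise ample is open; this gives the open immersion. Fiber-by-fiber density amounts to the statement that on each geometric fiber, every quasi-polarized K3 can be deformed (within the fixed degree) to a polarized one -- equivalently, that the ample cone is dense in the nef cone in an appropriate moduli-theoretic sense; this follows from the surjectivity/local structure of the period map and the fact that $(-2)$-curves causing non-ampleness can be removed under small deformation, a standard fact about K3 surfaces that I would cite from \cite{ogus:ss} or the references in the section. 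Separatedness of $\mathsf{M}^{\circ}_{2d}$ is the valuative criterion: given a DVR $R$ with fraction field $K$ and two polarized K3 surfaces over $R$ agreeing over $K$, one must show they agree over $R$. This is where uniqueness of the \emph{ample} model enters decisively -- a polarized K3 over a DVR has a canonical (minimal, or rather the relative Proj of the section ring) model, and two such models with isomorphic generic fibers are isomorphic -- so the ample hypothesis is exactly what rescues separatedness, which would fail for $\mathsf{M}_{2d}$ itself.

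The main obstacle I expect is the separatedness step: one needs a genuinely global argument (uniqueness of models over a DVR) rather than a formal deformation-theoretic one, and it relies on the minimal model theory / Matsusaka--Mumford type rigidity for polarized K3 surfaces, together with some care about potential bad reduction and the distinction between algebraic spaces and schemes. The representability is essentially bookkeeping with Hilbert schemes once the cohomological vanishing for $\xi^{\otimes n}$ is in hand, and the openness is soft; but pinning down the valuative criterion cleanly -- and making sure the comparison isomorphism of the two families over $R$ really does extend -- is the delicate point. In practice I would lean on \cite{rizov:moduli} and \cite{maulik}*{\S 2} for the details rather than reproving everything, since this proposition is preparatory material.
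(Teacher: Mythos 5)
Your overall division of labor matches the paper's: representability, openness, and separatedness are all delegated to \cite{rizov:moduli}*{4.3.3} and \cite{maulik}*{Proposition 2.1}, and your Hilbert-scheme sketch and Matsusaka--Mumford-style valuative criterion are indeed what those references do. The one assertion the paper actually proves from scratch is the fiber-by-fiber density, and that is where your proposal has a genuine gap. You justify density by appealing to ``the surjectivity/local structure of the period map,'' but the statement is fiber-by-fiber over $\Int\left[\frac{1}{2}\right]$, so it must be established over $\overline{\Field}_p$ for every odd $p$ --- and no period map or Torelli theorem is available there (indeed, surjectivity of the period map in characteristic $p$ is flagged as open in the introduction of this very paper, and constructing the characteristic-$p$ period map is the paper's main technical goal). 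The second half of your sentence, that $(-2)$-curves obstructing ampleness ``can be removed under small deformation,'' is the right idea but is precisely the thing that needs an argument; it is not something you can simply cite from \cite{ogus:ss}.

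The paper's actual argument is purely deformation-theoretic and characteristic-free: for a quasi-polarized $(X_0,\xi_0)$ over a field, $3D_0$ gives a base-point-free map to projective space contracting finitely many $(-2)$-curves, and these are the only obstruction to ampleness of the big and nef class. By Riemann--Roch, deforming a $(-2)$-curve is equivalent to deforming its divisor class, and \cite{lieblich_olsson}*{Theorem A.7} shows the deformation locus of such a class has codimension exactly $1$ in the versal deformation space of $(X_0,\xi_0)$. Hence the non-polarized locus is a union of codimension-$1$ subspaces and the polarized locus is dense in each fiber. You should replace the period-map appeal with this argument (or an equivalent one); the key quantitative input you are missing is that the locus where a given $(-2)$-class survives is cut out by a single nontrivial equation, i.e.\ is proper of codimension $1$, not merely ``removable.''
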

\begin{proof}
  Everything except the fiber-by-fiber density of the image of the map can be found in \cite{rizov:moduli}*{4.3.3} and \cite{maulik}*{Proposition 2.1}. Showing the claimed density amounts to seeing that any quasi-polarized K3 surface $(X_0,\xi_0)$ over a field $k$ admits a deformation $(X,\xi)$ such that $\xi$ is an ample class. Indeed, let $D_0$ be a divisor on $X_0$ with class $\xi_0$. Then $3D_0$ determines a base-point free map $X_0\to\bb{P}^N$ whose image is a surface with isolated ordinary double-point singularities. The pre-images of the singularities are $(-2)$-rational curves on $X_0$. If a deformation $(X,\xi)$ of $(X_0,\xi_0)$ is not polarized, then one of these $(-2)$-curves must also permit a deformation to $X$. It is easy to check using the Riemann-Roch formula that deforming a $(-2)$-curve on a K3 surface is equivalent to deforming its divisor class, and so \cite{lieblich_olsson}*{Theorem A.7} shows that the deformation locus of a $(-2)$-curve in the versal deformation space of $(X_0,\xi_0)$ has co-dimension $1$. This implies in turn that the locus where the versal deformation is not polarized is a union of co-dimension $1$ sub-spaces, and so finishes the proof of the proposition. Notice that the proof shows that the complement of $\mathsf{M}^{\circ}_{2d}$ in $\mathsf{M}_{2d}$ is flat over $\Int\left[\frac{1}{2}\right]$ and has pure co-dimension $1$.
\end{proof}

\subsection{}\label{k3:subsec:elladic}
Let $(\bm{f}:\bm{\mathcal{X}}\to\mathsf{M}_{2d},\bm{\xi})$ be the universal object over $\mathsf{M}_{2d}$. For any prime $\ell$, the second relative \'etale cohomology $\bm{H}^2_{\ell}$ of $\bm{\mathcal{X}}$ over $\mathsf{M}_{2d,\Int\left[\frac{1}{2\ell}\right]}$ with coefficients in $\underline{\Int}_{\ell}$ is a lisse $\Int_{\ell}$-sheaf of rank $22$ equipped with a perfect, symmetric Poincar\'e pairing
\[
 \langle\_,\_\rangle: \bm{H}^2_{\ell}\times\bm{H}^2_{\ell}\rightarrow\underline{\Int}_{\ell}(-2).
\]
We will actually be equipping $\bm{H}^2_{\ell}$ with the \emph{negative} of the conventional pairing. In characteristic $0$, this means that we are viewing the Betti cohomology groups of K3 surfaces as being quadratic spaces of signature $(19+,3-)$.

The $\ell$-adic Chern class $\on{ch}_{\ell}(\bm{\xi})$ of $\bm{\xi}$ is a global section of the Tate twist $\bm{H}^2_{\ell}(1)$ that satisfies $\langle \on{ch}_{\ell}(\xi),\on{ch}_{\ell}(\xi)\rangle=-2d$. We set
\[
 \bm{P}^2_{\ell}=\langle \on{ch}_{\ell}(\bm{\xi})\rangle^{\perp}(-1)\subset\bm{H}^2_{\ell}.
\]
This is a lisse $\Int_{\ell}$-sheaf over $\mathsf{M}_{2d,\left[\frac{1}{2\ell}\right]}$ of rank $21$ and it inherits a symmetric $\Int_{\ell}(-2)$-valued pairing $\langle\_,\_\rangle$, which is perfect if $\ell\nmid d$.

\subsection{}
There is also the second relative de Rham cohomology $\bm{H}^2_{\dR}$ of $\bm{\mathcal{X}}$ over $\mathsf{M}_{2d}$. This is a vector bundle with flat connection of rank $22$ equipped with a Hodge filtration $F^\bullet\bm{H}^2_{\dR}$ satisfying Griffiths transversality.  It is also equipped with a perfect, horizontal, symmetric pairing $\langle\_,\_\rangle$ into $\Reg{\mathsf{M}_{2d}}$. The filtration then is of the form
\[
 0=F^3\bm{H}^2_{\dR}\subset F^2\bm{H}^2_{\dR}\subset F^1\bm{H}^2_{\dR}=\bigl(F^2\bm{H}^2_{\dR}\bigr)^{\perp}\subset F^{0}\bm{H}^2_{\dR}=\bm{H}^2_{\dR},
\]
determined by the isotropic line $F^2\bm{H}^2_{\dR}$. The de Rham Chern class $\on{ch}_{\dR}(\bm{\xi})$ attached to $\bm{\xi}$ is a horizontal global section of $F^1\bm{H}^2_{\dR}$ satisfying $\langle \on{ch}_{\dR}(\bm{\xi}),\on{ch}_{\dR}(\bm{\xi})\rangle=-2d$. Again, we set
\[
 \bm{P}^2_{\dR}=\langle \on{ch}_{\dR}(\bm{\xi})\rangle^{\perp}\subset\bm{H}^2_{\dR}.
\]
This is a vector sub-bundle of $\bm{H}^2_{\dR}$ of rank $21$, and it inherits the connection, the filtration and the symmetric pairing from $\bm{H}^2_{\dR}$.

For any prime $p$, over $\mathsf{M}_{2d,\Field_p}$, the induced vector bundle $\bm{H}^2_{\dR,\Field_p}$ is equipped with an decreasing, horizontal filtration $F_{\on{con}}^\bullet\bm{H}^2_{\dR,\Field_p}$ called the \defnword{conjugate filtration} (cf.~\cite{ogus:ss}*{\S 1} for this and the rest of the discussion in this paragraph). Suppose that $k$ is an algebraically closed field over $\Field_p$ and we have a map $s:\Spec k\to\mathsf{M}_{2d}$. We say that $s$ is \defnword{superspecial} if the fiber of $\on{ch}_{\dR}(\bm{\xi})$ in $F^1\bm{H}^2_{\dR,s}$ lies in $F^2\bm{H}^2_{\dR,s}$. In this case, we have
\[
  F^2\bm{H}^2_{\dR,s}=F^2_{\on{con}}\bm{H}^2_{\dR,s}.
\]
We say that $s$ is \defnword{ordinary} if $\bm{X}_s$ is ordinary; that is, if $F^2\bm{H}^2_{\dR,s}\cap F^2_{\on{con}}\bm{H}^2_{\dR,s}=0$.

\comment{
\subsection{}\label{ks:subsec:filtisoc}
Let $k$ be a perfect field of characteristic $p$, let $W=W(k)$, and let $R$ be a formally smooth complete local $W$-algebra equipped with an augmentation map ${j}:R\to W$. We can arrange an identification $R=W\pow{t_1,\ldots,t_d}$ in such a way that ${j}$ is simply the map carrying each formal variable $t_i$ to $0$. Equip $R$ with a Frobenius lift $\varphi:R\to R$ with $\varphi\vert_W=\sigma$, and $\varphi(t_i)=t_i^p$, for $i=1,\ldots,d$. A \defnword{filtered $F$-crystal} over $R$ is a tuple $(M,F,\Fil^\bullet M)$, where:
\begin{itemize}
  \item $M$ is a free $R$-module.
  \item $F:\varphi^*M_{\Rat}\xrightarrow{\simeq}M_{\Rat}$ is an isomorphism of $R_{\Rat}$-modules.
  \item $\Fil^\bullet M$ is a decreasing, exhaustive, separated filtration of $M$ by direct summands.
\end{itemize}
If $(M,F,\Fil^\bullet M)$ is a filtered $F$-crystal over $R$, then we will call $M_{\Rat}$ a \defnword{filtered $F$-isocrystal} over $R_{\Rat}$.

The filtered $F$-crystal is \defnword{strongly divisible} if:
  \[
   F\biggl(\varphi^*\bigl(\sum_ip^{-i}\Fil^iM\bigr)\biggr)=M.
  \]
 \begin{lem}\label{ks:lem:strongdiv}
Let $\underline{M}=(M,F,\Fil^\bullet M)$ be a filtered $F$-crystal over $R$. For each integer $i$, set
\[
 M^i=\{m\in M:\; F(m)\in p^iM\}.
\]
Then the following statements are equivalent:
\begin{enumerate}
  \item $\underline{M}$ is strongly divisible.
  \item For each $i$, $M^i=\sum_{j\leq i}p^{i-j}\Fil^jM$.
  \item For each $i$, $\Fil^iM\subset M^i\subset \Fil^iM+pM$.
\end{enumerate}
\end{lem}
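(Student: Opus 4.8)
The plan is to fix an $R$-basis of $M$ adapted to $\Fil^{\bullet}M$ and reduce all three conditions to a single matrix statement. Since $M$ is $R$-free and $\Fil^{\bullet}M$ is a separated, exhaustive filtration by direct summands (so its graded pieces are free), I would choose an $R$-basis $e_1,\dots,e_n$ of $M$ together with integers $r_1\ge\cdots\ge r_n$ such that $e_j\in\Fil^{r_j}M\setminus\Fil^{r_j+1}M$, so that $\Fil^iM=\bigoplus_{r_j\ge i}Re_j$ for all $i$. A direct computation then gives $\sum_i p^{-i}\Fil^iM=\bigoplus_j p^{-r_j}Re_j$ and
\[
N^i:=\textstyle\sum_{j\le i}p^{i-j}\Fil^jM=\bigoplus_k p^{\max(i-r_k,0)}Re_k,
\]
so that $\Fil^iM\subseteq N^i\subseteq\Fil^iM+pM$ for every $i$. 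Writing $\Phi\colon M_{\Rat}\to M_{\Rat}$ for the $\varphi$-semilinear map underlying $F$ (so "$F(m)$" in the definition of $M^i$ means $\Phi(m)=F(1\otimes m)$) and $f_j:=p^{-r_j}\Phi(e_j)$, the hypothesis that $F$ is an isomorphism makes $\{f_j\}$ an $R_{\Rat}$-basis of $M_{\Rat}$, and — since $\varphi$ fixes $p$ — the module $F\bigl(\varphi^*\sum_i p^{-i}\Fil^iM\bigr)$ is exactly the $R$-span of $\{f_1,\dots,f_n\}$. Hence condition (1) is equivalent to: $\{f_1,\dots,f_n\}$ is an $R$-basis of $M$. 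I would also record two elementary facts about $\varphi$: that $\varphi(x)\in p^sR\iff x\in p^sR$ for $x\in R$, $s\in\Int$ (reduce mod $p$, where $\varphi$ is the injective map $t_i\mapsto t_i^p$ of $k\pow{t_1,\dots,t_d}$, then iterate), and that this reduction realizes $k\pow{t_1,\dots,t_d}$ as a finite free — so faithfully flat — module over its image $k\pow{t_1^p,\dots,t_d^p}$.

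With this dictionary, (1)$\Rightarrow$(2) is immediate: $\Phi(\sum b_je_j)=\sum_j\varphi(b_j)p^{r_j}f_j$, so when $\{f_j\}$ is an $R$-basis, $\Phi(m)\in p^iM\iff b_j\in p^{\max(i-r_j,0)}R$ for all $j$, i.e.\ $M^i=N^i$; and (2)$\Rightarrow$(3) follows from $\Fil^iM\subseteq N^i\subseteq\Fil^iM+pM$. For (3)$\Rightarrow$(2) I would argue by downward induction: given $m\in M^i$, write $m=x+py$ with $x\in\Fil^iM$; by the left half of (3), $x\in M^i$, so $p\Phi(y)=\Phi(m)-\Phi(x)\in p^iM$, whence $\Phi(y)\in p^{i-1}M$ and $m\in\Fil^iM+pM^{i-1}$; iterating down to the range $j\le\min_k r_k$, where the left half of (3) forces $\Phi(M)\subseteq p^{\min_k r_k}M$ and hence $M^j=M$, gives $M^i\subseteq N^i$, while $N^i\subseteq M^i$ follows from $\Fil^iM+pM^{i-1}\subseteq M^i$. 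So (2)$\Leftrightarrow$(3) is entirely formal.

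The substance lies in (3)$\Rightarrow$(1) (equivalently (2)$\Rightarrow$(1)). The left half of (3) at $i=r_j$ gives $e_j\in M^{r_j}$, i.e.\ $\Phi(e_j)\in p^{r_j}M$, so $f_j\in M$; thus $M_0:=\bigoplus Rf_j\subseteq M$ and, $\{f_j\}$ being an $R_{\Rat}$-basis, $M/M_0$ is finite torsion. By Nakayama it suffices to see that $\bar f_1,\dots,\bar f_n$ are linearly independent over $R/p=k\pow{t_1,\dots,t_d}$. Assuming not, the goal is to produce a vector $m\in M^{I+1}\setminus(\Fil^{I+1}M+pM)$ for a suitable $I$, contradicting the right half of (3). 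Here the $\varphi$-semilinearity intervenes: $\Phi$ only "sees" relations among the $\bar f_j$ whose coefficients lie in the subring $k\pow{t_1^p,\dots,t_d^p}$, so one first replaces a nonzero relation $\sum_j\bar\lambda_j\bar f_j=0$ by one $\sum_j\bar\nu_j\bar f_j=0$ with $\bar\nu_j\in k\pow{t_1^p,\dots,t_d^p}$ (this is where faithful flatness of $k\pow{t_1,\dots,t_d}$ over $k\pow{t_1^p,\dots,t_d^p}$ is used), writes $\bar\nu_j=\overline{\varphi(\mu_j)}$, picks $j_0$ with $r_{j_0}$ maximal among $j$ with $\mu_j\notin pR$, sets $I:=r_{j_0}$ and $m:=\sum_j p^{\max(I-r_j,0)}\mu_j e_j$; one then checks $\Phi(m)\in p^{I+1}M$ — the $r_j>I$ terms land there because for such $j$ maximality forces $\mu_j\in pR$, while the $r_j\le I$ terms assemble to $p^I$ times an element of $pM$ by the relation — whereas the $e_{j_0}$-component of $m$ is $\mu_{j_0}\notin pR$, so $m\notin\Fil^{I+1}M+pM$. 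I expect this last step — realizing a mod-$p$ relation by an explicit witness while keeping the Frobenius-semilinearity under control — to be the main obstacle; a possible alternative is to reduce, via Nakayama and $p$-adic completeness, to the case $R=W$, where $\varphi$ mod $p$ is an automorphism and the semilinear and linearized notions of injectivity coincide, but then one must separately verify that condition (3) descends along the augmentation $R\to W$.
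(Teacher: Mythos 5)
Your arguments for (1)$\Rightarrow$(2) and for the formal equivalence (2)$\Leftrightarrow$(3) are correct, and are already more detailed than the paper's proof, which declares (1)$\Leftrightarrow$(2) immediate and cites \cite{ogus:transversal} for (2)$\Leftrightarrow$(3). The gap sits exactly where you suspected, in (3)$\Rightarrow$(1): the claim that a nonzero relation $\sum_j\bar{\lambda}_j\bar{f}_j=0$ over $\bar{R}=k\pow{t_1,\dots,t_d}$ can be traded for one with coefficients in $S=k\pow{t_1^p,\dots,t_d^p}$. Faithful flatness does not deliver this: it identifies $\ker(g)\otimes_S\bar{R}$ with $\ker(g\otimes_S\bar{R})$ for the $S$-linear map $g\colon S^n\to\bar{M}$, $(\nu_j)\mapsto\sum_j\nu_j\bar{f}_j$, whereas the relations you need form the kernel of the composite $\bar{R}^n\to\bar{M}\otimes_S\bar{R}\to\bar{M}$, and the second (multiplication) map is far from injective. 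Concretely, for $d=1$, $n=2$, $\bar{f}_1=(1,t)$, $\bar{f}_2=(t,t^2)$, the relation module over $\bar{R}$ is $\bar{R}\cdot(t,-1)$, which meets $S^2$ only in $0$.

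Moreover this is not a repairable local difficulty: the implication (3)$\Rightarrow$(1) fails for abstract filtered $F$-crystals as defined in the paper once $d\geq 1$. Take $n=2$, $\Fil^1M=Re_1$, $\Fil^2M=0$, and
\[
\Phi(e_1)=p(e_1+te_2),\qquad \Phi(e_2)=te_1+(t^2+p)e_2,
\]
whose linearization has determinant $p^2$, a unit in $R_{\Rat}$. Here $f_1=e_1+te_2$ and $f_2=te_1+(t^2+p)e_2$ have determinant $p$ in the basis $(e_1,e_2)$, so (1) fails; but $M^1=Re_1\oplus pRe_2=\Fil^1M+pM$, and $M^2=pRe_1\oplus p^2Re_2\subseteq pM$ (with $M^i\subseteq M^2$ for $i\geq 2$), because membership of $b_1e_1+pc_2e_2$ in $M^2$ forces $\bar{\varphi}(\bar{b}_1)+t\,\bar{\varphi}(\bar{c}_2)=0$ in $\bar{R}$, and $S\cap tS=0$ then kills both terms. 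So (2) and (3) hold while (1) fails. Your argument, and the statement, are fine when $R=W$: there $\bar{\varphi}$ is surjective, the problematic step is vacuous, and your witness $m$ genuinely contradicts the right half of (3). To go beyond $R=W$ one needs input not contained in the axioms as stated (in the paper's applications the $F$-crystal is horizontal for a connection, which controls it by its fibers at $W$-points). A smaller separate point: ``by Nakayama it suffices to see the $\bar{f}_j$ are linearly independent over $R/p$'' is not literally Nakayama, which only reduces you to spanning mod $p$; to pass from independence to spanning you must also invoke $p^NM\subseteq M_0$, which gives $\det A=up^s$ with $u\in R^\times$ because $R$ is a UFD in which $p$ is prime.
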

\begin{proof}
  The equivalence of (1) and (2) is immediate, and the equivalence of (2) and (3) is \cite{ogus:transversal}*{Remark 3.9}.
\end{proof}

\begin{prp}\label{ks:prp:strongdiv}
Let $X/W$ be a smooth proper scheme with fiber $X_0$ over $k$. Suppose that the Hodge spectral sequence for $X_0$ degenerates at $E_1$, and that $H^\bullet_{\dR}(X/W)$ is a free $W$-module. Then, for any $d\in\Int_{\geq 0}$ with $d<p$, the relative de Rham cohomology $H^d_{\dR}(X/W)$ is a strongly divisible filtered $F$-crystal over $W$.
\end{prp}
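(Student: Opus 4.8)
The plan is to upgrade the fiberwise hypotheses to statements holding over all of $R$, then install the Frobenius structure by means of crystalline cohomology, and finally deduce strong divisibility by reducing to the classical case over a perfect field.

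Set $M=H^d_{\dR}(X/R)$, which is free by hypothesis. Since the Hodge spectral sequence of each fiber $X_s$ degenerates at $E_1$ and $H^\bullet_{\dR}(X/R)$ is free, the Hodge numbers $\dim_{k(s)}H^b(X_s,\Omega^a_{X_s})$ are locally constant on $\Spec R_0$ (compare their sum over $a+b=n$ with $\rank_R H^n_{\dR}(X/R)$ and use upper semicontinuity); hence, by Grauert's theorem, the sheaves $R^bf_*\Omega^a_{X/R}$ are locally free, thus free over the local ring $R$, and the Hodge--de Rham spectral sequence of $X/R$ itself degenerates at $E_1$ (its $E_1$ page is free of the same total rank as $H^\bullet_{\dR}(X/R)$). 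Consequently $\Fil^\bullet M$ has free graded pieces $\gr^aM=R^{d-a}f_*\Omega^a_{X/R}$, so it is a filtration by direct summands, compatible with reduction modulo $p$ and with base change; in particular de Rham cohomology now commutes with arbitrary base change on $R$. Since $X/R$ is a smooth proper lift of $X_0/R_0$ and $R$ carries the standard divided power structure on $(p)$ together with the Frobenius lift $\varphi$ of (\ref{ks:subsec:filtisoc}), the crystalline--de Rham comparison identifies $M$ with the value on $R\to R_0$ of the crystal $H^d_{\cris}(X_0/R_0)$; functoriality with respect to $\varphi$ then produces an integrally defined map $F\colon\varphi^*M\to M$, which becomes an isomorphism after inverting $p$ (Poincar\'e duality in crystalline cohomology shows $F$ is an isogeny). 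Thus $(M,F,\Fil^\bullet M)$ is a filtered $F$-crystal over $R$.

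To prove strong divisibility, by (\ref{ks:lem:strongdiv}) it suffices to show $\Fil^iM\subseteq M^i\subseteq\Fil^iM+pM$ for every $i$ (nontrivial only for $0\le i\le d$), where $M^i=\{m\in M:F(m)\in p^iM\}$. The first inclusion says $F$ divides the Hodge filtration; I would check this locally on $X$, lifting the relative Frobenius of $X_0/R_0$ to a $\varphi$-semilinear $\tilde\varphi$ on an affine thickening, observing that $\tilde\varphi^*\Omega^a_{X/R}\subseteq p^a\Omega^a_{X/R}$ (since $\tilde\varphi^*dx=d(\tilde\varphi x)$ with $\tilde\varphi x\equiv x^p\bmod p$), so that $\tilde\varphi^*$ carries the na\"ive filtration $\sigma^{\ge i}\Omega^\bullet_{X/R}$ into $p^i\Omega^\bullet_{X/R}$, and passing to hypercohomology --- the bound $i\le d<p$ ensuring that the gluing of different local Frobenius lifts introduces no denominators. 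For the second inclusion, write $X_j=X\otimes_{R,j}W$ and $X_{j,0}$ for its special fiber over $k=W/pW$. By the first paragraph $M^i$ and $\Fil^iM+pM$ are finitely generated submodules of the free module $M$ whose formation commutes with $\_\otimes_{R,j}W$, so $C_i:=(M^i+\Fil^iM+pM)/(\Fil^iM+pM)$ is a finitely generated $R$-module with $C_i\otimes_{R,j}W=0$ by Mazur's theorem applied to $X_{j,0}/k$ (cf.\ \cite{ogus:transversal}; legitimate since $d<p$ and $H^\bullet_{\cris}(X_{j,0}/W)=H^\bullet_{\dR}(X_j/W)=H^\bullet_{\dR}(X/R)\otimes_{R,j}W$ is torsion-free); since $\ker j=(t_1,\dots,t_d)\subseteq\mx_R$, Nakayama forces $C_i=0$.

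I expect the main obstacle to be the claim in the last step that $M^i=F^{-1}(p^iM)$ commutes with base change along $j$: this is not formal and must be extracted from the freeness of $M$ and of $\gr^\bullet M$ secured above together with the inclusion $\Fil^iM\subseteq M^i$ just proved, the point being to rule out that passage to the special fiber spuriously enlarges $M^i$. An alternative that bypasses this bookkeeping is to invoke the relative form of Mazur's theorem directly, which is available from Ogus's analysis of Griffiths transversality in crystalline cohomology \cite{ogus:transversal}, and whose hypotheses --- $X/R$ smooth proper, fiberwise $E_1$-degeneration, $H^\bullet_{\dR}(X/R)$ free, and $d<p$ --- are exactly those verified above. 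In either approach, the role of $d<p$ is to keep the Hodge filtration short enough that Frobenius divides it strictly, which can fail once the length of the filtration reaches $p$.
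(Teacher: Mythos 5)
Your overall architecture coincides with the paper's: both arguments run through the criterion of (\ref{ks:lem:strongdiv}), i.e.\ the two inclusions $\Fil^iM\subset M^i\subset\Fil^iM+pM$, with one inclusion supplied by a Mazur-type theorem and the other by a Frobenius-divisibility estimate in which $d<p$ is the essential hypothesis. The paper disposes of $M^i\subset\Fil^iM+pM$ by citing \cite{berthelot_ogus}*{8.26}, and reduces $F(\Fil^iM)\subset p^iM$ to $W$-valued points of $R$, where it quotes \cite{laffaille}*{Prop.~5.2}; you instead attack $F(\Fil^iM)\subset p^iM$ by a direct local computation on $X$ and reduce the complementary inclusion to $W$-points.

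Two steps need attention. First, in your treatment of $\Fil^iM\subset M^i$, the parenthetical ``the gluing of different local Frobenius lifts introduces no denominators'' is not a remark but the entire content of the step: comparing $\tilde\varphi_U^*$ and $\tilde\varphi_V^*$ on the de Rham (equivalently \v{C}ech--Alexander) complex involves a homotopy whose terms carry divided powers of $\tilde\varphi_Ux-\tilde\varphi_Vx$, and verifying that it preserves the divisibility $p^i$ on $\sigma^{\geq i}\Omega^\bullet_{X/R}$ for $i\leq d<p$ is precisely the estimate of Fontaine--Messing/Kato/Laffaille that the paper imports as a black box; asserting it is not proving it. Second, and more seriously, your Nakayama argument for $M^i\subset\Fil^iM+pM$ fails as written: knowing that the image of $M^i$ in $M_j/(\Fil^iM_j+pM_j)$ vanishes (where $M_j=M\otimes_{R,j}W$) only gives $C_i\subset(\ker j)\cdot\bigl(M/(\Fil^iM+pM)\bigr)$, which is weaker than $C_i=(\ker j)C_i$; for a submodule of a free module these differ. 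For instance, with $R_0=k\pow{t}$ and $C=tR_0\subset R_0$, the image of $C$ in the fiber at $t=0$ is zero, yet $C/tC\neq0$ and $C\neq 0$. So $C_i\otimes_{R,j}W=0$ does not follow from Mazur's theorem at the point $j$ without the base-change statement for $M^i$ that you yourself identify as the obstacle. Your proposed repair --- invoking the relative form of Mazur's theorem from \cite{ogus:transversal} --- is the correct one, and is essentially what the paper's appeal to \cite{berthelot_ogus}*{8.26} together with (\ref{ks:lem:strongdiv}) accomplishes; but as the proposal stands, this half of the argument is not complete.
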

\begin{proof}
  Following (\ref{ks:lem:strongdiv}) and \cite{berthelot_ogus}*{8.26}, it is enough to show that, for each $i\in\Int_{\geq 0}$:
  \[
   F(\Fil^iH^d_{\dR}(X/W))\subset p^iH^d_{\dR}(X/W),
  \]
  where $\Fil^\bullet H^d_{\dR}(X/W)$ is the Hodge filtration. This follows from \cite{laffaille}*{Prop. 5.2}.
\end{proof}
\begin{corollary}\label{ks:cor:k3strongdiv}
If $(X,\xi)$ is a primitively quasi-polarized K3 surface over $R$, then the primitive de Rham cohomology
\[
PH^2_{\dR}(X/R)=\langle \on{ch}_{\dR}(\xi)\rangle^{\perp}\subset H^2_{\dR}(X/R)
\]
is a strongly divisible filtered $F$-crystal over $R$.
\end{corollary}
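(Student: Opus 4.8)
The plan is to deduce this from (\ref{ks:prp:strongdiv}) by checking that strong divisibility descends to the orthogonal direct summand $PH^2_{\dR}(X/R)=\langle\on{ch}_{\dR}(\xi)\rangle^{\perp}$ of $H^2_{\dR}(X/R)$. Since $p>2$, (\ref{ks:prp:strongdiv}) applies in cohomological degree $2$: the Hodge spectral sequence of a K3 surface degenerates at $E_1$ in every fibre and $H^{\bullet}_{\dR}(X/R)$ is free, so $M:=H^2_{\dR}(X/R)$ is a strongly divisible filtered $F$-crystal over $R$. Write $c=\on{ch}_{\dR}(\xi)$. We use three properties of $c$, all recorded when $\bm{P}^2_{\dR}$ was introduced: it is a horizontal section of $\Fil^1 M$ not lying in $\Fil^2 M$; the symmetric pairing on $M$ is perfect; and, being the class of a line bundle, $c$ spans an $F$-stable line on which $F$ acts by multiplication by $p$, i.e. $F(\varphi^* c)=pc$ in $M_{\Rat}$.

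From these, since $\langle c,c\rangle=-2d$ is invertible in $R_{\Rat}$, one gets an orthogonal decomposition of filtered $F$-isocrystals $M_{\Rat}=\langle c\rangle_{\Rat}\oplus N_{\Rat}$ with $N:=PH^2_{\dR}(X/R)$; here $\Fil^i N_{\Rat}=\Fil^i M_{\Rat}\cap N_{\Rat}$, and $\Fil^2 M\subset N$ because $c\in\Fil^1 M$. That $F$ carries $\varphi^* N_{\Rat}$ isomorphically onto $N_{\Rat}$ follows from $N_{\Rat}=\langle c\rangle_{\Rat}^{\perp}$ together with the compatibility of $F$ with the pairing (and the fact that $F$ is an isomorphism of $M_{\Rat}$ preserving $\langle c\rangle_{\Rat}$). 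As $N$ is in addition a vector sub-bundle of $M$ of rank $21$ onto which the Hodge filtration restricts by direct summands, $(N,F,\Fil^{\bullet}N)$ is a filtered $F$-crystal over $R$ with $\Fil^i N=\Fil^i M\cap N$, and only strong divisibility remains to be verified.

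For this I would use criterion (3) of (\ref{ks:lem:strongdiv}). Writing $M^i=\{m\in M:F(m)\in p^i M\}$ and $N^i=\{n\in N:F(n)\in p^i N\}$, one must show $\Fil^i N\subseteq N^i\subseteq\Fil^i N+pN$ for all $i$, knowing the analogous inclusions for $M$. Since $N$ and each $\Fil^i N=\Fil^i M\cap N$ are direct summands of $M$, one has $p^i M\cap N_{\Rat}=p^i N$; this yields $N^i=M^i\cap N$, and, using $\Fil^i N=\Fil^i M\cap N\subseteq M^i$ and $F$-stability of $N_{\Rat}$, the inclusion $\Fil^i N\subseteq M^i\cap N=N^i$. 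For the reverse inclusion $N^i=M^i\cap N\subseteq\Fil^i N+pN$, intersect $M^i\subseteq\Fil^i M+pM$ with $N$ and reduce modulo $p$: because $\Fil^i M\cap N$ is a sub-bundle of $M$, its formation commutes with $R\to R/pR$, whence $(\Fil^i M+pM)\cap N\subseteq\Fil^i N+pN$. This proves all the inclusions, so $(N,F,\Fil^{\bullet}N)$ is strongly divisible.

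The step I expect to demand the most care is exactly this last base-change compatibility of $\Fil^i M\cap N$ — equivalently, the fact (asserted when $\bm{P}^2_{\dR}$ was introduced) that the Hodge filtration restricts to $PH^2_{\dR}(X/R)$ by genuine sub-bundles. When $p\nmid d$ it is automatic, since then $M=\langle c\rangle\oplus N$ already integrally and the whole assertion reduces at once to (\ref{ks:prp:strongdiv}) for $M$ together with the trivial strong divisibility of the rank-one summand $\langle c\rangle$ (on which $\sum_i p^{-i}\Fil^i\langle c\rangle=p^{-1}\langle c\rangle$ and $F(\varphi^*(p^{-1}\langle c\rangle))=\langle c\rangle$). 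The case $p\mid d$ is precisely where the structural input on $PH^2_{\dR}$ recalled above has to be used.
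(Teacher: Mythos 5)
Your reduction is exactly the paper's (the paper disposes of this corollary with ``it is not hard to see''), and every step you write down is valid except the one you yourself flag: that $\Fil^iM\cap N$ is a sub-bundle of $M$ whose formation commutes with $R\to R/pR$. For $i=0,3$ this is trivial, and for $i=2$ it holds because $c:=\on{ch}_{\dR}(\xi)\in F^1M=(F^2M)^{\perp}$ forces $F^2M\subseteq N$, so $\Fil^2M\cap N=F^2M$. The only real issue is $i=1$, where $F^1M\cap N=(F^2M+\langle c\rangle)^{\perp}$. Here you propose to quote the assertion, made when $\bm{P}^2_{\dR}$ was introduced, that the Hodge filtration restricts to $PH^2_{\dR}$ by genuine sub-bundles. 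You cannot take that as given: at a superspecial point $s$ one has $\langle c_s\rangle=F^2M_s$, so $F^1M_s\cap N_s=F^1M_s$ has dimension $21$ while $F^1M\cap N$ has rank $20$; the intersection is then not a direct summand and does not commute with reduction mod $p$, and your inclusion $(\Fil^1M+pM)\cap N\subseteq\Fil^1N+pN$ is exactly what fails. So as written there is a gap precisely at the point you identified.

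The gap closes because the hypotheses of the corollary exclude superspecial fibres. Since $R$ is formally smooth over $W$ with an augmentation $j:R\to W$, the pair $(X,\xi)\otimes_{R,j}W$ is a lift of the closed fibre $(X_0,\xi_0)$ to $W$. If $(X_0,\xi_0)$ were superspecial, then over $W$ one could write $c=\lambda g+pb$ with $g$ a generator of $F^2$ and $b\in F^1$; the relation $\langle c,g\rangle=0$ (from $c\in F^1=(F^2)^{\perp}$) gives $\langle b,g\rangle=0$, hence $\langle c,c\rangle=p^2\langle b,b\rangle$ and $\nu_p(d)\geq 2$, contradicting (\ref{k3:thm:deformation})(\ref{deform:superspecial}), which shows superspecialness forces $\nu_p(\deg\xi_0)=1$. (Equivalently, the deformation ring $W\pow{t_i,u_i}/(\sum_it_iu_i-2d)$ of a superspecial pair has no $W$-points.) Thus the closed fibre is not superspecial; since $c_s\in F^2M_s$ is a closed condition on the local scheme $\Spec R$, no fibre is superspecial, so $F^2M\oplus\langle c\rangle$ is a rank-$2$ sub-bundle, its orthogonal complement $F^1M\cap N$ is a rank-$20$ sub-bundle whose formation commutes with arbitrary base change, and your argument is complete. (Your shortcut when $p\nmid d$, splitting $M=\langle c\rangle\oplus N$ integrally, is also correct.)
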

\begin{proof}
  We see from (\ref{ks:prp:strongdiv}) that $H^2_{\dR}(X/R)$ is strongly divisible. Since $\on{ch}_{\dR}(\xi)$ generates a direct summand of $F^1H^2_{\dR}(X/R)$ and satisfies $F(\on{ch}_{\dR}(\xi))=p\on{ch}_{\dR}(\xi)$, it is not hard to see that its orthogonal complement is also strongly divisible.
\end{proof}
}
We now recall some definitions and results from \cite{vasiu:zink}.
\begin{defn}\label{k3:defn:healthy}
A regular local $\Int_{(p)}$-algebra $R$ with maximal ideal $\mx$ is \defnword{quasi-healthy} if it is faithfully flat over $\Int_{(p)}$, and if every abelian scheme over $\Spec~R\backslash\{\mx\}$ extends uniquely to an abelian scheme over $\Spec~R$.

A regular $\Int_{(p)}$-scheme $X$ is \defnword{healthy} if it is faithfully flat over $\Int_{(p)}$, and if, for every open sub-scheme $U\subset X$ containing $X_{\Rat}$ and all generic points of $X_{\Field_p}$, every abelian scheme over $U$ extends uniquely to an abelian scheme over $X$. It is \defnword{locally healthy} if, for every point $x\in X_{\Field_p}$ of co-dimension at least $2$, the complete local ring $\widehat{\Rg}_{X,x}$ is quasi-healthy.
\end{defn}

\begin{rem}\label{k3:rem:healthy}
\begin{itemize}
\item Any regular, flat $\Int_{(p)}$-scheme of dimension at most $1$ is trivially healthy.

\item By faithfully flat descent, a regular local ring $R$ is quasi-healthy whenever its completion $\widehat{R}$ is quasi-healthy.

\item If $X$ is locally healthy, then it is healthy. Indeed, suppose that $U\subset X$ is as in the definition of `healthy' above; the complement $X\backslash U$ lies entirely in the special fiber and has co-dimension at least $2$ in $X$. The claim follows by using ascending Noetherian induction on the co-dimension of $X\backslash U$, and repeatedly using quasi-healthiness of the local rings of $X$.

    We do not know if the converse holds.
\end{itemize}
\end{rem}

\begin{thm}[(Vasiu-Zink)]\label{k3:thm:vasiuzink}
Let $R$ be a regular local, faithfully flat $\Int_{(p)}$-algebra of dimension at least $2$.
\begin{enumerate}
\item\label{vz:main}Suppose that there exists a faithfully flat complete local $R$-algebra $\hat{R}$ that admits a surjection $\hat{R}\twoheadrightarrow W\pow{T_1,T_2}/(p-h)$, where $h\in (T_1,T_2)W\pow{T_1,T_2}$ is a power series that does not belong to the ideal $(p,T_1^p,T_2^p,T_1^{p-1}T_2^{p-1})$. Then $R$ is quasi-healthy.
\item\label{vz:easy}Let $\mx_R\subset R$ be the maximal ideal and suppose that $p\notin\mx_R^p$. Then $R$ is quasi-healthy.
\item\label{vz:smooth}If $R$ is a formally smooth complete local $\Int_{(p)}$-algebra, then $R$ is quasi-healthy.
\end{enumerate}
\end{thm}
\begin{proof}
  Cf.~Theorem 3 and Corollary 4 of~\cite{vasiu:zink}.
\end{proof}

We can encapsulate the deformation theory of K3 surfaces in the following
\begin{thm}\label{k3:thm:deformation}
Let $X_0$ be a K3 surface over a perfect field $k$ of characteristic $p>0$. Then:
\begin{enumerate}
  \item\label{deform:smooth}The deformation functor $\Def_{X_0}$ for $X_0$ is pro-representable and formally smooth of dimension $20$ over $W(k)$.
  \item\label{deform:xi0}For any class $\xi_0\in\Pic(X_0)$, the deformation functor $\Def_{(X_0,\xi_0)}$ for the pair $(X_0,\xi_0)$ is pro-represented by a flat, formal sub-scheme of $\Def_{X_0}$ defined by a single equation.
  \item\label{deform:primitive}If $\xi_0$ is primitive, then $\on{ch}_{\dR}(\xi_0)\neq 0$, and $\Def_{(X_0,\xi_0)}$ is formally smooth, unless $\on{ch}_{\dR}(\xi_0)$ lies in $F^2H^2_{\dR}(X_0/k)$. In particular, $\Def_{(X_0,\xi_0)}$ is formally smooth whenever $X_0$ is ordinary.
  \item\label{deform:superspecial}If $\xi_0$ is primitive and $\on{ch}_{\dR}(\xi_0)$ lies in $F^2H^2_{\dR}(X_0/k)$, then $\nu_p(\deg(\xi_0))=1$, and $\Def_{(X_0,\xi_0)}$ is quasi-healthy regular.
\end{enumerate}
\end{thm}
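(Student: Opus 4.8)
The plan is to analyze the deformation theory of K3 surfaces via the de Rham realization, following the standard principle that unobstructedness and the structure of the polarization locus can be read off from the variation of the Hodge filtration on $H^2_{\dR}$. For (\ref{deform:smooth}), I would invoke the classical result that K3 surfaces have unobstructed deformations: since $H^0(X_0,T_{X_0})=H^2(X_0,T_{X_0})=0$ (the first because $X_0$ has trivial canonical bundle and no global vector fields, the second by Serre duality against $H^0(X_0,\Omega^1_{X_0})=0$), the deformation functor is pro-representable and smooth, with tangent space $H^1(X_0,T_{X_0})\cong H^1(X_0,\Omega^1_{X_0})$, which is $20$-dimensional. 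This gives $\Def_{X_0}\cong\Spf W(k)\pow{t_1,\ldots,t_{20}}$.

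For (\ref{deform:xi0}), the idea is that deforming a line bundle class $\xi_0$ imposes exactly one condition: by the Kodaira--Spencer/Griffiths transversality description, the universal first Chern class lives in $F^1\bm{H}^2_{\dR}$ over $\Def_{X_0}$, and the condition that $\xi_0$ deform to order $n$ is that this class stay in $F^1$, which (since $\mathrm{gr}^0_F H^2_{\dR}$ is a line) is cut out by a single power series. More precisely, $\Def_{(X_0,\xi_0)}$ is the vanishing locus of the composite of $\mathrm{ch}_{\dR}$ of the flat extension of $\xi_0$ with the projection $F^0\bm{H}^2_{\dR}\to\mathrm{gr}^0_F\bm{H}^2_{\dR}$; since the target is an invertible sheaf, this is one equation, and flatness over $W(k)$ follows because the equation is not a zero-divisor (it is nonzero already on the generic fiber, or one checks it is not divisible by $p$ — this is where $\mathrm{ch}_{\dR}(\xi_0)\ne 0$ enters). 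That $\mathrm{ch}_{\dR}(\xi_0)\ne 0$ for primitive $\xi_0$ follows because the mod-$p$ reduction of the crystalline Chern class of a primitive class is nonzero (the primitivity of $\xi_0$ prevents it from dying under $H^2_{\cris}\to H^2_{\dR}$), and I would cite the K3 crystalline theory (Ogus) for this.

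For (\ref{deform:primitive}) and (\ref{deform:superspecial}), everything comes down to whether the single equation from (\ref{deform:xi0}) has smooth zero locus, equivalently whether its linear term is nonzero. The linear term is governed by the Kodaira--Spencer map: it is nonzero precisely when the image of $\mathrm{ch}_{\dR}(\xi_0)$ in $\mathrm{gr}^1_F H^2_{\dR}(X_0/k) = F^1/F^2$ is nonzero, i.e. when $\mathrm{ch}_{\dR}(\xi_0)\notin F^2 H^2_{\dR}(X_0/k)$. This gives the dichotomy. In the non-smooth case, $\mathrm{ch}_{\dR}(\xi_0)$ spans the isotropic line $F^2 H^2_{\dR}(X_0/k)$ itself; since $\langle\mathrm{ch}_{\dR}(\xi_0),\mathrm{ch}_{\dR}(\xi_0)\rangle = -\deg(\xi_0) = -2d$ and the crystalline lattice pairing is perfect away from $d$, while $F^2$ being isotropic forces the mod-$p$ reduction of this self-pairing to vanish, one gets $p\mid 2d$, hence $p\mid d$; refining this to $\nu_p(\deg(\xi_0))=1$ requires showing the pairing value is divisible by $p$ but not $p^2$, which I would extract from the strongly-divisible structure on $PH^2_{\dR}$ from Corollary \ref{ks:cor:k3strongdiv} together with the fact that $F^2$ is a \emph{line} (so the associated quadratic form on $H^2_{\cris}/W$ has a one-dimensional radical mod $p$, not two-dimensional). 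Finally, to see $\Def_{(X_0,\xi_0)}$ is quasi-healthy regular: it is cut out in the $20$-dimensional smooth $W(k)$-algebra $\Def_{X_0}$ by one equation $h$, so it is a regular local ring of dimension $20$ (hence $\geq 2$) flat over $\Int_{(p)}$, and by Theorem \ref{k3:thm:vasiuzink} it suffices to produce a surjection onto $W(k)\pow{T_1,T_2}/(p-h')$ with $h'\notin(p,T_1^p,T_2^p,T_1^{p-1}T_2^{p-1})$; I would obtain this by a careful choice of coordinates on $\Def_{X_0}$, using that the equation $h$ has the form $p\cdot(\text{unit}) + (\text{higher order in the }t_i)$ — the $p$-term being present because $\mathrm{ch}_{\dR}(\xi_0)$ lies in $F^2$ forces the constant-in-$t$ part of the defining equation to be $p$ times a unit — and then cutting down to two of the deformation variables.

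The main obstacle I expect is the precise valuation statement $\nu_p(\deg(\xi_0))=1$ and the accompanying verification of the hypothesis $h'\notin(p,T_1^p,T_2^p,T_1^{p-1}T_2^{p-1})$ in Theorem \ref{k3:thm:vasiuzink}: both require genuinely understanding the shape of the single defining equation in terms of the crystalline/strongly-divisible data, rather than just its linear term, and this is where one must combine the perfectness of the primitive pairing (controlled by $\nu_p(d)$), the strong divisibility from Corollary \ref{ks:cor:k3strongdiv}, and an explicit local computation in the Kuga--Satake or Dieudonn\'e-theoretic deformation space. The other parts are essentially bookkeeping with the Hodge filtration and Griffiths transversality.
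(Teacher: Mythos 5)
Your overall route coincides with the paper's: parts (\ref{deform:smooth})--(\ref{deform:primitive}) are there quoted from Deligne and Ogus, and part (\ref{deform:superspecial}) is a strong-divisibility computation fed into the Vasiu--Zink criterion (\ref{k3:thm:vasiuzink}). But two steps in your treatment of (\ref{deform:superspecial}) have genuine gaps. First, the valuation statement $\nu_p(\deg(\xi_0))=1$: your ``one-dimensional radical'' remark is not an argument. The paper's proof chooses a lift $X/W$, writes $f=\on{ch}_{\cris}(\xi_0)=f_1+pf_2$ with $f_1$ a generator of $F^2H^2_{\dR}(X/W)$, and combines the eigenvalue relation $F(f)=pf$ (Frobenius multiplies crystalline Chern classes of line bundles by $p$) with $F(F^2H)\subset p^2H$ to get $F(f_2)\notin pH$; strong divisibility then forces $f_2\notin pH+F^1H$, so $f_1\cdot f_2$ is a unit and $f\cdot f=2p(f_1\cdot f_2)+p^2(f_2\cdot f_2)$ has valuation exactly $1$. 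The relation $F(f)=pf$ is the indispensable input you never invoke; isotropy of $F^2$ together with strong divisibility alone does not exclude $\nu_p(\deg(\xi_0))\geq 2$.

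Second, your verification of the Vasiu--Zink hypothesis is under-determined: knowing only that $h=p\cdot(\text{unit})+(\text{higher order in the }t_i)$ does not suffice, since after restricting to two variables the higher-order part could land in $(p,T_1^p,T_2^p,T_1^{p-1}T_2^{p-1})$ (for instance a cube when $p=3$, or a degenerate quadratic part). What is needed, and what the paper takes from Ogus, is the explicit presentation $W\pow{t_1,\ldots,t_{10},u_1,\ldots,u_{10}}/(\sum_it_iu_i-\deg(\xi_0))$ of the deformation ring: the quadratic part is hyperbolic, so setting $t_i=u_i=0$ for $i\geq 2$ yields $p=c^{-1}T_1T_2$ with $c$ a unit, which visibly satisfies the criterion for $p>2$. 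Relatedly, ``cut out by one equation in a regular ring, hence regular'' is false in general; regularity here rests on the constant term of $h$ being $p$ times a unit, i.e.\ precisely on the valuation claim above, so your argument is circular at that point. A smaller issue: in (\ref{deform:xi0}) flatness for an arbitrary nonzero $\xi_0$ must be run with the crystalline Chern class, since $\on{ch}_{\dR}(\xi_0)$ vanishes whenever $\xi_0$ is divisible by $p$.
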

\begin{proof}
  (\ref{deform:smooth}) and (\ref{deform:xi0}) are due to Deligne; cf.~\cite{deligne:k3liftings}*{1.2,1.5}. (\ref{deform:primitive}) can be found in \cite{ogus:ss}*{2.2}.

  For (\ref{deform:superspecial}), that $\nu_p(\deg(\xi_0))=1$ follows from (a suitable adaptation of) the argument in \cite{mp:reg}*{5.20}. The main point is that the de Rham cohomology of any lift of $X_0$ over $W(k)$ is a strongly divisible filtered $F$-crystal in the sense of \cite{ogus:transversal}*{3.9}. This follows from \cite{berthelot_ogus}*{8.26} and \cite{laffaille}*{Prop 5.2}.\comment{set $W=W(k)$, and let $W_{\Rat}$ be its fraction field. Let $R$ be the formally smooth $W$-algebra pro-representing $\Def_{X_0}$. Choose any map $R\to W$: this gives rise to a formal lift $X/W$ of $X_0$. Let $H=H^2_{\dR}(X/W)$ be the de Rham cohomology of $X$. Via the identification of $H$ with the crystalline cohomology of $X_0$, if $\sigma:W\to W$ is the Frobenius lift, we have a Frobenius map:
  \[
   F:\sigma^*H\rightarrow H.
  \]
  Let $F^\bullet H\subset H$ be the Hodge filtration on $H$. Then the strong divisibility of the filtered $F$-crystal $H^2_{\dR}(X/W)$ (\ref{ks:prp:strongdiv}) shows that we have:
  \begin{align}\label{special:eqn:strongdiv}
    F\biggl(\sigma^*\bigl(p^{-2}F^2H+p^{-1}F^1H+H\bigr)\biggr)=H.
  \end{align}

  Let $f\coloneqq \on{ch}_{\cris}(\xi_0)\in H$ be the crystalline Chern class of $\xi_0$. By our hypothesis in (\ref{deform:superspecial}), we can write
  \[
   f=f_1+pf_2,
  \]
  where $f_1$ is a generator for $F^2H$, and $f_2\in H$.

  We have:
  \[
   pf_1+p^2f_2=pf=F(f)=F(f_1)+pF(f_2).
  \]
  Here, for any $v\in H$, we write $F(v)$ for $F(\sigma^*v)$.

  Since $f_1\in F^2H$, necessarily $F(f_1)\in p^2H$, and we see that we have:
  \[
   F(f_2)=\frac{1}{p}\bigl(pf_1+p^2f_2-F(f_1)\bigr)\in H\backslash pH.
  \]
  So (\ref{special:eqn:strongdiv}) implies that $f_2$ does not lie in $pH+F^1H$. In other words, the image of $f_2$ in $\gr^0_FH$ is a generator. This shows that $f_1\cdot f_2$ is a unit, and so
  \[
   \deg(\xi_0)=f\cdot f=2p(f_1\cdot f_2)
  \]
  is not divisible by $p^2$.}

  Now, Ogus~\cite{ogus:ss}*{2.2} shows that the deformation ring for $\Def_{(X_0,\xi_0)}$ is isomorphic to
  \[
   W\pow{t_1,\ldots,t_{10},u_1,\ldots,u_{10}}/(\sum_it_iu_i-\deg(\xi_0)).
  \]
  So it follows from Vasiu and Zink's criterion (\ref{k3:thm:vasiuzink}) that this ring is quasi-healthy regular.
\end{proof}

\comment{
One of the ingredients in the proof of Deligne cited above is a description of the tangent space of the deformation functor, which we extract here for later reference.
\begin{lem}\label{k3:lem:tangentspace}
In the situation of (\ref{deform:xi0}) above, we have a canonical identification
\[
\Def_{(X_0,\xi_0)}\bigl(k[\epsilon]\bigr)=\biggl\{\text{Isotropic lines $L\subset PH^2_{\dR}(X_0/k)\otimes k[\epsilon]$ lifting $F^2H^2_{\dR}(X_0/k)$}\biggr\}.
\]
Here, as usual $PH^2_{\dR}(X_0/k)=\langle \on{ch}_{\dR}(\xi_0)\rangle^{\perp}\subset H^2_{\dR}(X_0/k)$.
\end{lem}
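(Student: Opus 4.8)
The plan is to deduce both sides of the identification from the crystal structure on the second de Rham cohomology, restricted to the square-zero thickening $\Spec k\hookrightarrow\Spec k[\epsilon]$, together with Deligne's analysis of the deformation functors recalled in (\ref{deform:smooth}) and (\ref{deform:xi0}). For a lift $X_\epsilon/k[\epsilon]$ of $X_0$, the crystal structure on the second de Rham cohomology (concretely: the crystalline cohomology of $X_0$ relative to $W(k)$, evaluated on the divided power thickening $\Spec k\hookrightarrow\Spec k[\epsilon]$) furnishes a canonical isomorphism
\[
H^2_{\dR}(X_\epsilon/k[\epsilon])\xrightarrow{\simeq}H^2_{\dR}(X_0/k)\otimes_kk[\epsilon]
\]
reducing to the identity modulo $\epsilon$ and matching the Poincar\'e pairing and the de Rham Chern class of any line bundle on $X_\epsilon$ with those of $X_0$ extended $k[\epsilon]$-linearly. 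By (\ref{deform:smooth}), carrying $X_\epsilon$ to $F^2H^2_{\dR}(X_\epsilon/k[\epsilon])$ then sets up a bijection between $\Def_{X_0}(k[\epsilon])$ and the set of rank-one $k[\epsilon]$-module direct summands of $H^2_{\dR}(X_0/k)\otimes_kk[\epsilon]$ lifting the line $F^2H^2_{\dR}(X_0/k)$. Such a summand $L$ is automatically isotropic: it is the $F^2$ of a relative surface, so $L\cdot L\subseteq F^4H^4_{\dR}(X_\epsilon/k[\epsilon])=0$. Conversely every isotropic lift occurs, since the isotropic lifts of the (isotropic) line $F^2H^2_{\dR}(X_0/k)$ form a torsor under $(F^2H^2_{\dR}(X_0/k))^{\perp}/F^2H^2_{\dR}(X_0/k)$, a $20$-dimensional $k$-vector space of the same dimension as $\Def_{X_0}(k[\epsilon])$ by the smoothness in (\ref{deform:smooth}). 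Hence $\Def_{X_0}(k[\epsilon])$ is identified with the set of isotropic rank-one direct summands of $H^2_{\dR}(X_0/k)\otimes_kk[\epsilon]$ lifting $F^2H^2_{\dR}(X_0/k)$.

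Next I would impose the condition that the polarization deforms. By the deformation theory of line bundles underlying (\ref{deform:xi0}), a lift $X_\epsilon$ lies in $\Def_{(X_0,\xi_0)}(k[\epsilon])$ precisely when $\xi_0$ lifts to $\underline{\Pic}(X_\epsilon/k[\epsilon])$, i.e. when $\on{ch}_{\dR}(\xi_0)$ remains a Hodge class, i.e. when $\on{ch}_{\dR}(\xi_0)\otimes 1$ lies in the image of $F^1H^2_{\dR}(X_\epsilon/k[\epsilon])$ under the isomorphism above. Since the Hodge filtration of a K3 surface satisfies $F^1=(F^2)^{\perp}$ and the isomorphism respects the pairing, this says $\langle L,\on{ch}_{\dR}(\xi_0)\otimes 1\rangle=0$, that is $L\subseteq\langle\on{ch}_{\dR}(\xi_0)\otimes 1\rangle^{\perp}$. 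As $\on{ch}_{\dR}(\xi_0)\neq 0$ by (\ref{deform:primitive}), the functional $\langle\,\cdot\,,\on{ch}_{\dR}(\xi_0)\otimes 1\rangle$ on $H^2_{\dR}(X_0/k)\otimes_kk[\epsilon]$ is surjective, so its kernel is the free direct summand $PH^2_{\dR}(X_0/k)\otimes_kk[\epsilon]$; moreover $F^2H^2_{\dR}(X_0/k)\subseteq PH^2_{\dR}(X_0/k)$ already, because $\on{ch}_{\dR}(\xi_0)\in F^1H^2_{\dR}(X_0/k)=(F^2H^2_{\dR}(X_0/k))^{\perp}$. Combining this with the first paragraph produces exactly the asserted set.

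I expect the only genuinely non-formal inputs to be the two facts due to Deligne (\cite{deligne:k3liftings}): that a deformation of $X_0$ over $k[\epsilon]$ is encoded faithfully and freely by the position of its Hodge line inside the de Rham cohomology, and that a line bundle deforms along with its de Rham Hodge class. Granting these, the rest is linear algebra over $k[\epsilon]$; the one point requiring care is the bookkeeping that makes the two descriptions literally coincide — checking that the relevant lifts are automatically isotropic and that $F^2H^2_{\dR}(X_0/k)\subseteq PH^2_{\dR}(X_0/k)$, so that ``isotropic line in $PH^2_{\dR}(X_0/k)\otimes_kk[\epsilon]$ lifting $F^2$'' means the same whether read inside $PH^2_{\dR}(X_0/k)$ or inside $H^2_{\dR}(X_0/k)$.
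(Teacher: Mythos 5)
Your argument is correct and is exactly the identification the paper has in mind: the paper's proof simply declares the lemma standard and records that a deformation $(X,\xi)$ is sent to the isotropic line $F^2H^2_{\dR}(X/k[\epsilon])$ inside $H^2_{\dR}(X_0/k)\otimes_kk[\epsilon]$ via the crystal structure, which is precisely the map you construct and analyze. Your fleshing-out (horizontality of the pairing and of Chern classes, automatic isotropy via $F^4H^4=0$, the orthogonality criterion for lifting $\xi_0$, and the dimension count against Deligne's smoothness and injectivity statements) is the standard proof being alluded to.
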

\begin{proof}
  This is standard. We only note that, under this identification, each deformation $(X,\xi)$ over $k[\epsilon]$ is mapped to the isotropic line
  \[
  F^2H^2_{\dR}(X/k)\subset H^2_{\dR}(X/k)=H^2_{\dR}(X_0/k)\otimes_kk[\epsilon].
  \]
\end{proof}
}
\begin{corollary}\label{k3:cor:regular}
Let $r$ be the product of primes $\ell>2$ such that $\ell\mid d$, but $\ell^2\nmid d$.
\begin{enumerate}
\item\label{k3:smooth}$\mathsf{M}_{2d,\Int\left[\frac{1}{2r}\right]}$ is smooth over $\Int\left[\frac{1}{2r}\right]$ of relative dimension $19$.
\item\label{k3:sing}If $p\mid r$, then the singular locus of $\mathsf{M}_{2d,\Field_p}$ is at most $0$-dimensional, and lies within the superspecial locus.
\item\label{k3:reg}All mixed characteristic complete local rings of $\mathsf{M}_{2d,\Int\left[\frac{1}{2}\right]}$ of dimension at least $2$ are quasi-healthy regular.
\end{enumerate}
\end{corollary}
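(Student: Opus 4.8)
The plan is to read off all three assertions from the deformation theory recorded in~(\ref{k3:thm:deformation}) together with the Vasiu--Zink criterion~(\ref{k3:thm:vasiuzink}). Recall that a geometric point of $\mathsf{M}_{2d}$, given by a primitively quasi-polarized $(X_0,\xi_0)$ over an algebraically closed field $k$ of characteristic $p$, has complete local ring the deformation ring $\Def_{(X_0,\xi_0)}$; that it is superspecial exactly when $\on{ch}_{\dR}(\xi_0)\in F^2H^2_{\dR}(X_0/k)$; that in the non-superspecial case $\Def_{(X_0,\xi_0)}$ is formally smooth over $W(k)$ by~(\ref{k3:thm:deformation})(\ref{deform:primitive}), of relative dimension $19$ since by~(\ref{k3:thm:deformation})(\ref{deform:xi0}) it is cut out in the $20$-dimensional $\Def_{X_0}$ by a single nonzero equation; and that in the superspecial case $\nu_p(\deg\xi_0)=1$ by~(\ref{k3:thm:deformation})(\ref{deform:superspecial}). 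For~(\ref{k3:smooth}): the primes $p$ not invertible in $\Int\left[\frac{1}{2r}\right]$ are precisely the odd ones with $\nu_p(2d)=\nu_p(d)\neq 1$, i.e.\ $p\nmid d$ or $p^2\mid d$; by the last remark there are then no superspecial points over $\Int\left[\frac{1}{2r}\right]$, so every geometric complete local ring there is formally smooth of relative dimension $19$ over the relevant Witt ring (in characteristic $0$ this is classical), whence $\mathsf{M}_{2d,\Int\left[\frac{1}{2r}\right]}$ is smooth over $\Int\left[\frac{1}{2r}\right]$ of relative dimension $19$.

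For~(\ref{k3:sing}): assume $p\mid r$, so $p$ is odd with $\nu_p(2d)=1$. By~(\ref{k3:thm:deformation})(\ref{deform:primitive}) every non-superspecial point of $\mathsf{M}_{2d,\Field_p}$ is a smooth point, so the singular locus is contained in the (closed) superspecial locus. At a superspecial geometric point $s$, the computation in the proof of~(\ref{k3:thm:deformation})(\ref{deform:superspecial}) --- using $\nu_p(2d)=1$ to kill the constant $2d$ --- identifies the complete local ring of $\mathsf{M}_{2d,\Field_p}$ at $s$ with
\[
 k\pow{t_1,\ldots,t_{10},u_1,\ldots,u_{10}}\big/\Bigl(\,\textstyle\sum_{i=1}^{10}t_iu_i\,\Bigr).
\]
Since $\sum_i t_iu_i$ is a nondegenerate quadratic form of rank $20$, this ring is regular away from its maximal ideal, so $s$ is an isolated point of the singular locus; as the singular locus is closed in the finite-type stack $\mathsf{M}_{2d,\Field_p}$, it is therefore finite, in particular at most $0$-dimensional, and it lies in the superspecial locus.

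For~(\ref{k3:reg}): let $R$ be a mixed-characteristic complete local ring of $\mathsf{M}_{2d,\Int\left[\frac12\right]}$ with $\dim R\geq 2$; its residue characteristic $p$ is odd. By the analysis above all geometric complete local rings of $\mathsf{M}_{2d,\Int\left[\frac12\right]}$ are regular --- the ones not formally smooth over the Witt ring occur only at superspecial points, where $\nu_p(2d)=1$ forces the defining equation of $W(k)\pow{t_1,\ldots,t_{10},u_1,\ldots,u_{10}}/(\sum_i t_iu_i-2d)$ outside the square of the maximal ideal --- so $\mathsf{M}_{2d,\Int\left[\frac12\right]}$ is regular and hence so is $R$, which is moreover faithfully flat over $\Int_{(p)}$ because $\mathsf{M}_{2d}$ is flat over $\Int\left[\frac12\right]$. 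For the extension property for abelian schemes I would reduce by a faithfully flat base change to the case $R=\Def_{(X_0,\xi_0)}$ with $k$ algebraically closed and then apply~(\ref{k3:thm:vasiuzink}): if $(X_0,\xi_0)$ is not superspecial, $R\cong W(k)\pow{s_1,\ldots,s_{19}}$ surjects onto $W(k)\pow{T_1,T_2}/(p-T_1)$; if it is superspecial, write $2d=pc$ with $c$ a $p$-adic unit, so $R\cong W(k)\pow{t_1,\ldots,t_{10},u_1,\ldots,u_{10}}/(\sum_i t_iu_i-2d)$ surjects onto $W(k)\pow{t_1,u_1}/(t_1u_1-pc)=W(k)\pow{T_1,T_2}/(p-c^{-1}T_1T_2)$. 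In both cases the relevant element $h$ --- namely $T_1$, resp.\ $c^{-1}T_1T_2$ --- avoids the ideal $(p,T_1^p,T_2^p,T_1^{p-1}T_2^{p-1})$, so $R$ is quasi-healthy regular.

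Most of this is bookkeeping with the deformation rings of~(\ref{k3:thm:deformation}); the point that must be gotten right --- and the only place where oddness of the characteristic genuinely enters~(\ref{k3:reg}) --- is the last verification at superspecial points: $h=c^{-1}T_1T_2$ has degree $2$ and escapes the exclusion ideal precisely because its worst generator $T_1^{p-1}T_2^{p-1}$ has degree $2p-2>2$, which fails for $p=2$. This is the arithmetic trace of the absence of a good theory of integral models of orthogonal Shimura varieties in residue characteristic $2$.
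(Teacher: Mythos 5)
Your argument rests on the same two pillars as the paper's proof --- the deformation rings of (\ref{k3:thm:deformation}) and the Vasiu--Zink criterion (\ref{k3:thm:vasiuzink}) --- and your verifications at closed geometric points are correct; in particular the degree count showing that $h=c^{-1}T_1T_2$ escapes $(p,T_1^p,T_2^p,T_1^{p-1}T_2^{p-1})$ precisely when $p>2$ is exactly what the paper's appeal to (\ref{deform:superspecial}) amounts to. The one place you genuinely diverge is (\ref{k3:sing}): you observe that the special fiber of the superspecial deformation ring is the cone $k\pow{t_1,\ldots,t_{10},u_1,\ldots,u_{10}}/(\sum_it_iu_i)$, which is regular off its closed point, so that (by descent of regularity along the faithfully flat completion map) no proper generization of a closed superspecial point is singular; the paper instead quotes Ogus's observation that there are no non-trivial infinitesimal families of quasi-polarized superspecial K3 surfaces, i.e.\ that the superspecial locus itself is $0$-dimensional. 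Your route is more self-contained and proves exactly what is needed, at the cost of proving slightly less (the superspecial locus could a priori be positive-dimensional with only finitely many singular points on it --- which is fine for the corollary).

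There is one loose end in (\ref{k3:reg}). The reduction ``by a faithfully flat base change to $R=\Def_{(X_0,\xi_0)}$ with $k$ algebraically closed'' works for closed points of the special fiber, but the statement also concerns the completions at \emph{non-closed} points of $\mathsf{M}_{2d,\Field_p}$ of positive codimension in the fiber: these are mixed-characteristic of dimension at least $2$, yet they are not faithfully flat over any deformation ring $\Def_{(X_0,\xi_0)}$ at a closed specialization, and quasi-healthiness does not obviously pass from a local ring to the completed localizations at its non-maximal primes. The paper closes this case by invoking (\ref{k3:sing}): by that part, every such non-closed point is a smooth point of $\mathsf{M}_{2d,\Int\left[\frac{1}{2}\right]}$ over $\Int_{(p)}$, so the final clause of (\ref{k3:thm:vasiuzink}) (or your surjection onto $W\pow{T_1,T_2}/(p-T_1)$, adapted to the relevant Cohen ring) applies directly. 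With that sentence added, your proof is complete.
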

\begin{proof}
(\ref{k3:smooth}) is an immediate consequence of (\ref{deform:primitive}) and (\ref{deform:superspecial}) of (\ref{k3:thm:deformation}).

For (\ref{k3:sing}), we first note that the singular points of $\mathsf{M}_{2d,\Field_p}$ are all superspecial and that their complete local rings are quasi-healthy regular, by \emph{loc. cit.}. The assertion is now a consequence of the fact that there are no non-trivial infinitesimal families of quasi-polarized superspecial K3 surfaces (cf~\cite{ogus:ss}*{Remark 2.7}).

For (\ref{k3:reg}), we only need to worry about the complete local rings of $\mathsf{M}_{2d,\Int\left[\frac{1}{2}\right]}$ at points valued in fields of characteristic $p\mid r$. By (\ref{k3:sing}), the completions at the non-closed such points are formally smooth and hence quasi-healthy regular. The completions at the closed such points are quasi-healthy regular, as we have already observed.
\end{proof}

\subsection{}\label{k3:subsec:level}
We will need moduli spaces of K3 surfaces with level structure; cf.~\cite{rizov:moduli}*{\S 4}. Let $U$ be the hyperbolic lattice over $\Int$ of rank $2$; let $N$ be the self-dual lattice $U^{\oplus 3}\oplus E_8^{\oplus 2}$. Choose a basis $e,f$ for (say) the first copy of $U$ in $N$. Set
\[
  L_d=\langle e-df\rangle^{\perp}\subset N.
\]
This is a quadratic lattice over $\Int$ of discriminant $2d$; let $\dual{L}_d\subset V_d\coloneqq L_{d,\Rat}$ be its dual lattice. Set $G_d=\SO(V_d)$: it is a semi-simple algebraic group over $\Rat$.

Let $K\subset G_d(\Adele_f)$ be a compact open sub-group that stabilizes $L_{d,\widehat{\Int}}$ and acts trivially on $\dual{L}_d/L_d$. The maximal such sub-group is called the \defnword{discriminant kernel} of $L_{d,\widehat{\Int}}$. These compact opens are called \defnword{admissible} in \cite{rizov:moduli}. Strictly speaking, Rizov's definition of admissibility is the following: First, note that $G_d$ can be viewed as the sub-group of isometries of $V$ that fix $e-df$. Now, a compact open sub-group $K\subset G_d(\Adele_f)$ is admissible if every element of $K$, viewed as an isometry of $V_{\Adele_f}$, stabilizes $L_{\widehat{\Int}}$. That this is equivalent to our definition is shown in \cite{mp:reg}*{2.2}.

We will now fix an admissible compact open $K\subset G_d(\Adele_f)$ such that $K_p\subset G_d(\Rat_p)$ is the discriminant kernel of $L_{d,\Int_p}$.

Over $\mathsf{M}_{2d,\Int_{(p)}}$, the relative $\ell$-adic cohomology sheaves $\bm{H}^2_{\ell}$, for $\ell\neq p$, can be put together to get the $\widehat{\Int}^p$-sheaf $\bm{H}^2_{\widehat{\Int}^p}=\prod_{\ell\neq p}\bm{H}^2_{\ell}$. Then the Chern classes of $\bm{\xi}$ can also be put together to get the Chern class $\on{ch}_{\widehat{\Int}^p}(\bm{\xi})$ in $\bm{H}^2_{\widehat{\Int}^p}(1)$. Let $I^p$ be the \'etale sheaf over $\mathsf{M}_{2d,\Int_{(p)}}$, whose sections over any scheme $T\to\mathsf{M}_{2d,\Int_{(p)}}$ are given by
\[
 I^p(T)=\bigl\{\text{Isometries $\eta:L\otimes\underline{\widehat{\Int}}^p\xrightarrow{\simeq}\bm{H}^2_{\widehat{\Int}^p,T}(1)$ with $\eta(e-df)=\on{ch}_{\widehat{\Int}^p}(\bm{\xi})$}\bigr\}
\]
This has a natural right action via pre-composition by the constant sheaf of groups $K^p$. A section $[\eta]\in H^0(T,I^p/K^p)$ is called a \defnword{$K^p$-level structure} over $T$.

We define $\mathsf{M}_{2d,K,\Int_{(p)}}$ to be the relative moduli problem over $\mathsf{M}_{2d,\Int_{(p)}}$ that attaches to $T\to\mathsf{M}_{2d,\Int_{(p)}}$ the set of $K^p$-level structures over $T$.

\comment{
Over $\Rat$, we can define another moduli space: Let $I_p$ be the \'etale sheaf over $\mathsf{M}_{2d,K^p,\Rat}$, whose sections over any $T\to\mathsf{M}_{2d,K^p,\Rat}$ are given by:
\[
 I_p(T)=\bigl\{\text{Isometries $\eta:L\otimes\underline{\widehat{\Int}}_p\xrightarrow{\simeq}\bm{H}^2_{p,T}(1)$ with $\eta(e-df)=\on{ch}_{p}(\bm{\xi})$}\bigr\}
\]
A section $[\eta]\in H^0(T,I_p/K_p)$ is a \defnword{$K_p$-level structure} over $T$. We take $\mathsf{M}_{2d,K,\Rat}$ to be the relative moduli problem over $\mathsf{M}_{2d,K^p,\Rat}$ that parameterizes $K_p$-level structures. Note that $\mathsf{M}_{2d,K,\Rat}$ is the stack denoted in \cite{rizov:moduli}*{6.2.2} as $\mathcal{F}_{2d,K,\Rat}^{\on{full}}$, and that, if $p\nmid d$, it is trivial over $\mathsf{M}_{2d,K^p,\Rat}$. Even in general, it has an easy description: One finds that giving a $K_p$-level structure is the same as giving a section of the locally constant sheaf
\[
 \frac{\bm{H}^2_p(1)}{\bm{P}^2_p(1)\oplus\langle \on{ch}_{p}(\bm{\xi})\rangle}\xrightarrow{\simeq}\frac{\dual{\langle \on{ch}_{p}(\bm{\xi})\rangle}}{\langle \on{ch}_{p}(\bm{\xi})\rangle}.
\]
So $\mathsf{M}_{2d,K,\Rat}$ admits a section over $\mathsf{M}_{2d,K^p,\Rat}$, and is therefore the trival two-sheeted cover. In particular, it extends uniquely to a finite \'etale cover $\mathsf{M}_{2d,K,\Int_{(p)}}\to\mathsf{M}_{2d,K^p,\Int_{(p)}}$.
}
\begin{prp}\label{k3:prp:levelrep}
$\mathsf{M}_{2d,K,\Int_{(p)}}$ is finite and \'etale over $\mathsf{M}_{2d,\Int_{(p)}}$. For $K^p$ small enough, it is an algebraic space over $\Int_{(p)}$. It is healthy regular, and, unless $\nu_p(d)=1$, it is smooth over $\Int_{(p)}$.
\end{prp}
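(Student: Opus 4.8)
The plan is to reduce everything to the structure of $\mathsf{M}_{2d,\Int_{(p)}}$ recorded in (\ref{k3:thm:deformation}) and (\ref{k3:cor:regular}) by exhibiting $\mathsf{M}_{2d,K,\Int_{(p)}}$ as a finite \'etale cover of it: regularity, faithful flatness, and --- when $\nu_p(d)\neq 1$ --- smoothness over $\Int_{(p)}$ are then inherited, while healthy regularity comes from the Vasiu--Zink criterion (\ref{k3:thm:vasiuzink}).

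\emph{The cover is finite \'etale and surjective.} The lisse sheaf $\bm{H}^2_{\widehat{\Int}^p}(1)$ carries the global section $\on{ch}_{\widehat{\Int}^p}(\bm{\xi})$, whose value at every geometric point is a primitive vector of the same square $-2d$ as $e-df\in L$. Since $L=U^{\oplus 3}\oplus E_8^{\oplus 2}$ contains three hyperbolic planes, Eichler's theorem makes any two such vectors in $L\otimes\widehat{\Int}^p$ conjugate under the isometry group, so \'etale-locally on $\mathsf{M}_{2d,\Int_{(p)}}$ the pair $\bigl(\bm{H}^2_{\widehat{\Int}^p}(1),\on{ch}_{\widehat{\Int}^p}(\bm{\xi})\bigr)$ is isomorphic to $(L\otimes\widehat{\Int}^p,e-df)$. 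Hence $I^p$ is an \'etale-locally trivial torsor under the constant sheaf $\mathcal{K}$ of isometries of $L_{\widehat{\Int}^p}$ fixing $e-df$; as $K$ is admissible, $K^p\subset\mathcal{K}$ has finite index, so $I^p/K^p$ is a finite locally constant sheaf of sets, nonempty over every geometric point. Therefore its relative scheme of sections $\mathsf{M}_{2d,K,\Int_{(p)}}$ is finite \'etale and surjective over $\mathsf{M}_{2d,\Int_{(p)}}$.

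\emph{Algebraic space, regularity, flatness, smoothness.} The automorphisms of a point $(X,\xi,[\eta])$ form the subgroup of the finite group $\Aut(X,\xi)$ consisting of those $\alpha$ with $\alpha^{*}\in\eta K^p\eta^{-1}$; since an automorphism of a polarized K3 surface acting trivially on $\bm{H}^2_{\widehat{\Int}^p}$ is trivial (cf.~\cite{rizov:moduli}), $\Aut(X,\xi)$ injects into $\mathcal{K}$, and taking $K^p$ inside a torsion-free normal finite-index subgroup of $\mathcal{K}$ kills all these automorphism groups, so $\mathsf{M}_{2d,K,\Int_{(p)}}$ is then an algebraic space. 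Being \'etale over $\mathsf{M}_{2d,\Int_{(p)}}$, it shares its (strictly henselian, and complete) local rings; and $\mathsf{M}_{2d,\Int_{(p)}}$ is regular and faithfully flat over $\Int_{(p)}$ by (\ref{k3:thm:deformation}) and (\ref{k3:cor:regular}), surjectivity onto $\Spec\Int_{(p)}$ being the existence of primitively polarized degree-$2d$ K3 surfaces over fields of every odd characteristic. So $\mathsf{M}_{2d,K,\Int_{(p)}}$ is regular and faithfully flat over $\Int_{(p)}$. If $\nu_p(d)\neq 1$ then $\nu_p(2d)\neq 1$, so by (\ref{deform:primitive}) and (\ref{deform:superspecial}) of (\ref{k3:thm:deformation}) there are no superspecial points in characteristic $p$ and every $\Def_{(X_0,\xi_0)}$ is formally smooth; hence $\mathsf{M}_{2d,\Int_{(p)}}$ is flat with smooth fibres, i.e.\ smooth, over $\Int_{(p)}$, and so is its \'etale cover.

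\emph{Healthy regularity (the main point).} I would invoke the Vasiu--Zink principle that a regular scheme, faithfully flat over $\Int_{(p)}$, all of whose complete local rings at points of the special fibre of dimension $\geq 2$ are quasi-healthy regular, is healthy regular: given an abelian scheme over an open $U$ containing the generic fibre and all generic points of the special fibre (so that $X\setminus U$ has codimension $\geq 2$), one extends it over each such local ring by quasi-healthiness and glues the extensions using their uniqueness. For $\mathsf{M}_{2d,K,\Int_{(p)}}$ the hypothesis holds: its dimension-$\geq 2$ mixed-characteristic complete local rings are, via the \'etale cover, those of $\mathsf{M}_{2d}$, which by (\ref{k3:cor:regular})(\ref{k3:reg}) are quasi-healthy regular --- formally smooth over $\Int_{(p)}$ away from the superspecial locus, and quasi-healthy regular by the Vasiu--Zink criterion (\ref{k3:thm:vasiuzink}) at a superspecial point, where (\ref{deform:superspecial}) of (\ref{k3:thm:deformation}) exhibits the deformation ring as $W\pow{t_1,\ldots,t_{10},u_1,\ldots,u_{10}}/(\sum_{i=1}^{10} t_iu_i-2d)$ with $\nu_p(2d)=1$. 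I expect this globalization of the local quasi-healthiness coming out of K3 deformation theory --- rather than the level-structure bookkeeping --- to be the real substance of the argument.
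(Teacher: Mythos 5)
Your proposal is correct and follows essentially the same route as the paper: finiteness and \'etaleness from the definition of the level-structure sheaf, the algebraic-space claim from the faithfulness of $\Aut(X,\xi)$ on $\bm{H}^2_{\widehat{\Int}^p}$ combined with torsion-freeness of small $K^p$ (the paper cites \cite{maulik}*{2.8} for exactly this), and healthy regularity plus smoothness for $\nu_p(d)\neq 1$ by transporting (\ref{k3:cor:regular}) along the \'etale cover. You supply more detail than the paper does (the Eichler-transitivity argument for local triviality of the torsor and the explicit local-to-global passage for quasi-healthiness), but the underlying argument is the same.
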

\begin{proof}
  Both finiteness and \'etaleness are clear from the definition. As for the second assertion, the key point is to show that a quasi-polarized K3 surface with $K^p$-level structure has trivial automorphism group. This is shown in \cite{maulik}*{2.8}, which is based on \cite{rizov:moduli}*{6.2.2}.

  The last assertion follows from (\ref{k3:cor:regular}).
\end{proof}

\section{Shimura varieties}\label{sec:ortho}

Our main reference for this section will be \cite{mp:reg}.

\subsection{}
Let $L$ be a quadratic lattice over $\Int$ of signature $(n,2)$ with $n\geq 1$. We will write $Q$ for the quadratic form on $L$, and $[\_,\_]_Q$ for the associated bilinear form. Then one can associate with $L$ a Shimura variety $\Sh(L)$. It is a smooth Deligne-Mumford stack over $\Rat$ such that, as complex orbifolds, we have:
\[
\Sh(L)(\Comp)=G_L(\Rat)\backslash\bigl(X_L\times G_L(\Adele_f)/K_L\bigr).
\]
Here, $X_L$ is the space of oriented negative definite planes in $L_{\Real}$, $G_L$ is the reductive $\Rat$-group $\SO(L_{\Rat})$, and $K_L\subset G_L(\Adele_f)$ be the discriminant kernel of $L_{\widehat{\Int}}$: the largest sub-group of $\SO(L)(\widehat{\Int})$ that acts trivially on the discriminant $\on{disc}(L)=\dual{L}/L$, where $\dual{L}\subset L_{\Rat}$ is the dual lattice.

For the rest of this section, we will make the simplifying hypothesis that $L$ contains a hyperbolic plane: That is, we will assume that there exist isotropic elements $e,f\in L$ such that $[e,f]_Q=1$. Then we will have (use strong approximation for the Spin cover of $G_L$):
\[
 \Sh(L)(\Comp)=\Gamma_L\backslash X_L,
\]
where $\Gamma_L\subset\SO(L)(\Int)$ is the discriminant kernel.

Every compact open sub-group $K\subset K(L)$ determines a finite \'etale cover $\Sh_K(L)\to\Sh(L)$, defined over $\Rat$, with
\[
 \Sh_K(L)(\Comp)=G_L(\Rat)\backslash\bigl(X_L\times G_L(\Adele_f)/K\bigr).
\]
If $K$ is \defnword{neat}\footnote{This means that, for every $g\in G_L(\Adele_f)$, the discrete group $G_L(\Rat)\cap gKg^{-1}$ is torsion-free.}, then $\Sh_K$ is a smooth quasi-projective variety over $\Rat$.

\subsection{}\label{ortho:subsec:sheaves}
We will now show that, in the language of Section~\ref{sec:hodge}, $\Sh(L)$ carries a canonical family of $\Int$-motives $\bm{L}$. For details, cf.~\cite{mp:reg}*{\S~3}.

To begin, the Betti realization $\bm{L}_B$ will just be the local system on $\Sh(L)(\Comp)$ attached to the tautological representation $L$ of $\Gamma^+_L$. Note that $\bm{L}_B$ is equipped with a canonical symmetric bilinear form, which equips it with an injective map $\bm{L}_B\to\dual{\bm{L}}_B$ into its dual local system. The finite local system $\dual{\bm{L}}_B/\bm{L}_B$ with its $\underline{\Rat/\Int}$-valued quadratic form is canonically isomorphic to the constant sheaf $\underline{\dual{L}/L}$ over $\Sh(L)(\Comp)$. Furthermore, the determinant $\det(\bm{L}_B)$ is also identified with the constant sheaf $\underline{\det(L)}$ with its natural quadratic form.

For any prime $\ell$, the $\ell$-adic local system attached to $\bm{L}_B$ has a canonical descent over $\Sh(L)$, which we denote by $\bm{L}_{\ell}$. In fact, if $K(\ell^n)\subset K_L$ is the largest sub-group acting trivially on $L/\ell^nL$, then $\bm{L}_{\ell}$ is pro-represented over $\Sh(L)$ by the inverse system of finite \'etale covers $\{\Sh_{K(\ell^n)}(L)\}_{n\geq 1}$

The analytic vector bundle with integrable connection $\bm{L}_B\otimes\Reg{\Sh(L)_{\Comp}}^{\an}$ has a canonical algebraization $\bm{L}_{\dR,\Comp}$ over $\Sh(L)_{\Comp}$, which also descends canonically to a vector bundle with integrable connection $\bm{L}_{\dR,\Rat}$ over $\Sh(L)$. This vector bundle has the additional structure of a three-step filtration by vector sub-bundles:
\[
0=F^2\bm{L}_{\dR,\Rat}\subset F^1\bm{L}_{\dR,\Rat}\subset F^0\bm{L}_{\dR,\Rat}=(F^1\bm{L}_{\dR,\Rat})^{\perp}\subset F^{-1}\bm{L}_{\dR,\Rat}=\bm{L}_{\dR,\Rat}.
\]
Here, $F^1\bm{L}_{\dR,\Rat}\subset\bm{L}_{\dR,\Rat}$ is isotropic of rank $1$.

In fact, the pair $(\bm{L}_B,F^\bullet\bm{L}_{\dR,\Comp})$ forms a polarized variation of $\Int$-Hodge structures of weight $0$ over $\Sh(L)(\Comp)$. At each point of $\Sh(L)(\Comp)$, it gives rise to a $\Int$-Hodge structure with Hodge numbers $h^{-1,1}=h^{1,-1}=1,h^{0,0}=n$.

This allows us to give a moduli-theoretic description of $\Sh(L)$: Suppose that $T$ is a smooth complex analytic space and $f:T\to\Sh(L)(\Comp)$ is a map of smooth complex analytic stacks. We can attach to it the polarized variation of $\Int$-Hodge structures $(f^*\bm{L}_B,F^\bullet f^*\bm{L}^{\an}_{\dR,\Comp})$, and canonical identifications of the sheaves $f^*\dual{\bm{L}}_B/f^*\bm{L}_B$ and $\det(f^*\bm{L}_B)$ with the constant sheaves $\underline{\dual{L}/L}$ and $\underline{\det(L)}$, respectively. This gives us (cf.~\cite{milne:motive}*{3.10}):
\begin{prp}\label{ortho:prp:moduli}
The above process gives us a canonical equivalence between the category of maps of analytic stacks $T\to\Sh(L)(\Comp)$ and the category of tuples
\[
(\bm{U},F^\bullet\bigl(\bm{U}\otimes_{\Int}\Reg{T}\bigr),\eta,\beta),
\]
where:
\begin{itemize}
  \item $\biggl(\bm{U},F^\bullet\bigl(\bm{U}\otimes_{\Int}\Reg{T}\bigr)\biggr)$ is a polarized variation of $\Int$-Hodge structures over $T$ with constant Hodge numbers $h^{-1,1}=h^{1,-1}=1,h^{0,0}=n$.
  \item $\eta:\underline{\dual{L}/L}\xrightarrow{\simeq}\dual{\bm{U}}/\bm{U}$ and $\beta:\underline{\det(L)}\xrightarrow{\simeq}\det(\bm{U})$ are isomorphisms of sheaves of abelian groups over $T$, compatible with the natural pairings.
\end{itemize}
The local system $\bm{U}$ with its polarization pairing must satisfy the following additional condition: For every point $t\in T$, there exists an isometry of quadratic lattices $L\xrightarrow{\simeq}\bm{U}_t$ inducing $\eta_t$ and $\beta_t$.

If $T$ is a smooth algebraic variety over $\Comp$, this category can also be identified with the category of maps of algebraic stacks $T\to\Sh(L)_{\Comp}$.
\end{prp}
\qed

\subsection{}\label{ortho:subsec:motives}
The sheaves constructed above can all be viewed as realizations of a family of $\Int$-motives over $\Sh(L)$. This is essentially shown in \cite{mp:reg}*{\S~3}, with the additional inputs being (\ref{hodge:thm:abshodge}) and (\ref{hodge:thm:blasius}). The main idea is that there exists a finite \'etale cover $\widetilde{\Sh}(L)\to\Sh(L)$\footnote{This map is actually a bijection on geometric points.} attached to the central extension $\GSpin(L_{\Rat})\to G_L$, and an abelian scheme $A^{\KS}$ over $\widetilde{\Sh}(L)$ with an action of the Clifford algebra $C(L)$, called the \defnword{Kuga-Satake abelian scheme}. This abelian scheme arises from the natural action of $\GSpin(L_{\Rat})$ on $C(L_{\Rat})$ via left multiplication.

It has the following properties: For every point $s\to\Sh(L)$ with lift $s^{\on{sp}}\to\widetilde{\Sh}(L)$, the motive $h^1(A^{\KS}_{s^{\on{sp}}})\otimes \dual{\bigl(h^1(A^{\KS}_{s^{\on{sp}}})\bigr)}$, with its natural $\Int$-structure, depends only on $s$; we will denote it by $\bm{H}^{\otimes(1,1)}_s$. Furthermore, there is a natural idempotent operator $\pr_s$ on $\bm{H}^{\otimes(1,1)}_s$ such that, if $\bm{L}_s=\im\pr_s\subset\bm{H}^{\otimes(1,1)}_s$, then the various realizations of $\bm{L}_s$ are canonically identified with the fibers at $s$ of the sheaves $\bm{L}_B,\bm{L}_{\ell},\bm{L}_{\dR,\Rat}$ seen above.

In particular, we can view $\on{AH}(\bm{L}_s)$ as a space of ($C(L)$-equivariant) endomorphisms of $A^{\KS}_{s^{\on{sp}}}$, which we will refer to as \defnword{special endomorphisms}. If $s$ is a geometric point valued in a field embedded in $\Comp$, we will have:
\[
 \on{AH}(\bm{L}_s)=\bm{L}_{B,s}\cap F^0\bm{L}_{\dR,s,\Comp}\subset\bm{H}^{\otimes(1,1)}_{B,s}\cap F^0\bm{H}^{\otimes(1,1)}_{\dR,s,\Comp}=\End(A^{\KS}_{s^{\on{sp}}}).
\]
In general, given $T\to\Sh(L)$, we can define a `special endomorphism' over $T$ as follows: The endomorphism scheme $\underline{\End}(A^{\KS})$ over $\widetilde{\Sh}(L)$ has a canonical descent over $\Sh(L)$~\cite{mp:reg}*{5.24}. Write $\bm{E}$ for this descent; then the space of special endomorphisms $L(T)$ will consist of sections of $\bm{E}$ over $T$ whose fibers at every geometric point $s\to T$ lie in $\on{AH}(\bm{L}_s)$. Denote the space of special endomorphisms over $T$ by $L(T)$.

\subsection{}\label{ortho:subsec:integral}
From now on, we will further assume that the following condition: For every prime $p>2$, at least one of the following conditions holds:
\begin{itemize}
\item $L_{\Int_{(p)}}$ is maximal. That is, there is no bigger $\Int_{(p)}$-lattice in $L_{\Rat}$ on which the quadratic form is $\Int_{(p)}$-valued.
\item The $p$-primary part of the discriminant $\dual{L}/L$ is cyclic.
\end{itemize}

The main result of \cite{mp:reg} is:
\begin{thm}[\cite{mp:reg}*{8.1}]\label{ortho:thm:main}
Under these assumptions, $\Sh(L)$ admits an integral canonical model $\Ss(L)$ over $\Int[2^{-1}]$.
\end{thm}

The terminology here requires a bit of explanation. First, for every prime $p>2$, let $\Sh_p(L)$ be the pro-variety
\[
 \varprojlim_{K^p\subset G_L(\Adele_f^p)}\Sh_{K_{L,p}K^p}(L).
\]
It has a natural Hecke action by $G_L(\Adele_f^p)$. An \defnword{integral canonical model} for $\Sh_p(L)$ over $\Int_{(p)}$ is a locally healthy, regular pro-$\Int_{(p)}$-scheme $\Ss_p(L)$ with generic fiber $\Sh_p(L)$ satisfying the following extension property: For any locally healthy regular scheme $S$ over $\Int_{(p)}$, any map of generic fibers $S\otimes\Rat\to\Sh_p(L)$ extends (uniquely) to a map $S\to\Ss_p(L)$. A \defnword{smooth integral canonical model} $\Ss^{\on{sm}}_p(L)$ is defined similarly, except that we require it to be regular and formally smooth, and in the definition of the extension property, we restrict ourselves to schemes $S$ that are regular, formally smooth; cf.~\cite{mp:reg}*{8.5}. 

The integral canonical model (resp. smooth integral canonical model) is uniquely determined by these conditions. In particular, the action of $G_L(\Adele_f^p)$ on $\Sh_p(L)$ extends to an action on $\Ss_p(L)$ or $\Ss_p^{\on{sm}}(L)$, and so, for any compact open $K^p\subset G_L(\Adele_f^p)$, one obtains a model $\Ss_K(L)_{(p)}=\Ss_p(L)/K^p$ (resp. $\Ss^{\on{sm}}_K(L)_{(p)}=\Ss_p^{\on{sm}}(L)/K^p$) for $\Sh_{K_{L,p}K^p}(L)$. Here, we view the quotient in the category of algebraic stacks over $\Int_{(p)}$. 

We can now explain the meaning of (\ref{ortho:thm:main}). The model $\Ss(L)$ over $\Int[2^{-1}]$ is the unique one that satisfies the following property: For every $p>2$ such that $L_{\Int_{(p)}}$ is maximal (resp. is non-maximal with cyclic discriminant), $\Sh_p(L)$ admits an integral canonical model $\Ss_p(L)$ (resp. a smooth integral canonical model $\Ss^{\on{sm}}_p(L)$) over $\Int_{(p)}$ such that $\Ss(L)\otimes\Int_{(p)}=\Ss_{K_L}(L)_{(p)}$ (resp. $\Ss(L)\otimes\Int_{(p)}=\Ss^{\on{sm}}_{K_L}(L)_{(p)}$).

\subsection{}\label{ortho:subsec:integralmotives}
The integral model above carries a natural extension of the family of motives $\bm{L}$; cf. the discussion in \cite{mp:reg}*{8.7,8.10}. For simplicity, write $\Ss$ for the stack $\Ss(L)$. Quite formally, for any prime $\ell$, we can view $\bm{L}_{\ell}$ as an $\ell$-adic lisse sheaf over $\Ss[\ell^{-1}]$. Moreover, the de Rham realization $\bm{L}_{\dR,\Rat}$ extends to a vector bundle with integrable connection $\bm{L}_{\dR}$ over $\Ss$. The tautological isotropic line $F^1\bm{L}_{\dR,\Rat}$ also extends to an isotropic line $F^1\bm{L}_{\dR}\subset\bm{L}_{\dR}$.

Also, for any prime $p>2$, there is a natural $F$-crystal of vector bundles $\bm{L}_{\cris}$ over the crystalline site $(\Ss_{\Field_p}/\Int_p)_{\cris}$, whose Zariski realization over $\Ss_{\Int_p}$ is canonically identified with $\bm{L}_{\dR,\Int_p}$ as a vector bundle with integrable connection. The deformation theory of $\Ss$ is governed by the line $F^1\bm{L}_{\dR}$: Lifting a map $T\to\Ss$ over a first-order nilpotent thickening $T\into T'$ is equivalent to lifting the isotropic line $F^1\bm{L}_{\dR,T}$ over $T'$ (the lift of $\bm{L}_{\dR,T}$ over $T'$ being determined by its crystalline nature).

Suppose now that $E$ is a field of characteristic $0$ equipped with a discrete valuation $\nu$ with residue field $k$ of characteristic $p>2$, and suppose that we have an $E$-valued point $s:\Spec E\to\Sh(L)$ that extends to an $\Reg{E,(v)}$-valued point of $\Ss$. Then, in the notation of (\ref{hodge:prp:abelian}), the motive $\bm{V}_s\coloneqq\bm{L}_s\otimes\Rat$ belongs to the category $\mb{Mot}^{\circ}_{\on{Ab},\nu}(E)$. In fact, the crystalline realization of $\bm{V}_s$ can be identified with $\bm{L}_{\cris,s_0}\otimes\Rat$, where $s_0:\Spec k\to\Ss$ is the specialization of $s$, and $\bm{L}_{\cris,s_0}$ is the evaluation of $\bm{L}_{\cris}$ on the pro-nilpotent divided power thickening $\Spec k\into\Spec W(k)$. In particular, there exists a canonical crystalline comparison isomorphism
\begin{align}\label{ortho:eqn:criscomp}
\bm{L}_{p,\overline{s}}\otimes_{\Int_p}\Bcris\xrightarrow{\simeq}\bm{L}_{\cris,s_0}\otimes_{W(k)}\Bcris.
\end{align}
Here, $\overline{s}$ is a geometric point of $\Sh(L)$ lying above $s$.

The notion of a special endomorphism can also be extended to work over $\Ss$. The finite cover $\widetilde{\Sh}(L)\to\Sh(L)$ attached to the $\GSpin$ cover of $G_L$ extends to a finite \'etale map $\widetilde{\Ss}\to\Ss$ of (smooth) integral canonical models, and the Kuga-Satake abelian scheme also extends to an abelian scheme $A^{\KS}$ over $\widetilde{\Ss}$. If $\bm{H}_{\dR}$ is the relative first de Rham cohomology sheaf of $A^{\KS}$ over $\widetilde{\Ss}$, the inclusion $\bm{L}_{\dR,\Rat}\into\bm{H}_{\dR,\Rat}^{\otimes(1,1)}$ of vector bundles with flat connection over $\widetilde{\Sh}(L)$ extends to an inclusion $\bm{L}_{\dR}\subset\bm{H}_{\dR}^{\otimes(1,1)}$ over $\widetilde{\Ss}$. In turn, over $\widetilde{\Ss}_{\Field_p}$, this provides us an inclusion of $F$-crystals $\bm{L}_{\cris}\subset\bm{H}_{\cris}^{\otimes(1,1)}$.

If $s\to\widetilde{\Ss}$ is a point valued in a perfect field of characteristic $p>2$, we say that an endomorphism of $A^{\KS}_s$ is \defnword{special} if its crystalline realization in $\bm{H}^{\otimes(1,1)}_s$ actually lies in $\bm{L}_{\cris,s}$. This automatically implies that its $\ell$-adic realizations, for $\ell\neq p$, lie in $\bm{L}_{\ell,s}$; cf.~\cite{mp:reg}*{5.12}. In general, given an $\widetilde{\Ss}$-scheme $T$, we say that an endomorphism of $A^{\KS}_T$ is special if its restriction over every geometric point of $T$ is special.

Just as in the characteristic $0$ situation, the endomorphism sheaf of $A^{\KS}$ descends to a sheaf $\bm{E}$ over $\Ss$ along with the inclusion of crystals $\bm{L}_{\cris}\subset\bm{H}^{\otimes(1,1)}_s$. This allows us to speak of the group of `special endomorphisms' $L(T)$ over any $\Ss$-scheme $T$ even if the abelian scheme $A^{\KS}$ does not descend over $T$; cf.~\cite{mp:reg}*{8.13 }.

\subsection{}\label{ortho:subsec:embedding}
Suppose that we have a maximal quadratic lattice $L'$ of signature $(n+r,2)$ and an isometric embedding $L\into L'$ mapping $L$ onto a direct summand of $L'$. This gives rise to maps $\Ss\to\Ss(L')$ and $\widetilde{\Ss}\to\widetilde{\Ss}(L')$. There is now an additional notion of a special endomorphism over any $\Ss$-scheme $T$ arising from its induced structure as an $\Ss(L')$-scheme. Denote by $L'(T)$ the space consisting of this latter kind of special endomorphism.

Let $\Lambda=L^{\perp}\subset L'$. Then the relationship between $L(T)$ and $L'(T)$ can be described as follows~\cite{mp:reg}*{8.12}:
\begin{prp}\label{ortho:prp:specialemb}
There is a canonical isometric embedding $\Lambda\subset L'(\Ss)$, such that, for any $\Ss$-scheme $T$, we have a natural isometry:
\[
 L(T)\xrightarrow{\simeq}\Lambda^{\perp}\subset L'(T).
\]
This is compatible with isometries of sheaves:
\begin{align*}
  \bm{L}_{\ell}&\xrightarrow{\simeq}\Lambda^{\perp}\subset\bm{L}'_{\ell}\rvert_{\Ss[\ell^{-1}]},\text{ for any prime $\ell$;}\\
  \bm{L}_{\dR}&\xrightarrow{\simeq}\Lambda^{\perp}\subset\bm{L}'_{\dR}\rvert_{\Ss};\\
  \bm{L}_{\cris}&\xrightarrow{\simeq}\Lambda^{\perp}\subset\bm{L}'_{\dR}\rvert_{(\Ss_{\Field_p}/\Int_p)_{\cris}},\text{ for any prime $p>2$.}
\end{align*}
\end{prp}
\qed

\section{The Kuga-Satake period map over $\Int\left[\frac{1}{2}\right]$}\label{sec:ks}

In this section, we will study the classical period map for the moduli of K3 surfaces and show that it has an extension over $\Int[2^{-1}]$ with good properties. This allows us to extend the Kuga-Satake construction for K3 surfaces over fields of characteristic $p\neq 2$.

\subsection{}
Let $L_d$ be the quadratic lattice from (\ref{k3:subsec:level}): This is maximal at all primes $p>2$ such that $p^2\nmid d$ and has cyclic discriminant. So the theory of Section~\ref{sec:ortho} gives us an integral canonical model $\Ss(L_d)$ for $\Sh(L_d)$ over $\Int[2^{-1}]$. 

Over $\mathsf{M}_{2d,\Comp}^{\an}$, we have a natural isometric trivialization 
\[
\eta:\underline{\on{disc}(L_d)}\xrightarrow{\simeq}\on{disc}(\bm{P}^2_B).
\]
Indeed, for any point $s\to\mathsf{M}_{2d,\Comp}$, there is a canonical isometry:
\[
 \eta_s:\on{disc}(L_d)\xrightarrow{\simeq}\frac{N}{L_d\oplus\langle e-df\rangle}\xrightarrow{\simeq}\frac{\bm{H}^2_{B,s}}{\bm{P}^2_{B,s}\oplus\langle\on{ch}(\lambda)\rangle}\xrightarrow{\simeq}\on{disc}(\bm{P}^2_{B,s}),
\]
induced by any isometry $N\xrightarrow{\simeq}\bm{H}^2_{B,s}$ carrying $e-df$ to $\on{ch}(\lambda)$. Now, $\eta$ is the unique global isometry that interpolates the $\eta_s$.

Let $\tilde{\mathsf{M}}_{2d}\to\mathsf{M}_{2d}$ be the two-fold finite \'etale cover parameterizing isometric trivializations $\underline{\det(L_d)\otimes\Int_2}\xrightarrow{\simeq}\det(\bm{P}^2_2)$ of the determinant of the primitive $2$-adic cohomology of the universal quasi-polarized K3 surface. We can identify $\tilde{\mathsf{M}}_{2d,\Comp}$ with the space of isometric trivializations $\underline{\det(L_d)}\xrightarrow{\simeq}\det(\bm{P}^2_B)$ of the determinant of the primitive Betti cohomology.

Applying (\ref{ortho:prp:moduli}), we obtain:
\begin{prp}\label{ks:prp:torelli}
There is a natural period map
\[
 \iota_{\KS,\Comp}:\tilde{\mathsf{M}}_{2d,\Comp}\to\Sh(L_d)_{\Comp}
\]
attached to the tuple $(\bm{P}^2_B,F^\bullet\bm{P}^2_{\dR,\Comp},\eta,\beta)$, where $\beta$ is the tautological trivialization of $\det(\bm{P}^2_B)$ over $\tilde{\mathsf{M}}_{2d,\Comp}$.
\end{prp}
\qed
  
Cf. also~\cite{rizov:cm}*{Prop. 2.5} for a similar construction for (a finite cover of) $\mathsf{M}^{\circ}_{2d,\Comp}$, and~\cite{maulik}*{5.7} for its extension over the quasi-polarized locus.

\begin{prp}\label{ks:prp:hodgemotives}
For every point $s\in\tilde{\mathsf{M}}_{2d}(\Comp)$, there is a canonical isomorphism of $\Int$-motives:
\[
 \bm{L}_{\iota_{\KS,\Comp}(s)}(-1)\xrightarrow{\simeq}\bm{P}^2_s.
\]
\end{prp}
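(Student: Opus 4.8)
The plan is to first produce the desired isomorphism on the level of (variations of) Hodge structures, directly from the moduli-theoretic description of the period map, and then to upgrade it to an isomorphism of motives using the formalism of absolute Hodge cycles together with Deligne's Principle B. Recall from the proof of (\ref{ks:thm:torelli}) that $\iota_{\KS,\Comp}$ is by construction the morphism attached by (\ref{ortho:prp:moduli}) to the triple $\bigl(\bm{P}^2_B(1)\otimes\Int_{(p)},F^\bullet\bm{P}^2_{\dR,\Comp},\beta\bigr)$, where $\beta$ is the tautological $K^p$-level structure. Pulling the universal triple $(\bm{L}_B,F^\bullet\bm{V}_{\dR},\bm{\beta})$ over $\Sh_{d,K_p,\Comp}$ back along $\iota_{\KS,\Comp}$ therefore recovers this triple up to isomorphism; in particular one gets a canonical isomorphism of polarized variations of $\Int_{(p)}$-Hodge structures over $\mathsf{M}_{2d,K_p,\Comp}^{\an}$
\[
 \bm{\phi}_B:\iota_{\KS,\Comp}^*\bm{L}_B\xrightarrow{\simeq}\bm{P}^2_B(1)\otimes\Int_{(p)}
\]
carrying $\iota_{\KS,\Comp}^*\bm{\beta}$ to $\beta$ and the Hodge filtration of $\iota_{\KS,\Comp}^*\bm{V}_{\dR}$ to that of $\bm{P}^2_{\dR,\Comp}(1)$. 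Feeding $\bm{\phi}_B$ through the Betti--\'etale and Betti--de Rham comparison isomorphisms and restricting to a point $s$ gives, with $t=\iota_{\KS,\Comp}(s)$, a class $\bm{\phi}_s$ in the realizations of the $\Int_{(p)}$-motive $\bm{N}_s\coloneqq\dual{\bm{L}}_t(1)\otimes\bm{P}^2_s\otimes\Int_{(p)}=\Hom\bigl(\bm{L}_t(-1),\bm{P}^2_s\otimes\Int_{(p)}\bigr)$; here $\bm{L}_t$ is the $\Int_{(p)}$-motive of (\ref{ortho:subsec:motives}) and $\bm{P}^2_s$ the primitive degree $2$ motive of $X_s$. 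By construction $\bm{\phi}_s$ is a \emph{Hodge} class: $\bm{\phi}_{B,s}$ is a morphism of $\Int_{(p)}$-Hodge structures.

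It remains to show that $\bm{\phi}_s$ is \emph{absolutely} Hodge, for then it is a morphism of motives. The families $\bm{L}$ over $\Sh_{d,K}$ (\ref{ortho:subsec:motives}) and $\bm{P}^2$ over $\mathsf{M}_{2d}$ assemble, after pullback, into an admissible family of realizations over $\mathsf{M}_{2d,K_p,\Comp}$ whose fibers are realizations of motives, namely $\bm{N}=\iota_{\KS,\Comp}^*\dual{\bm{L}}(1)\otimes\bm{P}^2\otimes\Int_{(p)}$, and $\bm{\phi}$ is a flat global section of $\bm{N}_B$ that is Hodge at every point. By Deligne's Principle B \cite{dmos}*{I, Principle B} it therefore suffices to verify that $\bm{\phi}\otimes\Rat$ is absolutely Hodge at one point of each connected component of $\mathsf{M}_{2d,K_p,\Comp}$. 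For a base point I would take $s_0$ with $X_{s_0}$ a K3 surface with complex multiplication---such points are dense, being the preimages under $\iota_{\KS,\Comp}$ (which is surjective, the period map for quasi-polarized K3 surfaces over $\Comp$ being surjective) of the dense set of special points of $\Sh_{d,K}$---or, concretely, $X_{s_0}=\Kum(B)$ a Kummer surface, in which case the algebraic correspondence coming from the blow-up of $B/\{\pm 1\}$ identifies $h^2(X_{s_0})$ with $h^2(B)\oplus L^{\oplus 16}$, so that $\bm{P}^2_{s_0}\in\mb{Mot}_{\on{Ab}}(\Comp)$. Since $\bm{L}_{t_0}$ is always an abelian motive (\ref{ortho:subsec:motives}), $\bm{N}_{s_0}$ is then an abelian motive, and by (\ref{hodge:cor:abelian}) the Betti fiber functor is fully faithful on $\mb{Mot}_{\on{Ab}}(\Comp)$; hence the Hodge class $\bm{\phi}_{s_0}$, being a morphism of the underlying Hodge structures, is a morphism of motives, that is, absolutely Hodge. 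Principle B now propagates absolute Hodge-ness over all of $\mathsf{M}_{2d,K_p,\Comp}$.

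Finally, $\bm{\phi}_s$ being absolutely Hodge (rationally), it is a morphism $\bm{L}_t(-1)\to\bm{P}^2_s$ of $\Rat$-motives; its Betti realization $\bm{\phi}_{B,s}$ is an isomorphism and the Betti fiber functor is faithful and exact, so $\bm{\phi}_s$ is an isomorphism of motives. Its $\Adele_f$-realization is $\bm{\phi}_{B,s}\otimes\Adele_f$, which carries the $\widehat{\Int}_{(p)}$-lattice of $\bm{L}_t(-1)$ isomorphically onto that of $\bm{P}^2_s\otimes\Int_{(p)}$, since $\bm{\phi}_{B,s}$ is an $\Int_{(p)}$-linear isomorphism of $\Int_{(p)}$-lattices; hence $\bm{\phi}_s$ is an isomorphism of $\Int_{(p)}$-motives. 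It is canonical, being the specialization of $\bm{\phi}_B$, which is pinned down uniquely by the defining property of $\iota_{\KS,\Comp}$ together with the level structures. The only point here that is not routine bookkeeping is the absolute Hodge property of the previous paragraph---precisely the assertion that the Kuga--Satake correspondence attached to a polarized complex K3 surface is absolutely Hodge---and Principle B reduces it to the Kummer (or CM) base point, where it is manifest.
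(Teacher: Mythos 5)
Your proof follows essentially the same route as the paper's: produce the isometry of variations of Hodge structures from the moduli description of $\iota_{\KS,\Comp}$, then use Deligne's Principle~B to reduce the absolute Hodge property to a single Kummer point. The one genuine (if small) difference is at the base point: the paper cites the explicit computation of \cite{dmos}*{II.6.26(d)} (``show by hand''), whereas you observe that $h^2(\Kum(B))\cong h^2(B)\oplus L^{\oplus 16}$ via algebraic correspondences, so that $\bm{P}^2_{s_0}$ lies in $\mb{Mot}_{\on{Ab}}(\Comp)$ and the full faithfulness of the Betti functor there (\ref{hodge:cor:abelian}) does the rest; that is a clean and correct shortcut. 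Two points you gloss over: (i) you must actually exhibit a Kummer surface carrying a \emph{primitive} quasi-polarization of degree $2d$ for the given $d$ (and then invoke irreducibility of $\mathsf{M}_{2d,\Comp}$ to reach every component of the level cover) --- the paper devotes a paragraph to this, constructing $A=E\times E$ with the polarization $f\times(f\circ[d])$ of degree $d^2$ so that the induced class on the Kummer surface has self-intersection $2d$; and (ii) your proposed CM-point alternative is circular as stated, since knowing that $\bm{P}^2_{s_0}$ is an abelian motive at a CM point is essentially what one is trying to prove --- stick with the Kummer points.
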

\begin{proof}
  This is shown as in the proof of \cite{dmos}*{II.6.26(d)}. Here are some more details: To begin, from the very construction of $\iota_{\KS,\Comp}$ there exists a canonical isometry
\[
 \alpha_B:\iota_{\KS,\Comp}^*\bm{L}_B(-1)\xrightarrow{\simeq}\bm{P}^2_{B}
\]
of polarized variations of $\Int$-Hodge structures over $\tilde{\mathsf{M}}^{\an}_{2d,\Comp}$. We can view this as a section of the variation of $\Int$-Hodge structures $\bigl(\iota_{\KS,\Comp}^*\bm{H}_B^{\otimes(1,1)}\otimes\bm{P}^2_B\bigr)(1)$. After replacing $\tilde{\mathsf{M}}_{2d}$ by a finite \'etale cover $T$, we can view $\bm{H}_B^{\otimes(1,1)}$ as the relative cohomology sheaf of a family of abelian varieties.

As in \emph{loc. cit.}, we can show by hand that $\alpha_{B,s}$ is absolutely Hodge when $\bm{\mathcal{X}}_s$ is a Kummer K3. Now we can appeal to Principle B of \cite{dmos}*{Ch. I}, which states that a horizontal Hodge cycle (on a family of smooth projective varieties over a smooth connected variety) that is absolutely Hodge at one point is absolutely Hodge everywhere. To apply this, we have to show that every connected component of $T$ contains a Kummer point. Since $\mathsf{M}_{2d,\Comp}$ is irreducible (cf.~\ref{ks:cor:moduli}), it suffices to exhibit a single Kummer surface over $\Comp$ equipped with a primitive quasi-polarization of degree $2d$.

Let $A$ be an abelian surface over $\Comp$ equipped with a polarization $\lambda$ of degree $2d$. Then the Kummer surface $X$ attached to $A$ is constructed as follows: One takes the blow-up $\widetilde{A}$ of the $2$-torsion in $A$, and then quotients $\widetilde{A}$ by the action of the canonical lift $\iota$ of the involution $[-1]$ on $A$ given by multiplication by $-1$. Any polarization on $A$ gives rise to an ample class $\lambda\in\on{NS}(A)$ and the pull-back of $2\lambda=\lambda+[-1]^*\lambda$ over $\widetilde{A}$ descends to a quasi-polarization $\xi\in\on{NS}(X)$. Moreover, if the polarization is of degree $d^2$, then by Riemann-Roch~\cite{mumford:abvar}*{III.16}, $\lambda$ has self-intersection $2d$, and, since $\widetilde{A}\to X$ is a degree $2$ map of smooth surfaces, $\xi$ has self-intersection $2d$ as well.

So, to finish, we have to construct an abelian surface $A$ with a primitive polarization of degree $d^2$. For this, take $A=E\times E$, with $E$ an elliptic curve, and the polarization to be the endomorphism $f\times (f\circ[d])$, where $f:E\xrightarrow{\simeq}\dual{E}$ is the canonical polarization of $E$.
\end{proof}

\begin{corollary}[Rizov]\label{ks:cor:kugasatakechar0}
$\iota_{\KS,\Comp}$ descends to a map
\[
 \iota_{\KS,\Rat}:\tilde{\mathsf{M}}_{2d,\Rat}\to\Sh(L_d).
\]
\end{corollary}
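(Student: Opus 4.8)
The plan is to prove that $\iota_{\KS,\Comp}$ is equivariant for the natural actions of $\Aut(\Comp/\Rat)$ on its source and target, and then to conclude by Galois descent. Fix a compact open $K^p\subset G_d(\Adele_f^p)$. Since $\mathsf{M}_{2d,K^pK_p,\Rat}$ is a reduced algebraic space of finite type over $\Rat$ and $\Sh_{d,K^pK_p}$ is separated over $\Rat$, the morphism $\iota_{\KS,K^p,\Comp}$ descends to a $\Rat$-morphism once one knows that, for every $\tau\in\Aut(\Comp/\Rat)$, it agrees with its $\tau$-conjugate $\iota_{\KS,K^p,\Comp}^\tau$ (the $\Comp$-morphism obtained by transporting through the semilinear automorphisms of the base changes to $\Comp$ induced by $\tau$); and since a $\Comp$-morphism out of a reduced scheme into a separated scheme is determined by its values on a Zariski-dense set of $\Comp$-points, it is enough to verify this agreement on such a set. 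Passing to the limit over $K^p$ then gives the statement for $\mathsf{M}_{2d,K_p}$. Fixing an embedding $\overline{\Rat}\subset\Comp$, the $\overline{\Rat}$-points of $\mathsf{M}_{2d,K^pK_p}$ form such a dense set, so I am reduced to showing: for every $s\in\mathsf{M}_{2d,K^pK_p}(\overline{\Rat})$ and every $\tau\in\Aut(\overline{\Rat}/\Rat)$, one has $\iota_{\KS,\Comp}(s^\tau)=\iota_{\KS,\Comp}(s)^\tau$ in $\Sh_{d,K^pK_p}(\Comp)$, where $\iota_{\KS,\Comp}(s)^\tau$ denotes the image under the $\tau$-conjugation action on $\Comp$-points.

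The families of $\Int_{(p)}$-motives $\bm{P}^2$ on $\mathsf{M}_{2d}$ and $\bm{L}$ on $\Sh_{d,K}$ are defined over $\Rat$, so the formation of their fibres commutes with conjugation by $\tau$: there are canonical isomorphisms $\bm{P}^2_{s^\tau}\cong(\bm{P}^2_s)^\tau$ and $\bm{L}_{\iota_{\KS,\Comp}(s)^\tau}\cong(\bm{L}_{\iota_{\KS,\Comp}(s)})^\tau$, compatible with the level structures (the isometry $\bm{\beta}$ on the Shimura side, and the canonical isometry $\beta$ used to construct $\iota_{\KS,\Comp}$ in the proof of (\ref{ks:thm:torelli})). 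By (\ref{ks:prp:hodgemotives}) there is a canonical isomorphism $\bm{L}_{\iota_{\KS,\Comp}(s)}(-1)\cong\bm{P}^2_s\otimes\Int_{(p)}$, again compatible with the level structures; conjugating by $\tau$ gives $\bm{L}_{\iota_{\KS,\Comp}(s)^\tau}(-1)\cong\bm{P}^2_{s^\tau}\otimes\Int_{(p)}$, while applying (\ref{ks:prp:hodgemotives}) directly at $s^\tau$ gives $\bm{L}_{\iota_{\KS,\Comp}(s^\tau)}(-1)\cong\bm{P}^2_{s^\tau}\otimes\Int_{(p)}$. Thus the two points $\iota_{\KS,\Comp}(s)^\tau$ and $\iota_{\KS,\Comp}(s^\tau)$ of $\Sh_{d,K^pK_p}$ carry the same polarized $\Int_{(p)}$-motive equipped with the same prime-to-$p$ level structure. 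To conclude that they coincide I would invoke the motivic moduli interpretation of the canonical model $\Sh_{d,K_p}$ over number fields due to Milne \cite{milne:motive}, whose complex-analytic shadow is (\ref{ortho:prp:moduli}): a point of $\Sh_{d,K_p}$ valued in $\overline{\Rat}$ is pinned down by its associated polarized $\Int_{(p)}$-motive together with its prime-to-$p$ level structure, and $\Aut(\overline{\Rat}/\Rat)$ acts on such points by conjugation of this data. (One can instead localise the whole argument at the dense subset of CM K3 points — dense because $\iota_{\KS,\Comp}$ restricts to an open immersion on $\mathsf{M}^{\circ}_{2d,K_p,\Comp}$ by (\ref{ks:thm:torelli}), special points are dense in $\Sh_{d,K_p,\Comp}$, and $\mathsf{M}^{\circ}_{2d}$ is fibre-by-fibre dense in $\mathsf{M}_{2d}$ by (\ref{k3:prp:representability}) — and match the reciprocity law of the canonical model at special points with the main theorem of complex multiplication applied to the CM Hodge structure $PH^2(X_s)$; this is essentially the route of Rizov \cite{rizov:cm}.)

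The main obstacle is exactly this last input: upgrading the moduli description from $\Comp$, where it is (\ref{ortho:prp:moduli}), to a Galois-compatible statement over $\overline{\Rat}$, i.e. controlling the $\Aut(\overline{\Rat}/\Rat)$-action on the points of the canonical model of $\Sh_{d,K_p}$ through its family of motives — equivalently, the reciprocity law at special points of the canonical model and its compatibility, via (\ref{ks:prp:hodgemotives}), with the main theorem of complex multiplication. This is the content of \cite{milne:motive}, or alternatively of the main theorem of complex multiplication as in \cite{dmos}; it is available here because the GSpin and orthogonal Shimura varieties in play are of abelian type, so Deligne's theorem (\ref{hodge:thm:abshodge}) that Hodge cycles on abelian varieties are absolutely Hodge supplies the needed motivic inputs. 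Once this is granted, the $\Adele_f^p$-level (and $\ell$-adic) compatibilities used above require no separate verification, since the isomorphism in (\ref{ks:prp:hodgemotives}) is an isomorphism of $\Int_{(p)}$-motives and hence its étale realisation is automatically Galois-equivariant.
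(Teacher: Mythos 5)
Your proposal is correct, and its engine --- conjugating the isomorphism of $\Int_{(p)}$-motives from (\ref{ks:prp:hodgemotives}) and using that the families $\bm{L}$ and $\bm{P}^2$, together with their level-structure data, are defined over the $\Rat$-models --- is exactly the paper's. Where you diverge is at the closing step, and in fact you have misplaced the difficulty. You reduce to $\overline{\Rat}$-points and then feel compelled to invoke an arithmetic, Galois-equivariant moduli description of the canonical model over $\overline{\Rat}$ (Milne's theory, or equivalently the reciprocity law at special points); but the two points you must compare, $\iota_{\KS,\Comp}(s)^\tau$ and $\iota_{\KS,\Comp}(s^\tau)$, are $\Comp$-points of $\Sh_{d,K_p,\Comp}$, and the purely complex-analytic statement (\ref{ortho:prp:moduli}) already says that $\Comp$-points are separated by their tuples (polarized $\Int_{(p)}$-Hodge structure, determinant trivialization, trivializing $\Adele_f^p$-isometry). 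Your conjugated motivic isomorphism yields an isomorphism of precisely these tuples at the two points --- the Betti realization of an isomorphism of motives over $\Comp$ is an isomorphism of Hodge structures --- so they coincide with no further input. This is how the paper concludes. Your fallback via CM points and Shimura--Taniyama reciprocity is Rizov's original argument \cite{rizov:cm}, which the paper records and then deliberately replaces by the motivic one; and your primary route through \cite{milne:motive} is viable but imports the characterization of the canonical model by its reciprocity law, which is strictly more than is needed once (\ref{ks:prp:hodgemotives}) is in hand.
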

\begin{proof}
This is essentially \cite{rizov:cm}*{3.16} (cf. also~\cite{maulik}*{5.7}). Rizov shows that the map descends over $\Rat$ by proving the existence of a dense set of `CM points', for which the reciprocity law is compatible with Shimura-Taniyama reciprocity for CM points on the canonical model $\Sh(L_d)$.

But we will provide a different proof, using the theory of motives for absolute Hodge cycles. It is enough to see that, for every $\sigma\in\Aut(\Comp)$, $\iota_{\KS,\Comp}\circ\sigma=\sigma\circ\iota_{\KS,\Comp}$. For this, from (\ref{ortho:prp:moduli}), it is enough to see that both maps induce the same tuples (up to isomorphism) over $\tilde{\mathsf{M}}_{2d,\Comp}$. This is easy to deduce from the following consequence of (\ref{ks:prp:hodgemotives}): For every $s\in\tilde{\mathsf{M}}_{2d,\Comp}$, there are canonical isomorphisms of $\Int$-Hodge structures:
\[
 \bm{L}_{\sigma(\iota_{\KS,\Comp}(s))}(-1)\xrightarrow{\simeq}\bm{P}^2_{\sigma(s)}\xrightarrow{\simeq}\bm{L}_{\iota_{\KS,\Comp}(\sigma(s))}(-1).
\]
\end{proof}

\subsection{}\label{ks:subsec:motivic}
For the sake of convenience, given any sheaf $F$ over $\Sh(L_d)$ (with respect to any of the natural Grothendieck topologies), we will denote its pull-back along $\iota_{\KS,\Comp}$ again by the same letter $F$. This will apply in particular to the various realizations of the family of $\Int$-motives $\bm{L}$.

Via the de Rham comparison isomorphism, $\alpha_B$ gives rise to a canonical isometry of polarized filtered vector bundles with flat connection:
\[
 \alpha_{\dR,\Comp}:\bm{L}_{\dR,\Comp}(-1)\xrightarrow{\simeq}\bm{P}^2_{\dR,\Comp}.
\]
That this isometry is algebraic follows from \cite{deligne:eqsdiff} and the fact that both flat bundles have regular singularities along the boundary divisor in a suitable compactification of $\tilde{\mathsf{M}}^{\circ}_{2d,\Comp}$.

Via Artin's comparison isomorphisms, for any prime $\ell$, we also obtain compatible isometries of polarized local systems on $\tilde{\mathsf{M}}_{2d,\Comp}$:
\begin{align*}
 \alpha_{\ell}:\bm{L}_{\ell}(-1)&\xrightarrow{\simeq}\bm{P}^2_{\ell}\\
\end{align*}

\begin{prp}\label{ks:prp:descent}
\mbox{}
\begin{enumerate}
\item\label{descent:etale}For each prime $\ell$, the isometry $\alpha_{\ell}$ is defined over $\tilde{\mathsf{M}}_{2d,\Rat}$ (and hence over $\tilde{\mathsf{M}}_{2d,\Int[(2\ell)^{-1}]}$).
\item\label{descent:derham}The isomorphism $\alpha_{\dR,\Comp}$ descends to an isometry
\[
 \alpha_{\dR,\Rat}:\bm{L}_{\dR,\Rat}(-1)\xrightarrow{\simeq}\bm{P}^2_{\dR,\Rat}
\]
of filtered polarized vector bundles with flat connection over $\tilde{\mathsf{M}}_{2d,\Rat}$.
\item\label{descent:motive}For every point $s:\Spec F\to\tilde{\mathsf{M}}_{2d,\Rat}$, there is a canonical isometry of $\Int$-motives $\bm{L}_s(-1)\xrightarrow{\simeq}\bm{P}^2_s$. In particular, $\bm{P}^2_s$ is a motive in $\mb{Mot}_{\on{Ab}}(F)$ with $\Int$-structure.
\item\label{descent:derhamcomp}If $\nu:F\to\Int$ is a discrete valuation on $F$, then the isomorphism $\bm{L}_s(-1)\xrightarrow{\simeq}\bm{P}^2_s$ is a map of motives in $\mb{Mot}_{\on{AD},v}(F)$ with $\Int$-structure (cf.~\ref{hodge:subsec:admotives}).
\end{enumerate}
\end{prp}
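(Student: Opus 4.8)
The plan is to reduce everything to the absolute Hodge statement (\ref{ks:prp:hodgemotives}) together with the criterion (\ref{hodge:prp:abshodge})(3)/(\ref{hodge:thm:motives})(2) for a morphism of motives to be defined over the ground field, namely that it suffices to check Galois-equivariance of a single $\ell$-adic realization. I would proceed in four steps corresponding to the four assertions, propagating descent data from $\Comp$ down to $\Rat$, and then down to the local field $F$.

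First, for (\ref{descent:etale}): the isometries $\eta_\ell$ and $\eta_p$ are constructed over $\Comp$ via Artin comparison, but the $\ell$-adic sheaves $\bm{V}_\ell$, $\bm{L}_p$, $\bm{P}^2_\ell$, $\bm{P}^2_p$ all already exist over $\mathsf{M}_{2d,K,\Rat}$ — the first pair by (\ref{ortho:subsec:motives}) and the description of level structures, the second by (\ref{k3:subsec:elladic}). So the content is that the comparison isometry is Galois-equivariant. This is immediate from the definition of the level structure: by construction of $\iota_{\KS,\Rat}$ in (\ref{ks:thm:torelli}), the map was arranged precisely so that $\iota_{\KS,\Comp}^*\bm{\beta}$ matches the tautological level-structure isometry $\beta:\underline{V}_{d,\Adele_f^p}\xrightarrow{\simeq}\bm{P}^2_{\Adele_f^p}(1)$; since $\iota_{\KS,\Rat}$ is defined over $\Rat$ (\ref{ks:cor:kugasatakechar0}) and $\bm{\beta}$ is defined over the pro-variety $\Sh_{d,K_p}$, this identification is automatically Galois-equivariant. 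For $\eta_p$ one uses instead the inclusion $\bm{L}_p\subset\bm{H}_p^{\otimes(1,1)}$ from (\ref{ortho:subsec:projector}) and the analogous $p$-adic level data; alternatively, once $\eta_{\dR}$ is known to descend (Step 2) and one $\ell\neq p$ case is done, $\eta_p$ follows from (\ref{descent:motive}).

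Second, (\ref{descent:derham}): $\eta_{\dR,\Comp}$ is known to be an \emph{algebraic} isomorphism of flat bundles over $\mathsf{M}_{2d,K,\Comp}$ by the cited regular-singularities argument (\cite{deligne:eqsdiff}). Both source and target bundles, $\bm{V}_{\dR}$ and $\bm{P}^2_{\dR,\Rat}$, are already defined over $\Rat$ — $\bm{V}_{\dR}$ by (\ref{ortho:subsec:sheaves}) and $\bm{P}^2_{\dR,\Rat}$ by the construction in (\ref{sec:k3}). An algebraic morphism between $\Rat$-schemes' coherent sheaves that exists after base change to $\Comp$ descends iff it is $\Aut(\Comp/\Rat)$-invariant; but descent of $\eta_\ell$ from Step 1 plus the compatibility of de Rham and étale realizations under the comparison isomorphism (the two are linked through the fact that $\eta_B$ is the common source of both $\eta_{\dR,\Comp}$ and $\eta_\ell$) forces this invariance. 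Concretely one checks that for any $\sigma\in\Aut(\Comp/\Rat)$, $\sigma^*\eta_{\dR,\Comp}$ and $\eta_{\dR,\Comp}$ have the same $\ell$-adic avatar under the comparison isomorphism, hence agree.

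Third, (\ref{descent:motive}): for a point $s:\Spec F\to\mathsf{M}_{2d,K_p,\Rat}$ with $F\into\Comp$, (\ref{ks:prp:hodgemotives}) gives a canonical isomorphism of $\Int_{(p)}$-Hodge structures $\bm{L}_{\iota_{\KS}(s)}(-1)\xrightarrow{\simeq}\bm{P}^2_s\otimes\Int_{(p)}$ over $\overline{F}$ (more precisely over $\Comp$, and by (\ref{hodge:prp:abshodge})(4) this descends to $\overline{F}$). By (\ref{hodge:prp:abshodge})(1)–(3), its $\ell$-adic realization is exactly $\eta_{\ell,s}$, which is $\Gamma_F$-equivariant by Step 1; hence by (\ref{hodge:thm:motives})(2) and the $R$-structure compatibility from (\ref{hodge:subsec:rstruc}) the isomorphism is a morphism of $\Int_{(p)}$-motives over $F$. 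That $\bm{P}^2_s$ is then an object of $\mb{Mot}_{\on{Ab}}(F)$ with $\Int$-structure follows since $\bm{L}_{\iota_{\KS}(s)}$ is, by (\ref{ortho:subsec:motives}), a sub-motive of $\bm{H}_{(p)}^{\otimes(1,1)}$ attached to a Kuga–Satake abelian variety (after a finite extension $F'/F$, but the motive itself is defined over $F$), and the $\Int$-structure on $\bm{P}^2_s$ is the one from the integral étale cohomology of the K3 surface, which under $\eta_{\widehat{\Int}}$ corresponds to an $\Aut(\overline F/F)$-stable lattice.

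Fourth, (\ref{descent:derhamcomp}): given a discrete valuation $\nu$ on $F$, I must upgrade the morphism of motives just produced to a morphism in $\mb{Mot}_{\on{AD},\nu}(F)$, i.e. check that the cycle in $\on{AH}\bigl((\bm{L}_{\iota_{\KS}(s)}(-1))^\vee\otimes \bm{P}^2_s\bigr)$ defining $\eta_s$ is \emph{de Rham} with respect to $\nu$. The key input is (\ref{hodge:thm:blasius}): every absolutely Hodge cycle on an abelian variety is de Rham. Since $\bm{L}_{\iota_{\KS}(s)}$ sits inside $\bm{H}_{(p),s_{\on{sp}}}^{\otimes(1,1)}$ for a Kuga–Satake abelian variety (over a finite extension $F'/F$; but by (\ref{hodge:lem:abscrys}) the notion of de Rham is insensitive to finite extensions, and by (\ref{hodge:prp:abshodge})(3) being de Rham can be checked on the $\ell$-adic realization being Galois-invariant — already known), and $\bm{P}^2_s$ is identified by (\ref{descent:motive}) with such a sub-motive, $\eta_s$ becomes a cycle on a product of an abelian variety with (a Tate twist of) the same, hence lies in $\mb{Mot}_{\on{Ab}}$, hence is de Rham by (\ref{hodge:thm:blasius}). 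The $\Int_{(p)}$-structure is respected because it already was at the level of plain $\Int_{(p)}$-motives from Step 3, and the de Rham refinement only restricts the morphisms, not the lattices.

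The main obstacle is Step 4 — more precisely, making sure that the identification of $\bm{P}^2_s$ as an honest sub-motive of the $H^1$ of an abelian variety, rather than merely a motive whose realizations embed compatibly, is legitimate, so that Blasius–Wintenberger (\ref{hodge:thm:blasius}) genuinely applies. One has to be slightly careful that the motive $\bm{P}^2_s$ lands in $\mb{Mot}_{\on{Ab}}(F)$ and not merely $\mb{Mot}_{\on{Ab}}(F')$: this follows from the final parenthetical remark in (\ref{ortho:subsec:motives}) (the motive $\bm{H}_{(p),s_{\on{sp}}}^{\otimes(1,1)}$ and the inclusion $\bm{L}_{s_{\on{sp}}}\subset\bm{H}_{(p),s_{\on{sp}}}^{\otimes(1,1)}$ descend to $F$) combined with the descent in Step 3, but this bookkeeping is the delicate point. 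Everything else is a routine application of the "check one $\ell$-adic realization is Galois-equivariant" principle of (\ref{hodge:thm:motives})(2) and (\ref{hodge:prp:abshodge})(3).
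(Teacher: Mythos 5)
Your steps (1)--(3) track the paper's proof. The prime-to-$p$ case of (1) is exactly the observation that the composite of the two tautological trivializations of the $\Adele_f^p$-sheaves is the identity, which is defined over $\Rat$; (3) is the combination of (\ref{ks:prp:hodgemotives}) with the ``check one $\ell$-adic realization is Galois-equivariant'' criterion of (\ref{hodge:thm:motives}); and the paper then obtains (2) and the $\ell=p$ case of (1) by applying (3) to the generic points of $\mathsf{M}_{2d,K,\Rat}$ --- the route you mention only as an ``alternative.'' Your primary route for $\eta_p$ via ``analogous $p$-adic level data'' does not exist as stated (the level structure is prime-to-$p$), and your direct $\Aut(\Comp/\Rat)$-invariance check for $\eta_{\dR,\Comp}$ secretly reuses the absolute-Hodge property of $\eta_B$; both issues disappear if you simply derive (1) for $\ell=p$ and (2) from (3), as the paper does.

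The genuine gap is Step 4. Blasius--Wintenberger (\ref{hodge:thm:blasius}) applies to Hodge cycles on products of abelian varieties, whereas $\eta_s$ is a class on $X\times A^{\KS}\times A^{\KS}$ with $X$ a K3 surface: the de Rham condition of (\ref{hodge:defn:absderham}) concerns the comparison isomorphism $\gamma_{\dR}$ for this product, whose $H^2(X)$-factor is not governed by that theorem. Your attempted repair --- that $\bm{P}^2_s$ is identified with a sub-motive of $H^1(A)^{\otimes(1,1)}$ and hence the cycle ``lies in $\mb{Mot}_{\on{Ab}}$'' --- is circular: the identification in question \emph{is} the cycle $\eta_s$, and one may only transport the de Rham property along an isomorphism of motives that is already known to be de Rham. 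The paper instead appeals to the argument of \cite{blasius}*{3.1(3)}: one verifies the de Rham property directly at Kummer points, where the primitive cohomology of $X$ is realized inside the cohomology of an abelian variety by an algebraic correspondence (and algebraic classes are de Rham by (\ref{hodge:defn:abscrys}) and (\ref{hodge:lem:abscrys})), and then propagates to all points by the horizontality of both $\eta_{\dR}$ and $\gamma_{\dR}(\eta_p\otimes 1)$ in the family --- a de Rham analogue of Deligne's Principle B, as in the proof of (\ref{ks:prp:hodgemotives}). Without this, your proof of (4) does not go through.
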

\begin{proof}
To prove (\ref{descent:etale}), it is sufficient to show that the $\ell$-adic sheaf $\bm{P}^2_{\ell}$ does not admit any non-trivial isometries over $\tilde{\mathsf{M}}_{2d,\Comp}$ with trivial determinant. This follows from the fact that the attached monodromy representation is irreducible---a fact that can be deduced from the openness of the period map $\iota^{\KS}_{\Comp}$; cf.~\cite{deligne:k3weil}*{6.4}.

By (\ref{ks:prp:hodgemotives}), given a point $s\in\tilde{\mathsf{M}}_{2d}(\Comp)$, the isometry of Hodge structures
\[
 \alpha_{B,s}:\bm{L}_{B,s}(-1)\xrightarrow{\simeq}\bm{P}^2_{B,s}
\]
is absolutely Hodge. If we are now given a point $s\in\tilde{\mathsf{M}}_{2d}(F)$, where $F$ is a field of characteristic $0$ that is embeddable in $\Comp$, using (\ref{descent:etale}) for any $\ell$ and (\ref{hodge:prp:basechange}), we find that there exists a unique isometry of polarized $\Int$-motives
\[
  \alpha_s:\bm{L}_{s}(-1)\xrightarrow{\simeq}\bm{P}^2_{s}
\]
such that, for any embedding $\tau:F\into\Comp$, it induces the realizations $\alpha_{B,\tau(s)}$, $\alpha_{\ell,\tau(s)}$ and $\alpha_{\dR,\tau(s)}$. This shows (\ref{descent:motive}).

Applying (\ref{descent:motive}) to the generic points of $\tilde{\mathsf{M}}_{2d,\Rat}$, we get (\ref{descent:derham}).

(\ref{descent:derhamcomp}) now follows from the argument used for the proof of \cite{blasius}*{3.1(3)}; cf. also the proof of (\ref{ks:prp:hodgemotives}).
\end{proof}

\begin{prp}\label{ks:prp:kugasatakecharp}
$\iota^{\KS}_{\Rat}$ extends to a map
\[
  \iota^{\KS}:\tilde{\mathsf{M}}_{2d}\rightarrow \Ss(L_d)
\]
\end{prp}
\begin{proof}
  For every compact open $K\subset K_{L_d}$, write $\tilde{\mathsf{M}}_{2d,K,\Rat}$ for the pull-back of $\mathsf{M}_{2d,K,\Rat}$ over $\tilde{\mathsf{M}}_{2d,\Rat}$. Then the map $\iota^{\KS}_{\Rat}$ lifts naturally to a map $\iota^{\KS}_{K,\Rat}:\tilde{\mathsf{M}}_{2d,K,\Rat}\to\Sh_K(L_d)$.

  Fix a prime $p>2$, and write $\tilde{\mathsf{M}}_{2d,K_{L_d,p}}$ for the $\Int_{(p)}$-scheme defined as the inverse limit
  \[
    \varprojlim_{K^p\subset G_d(\Adele_f^p)}\tilde{\mathsf{M}}_{2d,K_{L_d,p}K^p,\Int_{(p)}}.
  \]  
  Then $\tilde{\mathsf{M}}_{2d,K_{L_d,p},\Rat}$ admits a map $\iota^{\KS}_{K_{L_d,p},\Rat}$ to $\Sh_p(L_d)$ giving rise to $\iota^{\KS}_{K,\Rat}$ at each finite level. Therefore, if $\nu_p(d)\leq 1$ (resp. $\nu_p(d)>1$), by the extension property of the integral canonical model $\Ss_p(L_d)$ (resp. the smooth integral canonical model $\Ss^{\on{sm}}_p(L_d)$), $\iota^{\KS}_{K_{L_d,p},\Rat}$ extends uniquely to a map
  \[
   \iota^{\KS}_{K_{L_d,p}}:\tilde{\mathsf{M}}_{2d,K_{L_d,p}}\to\Ss_p(L_d)\text{ (or $\Ss^{\on{sm}}_p(L_d)$)}.
  \]
  In turn, this gives us an extension over $\Int_{(p)}$:
  \[
  \iota^{\KS}_{\Int_{(p)}}:\tilde{\mathsf{M}}_{2d,\Int_{(p)}}\to\Ss(L_d)_{\Int_{(p)}}.
  \]
  
  \comment{To prove that its restriction to $\mathsf{M}^{\ord,\circ}_{2d,K_p,\Int_{(p)}}$ is open, we work with the finite level spaces $\mathsf{M}^{\ord,\circ}_{2d,K,\Int_{(p)}}$, for $K^p$ small enough.

  Let us denote the induced map $\mathsf{M}_{2d,K,\Int_{(p)}}\to\Ss_{d,K}$ again by $\iota^{\KS}$. It is enough to show that the restriction of $\iota^{\KS}$ to the ordinary locus is quasi-finite. Indeed, by \cite{laumon_m-b}*{16.5}, there would then exist a finite $\Ss_{d,K}$-scheme $\mathscr{Z}$ and a factoring as below, where the top arrow is a dense open immersion.
  \begin{diagram}
    \mathsf{M}^{\ord,\circ}_{2d,K,\Int_{(p)}}&\rInto&\mathscr{Z}\\
    &\rdTo_{\iota^{\KS}}&\dTo\\
    &&\Ss_{d,K}.
  \end{diagram}
  Since $\Ss_{d,K}$ is a normal scheme, and since $\iota^{\KS}_{\Rat}$ restricted to $\mathsf{M}^{\circ}_{2d,K,\Rat}$ is an open immersion, it follows that $\mathscr{Z}\to\Ss_{d,K}$ is in fact an isomorphism onto a union of connected components of $\Ss_{d,K}$. So the restriction of $\iota^{\KS}$ to $\mathsf{M}^{\ord,\circ}_{2d,K,\Int_{(p)}}$ is an open immersion.

  It remains to show the quasi-finiteness of the restriction. This is due to Rizov and B. Moonen~\cite{rizov:kugasatake}*{4.2.3}. The proof uses the theory of canonical lifts of ordinary K3 surfaces, and quasi-finiteness in characteristic $0$. To be precise, the result as cited only applies when $p\nmid d$, but the argument only needs the following fact, proven unconditionally by Nygaard~\cite{nygaard}*{Corollary 2.8}: The Kuga-Satake abelian variety $A$ attached to the canonical lift $X$ of an ordinary K3 surface $X_0$ (and any choice of polarization on $X_0$; line bundles on $X_0$ lift uniquely over $X$) is isogenous to the canonical lift of the Kuga-Satake abelian variety $A_0$ attached to $X_0$ (which he also shows to be ordinary).}
\end{proof}
\comment{
\begin{corollary}\label{ks:cor:connected}
Every geometrically connected component of $\mathsf{M}_{2d,K,\Field_p}$ is the specialization of a geometrically connected component of $\mathsf{M}_{2d,K,\Rat}$. In particular, $\mathsf{M}_{2d,\Field_p}$ is a geometrically irreducible Deligne-Mumford stack.
\end{corollary}
\begin{proof}
We first observe that $\mathsf{M}^{\ord,\circ}_{2d,K,\Int_{(p)}}$ is dense fiber-by-fiber in $\mathsf{M}_{2d,K,\Int_{(p)}}$, whose special fiber is normal. So it suffices to prove the result for the former. But now $\mathsf{M}^{\ord,\circ}_{2d,K,\Int_{(p)}}$ is an open sub-scheme of $\Ss^{\on{pr}}_{d,K}$, whose special fiber is again normal. So the result follows from (\ref{ortho:cor:conncomp}).

For the second, it is now enough to show that $\mathsf{M}_{2d,\Comp}$ is an irreducible stack, which is well-known; cf. for example~\cite{barth_peters_vdv}*{Ch. VIII}.
\end{proof}
}
The main result of this section is:
\begin{thm}\label{ks:thm:kugasatakecharp}
The map $\iota^{\KS}$ is \'etale.
\end{thm}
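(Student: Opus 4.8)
The plan is to fix a small enough $K^{p}$ (so that $K=K^{p}K_{p}$ is neat) and work with the induced $\iota^{\KS}\colon\mathsf{M}_{2d,K,\Int_{(p)}}\to\Ss^{\on{pr}}_{d,K}$, étaleness at this finite level giving it at pro-level. First I would check étaleness point by point and reduce everything to a comparison of Zariski tangent spaces in characteristic $p$. By Theorem~\ref{ks:thm:torelli} and Corollary~\ref{ks:cor:kugasatakechar0} the generic fibre $\iota^{\KS}_{\Rat}$ is already étale, so the non-étale locus of $\iota^{\KS}$ is a closed subset of the special fibre $\mathsf{M}_{2d,K,\Field_p}$; passing to completions along $\overline{\Field}_p$-points, it suffices to show that for every $\overline{\Field}_p$-point $x$ of the special fibre, corresponding to a quasi-polarized K3 surface $(X_0,\xi_0)$ and with image $s=\iota^{\KS}(x)$, the local homomorphism $\widehat{\mathcal{O}}_{\Ss^{\on{pr}}_{d,K},s}\to\widehat{\mathcal{O}}_{\mathsf{M}_{2d,K},x}$ is an isomorphism. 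By Theorem~\ref{k3:thm:deformation} and Proposition~\ref{ortho:prp:ssdkpr} both complete local rings are quasi-healthy regular, hence regular of dimension $20$; so the map is an isomorphism as soon as it is surjective, i.e.\ as soon as $d\iota^{\KS}_{x}$ is injective on tangent spaces, which, both spaces being $20$-dimensional over $k=\overline{\Field}_p$, is the same as asking it to be an isomorphism.

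Next I would compute the two tangent spaces. By Lemma~\ref{k3:lem:tangentspace}, $T_{x}\mathsf{M}_{2d,K}=\Def_{(X_0,\xi_0)}(k[\epsilon])$ is the space of isotropic lines in $PH^{2}_{\dR}(X_0/k)\otimes_{k}k[\epsilon]$ lifting $F^{2}H^{2}_{\dR}(X_0/k)$ — which lies in $PH^{2}_{\dR}(X_0/k)$ since $\on{ch}_{\dR}(\xi_0)\in F^{1}H^{2}_{\dR}(X_0/k)=(F^{2}H^{2}_{\dR}(X_0/k))^{\perp}$ — whereas by Lemma~\ref{ortho:lem:tangentspace}, $T_{s}\Ss^{\on{pr}}_{d,K}$ is the space of isotropic lines in $\bm{L}_{\dR,s}\otimes_{k}k[\epsilon]$ lifting $F^{1}\bm{L}_{\dR,s}$. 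These two descriptions have exactly the same shape, so the theorem reduces to producing a filtered isometry
\[
 \theta\colon PH^{2}_{\dR}(X_0/k)\xrightarrow{\simeq}\bm{L}_{\dR,s},\qquad \theta\bigl(F^{i+1}PH^{2}_{\dR}(X_0/k)\bigr)=F^{i}\bm{L}_{\dR,s},
\]
compatible with the period construction of $\iota^{\KS}$, in the sense that $d\iota^{\KS}_{x}$ is the bijection on lifts induced by $\theta$: it sends a first-order deformation $(X,\xi)/k[\epsilon]$, whose class is $F^{2}H^{2}_{\dR}(X/k[\epsilon])$, to $\theta\bigl(F^{2}PH^{2}_{\dR}(X/k[\epsilon])\bigr)=F^{1}\bm{L}_{\dR}$. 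Granting $\theta$, $d\iota^{\KS}_{x}$ is an isomorphism and we are done.

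The construction of $\theta$ — equivalently, the compatibility of the Kuga--Satake map with de Rham realizations on the \emph{integral} level — is the real content. Over the generic fibre Proposition~\ref{ks:prp:descent}(\ref{descent:derham}) provides the filtered isometry $\eta_{\dR,\Rat}\colon\bm{V}_{\dR}(-1)\xrightarrow{\simeq}\bm{P}^{2}_{\dR,\Rat}$; by Proposition~\ref{ks:prp:descent}(\ref{descent:derhamcomp}), Theorem~\ref{hodge:thm:blasius} and Lemma~\ref{hodge:lem:abscrys} it is a morphism of crystalline motives, and by Proposition~\ref{ks:prp:descent}(\ref{descent:etale}) it identifies the $p$-adic étale lattices $\bm{P}^{2}_{p}$ and $\bm{L}_{p}(-1)$. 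I would obtain $\theta$ by passing to crystalline realizations at a $W(k)$-valued lift $\widetilde{x}$ of $x$: both $PH^{2}_{\dR}(X_{\widetilde{x}}/W(k))$ (Corollary~\ref{ks:cor:k3strongdiv}) and the pullback of $\bm{L}_{\dR}$ from $\Ss^{\on{pr}}_{d,K_{\on{sp}}}$ (inside $H^{1}_{\dR}(A^{\KS})^{\otimes(1,1)}$; strongly divisible by \S\ref{ortho:subsec:hodgetate}) are strongly divisible filtered $F$-crystals with isomorphic $p$-adic étale realizations, compatibly with the isocrystal isomorphism coming from $\eta_{\dR,\Rat}$. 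When $p\geq 5$ the Hodge--Tate weights $\{0,1,2\}$ of $PH^{2}$ lie in $[0,p-2]$, so the full faithfulness of the Fontaine--Laffaille functor forces this isocrystal isomorphism to preserve lattices and filtrations, yielding $\theta$ and, run over $\widehat{\mathcal{O}}_{\mathsf{M}_{2d,K},x}$, its family version over the formal neighbourhood. When $p=3$ this fails, and here I would follow the strategy indicated for Theorem~\ref{intro:thm:open}: first prove the integral compatibility over the \emph{ordinary} locus using the integral Hodge--Tate comparison isomorphism of Bloch--Kato — which carries no constraint on the length of the filtration, the ordinary reduction supplying the needed splitting — and then, since the ordinary locus is dense and $\mathsf{M}_{2d,K,\Int_{(p)}}$ is healthy regular with normal special fibre (Corollary~\ref{k3:cor:regular}), propagate the isomorphism of crystals from this dense open to the whole special fibre, treating the finitely many superspecial points separately via their quasi-healthy regular structure (Theorem~\ref{k3:thm:deformation}(\ref{deform:superspecial})) and Grothendieck--Messing theory.

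I expect this last step — securing the integral de Rham/crystalline compatibility for $p=3$ and propagating it off the ordinary locus — to be the main obstacle; once $\theta$ is available at every characteristic-$p$ point and in its first-order neighbourhood, the reduction to tangent spaces and the conclusion are formal.
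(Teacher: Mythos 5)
Your reduction to complete local rings and tangent spaces, and the identification of both tangent spaces with isotropic lines lifting the Hodge line, is exactly the paper's argument; as you say, the entire weight of the proof rests on the integral filtered isometry $\theta$, which is Proposition (\ref{ks:prp:cris}) in the text. Where you diverge is in how you produce it. The paper does not bifurcate on $p$: for every odd $p$ it proves integrality of $\eta_{\dR,W_{\Rat}}$ at ordinary points (Lemma \ref{ks:lem:ordinary}) using the Bloch--Kato integral Hodge--Tate comparison together with the slope filtrations and the strong divisibility of both lattices, and then extends. Your Fontaine--Laffaille branch for $p\geq 5$ is legitimate --- it is essentially Maulik's argument, which the paper cites as an ``essentially equivalent statement with stronger hypotheses on $d$ and $p$'' --- but it is precisely what the paper is engineered to avoid, since it breaks at $p=3$; the uniform Bloch--Kato route handles $p=3$ at no extra cost for larger $p$. (Two small points: passing from integrality at each $W$-valued lift to integrality over the whole deformation ring $R$ needs a separate argument via strong divisibility, which the paper delegates to Maulik; and the relative tangent spaces of isotropic lines have dimension $19$ at smooth points, not $20$, though this does not affect the dimension count you actually use.)

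The one place your proposal is materially incomplete is the propagation step, which you correctly flag as the main obstacle but for which you name the wrong tools. Density of the ordinary locus is not by itself a mechanism for extending an integral isomorphism of lattices: if the non-ordinary locus were a divisor in the arithmetic total space, nothing would force $\eta_{\dR}$ to remain integral across it. What the paper uses is that the locus $U$ where $\eta_{\dR}$ is integral is open, contains the generic fibre and the ordinary locus of the special fibre, and hence has complement of codimension $\geq 2$ in the normal (indeed regular) space $\mathsf{M}_{2d,K,\Int_{(p)}}$; restriction of vector-bundle homomorphisms to the complement of a codimension-$2$ closed subset of a normal space is fully faithful, so $\eta_{\dR}$ extends, and remains a filtered isometry because these conditions can be checked after inverting $p$. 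Once this is in place the superspecial points require no separate treatment --- normality is all that is used there --- and Grothendieck--Messing theory plays no role in this step. You should replace your pointwise treatment of the superspecial locus with this codimension argument, which in fact handles the entire non-ordinary locus at once.
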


We will need a few preliminaries before we can prove (\ref{ks:thm:kugasatakecharp}), the main input being (\ref{ks:prp:cris}) below. The proof will appear right below that of \emph{loc. cit.}

\comment{The filtered $F$-crystal is \defnword{strongly divisible} if:
  \[
   F\biggl(\varphi^*\bigl(\sum_ip^{-i}\Fil^iM\bigr)\biggr)=M.
  \]
Denote by $\mathsf{MF}_R$ (resp. $\mathsf{MF}_R^{\on{sd}}$) the category of filtered $F$-crystals (resp. strongly divisible $F$-crystals) over $R$. For each $i\in\Int$, and $\underline{M}\in\mathsf{MF}_R$, let $\underline{M}(i)\in\mathsf{MF}_R$ denote the tuple $(M,p^{-i}F,\Fil^{\bullet-i}M)$. $\underline{M}$ is strongly divisible if and only if $\underline{M}(i)$ is so.

Reducing along ${j}$ gives us a functor ${j}^*:\mathsf{MF}_R\to\mathsf{MF}_W$. Let $\mathsf{MF}_{R\left[\frac{1}{p}\right]}$ be the category of tuples $(N,F,\Fil^\bullet N)$, where $(N,\Fil^\bullet N)$ is a filtered $R\left[\frac{1}{p}\right]$-module and $F:\varphi^*N\xrightarrow{\simeq}N$ is an isomorphism. We then obtain a commuting diagram of functors:
\Square{\mathsf{MF}_R}{}{\mathsf{MF}_{R\left[\frac{1}{p}\right]}}{{j}^*}{{j}^*}{\mathsf{MF}_W}{}{\mathsf{MF}_{W\left[\frac{1}{p}\right]}}.

\begin{prp}\label{ks:prp:fisocrys}
The diagram above induces a fully faithful functor:
\[
\mathsf{MF}^{\on{sd}}_R\rightarrow\mathsf{MF}_{R\left[\frac{1}{p}\right]}\times_{\mathsf{MF}_{W\left[\frac{1}{p}\right]}}\mathsf{MF}_W.
\]
\end{prp}
\begin{proof}
  The assertion amounts to the following: Suppose that $\underline{M}_1$ and $\underline{M}_2$ are two objects in $\mathsf{MF}^{\on{sd}}_R$. Let
  \[
  f:\underline{M}_1\left[\frac{1}{p}\right]\to\underline{M}_2\left[\frac{1}{p}\right]
  \]
  be a map in $\mathsf{MF}_{R\left[\frac{1}{p}\right]}$ such that $j^*f$ carries $j^*M_1$ into $j^*M_2$. Then $f$ carries $M_1$ to $M_2$.

  Let $J=\ker(j)$, and, for $n\geq 0$ and $i=1,2$, let $M_{i,n}=M_i/J^{n+1}M_i$. For each $n\geq 0$, we obtain a map
  \[
   f_n:M_{1,n}\left[\frac{1}{p}\right]\rightarrow M_{2,n}\left[\frac{1}{p}\right].
  \]
  We will prove by induction on $n$ that $f_n$ carries $M_{1,n}$ into $M_{2,n}$. For $n=0$, this is our hypothesis, so assume $n\geq 1$, and that $f_{n-1}$ carries $M_{1,n-1}$ to $M_{2,n-1}$. Choose $m\in M_1$; then we can find
  \[
   m'\in\sum_ip^{-i}\Fil^iM_1
  \]
  such that $F(m')=m$. By our induction hypothesis, $f(m')$ lies in $\sum_ip^{-i}\Fil^iM_2+J^nM_2\left[\frac{1}{p}\right]$. Therefore, $f(m)=f(F(m'))=F(f(m'))$ will lie in
  \[
  M_2+F(J^nM_2\left[\frac{1}{p}\right])=M_2+J^{np}M_2\left[\frac{1}{p}\right].
  \]
  This shows that $f_n$ carries $M_{1,n}$ into $M_{2,n}$ and finishes the induction.
\end{proof}

\comment{
\subsection{}\label{ks:subsec:kummer}
Recall that a \defnword{Kummer} K3 surface $X$ over a $\Int\left[\frac{1}{2}\right]$-algebra $R$ is one that is obtained as follows: We start with an abelian surface $A$ over $R$, and we blow up the \'etale sub-group $A[2]\subset A$ to get a birational map of smooth $R$-schemes $\widetilde{A}\to A$. The involution $\iota:a\mapsto -a$ on $A$ lifts to an involution $\widetilde{\iota}$ of $\widetilde{A}$, and we set $X=\widetilde{A}/\widetilde{\iota}$.
\begin{prp}\label{ks:prp:kummer}
Let $k$ be an algebraically closed field of characteristic $p>2$, and let $W=W(k)$. Let $X$ be a Kummer K3 surface over $W$ attached to an abelian surface $A$ over $W$. Then:
\begin{enumerate}
\item\label{kummer:etale}If $\overline{W}_{\Rat}$ is an algebraic closure of $W_{\Rat}$, then there exists a $\Gal(\overline{W}_{\Rat}/W_{\Rat})$-equivariant isomorphism:
\[
 H^2_{\et}\bigl(X_{\overline{W}_{\Rat}},\Int_p\bigr)\xrightarrow{\simeq} H^2_{\et}\bigl(A_{\overline{W}_{\Rat}},\Int_p\bigr)\bigoplus H^2_{\et}\bigl(\bb{P}^1_{\overline{W}_{\Rat}},\Int_p\bigr)^{\oplus 16}.
\]
\item\label{kummer:derham}There exists an isomorphism of filtered $F$-crystals over $W$:
\[
 H^2_{\dR}\bigl(X/W\bigr)\xrightarrow{\simeq}H^2_{\dR}\bigl(A/W)\bigoplus H^2_{\dR}(\bb{P}^1/W)^{\oplus 16}.
\]
\end{enumerate}
\end{prp}
\begin{proof}
We first claim that the natural maps
\begin{align*}
H^2_{\et}\bigl(X_{\overline{W}_{\Rat}},\Int_p\bigr)&\rightarrow H^2_{\et}\bigl(\widetilde{A}_{\overline{W}_{\Rat}},\Int_p\bigr);\\
H^2_{\dR}\bigl(X/W\bigr)&\rightarrow H^2_{\dR}\bigl(\widetilde{A}/W\bigr)
\end{align*}
are both isomorphisms.

(\ref{kummer:etale}) is now classical; cf.~\cite{sga7II}*{Exp. XVIII, Thm. 2.2}.
\end{proof}

\begin{corollary}\label{ks:cor:kummerord}
Let $s$ be a $k$-valued ordinary Kummer point of $\mathsf{M}_{2d,K,\Int_{(p)}}$. By this, we mean that the attached K3 surface is the Kummer surface attached to an ordinary abelian variety. Let $\widetilde{s}$ be any lift of $s$ over $W$\footnote{Such lifts always exist, since $s$ will be a smooth point of $\mathsf{M}_{2d,K,\Int_{(p)}}$.}. Then the natural isomorphism of $W_{\Rat}$-vector spaces:
\[
 \bm{V}_{\dR,\widetilde{s}}(-1)\xrightarrow{\simeq}\bm{P}^2_{\dR,\widetilde{s}}\left[\frac{1}{p}\right]
\]
restricts to an isomorphism of $W$-modules:
\[
 \bm{L}_{\dR,\widetilde{s}}(-1)\xrightarrow{\simeq}\bm{P}^2_{\dR,\widetilde{s}}.
\]
\end{corollary}
\begin{proof}

\end{proof}
}}

\begin{lem}\label{ks:lem:strongdivext}
Let $k$ be a perfect field of characteristic $p>2$, and let $W=W(k)$. For $s:\Spec W\to\tilde{\mathsf{M}}_{2d}$, the map
\[
 \alpha_{\dR,s_{\Rat}}:\bm{L}_{\dR,s_{\Rat}}(-1)\xrightarrow{\simeq}\bm{P}^2_{\dR,s_{\Rat}}
\]
is an isomorphism of $F$-isocrystals.
\end{lem}
\begin{proof}
This is shown in~\cite{ogus:duke}*{\S~7}, but we can provide a different proof with the technology of Section~\ref{sec:hodge}.

  Let $\overline{s}_{\Rat}$ be a geometric point above $s_{\Rat}$ valued in an algebraic closure $\overline{W}_{\Rat}$. Then we have comparison isomorphisms:
  \begin{align*}
    \bm{L}_{p,\overline{s}_{\Rat}}\otimes\Bcris&\xrightarrow{\simeq}\bm{L}_{\dR,s}\otimes\Bcris;\\
    \bm{P}^2_{p,\overline{s}_{\Rat}}\otimes\Bcris&\xrightarrow{\simeq}\bm{P}^2_{\dR,s}\otimes\Bcris.
  \end{align*}

  We also have a natural isomorphism of $\Gal(\overline{W}_{\Rat}/W_{\Rat})$-representations:
  \begin{align*}
   \alpha_{p,\overline{s}_{\Rat}}:\bm{L}_{p,\overline{s}_{\Rat}}(-1)&\xrightarrow{\simeq}\bm{P}^2_{p,\overline{s}_{\Rat}}
  \end{align*}
  arising from an isomorphism of motives $\bm{L}_{s_{\Rat}}(-1)\xrightarrow{\simeq}\bm{P}^2_{s_{\Rat}}$. It now follows from (\ref{ks:prp:descent})(\ref{descent:derhamcomp}) that $\alpha_{\dR,s_{\Rat}}$ is exactly the map obtained from $\alpha_{p,\overline{s}_{\Rat}}$ via the crystalline comparison isomorphisms. In particular, it is $F$-equivariant.
\end{proof}

\begin{lem}\label{ks:lem:ordinary}
Suppose that ${s}:\Spec W\to\tilde{\mathsf{M}}_{2d}$ is a lift of an ordinary point $s_0:\Spec k\to\tilde{\mathsf{M}}_{2d}$. Then $\alpha_{\dR,{s}_{\Rat}}$ carries $\bm{L}_{\dR,{s}}(-1)$ onto $\bm{P}^2_{\dR,{s}}$.
\end{lem}
\begin{proof}
\comment{By the argument in \cite{maulik}*{6.15}, it is enough to show that, for every map $R\to W$ attached to a lift ${s}:\Spec W\to\mathsf{M}_{2d}$ of an ordinary point of $\mathsf{M}$, the induced map $\alpha_{\dR,W_{\Rat}}$ carries $\bm{L}_{\dR,W}(-1)$ onto $\bm{P}^2_{\dR,W}$.}

First, by the Dieudonn\'e-Manin classification~\cite{manin} (cf. also~\cite{katz:dwork}*{2.1}), $\bm{L}_{\dR,{s}}(-1)$ (resp. $\bm{P}^2_{\dR,{s}}$) admits a canonical largest $F$-stable direct summand $\bm{L}_{\dR,{s},0}(-1)$ (resp. $\bm{P}^2_{\dR,{s},0}$) to which $F$ restricts to an isomorphism (this is the \defnword{slope $0$} part). In fact, this sub-$F$-crystal must be of rank $1$. It suffices to check this for $\bm{P}^2_{\dR,{s}}(-1)$, for which cf.~\cite{ogus:height}*{p. 327}.

Let $U_{\bullet}\bm{L}_{\dR,{s}}(-1)$ be the three-step ascending filtration on $\bm{L}_{\dR,{s}}(-1)$ determined by
\[
U_0\bm{L}_{\dR,{s}}(-1)=\bm{L}_{\dR,{s},0}(-1)\;;\;U_1\bm{L}_{\dR,{s}}=\bm{L}^{\perp}_{\dR,{s},0}(-1).
\]
Analogously define an ascending filtration $U_{\bullet}\bm{P}^2_{\dR,{s}}$ on $\bm{P}^2_{\dR,{s}}$. These are the canonical slope filtrations and are in particular preserved by the $F$-equivariant map $\alpha_{\dR,{s}_{\Rat}}$ after changing scalars to $W_{\Rat}$.

Let $\Comp_p$ be the completion of $\overline{s}_{\Rat}$ and let $\Reg{\Comp_p}$ be its ring of integers. It can now be deduced from \cite{bloch_kato}*{9.6} that there are ascending $\Gamma$-stable filtrations $U_\bullet\bm{L}_{p,\overline{s}_{\Rat}}(-1)$ and $U_\bullet\bm{P}^2_{p,\overline{s}_{\Rat}}$ that satisfy the following conditions:
\begin{enumerate}
  \item The crystalline comparison isomorphisms (for both $\bm{L}$ and $\bm{P}^2$) respect the $U$-filtrations on either side (in fact, one can \emph{define} the $U$-filtrations on the \'etale side to be the unique ones that satisfy this property).
  \item The canonical $\Gamma$-equivariant isomorphism
      \[
      \eta_{p,\overline{s}_{\Rat}}:\bm{L}_{p,\overline{s}_{\Rat}}(-1)\xrightarrow{\simeq}\bm{P}^2_{p,\overline{s}_{\Rat}}
      \]
      respects $U$-filtrations.
  \item For each $n\in\Int$, we have $\Gamma$-equivariant isomorphisms compatible with the comparison isomorphisms:
\begin{align}\label{ks:eqn:grisom}
    \gr^U_n\bm{L}_{p,\overline{s}_{\Rat}}(-1)\otimes \Reg{\Comp_p}&\xrightarrow{\simeq}\gr^U_n\bm{L}_{\dR,{s}}(-1)\otimes\Reg{\Comp_p}(-n);\\
    \gr^U_n\bm{P}^2_{p,\overline{s}_{\Rat}}\otimes \Reg{\Comp_p}&\xrightarrow{\simeq}\gr^U_n\bm{P}^2_{\dR,{s}}\otimes\Reg{\Comp_p}(-n).
  \end{align}
\end{enumerate}

Now, $\gr^U\alpha_{\dR,{s}_{\Rat}}$ has to be compatible with $\gr^U\eta_{p,\overline{s}_{\Rat}}$ under the isomorphisms in (\ref{ks:eqn:grisom}), and $\eta_{p,\overline{s}_{\Rat}}$ carries $\bm{L}_{p,\overline{s}_{\Rat}}(-1)$ onto $\bm{P}^2_{p,\overline{s}_{\Rat}}$. Therefore, since $\Reg{\Comp_p}$ is faithfully flat over $W$, we find that $\gr^U\alpha_{\dR,{s}_{\Rat}}$ must carry $\gr^U\bm{L}_{\dR,{s}}(-1)$ onto $\gr^U\bm{P}^2_{\dR,{s}}$.

By the strong divisibility of $\bm{L}_{\dR,{s}}(-1)$ (cf.~\cite{mp:reg}*{4.8}), we must have
\[
U_0\bm{L}_{\dR,{s}}(-1)\cap F^1\bm{L}_{\dR,{s}}(-1)=0.
\]
Since $U_0\bm{L}_{\dR,{s}}(-1)$ is an isotropic line (this can be seen, for example, from the fact that $F(f)\circ F(f)=p^2(f\circ f)$, for any $f\in\bm{L}_{\dR,{s}}(-1)$), we obtain a splitting of $U_\bullet\bm{L}_{\dR,{s}}(-1)$:
\[
 \bm{L}_{\dR,{s}}(-1)=F^2\bm{L}_{\dR,{s}}(-1)\oplus(F^2\bm{L}_{\dR,{s}}(-1)\oplus U_0\bm{L}_{\dR,{s}}(-1))^{\perp}\oplus U_0\bm{L}_{\dR,{s}}(-1).
\]
We similarly define a splitting for $U_\bullet\bm{P}^2_{\dR,{s}}$, and the construction shows that these splittings are compatible with $\alpha_{\dR,{s}_{\Rat}}$. Therefore, $\alpha_{\dR,{s}_{\Rat}}$ must indeed carry $\bm{L}_{\dR,{s}}(-1)$ onto $\bm{P}^2_{\dR,{s}}$.
\end{proof}

\comment{
We immediately obtain:
\begin{corollary}\label{ks:cor:intprop}
If $s$ is an ordinary point, then there is an open neighborhood $U$ of $s$ in $\mathsf{M}_{2d,\Int_p}$ such that $\alpha_{\dR,U_{\Rat_p}}$ carries $\bm{L}_{\dR,U}(-1)$ onto $\bm{P}^2_{\dR,U}$.
\end{corollary}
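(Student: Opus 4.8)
The plan is to recognise the conclusion as an open condition on $\mathsf{M}_{2d,K,\Int_{(p)}}$ and then to feed in (\ref{ks:lem:ordinary}) at the level of the complete local ring at $s$. First I would reduce to the case where, \'etale-locally around $s$, the space $\mathsf{M}_{2d,K,\Int_{(p)}}$ is an integral, regular, flat $\Int_{(p)}$-scheme $M$ with dense generic fibre $M_{\Rat}$; this is legitimate, since an ordinary point lies in the smooth locus of $\mathsf{M}_{2d,K,\Int_{(p)}}$ over $\Int_{(p)}$ by (\ref{k3:thm:deformation})(\ref{deform:primitive}), and an open subset of an \'etale chart maps to an open subset of the algebraic space.

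Next, $\bm{L}_{\dR}(-1)$ (pulled back along $\iota^{\KS}$) and $\bm{P}^2_{\dR}$ are locally free $\Rg_M$-modules of the same rank $21$, and by (\ref{ks:prp:descent})(\ref{descent:derham}) the isomorphism $\eta_{\dR,\Rat}$ identifies their restrictions to $M_{\Rat}$; hence $\eta_{\dR,\Rat}$ is a rational section of the locally free sheaf $\SHom\bigl(\bm{L}_{\dR}(-1),\bm{P}^2_{\dR}\bigr)$. I would let $U_0\subseteq M$ be the open locus of regularity of this rational section, i.e. the complement of the closed subset cut out by the coherent sheaf of its denominators, so that over $U_0$ it promotes to an honest morphism $\eta_0\colon\bm{L}_{\dR}(-1)|_{U_0}\to\bm{P}^2_{\dR}|_{U_0}$. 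Since $\eta_0$ is already an isomorphism over the dense open $U_0\cap M_{\Rat}$, its cokernel is a coherent sheaf supported inside the special fibre; removing this support yields an open $U\subseteq U_0$, containing $M_{\Rat}$, on which $\eta_0$ is a surjection of locally free sheaves of equal rank, hence an isomorphism. By construction $U$ is exactly the locus on which $\eta_{\dR,\Rat}$ carries $\bm{L}_{\dR}(-1)$ onto $\bm{P}^2_{\dR}$, so it remains only to check that $s\in U$.

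This last step is where (\ref{ks:lem:ordinary}) supplies all the content: with $R=\widehat{\Rg}_{M,s}$, that lemma says exactly that the pullback of $\eta_{\dR,\Rat}$ to $\Spec R$ is a regular section of $\SHom$ and is an isomorphism of $R$-modules. As $R$ is faithfully flat over $\Rg_{M,s}$, I would descend both properties: the ideal of denominators of $\eta_{\dR,\Rat}$ at $s$ is a finitely generated ideal of $\Rg_{M,s}$ whose extension to $R$ is the unit ideal, hence already the unit ideal, so $s\in U_0$; and $\coker(\eta_{0,s})$ is a finitely generated $\Rg_{M,s}$-module annihilated by $\otimes_{\Rg_{M,s}}R$, hence zero, so $s\in U$. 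Taking this $U$ as the desired neighbourhood concludes the argument. I do not expect any genuine obstacle here: the substance is entirely in (\ref{ks:lem:ordinary}), and the only things needing care are the standard facts that regularity of a rational section of a locally free sheaf and invertibility of a morphism between locally free sheaves of equal rank are both open conditions that reflect along the faithfully flat map $\Spec\widehat{\Rg}_{M,s}\to\Spec\Rg_{M,s}$.
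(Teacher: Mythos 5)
Your proposal is correct and is exactly the argument the paper has in mind: the paper states the corollary as an immediate consequence of (\ref{ks:lem:ordinary}) with no further proof, the point being precisely that "$\eta_{\dR}$ extends to an isomorphism of the integral sheaves" is an open condition whose validity at $s$ is detected by the faithfully flat map $\Spec\widehat{\Rg}_{M,s}\to\Spec\Rg_{M,s}$. Your write-up just makes this spreading-out explicit, so there is nothing to add.
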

\qed}
The following result, which exhibits the \emph{integral} crystalline nature of the Kuga-Satake construction is the chief ingredient in the proof of (\ref{ks:thm:kugasatakecharp}); cf~\cite{maulik}*{6.8} for an essentially equivalent statement, but with stronger hypotheses on $d$ and $p$.
\begin{prp}\label{ks:prp:cris}
The isometry
\[
 \alpha_{\dR,\Rat}:\bm{L}_{\dR,\Rat}(-1)\xrightarrow{\simeq}\bm{P}^2_{\dR}\rvert_{\mathsf{M}_{2d,\Rat}}
\]
extends to an isometry (necessarily unique)
\[
 \alpha_{\dR}:\bm{L}_{\dR}(-1)\xrightarrow{\simeq}\bm{P}^2_{\dR}
\]
of vector bundles over $\tilde{\mathsf{M}}_{2d}$ with integrable connection. It carries $F^1\bm{L}_{\dR}(-1)$ onto $F^2\bm{P}^2_{\dR}$.
\end{prp}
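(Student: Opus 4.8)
The plan is to extend $\eta_{\dR,\Rat}$ first across the ordinary locus of the special fibre, where (\ref{ks:cor:intprop}) already supplies an integral comparison, and then across the whole model by an algebraic Hartogs argument. Uniqueness is immediate: since $\mathsf{M}_{2d,K,\Int_{(p)}}$ is faithfully flat over $\Int_{(p)}$, the open $\mathsf{M}_{2d,K,\Rat}$ is schematically dense in it, and a morphism of vector bundles is determined by its restriction to a schematically dense open. For the statement to make sense integrally we note that $\bm{P}^2_{\dR}$ carries its Gauss--Manin connection, while $\bm{L}_{\dR}$ over $\mathsf{M}_{2d,K,\Int_{(p)}}$ is the pullback along $\iota^{\KS}$ (see (\ref{ks:prp:kugasatakecharp})) of the filtered vector bundle with integrable connection of (\ref{ortho:subsec:integralrealizations}), whose generic fibre recovers $\bm{V}_{\dR}$ over $\Sh_{d,K}$; in particular $\bm{L}_{\dR}(-1)\rvert_{\mathsf{M}_{2d,K,\Rat}}=\bm{V}_{\dR}(-1)$.

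For the extension over the ordinary locus: given an ordinary point $s\in\mathsf{M}_{2d,K,\Field_p}$, (\ref{ks:cor:intprop}) produces an open neighbourhood $U_s\subset\mathsf{M}_{2d,K,\Int_{(p)}}$ of $s$ over which $\eta_{\dR,\Rat}$ extends to an isomorphism $\bm{L}_{\dR}(-1)\rvert_{U_s}\xrightarrow{\simeq}\bm{P}^2_{\dR}\rvert_{U_s}$. By uniqueness these isomorphisms agree with one another and with $\eta_{\dR,\Rat}$ on overlaps, so they glue to an isomorphism over the open subscheme $U=\mathsf{M}_{2d,K,\Rat}\cup\bigcup_s U_s$, which contains the generic fibre together with every ordinary point of the special fibre. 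Now set $Z=\mathsf{M}_{2d,K,\Int_{(p)}}\setminus U$. Then $Z$ is contained in the non-ordinary locus of $\mathsf{M}_{2d,K,\Field_p}$; since the ordinary locus of the moduli of polarised K3 surfaces is open and dense in each characteristic-$p$ fibre, $Z$ has codimension $\geq 1$ in $\mathsf{M}_{2d,K,\Field_p}$, and as that fibre is a Cartier divisor (flatness over $\Int_{(p)}$), $Z$ has codimension $\geq 2$ in $\mathsf{M}_{2d,K,\Int_{(p)}}$. The latter is regular (\ref{k3:prp:levelrep}), hence normal, so the locally free sheaf $\mathcal{H}=\SHom\bigl(\bm{L}_{\dR}(-1),\bm{P}^2_{\dR}\bigr)$ satisfies Hartogs across $Z$: restriction $\Gamma(\mathsf{M}_{2d,K,\Int_{(p)}},\mathcal{H})\to\Gamma(U,\mathcal{H})$ is bijective, and the isomorphism over $U$ extends uniquely to a morphism $\eta_{\dR}:\bm{L}_{\dR}(-1)\to\bm{P}^2_{\dR}$.

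It remains to check that $\eta_{\dR}$ has the asserted properties. Horizontality, compatibility with the Hodge filtrations, and compatibility with the pairings each amount to the vanishing of a global section of a suitable locally free sheaf (respectively $\mathcal{H}\otimes\Omega^1$; the sheaves $\SHom\bigl(F^i\bm{L}_{\dR}(-1),\bm{P}^2_{\dR}/F^i\bm{P}^2_{\dR}\bigr)$ for all $i$; and the difference of the two pullbacks of the pairing), and each such section vanishes over the schematically dense open $\mathsf{M}_{2d,K,\Rat}$, hence on all of $\mathsf{M}_{2d,K,\Int_{(p)}}$. Finally, $\det\eta_{\dR}$ is a section of the line bundle $\dual{\det\bm{L}_{\dR}(-1)}\otimes\det\bm{P}^2_{\dR}$ (both bundles have rank $21$) that is nowhere zero on the dense open $U$; restricting to a connected component of $\mathsf{M}_{2d,K,\Int_{(p)}}$, which is integral since the scheme is regular, its zero locus is either empty or an effective Cartier divisor, and the latter is impossible because the zero locus lies in $Z$, of codimension $\geq 2$. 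Hence $\det\eta_{\dR}$ is a unit and $\eta_{\dR}$ is an isomorphism.

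The substantive input is the ordinary case, already handled by (\ref{ks:lem:ordinary}) and (\ref{ks:cor:intprop}) via the integral Hodge--Tate comparison of Bloch--Kato; in the argument above the only points requiring care are the codimension estimate for the non-ordinary locus — this is where the density of the ordinary locus of $\mathsf{M}_{2d,K,\Field_p}$ is genuinely used — and the verification at the end that the extended map is an isomorphism of bundles rather than merely a morphism.
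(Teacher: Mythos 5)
Your proposal is correct and follows essentially the same route as the paper: extend over the ordinary locus via (\ref{ks:cor:intprop}), observe that the remaining closed set has codimension $\geq 2$ by density of the ordinary locus in the special fibre, invoke normality of $\mathsf{M}_{2d,K,\Int_{(p)}}$ to extend across it, and verify horizontality, filtration- and pairing-compatibility on the schematically dense generic fibre. The only cosmetic difference is that the paper gets the isomorphism property for free from full faithfulness of restriction of vector bundles away from codimension $2$ (extend $\eta^{-1}$ as well), whereas you argue via the determinant; both are fine.
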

\begin{proof}
It is enough to extend $\alpha_{\dR,\Rat}$ as a map of vector bundles, since the other requirements can be checked over $\Rat$. 

Fix a prime $p>2$ and an affine open $U=\Spec R\subset\tilde{\mathsf{M}}_{2d,\Int_p}$ such that the restrictions of $\bm{L}_{\dR}$ and $\bm{P}^2_{\dR}$ to $U$ are both trivial. We now follow the argument from \cite{maulik}*{6.15}. First, represent $\alpha_{\dR,U_{\Rat_p}}$ by a matrix with values in $R_{\Rat}$. We claim that the entries of this matrix lie in $R$. Indeed, let $a\in R_{\Rat}$ be a matrix entry, and let $m\in\Int_{\geq 0}$ be minimal such that $a'=p^ma\in R$. By (\ref{ks:lem:ordinary}), for any $W(\overline{\Field}_p)$-valued point of $U$ with ordinary reduction, the image of $a$ in $W(\overline{\Field}_p)_{\Rat}$ lies in $W$. In particular, if $m>1$, then the value of $a'$ at any such point would lie in $pW(\overline{\Field}_p)$. Since ordinary points are dense in $U_{\Field_p}$, this implies that $a'$ must lie in $pR$, which is a contradiction. Indeed, otherwise, the image of $a'$ in $R_{\Field_p}$ would be a non-zero global function on $U_{\Field_p}$ that vanishes at a dense set of points.

An analogous argument shows that the matrix entries of $\alpha_{\dR,U_{\Rat_p}}^{-1}$ also lie in $R$, thus proving that $\alpha_{\dR,U_{\Rat}}$ extends to an isometry $\alpha_{\dR,U}:\bm{L}_{\dR,U}(-1)\xrightarrow{\simeq}\bm{P}^2_{\dR,U}$.

From this, the proposition follows.
\end{proof}

\subsection{}
For any smooth point $s\in\tilde{\mathsf{M}}_{2d,\Field_p}(\overline{\Field}_p)$, let $R$ be the completion of the local ring at $s$. Set $W=W(\overline{\Field}_p)$, and choose a lift $j:R\to W$. Equip $R$ with an endomorphism $\varphi$ lifting the $p$-power Frobenius on $R_{\Field_p}$ such that $\sigma\circ j=j\circ\varphi$. The restrictions of $\bm{L}_{\dR}(-1)$ and $\bm{P}^2_{\dR}$ to $\Spec R$ give rise to $F$-crystals over $R$ in the terminology of \cite{katz:dwork}*{1.3}. We will denote these $F$-crystals by $\bm{L}_{\dR,R}(-1)$ and $\bm{P}^2_{\dR,R}$, respectively.  The reductions of $\bm{L}_{\dR,R}(-1)$ to $\bm{P}^2_{\dR,R}$ along $j$ will be denoted $\bm{L}_{\dR,W}(-1)$ and $\bm{P}^2_{\dR,W}$, respectively.

\begin{lem}\label{ks:cor:strongdivext}
$\alpha_{\dR,\Spec R}$ is an isomorphism of filtered $F$-crystals over $R$. \comment{It carries $\bm{L}_{\dR,R}(-1)$ onto $\bm{P}^2_{\dR,R}$ if and only if $\alpha_{\dR,W_{\Rat}}$ carries $\bm{L}_{\dR,W}(-1)$ onto $\bm{P}^2_{\dR,W}$. }
\end{lem}
\begin{proof}
\comment{Once we show that $\alpha_{\dR,R_{\Rat}}$ is a map of $F$-isocrystals, the second assertion will follow from (\ref{ks:prp:fisocrys}).}
  Let $R^{\an}_{\Rat}$ be the ring of functions on the rigid analytic space over $W_{\Rat}$ attached to $\Spf R$. Then, by \cite{katz:dwork}*{3.1}, there exist unique $F$-equivariant, horizontal isomorphisms that reduce to the identity along $j^*$:
  \begin{align*}
    \bm{L}_{\dR,W}(-1)\otimes_WR^{\an}_{\Rat}&\xrightarrow{\simeq}\bm{L}_{\dR,R^{\an}_{\Rat}}(-1);\\
    \bm{P}^2_{\dR,W}\otimes_WR^{\an}_{\Rat}&\xrightarrow{\simeq}\bm{P}^2_{\dR,R^{\an}_{\Rat}}.
  \end{align*}
  Here, we equip the left hand sides with the constant connection $1\otimes\on{d}$ and the constant $F$-structures induced from the ones on $\bm{L}_{\dR,W}(-1)$ and $\bm{P}^2_{\dR,W}$.

  Since $\alpha_{\dR,R_{\Rat}}$ is horizontal for the connection, it now suffices to check that the induced isomorphism
  \[
  \alpha_{\dR,W_{\Rat}}:\bm{L}_{\dR,W_{\Rat}}(-1)\xrightarrow{\simeq}\bm{P}^2_{\dR,W_{\Rat}}
  \]
  is a map of $F$-isocrystals over $W_{\Rat}$, and this follows from (\ref{ks:lem:strongdivext}).
\end{proof}

Let $T\to\tilde{\mathsf{M}}_{2d,\Field_p}$ be an \'etale map with $T$ a scheme. Then one can also consider the crystalline realization $\bm{P}^2_{\cris,T}$ of the primitive cohomology of the universal family $\bm{\mathcal{X}}_T\to T$: This will be a crystal of vector bundles over $(T/\Int_p)_{\cris}$. At the same time, one also has the crystal $\bm{L}_{\cris,T}(-1)$ over $(T/\Int_p)_{\cris}$. In fact, both these crystals have the additional structure of an $F$-crystal. That is, if $\on{Fr}_T:T\to T$ is the absolute Frobenius on $T$, then we have natural maps $\on{Fr}_T^*\bm{P}^2_{\cris}\to\bm{P}^2_{\cris}$ and $\on{Fr}_T^*\bm{L}_{\cris}(-1)\to\bm{L}_{\cris}(-1)$.
\begin{corollary}\label{ks:cor:criscomp}
$\alpha_{\dR}$ induces a canonical isomorphism of $F$-crystals
\[
 \bm{L}_{\cris,T}(-1)\xrightarrow{\simeq}\bm{P}^2_{\cris,T}.
\]
\end{corollary}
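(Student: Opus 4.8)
The plan is to deduce the corollary formally from (\ref{ks:prp:cris}) and (\ref{ks:lem:strongdivext}), via the usual dictionary between crystals of vector bundles on $(T/\Int_p)_{\cris}$ and vector bundles with integrable connection (together with a Frobenius structure) on a $p$-adic lift. By (\ref{ks:prp:cris}), $\eta_{\dR}$ is an isomorphism of vector bundles with integrable connection $\bm{L}_{\dR}(-1)\xrightarrow{\simeq}\bm{P}^2_{\dR}$ over $\mathsf{M}_{2d,K,\Int_{(p)}}$, which, after pulling back along the étale map $T\to\mathsf{M}_{2d,K,\Field_p}$, provides a flat, regular $\Int_{(p)}$-scheme lifting $T$. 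Over the smooth locus this lift represents the restrictions of the crystals $\bm{L}_{\cris,T}(-1)$ and $\bm{P}^2_{\cris,T}$ together with their $F$-structures, so $\eta_{\dR}$ already induces an isomorphism of crystals $\eta_{\cris}\colon\bm{L}_{\cris,T}(-1)\xrightarrow{\simeq}\bm{P}^2_{\cris,T}$ there; it extends over the complementary codimension $\geq 2$ locus by normality, exactly as vector bundles were extended in the proof of (\ref{ks:prp:cris}). The remaining point --- the whole content of the corollary --- is that $\eta_{\cris}$ intertwines the Frobenius structures, i.e. $\eta_{\cris}\circ F_{\bm{L}}=F_{\bm{P}^2}\circ\on{Fr}_T^*\eta_{\cris}$ as morphisms of crystals $\on{Fr}_T^*\bm{L}_{\cris}(-1)\to\bm{P}^2_{\cris}$.

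The difference of these two morphisms of crystals vanishes as soon as it vanishes upon evaluation at the formal thickenings coming from the complete local rings $R=\widehat{\Rg}_{\mathsf{M}_{2d,K,\Int_{(p)}},s}$ at the closed points $s$ of the smooth locus of $T$. Fixing a Frobenius lift $\varphi$ on $R$ as in (\ref{ks:subsec:filtisoc}), this vanishing says precisely that the integral isomorphism $\eta_{\dR,R}\colon\bm{L}_{\dR,R}(-1)\xrightarrow{\simeq}\bm{P}^2_{\dR,R}$ of (\ref{ks:prp:cris}) intertwines the two Frobenius maps $F_{\bm{L}}$ and $F_{\bm{P}^2}$, which are integral because, as recalled before the statement, $\bm{L}_{\cris}\subset\bm{H}^{\otimes(1,1)}_{\cris}$ and $\bm{P}^2_{\cris}$ carry genuine $F$-crystal structures. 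By (\ref{ks:lem:strongdivext}), $\eta_{\dR,R_{\Rat}}$ is a morphism of filtered $F$-isocrystals over $R_{\Rat}$, so the intertwining holds after inverting $p$; and since $R$ is an $\Int_{(p)}$-flat domain and $\bm{P}^2_{\dR,R}$ is $R$-free, the restriction map $\Hom_R(\varphi^*\bm{L}_{\dR,R}(-1),\bm{P}^2_{\dR,R})\to\Hom_{R_{\Rat}}(\varphi^*\bm{L}_{\dR,R_{\Rat}}(-1),\bm{P}^2_{\dR,R_{\Rat}})$ is injective, so it holds integrally as required.

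There is no real obstacle left once (\ref{ks:prp:cris}) and (\ref{ks:lem:strongdivext}) are available: the only things demanding a little care are (i) checking that both Frobenius maps are integral, so that the ``rational equality implies integral equality'' step legitimately applies, and (ii) the bookkeeping with the crystalline dictionary and the codimension $\geq 2$ extension on the singular locus of $T$ (relevant only when $\nu_p(d)=1$). The genuine work has already been done: the integral de Rham compatibility in (\ref{ks:prp:cris}) (resting ultimately on density of the ordinary locus and the integral Hodge--Tate comparison of Bloch--Kato) and the rational crystalline compatibility in (\ref{ks:lem:strongdivext}) (via Katz's rigid-analytic continuation and the de Rham property of Kuga--Satake guaranteed by Blasius--Wintenberger).
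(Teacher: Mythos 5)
Your proposal is correct and follows the same route as the paper: in the smooth case one combines (\ref{ks:prp:cris}) and (\ref{ks:lem:strongdivext}) via the equivalence between crystals on $T$ and flat vector bundles on a lift (your explicit ``rational $F$-equivariance plus torsion-freeness implies integral $F$-equivariance'' step is exactly what is implicit there), and in the case $\nu_p(d)=1$ one handles the isolated singular points by full faithfulness of restriction of crystals to the smooth locus. No gaps.
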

\begin{proof}
If $T$ is smooth (this is always the case unless $\nu_p(d)=1$), then this follows from (\ref{ks:prp:cris}) and (\ref{ks:cor:strongdivext}). Indeed, working locally if necessary, we can assume that $T$ lifts to a smooth map $\widetilde{T}\to\tilde{\mathsf{M}}_{2d,\Int_p}$. Now, one can use the classical equivalence of categories between crystals on $T$ and vector bundles over $\widetilde{T}$ with integrable connections.

Suppose now that $\nu_p(d)=1$ and that $T$ is not smooth. Then, according to (\ref{k3:cor:regular}), $T$ has at worst isolated singular points with quadratic singularities. Let $T^{\on{sm}}\subset T$ be the smooth locus. The result now follows from the fact that restriction of crystals of vector bundles from $(T/\Int_p)_{\cris}$ to $(T^{\on{sm}}/\Int_p)_{\cris}$ is a fully faithful operation.
\end{proof}

\begin{proof}[Proof of (\ref{ks:thm:kugasatakecharp})]
It is enough to show that, for every prime $p>2$, and every closed point $s\in\tilde{\mathsf{M}}_{2d}(\overline{\Field}_p)$, the induced map of complete local $\Int_p$-algebras
\[
 \widehat{\Rg}_{\Ss(L_d),\iota^{\KS}(s)}\rightarrow\widehat{\Rg}_{\tilde{\mathsf{M}}_{2d},s}
\]
is an isomorphism. For simplicity denote this map by $R\to R'$.

Both $R$ and $R'$ are complete local Noetherian domains of the same dimension, namely $19$, so it is enough to show that the induced map of tangent spaces $t_R\rightarrow t_{R'}$ is an isomorphism. But, by (\ref{ks:prp:cris}), both $t_R$ and $t_{R'}$ can be canonically identified with the space
\[
\biggl\{\text{Isotropic lines $L\subset \bm{P}^2_{\dR,s}\otimes\overline{\Field}_p[\epsilon]$ lifting $F^2\bm{P}^2_{\dR,s}$}\biggr\}.
\]
Under these identifications, the map on tangent spaces is simply the identity. This can be checked, for example, by lifting to characteristic $0$.
\end{proof}

Fix a prime $p>2$. Given any neat compact open $K\subset K_{L_d}$ with $K_p=K_{L_d,p}$, $\mathsf{M}_{2d,K,\Int_{(p)}}$ admits a (non-canonical) section to $\tilde{\mathsf{M}}_{2d,\Int_{(p)}}$.
\begin{corollary}\label{ks:cor:open}
The induced map $\iota^{\KS}_K:\mathsf{M}^{\circ}_{2d,K,\Int_{(p)}}\to\Ss_K(L_d)_{(p)}$ is an open immersion.
\end{corollary}
\begin{proof}
  Clearly, $\iota^{\KS}_K$ is \'etale. Therefore, by \cite{laumon_m-b}*{16.5}, there exists a finite $\Ss_K(L_d)_{(p)}$-scheme $\mathscr{Z}$ and a factoring as below, where the top arrow is a dense open immersion.
  \begin{diagram}
    \mathsf{M}^{\circ}_{2d,K,\Int_{(p)}}&\rInto&\mathscr{Z}\\
    &\rdTo_{\iota^{\KS}}&\dTo\\
    &&\Ss_K(L_d)_{(p)}.
  \end{diagram}
  Since $\Ss_K(L_d)_{(p)}$ is a normal scheme, it is sufficient to show that $\iota^{\KS}_{K,\Rat}$ restricted to $\mathsf{M}^{\circ}_{2d,K,\Rat}$ is an open immersion. Indeed, it would then follow that $\mathscr{Z}\to\Ss_K(L_d)_{(p)}$ is in fact an isomorphism onto a union of connected components of $\Ss_K(L_d)_{(p)}$. So the restriction of $\iota^{\KS}_K$ to $\mathsf{M}^{\circ}_{2d,K,\Int_{(p)}}$ must be an open immersion.
  
  To finish, it suffices to show that $\iota^{\KS}_{K,\Comp}$ is an open immersion. This is essentially the global Torelli theorem for K3 surfaces, for which there are many proofs in the literature; cf.~\cites{ps_shafarevich,burns_rapoport,looijenga_peters,friedman}. For a good summary and yet another proof, cf.~\cite{huybrechts}. Our ad\'elic formulation can be found in \cite{rizov:cm}*{Prop. 2.10}.
\end{proof}

\begin{corollary}\label{ks:cor:moduli}
For every $p>2$, $\mathsf{M}^{\circ}_{2d,\Field_p}$ is a quasi-projective Deligne-Mumford stack over $\Field_p$. Moreover, the Hodge bundle $\omega=F^2\bm{H}^2_{\dR,\Field_p}$ is ample over $\mathsf{M}^{\circ}_{2d,\Field_p}$. If $\nu_p(d)\leq 1$, then $\mathsf{M}^{\circ}_{2d,\Field_p}$ is geometrically irreducible.
\end{corollary}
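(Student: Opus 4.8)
The plan is to read off all three assertions from Corollary~\ref{ks:cor:open} (the restriction of $\iota^{\KS}$ to $\mathsf{M}^{\circ}$ is an open immersion into $\Ss^{\on{pr}}_{d,K}$), Lemma~\ref{ortho:lem:hodgesmooth} (relative ampleness of $F^1\bm{L}_{\dR}$), Proposition~\ref{ks:prp:cris} (the filtered identification $\bm{L}_{\dR}(-1)\xrightarrow{\simeq}\bm{P}^2_{\dR}$), and Corollary~\ref{ortho:cor:conncomp} (the bijection on geometric connected components), after descending from level $K$ to the stack. Since $\on{ch}_{\dR}(\bm{\xi})$ lies in $F^1\bm{H}^2_{\dR}=(F^2\bm{H}^2_{\dR})^{\perp}$, it is orthogonal to $F^2\bm{H}^2_{\dR}$, so $F^2\bm{H}^2_{\dR}\subset\bm{P}^2_{\dR}$ and $\omega=F^2\bm{H}^2_{\dR}=F^2\bm{P}^2_{\dR}$; on the other side the Hodge filtration on $\bm{L}_{\dR}$ has the form $0=F^2\subset F^1\subset F^0=\bm{L}_{\dR}$, so $F^2(\bm{L}_{\dR}(-1))=F^1\bm{L}_{\dR}(-1)$, isomorphic as a line bundle to $F^1\bm{L}_{\dR}$. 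Thus over $\mathsf{M}^{\circ}_{2d,K,\Field_p}$ the isometry $\eta_{\dR}$ of~(\ref{ks:prp:cris}) identifies $\omega$ with the restriction of $F^1\bm{L}_{\dR}$ along the open immersion $\iota^{\KS}\colon\mathsf{M}^{\circ}_{2d,K,\Field_p}\hookrightarrow\Ss^{\on{pr}}_{d,K,\Field_p}$. By~(\ref{ortho:lem:hodgesmooth}), $F^1\bm{L}_{\dR}$ is relatively ample over $\Int_{(p)}$, so its restriction to the special fibre, and hence to the open subscheme $\mathsf{M}^{\circ}_{2d,K,\Field_p}$, is ample. As the universal K3 surface on $\mathsf{M}^{\circ}_{2d,K}$ is pulled back from $\mathsf{M}^{\circ}_{2d}$, the bundle $\omega$ on $\mathsf{M}^{\circ}_{2d,\Field_p}$ pulls back along the finite \'etale surjection $\mathsf{M}^{\circ}_{2d,K,\Field_p}\to\mathsf{M}^{\circ}_{2d,\Field_p}$ to this ample $\omega$; a suitable power of $\omega$ therefore descends to an ample line bundle on the coarse moduli space (Keel--Mori), which is thus quasi-projective, and $\mathsf{M}^{\circ}_{2d,\Field_p}$ is a quasi-projective Deligne--Mumford stack with $\omega$ ample.

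For geometric irreducibility when $\nu_p(d)\leq1$, note first that then $\Ss^{\on{pr}}_{d,K}=\Ss_{d,K}$ by~(\ref{ortho:prp:ssdkpr}), a regular scheme flat over $\Int_{(p)}$ with geometrically normal special fibre, of which $\mathsf{M}^{\circ}_{2d,K,\Int_{(p)}}$ is an open subscheme by~(\ref{ks:cor:open}). Let $\Ss'\subset\Ss_{d,K}$ be the union of the connected components meeting $\mathsf{M}^{\circ}_{2d,K,\Int_{(p)}}$: it is a clopen, regular, flat $\Int_{(p)}$-subscheme with geometrically normal special fibre, inside which $\mathsf{M}^{\circ}_{2d,K,\Int_{(p)}}$ is dense. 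Granting that $\mathsf{M}^{\circ}_{2d,K,\overline{\Field}_p}$ is dense in $\Ss'_{\overline{\Field}_p}$ (and likewise $\mathsf{M}^{\circ}_{2d,K,\overline{\Rat}}$ in $\Ss'_{\overline{\Rat}}$, which follows from surjectivity of the classical period map on quasi-polarized moduli), both $\mathsf{M}^{\circ}_{2d,K,\overline{\Field}_p}$ and $\mathsf{M}^{\circ}_{2d,K,\overline{\Rat}}$ are dense opens of normal schemes, so their sets of connected components coincide with those of $\Ss'_{\overline{\Field}_p}$ and $\Ss'_{\overline{\Rat}}$; the latter two are matched by~(\ref{ortho:cor:conncomp}), and the whole chain is $G_d(\Adele_f^p)$-equivariant. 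Since $\mathsf{M}^{\circ}_{2d,\Comp}$ is irreducible (classical, via the Torelli theorem and connectedness of the period domain), $G_d(\Adele_f^p)$ acts transitively on the geometric connected components of $\mathsf{M}^{\circ}_{2d,K,\overline{\Rat}}$, hence on those of $\mathsf{M}^{\circ}_{2d,K,\overline{\Field}_p}$; as each of the latter is irreducible (normality of the special fibre), the quotient $\mathsf{M}^{\circ}_{2d,\overline{\Field}_p}$ is irreducible.

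The main obstacle is the density of $\mathsf{M}^{\circ}_{2d,K,\overline{\Field}_p}$ in $\Ss'_{\overline{\Field}_p}$: equivalently, that the K3 locus meets every special-fibre component of each connected component of $\Ss^{\on{pr}}_{d,K}$ on which it is present, a statement which does \emph{not} follow formally from density in the total space (density in a fibre need not pass to the special fibre). I would handle it by using Ogus's theorem that the ordinary locus of $\mathsf{M}_{2d,K,\Field_p}$ is dense (\cite{ogus:ss}*{2.9}), so that it suffices to argue with $\mathsf{M}^{\ord,\circ}_{2d,K,\Int_{(p)}}$, which is fibre-by-fibre dense in $\mathsf{M}_{2d,K,\Int_{(p)}}$ and still an open subscheme of $\Ss^{\on{pr}}_{d,K}$, and then invoking the theory of canonical lifts of ordinary K3 surfaces: any ordinary $\overline{\Field}_p$-point of $\mathsf{M}^{\circ}_{2d,K}$, with its polarization and level structure, lifts to a $W(\overline{\Field}_p)$-point whose generic fibre lies in $\mathsf{M}^{\circ}_{2d,K,\Rat}$, so each special-fibre component carrying ordinary K3 points is the specialization of a characteristic-$0$ one. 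Combined with the equidimensionality of $\mathsf{M}^{\circ}_{2d,K,\Field_p}$ from~(\ref{k3:cor:regular}), the flatness of $\Ss^{\on{pr}}_{d,K}$, and the normality of its special fibre, this identifies exactly which components of $\Ss^{\on{pr}}_{d,K,\overline{\Field}_p}$ lie in the K3 locus and matches them, via~(\ref{ortho:cor:conncomp}), with the K3 locus in characteristic $0$. Alternatively one may run the same argument inside the proper compactification $\overline{\Ss}_{d,K}$ of~(\ref{ortho:prp:compact}), applying Zariski's connectedness theorem; this route is available precisely under the hypothesis $p^2\nmid d$, which is why the irreducibility statement carries that restriction.
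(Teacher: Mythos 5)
Your treatment of quasi-projectivity and of the ampleness of $\omega$ is exactly the paper's: identify $\omega=F^2\bm{H}^2_{\dR}=F^2\bm{P}^2_{\dR}$ with $F^1\bm{L}_{\dR}$ (up to the Tate twist) via $\eta_{\dR}$ from (\ref{ks:prp:cris}), restrict the relatively ample bundle of (\ref{ortho:lem:hodgesmooth}) along the open immersion of (\ref{ks:cor:open}), and descend to the stack. No issues there.

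For irreducibility you follow the same skeleton as the paper, but you manufacture an obstacle the argument does not need, and your resolution of it does not deliver what you claim. You assert that you must show $\mathsf{M}^{\circ}_{2d,K,\overline{\Field}_p}$ is \emph{dense} in $\Ss'_{\overline{\Field}_p}$, i.e.\ that every irreducible component of $\Ss'_{\overline{\Field}_p}$ meets the polarized K3 locus; but the ordinary-locus/canonical-lift argument you sketch only shows that every component of $\mathsf{M}^{\circ}_{2d,K,\overline{\Field}_p}$ specializes from characteristic $0$ --- it says nothing about components of $\Ss'_{\overline{\Field}_p}$ that might a priori avoid the K3 locus altogether, so density is not established by it. The point is that density is not needed. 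Since $\Ss'_{\overline{\Field}_p}$ is normal, each of its connected components is irreducible and therefore contains at most one connected component of the open subscheme $\mathsf{M}^{\circ}_{2d,K,\overline{\Field}_p}$; this already gives an \emph{injection} of $\pi_0(\mathsf{M}^{\circ}_{2d,K,\overline{\Field}_p})$ into $\pi_0(\Ss'_{\overline{\Field}_p})\cong\pi_0(\Ss'_{\overline{\Rat}})=\pi_0(\mathsf{M}^{\circ}_{2d,K,\overline{\Rat}})$, where the middle bijection is (\ref{ortho:cor:conncomp}) and the last identification uses only the density of $\mathsf{M}^{\circ}_{2d,K,\Rat}$ in $\Ss'_{\Rat}$, which is automatic from the construction in (\ref{ks:cor:open}) (no surjectivity of the period map is required). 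This injection is equivariant for the prime-to-$p$ Hecke action, which acts transitively on the target because $\mathsf{M}^{\circ}_{2d,\Comp}$ is irreducible; a nonempty stable subset of a transitive action is everything, so the injection is a bijection, the action on $\pi_0(\mathsf{M}^{\circ}_{2d,K,\overline{\Field}_p})$ is transitive, and the quotient $\mathsf{M}^{\circ}_{2d,\overline{\Field}_p}$ is connected, hence irreducible by normality. Your density claim is thus a true \emph{consequence} of the argument, not an input to it, and the detour through canonical lifts should simply be deleted.
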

\begin{proof}
  The quasi-projectivity is immediate from (\ref{ks:thm:kugasatakecharp}) and the quasi-projectivity of $\Ss(L_d)_{\Field_p}$.

  To show ampleness of $\omega$ it suffices by (\ref{ks:prp:cris}) to show the ampleness of $F^1\bm{L}_{\dR,\Field_p}$. This follows from \cite{mp:reg}*{4.18}.

  Suppose now that $\nu_p(d)\leq 1$. Then $L_d$ is maximal at $p$, and the result follows from \cite{mp:reg}*{8.3}, (\ref{ks:cor:open}), and the fact that $\on{M}^{\circ}_{2d,\Comp}$ is irreducible (as can be seen from global Torelli and the complex analytic uniformization).
\end{proof}

\begin{thm}\label{ks:thm:ksconst}
Given any field $k$ of odd characteristic $p$ and a polarized K3 surface $(X,\xi)$ over $k$ of degree $2d$, there exists a finite separable extension $k'/k$ and an abelian variety $A$ over $k'$, the \defnword{Kuga-Satake} abelian variety, equipped with an action of the Clifford algebra $C(L_d)$, which enjoys the following additional properties:
\begin{enumerate}
  \item\label{ksconst:h1}Fix a separable closure $k^{\on{sep}}$ of $k'$. For every prime $\ell\neq p$, there exists an isomorphism of $\Int_{\ell}$-modules
  \[
   H^1_{\et}\bigl(A_{k^{\on{sep}}},\Int_{\ell}\bigr)\xrightarrow{\simeq}C\bigl(PH^2_{\et}(X_{k^{\on{sep}}},\Int_{\ell}(1))\bigr).
  \]
  Here, the right hand side denotes the Clifford algebra attached to the quadratic lattice $PH^2_{\et}\bigl(X_{k^{\on{sep}}},\Int_{\ell}(1)\bigr)$. Moreover, let $k^p$ be a perfect closure of $k'$; then there exists an isomorphism of $W(k^p)$-modules
  \[
   H^1_{\cris}\bigl(A_{k^p}/W(k^p)\bigr)\xrightarrow{\simeq}C\bigl(PH^2_{\cris}(X_{k^p}/W(k^p))(1)\bigr).
  \]
  \item\label{ksconst:end}For all primes $\ell\neq p$, the algebra
      \[
      C(L_d)\otimes\Int_{\ell}\subset\End\bigl(H^1_{\et}\bigl(A_{k^{\on{sep}}},\Int_{\ell}\bigr)\bigr)
      \]
      is Galois-equivariantly identified with $C\bigl(PH^2_{\et}(X_{k^{\on{sep}}},\Int_{\ell}(1))\bigr)$ acting on itself by right translation via the isomorphism in (\ref{ksconst:h1}). Similarly, $C(L_d)\otimes W(k^p)$ is $F$-equivariantly identified with $C\bigl(PH^2_{\cris}(X_{k^p}/W(k^p))(1)\bigr)$
  \item\label{ksconst:ph2}The action of $C\bigl(PH^2_{\et}(X_{k^{\on{sep}}},\Int_{\ell}(1))\bigr)$ on itself by left translations induces, via (\ref{ksconst:h1}), a Galois-equivariant embedding
      \[
       PH^2_{\et}\bigl(X_{k^{\on{sep}}},\Int_{\ell}(1)\bigr)\subset\End_{C(L_d)}\bigl(H^1_{\et}\bigl(A_{k^{\on{sep}}},\Int_{\ell}\bigr)\bigr).
      \]
      Similarly, there is an $F$-equivariant embedding
      \[
      PH^2_{\cris}\bigl(X_{k^p}/W(k^p)\bigr)(1)\subset\End_{C(L_d)}\bigl(H^1_{\cris}(A_{k^p}/W(k^p))\bigr).
      \]
  \item\label{ksconst:special}Let $L(A)\subset\End(A)$ be the sub-space of endomorphisms whose cohomological realizations lie in the image of $PH^2_{\et}\bigl(X_{k^{\on{sep}}},\Int_{\ell}(1)\bigr)$ for all $\ell\neq p$, as well as in the image of $PH^2_{\cris}\bigl(X_{k^p}/W(k^p)\bigr)(1)$. Then there is a natural identification
      \[
       \Pic(X_{k'})\supset\langle\xi\rangle^{\perp}\xrightarrow{\simeq}L(A)
      \]
      compatible with all cohomological realizations.
\end{enumerate}
\end{thm}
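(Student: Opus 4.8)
The plan is to obtain $A$ by pulling back the universal Kuga--Satake abelian scheme over the $\GSpin$ integral model $\Ss^{\on{pr}}_{d,K_{\on{sp}}}$ along the open immersion $\iota^{\KS}$ of (\ref{ks:cor:open}), and then to read off (\ref{ksconst:h1})--(\ref{ksconst:special}) from the comparison isomorphisms of (\ref{ks:prp:descent}), (\ref{ks:prp:cris}) and (\ref{ks:cor:criscomp}). Concretely: fix a neat admissible $K=K^pK_p$ as in Section~\ref{sec:ortho} with $K_p$, $K_{\on{sp},p}$ the relevant discriminant kernels and $K^p$ small enough that $\mathsf{M}^\circ_{2d,K,\Int_{(p)}}$ is an algebraic space. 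The sheaf $I^p/K^p$ of $K^p$-level structures is finite \'etale over $\mathsf{M}^\circ_{2d}$ (\ref{k3:subsec:level}), and $\Ss^{\on{pr}}_{d,K_{\on{sp}}}\to\Ss^{\on{pr}}_{d,K}$ is finite \'etale by (\ref{ortho:lem:zkldescent}); hence, after replacing $k$ by a finite separable extension which we again denote $k'$, the point $[(X,\xi)]$ lifts to $x\colon\Spec k'\to\mathsf{M}^\circ_{2d,K,\Int_{(p)}}$ and, composing with $\iota^{\KS}$ and lifting along the $\GSpin$-cover, to $\widetilde{x}\colon\Spec k'\to\Ss^{\on{pr}}_{d,K_{\on{sp}}}$. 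The canonical $K^p_{\on{sp}}$-level structure singles out, within the prime-to-$p$ isogeny class $A^{\KS}_{\widetilde x}$, a distinguished honest abelian variety as in the discussion preceding the statement; this is our $A$, and right translation on $H_{\Int}=C(L_d)$ supplies the $\Int$-algebra $C=C(L_d)$ together with a map $C\to\End(A)$. (If $p\mid d$ and $x$ is superspecial with $\nu_p(d)=1$, then $\widetilde x$ may be a singular point of $\Ss^{\on{pr}}_{d,K_{\on{sp}}}$, but this is harmless since $A^{\KS}$ is defined over all of $\Ss^{\on{pr}}_{d,K_{\on{sp}}}$.)

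Parts (\ref{ksconst:h1})--(\ref{ksconst:ph2}) are then formal. For $\ell\ne p$ one has $H^1_{\et}(A_{k^{\on{sep}}},\Int_\ell)=\bm{H}_{\ell,\widetilde x}\cong C(L_{d,\Int_\ell})$, compatibly with the right $C(L_d)$-action and the Galois action, while the $K^p$-level structure identifies $L_{d,\Int_\ell}$ isometrically and Galois-equivariantly with $PH^2_{\et}(X_{k^{\on{sep}}},\Int_\ell(1))$; functoriality of Clifford algebras gives the $\ell$-adic parts of (\ref{ksconst:h1}) and (\ref{ksconst:end}), and the left translation action of $C(L_{d,\Int_\ell})$ on itself gives (\ref{ksconst:ph2}). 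For the crystalline statements one substitutes for the level structure the isomorphism of $F$-crystals $\bm{L}_{\cris}(-1)\xrightarrow{\simeq}\bm{P}^2_{\cris}$ of (\ref{ks:cor:criscomp}): this identifies $\bm{H}_{\cris,\widetilde x}=H^1_{\cris}(A_{k^p}/W(k^p))$, both $F$- and $C(L_d)$-equivariantly, with $C\bigl(PH^2_{\cris}(X_{k^p}/W(k^p))(1)\bigr)$, and the same considerations yield the crystalline halves of (\ref{ksconst:h1})--(\ref{ksconst:ph2}).

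Part (\ref{ksconst:special}) is where the real work lies. For an integer $m$, let $Z(m)\subset\mathsf{M}^\circ_{2d,K,\Int_{(p)}}$ be the closed subspace parametrizing quasi-polarized K3 surfaces equipped with an extra primitive line bundle of the prescribed numerical type relative to $\xi$; by (\ref{k3:thm:deformation})(\ref{deform:xi0}) and (\ref{k3:lem:tangentspace}) this is locally defined by a single equation, with tangent space cut out of that of $\mathsf{M}^\circ_{2d,K}$ by the condition that the deforming isotropic line in $\bm{P}^2_{\dR}$ stay perpendicular to the de Rham class of the extra bundle. On the Shimura side, (\ref{ortho:lem:zkl})(\ref{zkl:tangent}) and (\ref{ortho:lem:zkl})(\ref{zkl:deform}) describe the special cycle $Z_{K_{\on{sp}}}(v)$ on $\Ss^{\on{pr}}_{d,K_{\on{sp}}}$ by the formally identical condition on the de Rham realization of its tautological special endomorphism. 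Transporting one description to the other through $\eta_{\dR}\colon\bm{L}_{\dR}(-1)\xrightarrow{\simeq}\bm{P}^2_{\dR}$ of (\ref{ks:prp:cris}) (and its crystalline incarnation (\ref{ks:cor:criscomp})), one shows that $\iota^{\KS}$ identifies the pullback of the image of $Z_{K_{\on{sp}}}(v)$ in $\Ss^{\on{pr}}_{d,K}$ with the appropriate $Z(m)$, compatibly with matching the tautological special endomorphism to the cycle class of the tautological extra line bundle; the agreement of all cohomological realizations under this matching is checked after lifting to characteristic $0$, where it follows from (\ref{ks:prp:hodgemotives}), Deligne's theorem (\ref{hodge:thm:abshodge}) on absolute Hodge cycles, and the identification $\on{AH}(\bm{H}^{\otimes(1,1)})=\End(A^{\KS})_{(p)}$ recalled in (\ref{ortho:subsec:motives}). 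Specializing over $\widetilde x$: a class in $\langle\xi\rangle^\perp\subset\Pic(X_{k'})$ places $x$ in some $Z(m)$, hence $\widetilde x$ in the corresponding special cycle, and so produces an element of $L(A)$; conversely $f\in L(A)$ places $\widetilde x$ in some $Z_{K_{\on{sp}}}(v_f)$, hence $x$ in the corresponding $Z(m)$, and so produces a class in $\langle\xi\rangle^\perp\subset\Pic(X_{k'})$ (after a final harmless enlargement of $k'$). Injectivity of $\langle\xi\rangle^\perp\to L(A)$ is immediate because the cohomological realizations are faithful, and that the two constructions are mutually inverse follows once more by comparing realizations.

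The main obstacle is the middle step of Part (\ref{ksconst:special}): matching, integrally and including the crystalline realization in characteristic $p$, the deformation theory of the Noether--Lefschetz divisors $Z(m)$ in the K3 moduli space with that of the special cycles $Z_{K_{\on{sp}}}(v)$ on the orthogonal Shimura model. This is exactly where the integrality of the comparison isomorphism in (\ref{ks:prp:cris})--(\ref{ks:cor:criscomp}) and the deformation theory of special endomorphisms from \cite{mp:reg}*{\S 5--6} are indispensable; everything else in the proof is a formal manipulation of the comparison isomorphisms $\eta_?$.
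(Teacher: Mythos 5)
Your construction of $A$ and your treatment of parts (\ref{ksconst:h1})--(\ref{ksconst:ph2}) coincide with the paper's: pull back $A^{\KS}$ along a lift of the moduli point to $\Ss^{\on{pr}}_{d,K_{\on{sp}}}$ after a separable base extension, and read off the $\ell$-adic statements from the level structure and the crystalline ones from (\ref{ks:cor:criscomp}). That part is fine.

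For part (\ref{ksconst:special}) there is a genuine gap. You propose to identify the Noether--Lefschetz locus $Z(m)$ with (the pullback of) the special cycle $Z_{K_{\on{sp}}}(v)$ by ``transporting one description to the other through $\eta_{\dR}$,'' on the grounds that their tangent spaces and local equations are formally identical. But agreement of deformation-theoretic descriptions does not produce a morphism between the two moduli problems, let alone one matching the tautological line bundle with the tautological special endomorphism: $Z(m)$ parametrizes extra line bundles while $Z_{K_{\on{sp}}}(v)$ parametrizes actual endomorphisms of $A^{\KS}$, and converting one into the other at a characteristic-$p$ point is precisely the content of (\ref{ksconst:special}). (Remark \ref{ks:rem:divtoend} does establish the scheme-theoretic identification you want, but it does so by first invoking the theorem's proof for the bijection on field-valued points, then matching complete local rings, then applying Artin approximation --- so as a route to the theorem your argument runs in a circle.) The missing idea, which is the paper's actual argument, works one class at a time: given $f\in L(A)$, the deformation space of the triple $(X,\xi,f)$ --- cut out inside the deformation space of $(X,\xi)$ via the \'etaleness of $\iota^{\KS}$ from (\ref{ks:thm:kugasatakecharp}) --- admits a component flat over $\Int_p$ by (\ref{ortho:lem:flat}); lifting to characteristic $0$, the class $f$ becomes a Hodge class in $\bm{P}^2$ by (\ref{ks:prp:hodgemotives}), hence a line bundle by Lefschetz $(1,1)$, which one then specializes back to $X$. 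The reverse direction uses the flatness of the deformation space of $(X,\xi,\eta)$ for a line bundle $\eta$ together with the identification $\on{AH}(\bm{L})\cap\End(A^{\KS})=L(A^{\KS})\cap\End(A^{\KS})$ recalled in (\ref{ortho:subsec:motives}). Your appeal to characteristic-$0$ lifting only to ``check agreement of realizations'' puts it in the wrong place: the lifting is needed to construct the correspondence in the first place.
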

\begin{proof}
After replacing $k$ by a finite separable extension if necessary we can assume that $(X,\xi)$ arises from a point $s\in\tilde{\mathsf{M}}_{2d}(k)$ that lifts to a $k$-valued point of $\widetilde{\Ss}(L_d)$. To this lift, we can attach the Kuga-Satake abelian variety $A^{\KS}_s$ with properties (\ref{ksconst:h1}), (\ref{ksconst:end}) and (\ref{ksconst:ph2}). The integral crystalline compatibility here follows from (\ref{ks:cor:criscomp}).

It still remains to show (\ref{ksconst:special}). For this we observe that, given a special endomorphism $f\in L(A^{\KS}_s)=L(s)$, the deformation space of the triple $(X,\xi,f)$ admits a flat component. Indeed, by (\ref{ks:thm:kugasatakecharp}), we can identify the complete local ring of $\tilde{\mathsf{M}}_{2d}$ at $s$ with that of $\Ss(L_d)$, and so the claim follows from \cite{mp:reg}*{8.15}.

We see therefore that there exists a lift $(\widetilde{X},\widetilde{\xi},\widetilde{f})$ over a characteristic $0$ field $F$ attached to a point $\widetilde{s}\in\tilde{\mathsf{M}}_{2d}(F)$ lifting $s$. Here we have:
\begin{align*}
 \Pic\bigl(\widetilde{X}\bigr)\supset\langle\widetilde{\xi}\rangle^{\perp}&\xrightarrow[\simeq]{\text{Lefschetz (1,1)}}\on{AH}\bigl(\bm{P}^2_{\overline{s}}\bigr)\\
 &=\on{AH}(\bm{L}_{\overline{s}})\cap\End(A^{\KS}_{\widetilde{s}})=L\bigl(A^{\KS}_{\widetilde{s}}\bigr).
\end{align*}
See (\ref{hodge:subsec:rstruc}) and (\ref{ortho:subsec:motives}) for the notation. In particular, there is a unique element of $\langle\widetilde{\xi}\rangle^{\perp}$ mapping to $\widetilde{f}$ under this isomorphism. Reducing back over $k$ shows that there is a unique element of $\langle\xi\rangle^{\perp}\subset\Pic(X)$ that has the same cohomological realizations as $f$.

Repeating this step for all $f\in L(A^{\KS}_s)$ shows that we have an inclusion $L(A^{\KS}_s)\into\langle \xi\rangle^{\perp}$ compatible with cohomological realizations.

Similarly, given a class $\eta\in\langle\xi\rangle^{\perp}\subset\Pic(X)$, the deformation space of $(X,\xi,\eta)$ again admits a flat component. Repeating the same argument as above gives us an inclusion going the other way, and so finishes the proof.
\end{proof}

\begin{rem}\label{ks:rem:ksconst}
In the literature (cf. for example~\cite{deligne:k3weil},~\cite{andre:shaf}), one usually finds the \emph{even} Clifford algebra in place of the full Clifford algebra that we have chosen to use. As in~\cite{charles}*{3.3}, we do this to ensure that the statement in (\ref{ksconst:ph2}) above is not too unwieldy.
\end{rem}

\begin{rem}\label{ks:rem:divtoend}
In fact, one can show more. For every map $T\to\mathsf{M}_{2d}$, we have a canonical identification:
\[
 L(T)=\langle\bm{\xi}\rangle^{\perp}_T\subset\Pic(\bm{\mathcal{X}}_T/T).
\]
Here, $L(T)$ is the space of special endomorphisms over $T$ viewed as a a scheme over $\Ss_d$

Indeed, the functors $T\mapsto L(T)$ and $T\mapsto\langle\bm{\xi}\rangle^{\perp}_T$ are both representable, unramified, and locally of finite type over $\mathsf{M}_{2d}$. Moreover, it is easy to deduce from the above argument that, for any field $k$, there is a canonical bijection between their $k$-valued points. In addition, one sees using deformation theory that, given a $k$-valued point of either stack, the complete local ring at that point is canonically isomorphic to the complete local ring at the associated $k$-valued point of the other stack. Using this and Artin approximation, one can glue together a canonical isomorphism from one stack to the other.
\end{rem}

\begin{rem}\label{ks:rem:deformation}
Notice that we did not need the full force of the \'etaleness of $\iota^{\KS}$ in the proof above. All we needed was for the intersection of the deformation space of a polarized K3 surface with the deformation space of a special endomorphism to admit a flat component. This weaker condition might still be checkable in situations where the Kuga-Satake period map is not expected to be \'etale, such as in the context of the Catanese-Ciliberto surfaces considered in \cite{lyons}.
\end{rem}

\subsection{}\label{ks:subsec:canonical}
In~\cite{rizov:kugasatake}*{4.2}, Rizov shows that, when $p\nmid d$, the Kuga-Satake construction is compatible with the theory of canonical lifts for ordinary varieties. This continues to hold in our more general situation. Suppose that $(X_0,\xi_0)$ is a polarized K3 surface over a perfect field $k$ of characteristic $p$, and suppose that $X_0$ is ordinary. Let $(X,\xi)$ be the canonical lift (cf. \emph{loc. cit.}) of $(X_0,\xi_0)$ over $W(k)$. After replacing $k$ by a finite extension, if necessary, we can assume that there is a Kuga-Satake abelian variety $A_0$ over $k$ attached to $(X_0,\xi_0)$, as in Theorem~\ref{intro:thm:ksconst}. There is also an algebraizable deformation $A$ of $A_0$ over $W(k)$ attached to the canonical lift $(X,\xi)$.
\begin{prp}\label{ks:prp:canonical}
$A_0$ is ordinary and $A$ is its canonical lift.
\end{prp}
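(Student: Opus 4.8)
The plan is to deduce the proposition from the integral crystalline compatibility of the Kuga--Satake construction established in (\ref{ks:prp:cris})--(\ref{ks:cor:criscomp}): transport the defining property of the canonical lift of $X_0$ to a statement about the Hodge filtration on $H^1$ of the Kuga--Satake abelian variety, and then recognise the latter as the characterisation of the Serre--Tate canonical lift. First I would set up the bookkeeping, exactly as in the proof of (\ref{ks:thm:ksconst}): after replacing $k$ by a finite extension we may assume $(X_0,\xi_0)$ carries a suitable $K^p$-level structure, giving a point $s_0\in\mathsf{M}_{2d,K,\Int_{(p)}}(k)$, and that the canonical lift $(X,\xi)$ corresponds to a point $s\in\mathsf{M}_{2d,K,\Int_{(p)}}(W)$ with $W=W(k)$, reducing to $s_0$; one checks as before that $A_0$ and $A$ are canonically isomorphic to $A^{\KS}_{\iota^{\KS}(s_0)}$ and $A^{\KS}_{\iota^{\KS}(s)}$. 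Put $H=H^1_{\cris}(A_0/W)=H^1_{\dR}(A/W)=\bm H_{\dR,s}$ and $N=PH^2_{\cris}(X_0/W)=PH^2_{\dR}(X/W)=\bm P^2_{\dR,s}$, each with its crystalline Frobenius $\Phi$ and with the Hodge filtration determined by the lift. By (\ref{ks:prp:cris}) and (\ref{ks:cor:criscomp}), $\eta_{\dR}$ specialises at $s$ to an isomorphism of filtered $F$-crystals over $W$
\[
 \eta:\ \bm L_{\dR,s}(-1)\ \xrightarrow{\ \simeq\ }\ N,
\]
which, by (\ref{ortho:subsec:projector}) and (\ref{ks:thm:ksconst})(\ref{ksconst:ph2}), is compatible with the embeddings $\bm L_{\dR,s}\subset\bm H^{\otimes(1,1)}_{\dR,s}=\End(H)$ and $N(1)\subset\End(H)$ and with the $\GSpin$/Clifford action of (\ref{ortho:subsec:sheaves}).

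For the ordinarity of $A_0$: since $X_0$ is ordinary, $N$ (equivalently $H^2_{\cris}(X_0/W)$) is an ordinary $F$-crystal, hence so is $\bm L_{\cris,s_0}(-1)$; the attached Kuga--Satake abelian variety is then ordinary by Nygaard~\cite{nygaard}*{Cor.\ 2.8}, and, ordinarity being an isogeny invariant, the same holds for our integrally pinned-down $A_0$. Alternatively one can argue directly: the slope decomposition of $\bm L_{\cris,s_0}$ furnishes a cocharacter of $\GSpin(\bm L_{\cris,s_0})$, conjugate to the Hodge cocharacter, whose composite with the spin representation splits $H$ as an ordinary $F$-crystal; compare the deformation-theoretic arguments of (\ref{ortho:lem:zklpr}) and \cite{mp:reg}.

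The substantive point is that $A$ is the canonical lift. Recall (Grothendieck--Messing and Serre--Tate theory) that for an ordinary abelian variety $A_0/k$ the Serre--Tate canonical lift is the unique lift $A'/W$ whose Hodge filtration $F^1H^1_{\dR}(A'/W)\subset H$ is stable under $\Phi$, equivalently is the positive-slope part of $H$. On the other hand, by the Nygaard--Ogus theory underlying the notion of canonical lift used in \cite{rizov:kugasatake}*{4.2}, the canonical lift $X$ of the ordinary K3 surface $X_0$ is characterised by the $\Phi$-stability of the Hodge filtration on $H^2_{\dR}(X/W)$; passing to primitive cohomology, $F^\bullet N$ is $\Phi$-stable. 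Transporting through $\eta$, the filtration $F^\bullet\bm L_{\dR,s}$ is $\Phi$-stable; in particular the isotropic line $F^1\bm L_{\dR,s}=\langle e\rangle$ is $\Phi$-stable, so $\Phi(\sigma^*e)\in W\,e$. By the description of the Hodge filtration on $\bm H_{\dR}$ recalled in (\ref{ortho:subsec:sheaves}), $F^1H$ is the image (equal to the kernel, as $e$ is isotropic) of the action of $e$ on $H$; since $\Phi$ on $\bm H^{\otimes(1,1)}_{\dR,s}=\End(H)$ restricts to $\Phi$ on $\bm L_{\dR,s}$ and is $\GSpin$-compatible with $\Phi$ on $H$, one gets $\Phi(\sigma^*F^1H)=\Phi\bigl(\sigma^*(e\cdot H)\bigr)\subseteq e\cdot H=F^1H$. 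Thus the Hodge filtration of $A$ is $\Phi$-stable, and $A$ is the canonical lift of $A_0$.

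The main obstacle I anticipate is this last compatibility: spelling out, with the correct Tate twists, that $\Phi$-stability of the isotropic line $F^1\bm L_{\dR,s}$ forces $\Phi$-stability of the induced Hodge filtration $F^1H$ --- i.e.\ that the reduction of structure group to the parabolic $P_{\on{sp}}\subset G_{d,\on{sp}}$ (the spin representation $H=C(V_d)$) is compatible with the crystalline Frobenii on $\bm L_{\cris}$ and $\bm H_{\cris}$. This is built into the construction of the integral models in \cite{mp:reg}, but should be made explicit. A minor additional point is to confirm that the $A_0,A$ produced by the period map coincide with those in the statement, which follows from the pinning down of $A^{\KS}$ inside its prime-to-$p$ isogeny class discussed just before (\ref{ks:thm:ksconst}).
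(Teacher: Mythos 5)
Your argument is correct in outline, but it takes a genuinely different route from the paper. The paper's proof is Serre--Tate--theoretic: ordinarity of $A_0$ is read off from the slope computation in the proof of (\ref{ks:lem:ordinary}), the deformation space of $A_0$ is identified with a formal torus $\mathfrak{T}$ via Serre--Tate coordinates, Nygaard's theorem \cite{nygaard}*{2.7} is invoked to see that $A$ is \emph{isogenous} to the canonical lift and hence corresponds to a torsion point of $\mathfrak{T}$, and one concludes because the identity is the only $W(k)$-rational torsion point. You instead argue crystallinely: you transport the $\Phi$-stability of the Hodge filtration on $PH^2_{\dR}(X/W)$ (the defining feature of Nygaard's canonical lift) through the integral isomorphism of filtered $F$-crystals from (\ref{ks:prp:cris})/(\ref{ks:cor:criscomp}) and the Clifford construction, obtaining $\Phi$-stability of $F^1H^1_{\dR}(A/W)$ and hence the Serre--Tate characterisation of the canonical lift directly. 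Both work. Your route buys an integral statement that bypasses Nygaard's isogeny theorem for the ``canonical lift'' half (though you still use him, or a slope-cocharacter argument, for ordinarity), at the price of the Frobenius/Tate-twist bookkeeping you flag: one must check that $\Phi_{\bm{L}}(\sigma^*e)=pu\,e$ with $u\in W^\times$ (slope $2$ on $F^2\bm{P}^2$, untwisted once), so that $\Phi_H(\sigma^*(e\cdot H))\subseteq pu\,e\cdot H\subseteq F^1H$ via the $F$-equivariance of $\bm{L}_{\cris}\subset\bm{H}^{\otimes(1,1)}_{\cris}$; combined with the divisibility $\Phi_H(\sigma^*F^1H)\subseteq pH$ this pins $F^1H$ down as the slope-$\geq 1$ summand, and Grothendieck--Messing then identifies the lift as canonical. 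The paper's argument is shorter; yours is more self-contained given the integral crystalline compatibility already established in this section.
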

\begin{proof}
  The proof of \cite{rizov:kugasatake}*{4.2.2} goes through verbatim. We recall it here briefly for the convenience of the reader. That $A_0$ is ordinary was already observed in the course of the proof of (\ref{ks:lem:ordinary}). Via Serre-Tate co-ordinates, the deformation space of $A_0$ is naturally identified with a formal torus $\mathfrak{T}$ over $W(k)$. Nygaard has shown in \cite{nygaard}*{2.7} that in this situation $A$ has to be isogenous to the canonical lift, implying that it corresponds to a torsion point of $\mathfrak{T}$. However, the only torsion point of $\mathfrak{T}$ defined over $W(k)$ is the identity, which corresponds to the canonical lift of $A_0$.
\end{proof}

\section{The Tate conjecture}\label{sec:special}

Let $L$ be a quadratic lattice as in Section~\ref{sec:ortho} satisfying the conditions of (\ref{ortho:subsec:integral}), and let $\Ss,\widetilde{\Ss}$ be the attached integral models over $\Int[2^{-1}]$ of $\Sh(L)$ and $\widetilde{\Sh}(L)$, respectively. Fix a prime $p>2$.

\subsection{}
Suppose that we have $s\in\widetilde{\Ss}(\overline{\Field}_p)$, and suppose that $s$ in fact arises from a point $s_0$ defined over the finite field $\Field_{p^r}$. Then, for each $\ell\neq p$ and each $m$ such that $r\vert m$, the $p^m$-power Frobenius $\Fr_m$ acts on $\bm{H}_{\ell,s}$ and on $\bm{L}_{\ell,s}$. We will write $\bm{V}_{\ell,s}$ for the $\Rat_{\ell}$-vector space $\bm{L}_{\ell,s}\otimes\Rat_{\ell}$.

For any $m\in\Int_{\geq 1}$, set $\Int_{p^r}=W(\Field_{p^m})$, and let $\Rat_{p^m}$ be its fraction field. We have the crystalline realization $\bm{L}_{\cris,s}$: this is a $\Int_{p^r}$-module of endomorphisms of the $\Int_{p^r}$-module $\bm{H}_{\cris,s}$. Set $\bm{V}_{\cris,s}=\bm{L}_{\cris,s}\left[\frac{1}{p}\right]$. For each $t$ such that $r\mid t$, let 
\[
\bigl(\Rat_{p^m}\otimes_{\Rat_{p^r}}\bm{V}_{\cris,s}\bigr)^{\bm{F}_s=1}\subset\Rat_{p^m}\otimes_{\Rat_{p^r}}\bm{V}_{\cris,s}
\]
denote the $\Rat_p$-subspace of $\bm{F}_s$-equivariant endomorphisms. For any prime $\ell$, set
\[
 r_{\ell}=\begin{cases}
   \dim_{\Rat_{\ell}}(\varinjlim_{r\vert m}\bm{V}_{\ell,s}^{\Fr_m=1})\text{, if $\ell\neq p$};\\
   \dim_{\Rat_p}(\varinjlim_{r\vert m}(\Rat_{p^m}\otimes_{\Rat_{p^r}}\bm{V}_{\cris,s})^{\bm{F}_s=1})\text{, if $\ell=p$}.
 \end{cases}
\]
From now on, we will maintain:
\begin{assump}[$\ell$-independence]\label{special:assump:ellind}
  $r_{\ell}$ is independent of $\ell$.
\end{assump}

\begin{rem}\label{special:rem:kisin}
  This assumption should always hold by results of Kisin~\cite{kis:langrap}. Also, by \cite{katz_messing}, it will hold if one can realize $\bigl\{\{\bm{V}_{\ell,s}\}_{\ell\neq p},\bm{V}_{\cris,s}\bigr\}$ as the family of cohomological realizations of a motive over $\Field_{p^r}$.
\end{rem}

\begin{thm}\label{special:thm:formtate}
\mbox{}
\begin{enumerate}[itemsep=0.12in]
\item\label{formtate:ell}If $\ell\neq p$, the natural map
  \begin{align*}
    L(A^{\KS}_s)\otimes\Rat_{\ell}&\rightarrow\varinjlim_{r\vert m}\bm{V}_{\ell,s}^{\Fr_m=1}
  \end{align*}
  is an isometry of $\Rat_{\ell}$-quadratic spaces.
\item\label{formtate:p}The natural map
\begin{align*}
  L(A^{\KS}_s)\otimes\Rat_p&\rightarrow\varinjlim_{r\vert m}(\Rat_{p^m}\otimes\Rat_{p^r}\bm{V}_{\cris,s})^{\bm{F}_s=1}
\end{align*}
is an isometry of $\Rat_p$-quadratic spaces.
\end{enumerate}
\end{thm}

\begin{rem}\label{special:rem:formtate}
Given our standing assumption (\ref{special:assump:ellind}), each of the assertions of the theorem is equivalent to the following statement: $\rk L(A^{\KS}_s)=r$, where $r=r_{\ell}$, for one (hence any) prime $\ell$.
\end{rem}

The proof of this theorem will be given below following (\ref{special:rem:kisinproof}). As noted in the introduction, the flexibility of working with arbitrary orthogonal Shimura varieties is important to our method. It enables us to make the following crucial reduction:
\begin{lem}\label{special:lem:red1}
We can assume that $L$ is self-dual at $p$ and that $L(A^{\KS}_s)\neq 0$.
\end{lem}
\begin{proof}
  Choose any non-zero positive definite quadratic space $\Lambda$ over $\Int$ such that $V'=V\oplus\Lambda_{\Rat}$ admits a lattice $L'$ that is self-dual at $p$. It is always possible to find such a $\Lambda$; cf.~\cite{mp:reg}*{6.1}. Attached to this is a map of (smooth) integral canonical models $\widetilde{\Ss}\to\widetilde{\Ss}(L')$. Let $\widetilde{s}$ be the image of $s$ in $\widetilde{\Ss}(L')$. Set
  \[
 \widetilde{r}_{\ell}=\begin{cases}
   \dim_{\Rat_{\ell}}(\varinjlim_{r\vert m}\widetilde{\bm{V}}_{\ell,\widetilde{s}}^{\Fr_m=1})\text{, if $\ell\neq p$};\\
   \dim_{\Rat_p}(\varinjlim_{r\vert m}(\Rat_{p^m}\otimes_{\Rat_{p^r}}\widetilde{\bm{V}}_{\cris,s})^{\bm{F}_s=1}\text{, if $\ell=p$}.
 \end{cases}
\]
Then, by (\ref{ortho:prp:specialemb}), we have, for all $\ell$, $\widetilde{r}_{\ell}=r_{\ell}+\rk\Lambda$. Therefore, the assumption (\ref{special:assump:ellind}) holds for $s\in\widetilde{\Ss}(\overline{\Field_p})$ if and only if it holds for $\widetilde{s}\in\widetilde{\Ss}(L')(\overline{\Field}_p)$.

Moreover, by \emph{loc. cit.}, we have
\[
   L(A^{\KS}_s)=\Lambda^{\perp}\subset L(\widetilde{A}^{\KS}_{\widetilde{s}}).
\]
So we find that (\ref{special:thm:formtate}) holds for $\widetilde{s}$ if and only if it holds for $s$.
\end{proof}

\subsection{}
Following this lemma, we can and will maintain the assumptions that $L$ is self-dual at $p$ and that $L(A^{\KS}_s)\neq 0$.

For $\ell\neq p$ and all $m\in\Int_{>0}$ such that $r\mid m$, let $I_{\ell,m}\subset G_{\ell}\coloneqq\GSpin(\bm{V}_{\ell,s})$ be the commutant of $\Fr_m$. Since $\Fr_m$ is a semi-simple element, $I_{\ell,m}$ is a reductive sub-group of $\GSpin(\bm{V}_{\ell,s})$. In fact, for $m$ large enough $I_{\ell,m}$ does not depend on $m$. From now on, we will fix $m$ such that $I_{\ell,m}=I_{\ell,m'}$, for all $m'\geq m$ with $r\vert m'$. Note that, for such $m$ and $m'$, we have
\[
 \bm{V}_{\ell,s}^{\Fr_{m'}=1}=\bm{V}_{\ell,s}^{\Fr_m=1}.
\]
We will write $I_{\ell}$ for the group $I_{\ell,m}$.

\begin{lem}\label{special:lem:irreducible}
For every $\ell\neq p$, $\bm{V}_{\ell,s}^{\Fr_m=1}$ is an absolutely irreducible representation of $I_{\ell}$.
\end{lem}
\begin{proof}
  Let $q=p^m$. Fix $\ell\neq p$, and let $1,\alpha_1^{\pm 1},\ldots,\alpha_r^{\pm 1}\in\overline{\Rat}_{\ell}$ be the distinct eigenvalues of $\Fr_m$ acting on $\bm{V}_{\ell,s}$. Since $\Fr_m$ is semi-simple, for $\ell\neq p$, the image of $I_{\ell}\otimes\overline{\Rat}_{\ell}$ in $\SO\bigl(\bm{V}_{\ell,s,\overline{\Rat}_{\ell}}\bigr)$ is the product
  \[
   \SO\bigl(\bm{V}^{\Fr_m=1}_{\ell,s,\overline{\Rat}_{\ell}}\bigr)\times\prod_{i=1}^r\GL\bigl(\bm{V}^{\Fr_m=\alpha_i}_{\ell,s,\overline{\Rat}_{\ell}}\bigr).
  \]
  From this description, the lemma is immediate.
\end{proof}

\subsection{}
Let $\underline{\Aut}^{\circ}(A^{\KS}_s)$ be the group scheme of units in the ring $\End(A^{\KS}_s)\otimes\Rat$: this is an algebraic group over $\Rat$. For $\ell\neq p$, there is a natural embedding of algebraic $\Rat_{\ell}$-groups
\[
  i_{\ell}:\underline{\Aut}^{\circ}(A^{\KS}_s)\otimes_{\Rat}\Rat_{\ell}\hookrightarrow \GL(\bm{H}_{\ell,s}\otimes\Rat_{\ell})
\]
defined by the functoriality of $\ell$-adic homology.

Similarly, if $\Rat_p^{\nr}$ is the fraction field of $W(\overline{\Field}_p)$, we have a natural embedding of algebraic $\Rat^{\nr}_p$-groups
\[
  i_p:\underline{\Aut}^{\circ}(A^{\KS}_s)\otimes_{\Rat}\Rat^{\nr}_p\hookrightarrow\GL(\bm{H}_{\cris,s,\Rat_p^{\nr}}).
\]

Let $I\subset\underline{\Aut}^{\circ}(A^{\KS}_s)$ be the largest closed sub-group that maps into $G_{\ell}$ under $i_{\ell}$ for each $\ell\neq p$, and into $\GSpin(\bm{V}_{\cris,s})$ under $i_p$.

We will need the following proposition:
\begin{prp}[Kisin]\label{special:prp:kisin}
Suppose that $\ell\neq p$ is a prime such that $G_{\ell}$ is split and such that all the roots of the characteristic polynomial of $\Fr_m$ are contained in $\Rat_{\ell}$.\footnote{Such a prime always exists; in fact, the set of such primes has positive density.} Then the natural map $I_{\Rat_{\ell}}\to I_{\ell}$ is an isomorphism.
\end{prp}
\begin{proof}\label{special:rem:kisinproof}
This is proven in~\cite{kis:langrap} via a group-theoretic reinterpretation of Tate's original argument for the main theorem of~\cite{tate:end}.

We only give a sketch here. $I$ is easily seen to be reductive, since it preserves a polarization on $A^{\KS}_s$. So it suffices to show that $I_{\Rat_{\ell}}$ contains a Borel sub-group of $I_{\ell}$. For this, using the splitness of $I_{\ell}$ (which holds by our hypothesis that $G_{\ell}$ is split), and a little further argument, it is enough to prove that the $\ell$-adic manifold $I(\Rat_{\ell})\backslash I_{\ell}(\Rat_{\ell})$ is compact. Choose a neat compact open $K\subset K_L$ with $K_p=K_{L,p}=\SO(L)(\Int_p)$. We can assume that $s$ is an $\bb{F}_{p^m}$-valued point of the finite \'etale cover $\widetilde{\Ss}_K(L)_{(p)}$ of $\widetilde{\Ss}(L)_{\Int_{(p)}}$.

Set $U_{\ell}=K_{\ell}\cap I_{\ell}(\Rat_{\ell})$; clearly, it suffices to show that the double coset space
\[
 I(\Rat_{\ell})\backslash I_{\ell}(\Rat_{\ell})/U_{\ell}
\]
is finite.

For this, we observe that the natural action of $I_{\ell}(\Rat_{\ell})$ on the set of $\Fr_m$-stable $\Int_{\ell}$-lattices within $\bm{H}_{\ell,s}\otimes\Rat_{\ell}$, along with the correspondence between these lattices and abelian varieties over $\Field_{p^m}$ isogenous to $A^{\KS}_s$, provides us with a map
\[
I(\Rat)\backslash I_{\ell}(\Rat_{\ell})/U_{\ell}\to\widetilde{\Ss}_K(L)_{(p)}(\Field_{p^m}).
\]
Using essential information from the construction of his integral canonical models, Kisin is able to show that this map is actually injective. This shows that the left hand side is finite and completes our proof.
\end{proof}

\begin{proof}[Proof of (\ref{special:thm:formtate})]
 For $\ell\neq p$, the map
 \[
  L(A^{\KS}_s)\otimes\Rat_{\ell}\rightarrow\bm{V}_{\ell,s}^{\Fr_m=1}
 \]
 is a map of $I\otimes\Rat_{\ell}$-representations, and so, by (\ref{special:prp:kisin}), for a particular choice of $\ell$, it is in fact a map of $I_{\ell}$-representations. But now, by (\ref{special:lem:irreducible}), $\bm{V}_{\ell,s}^{\Fr_m=1}$ is an irreducible representation of $I_{\ell}$. Since $L(A^{\KS}_s)\neq 0$, this implies that the map must be an isomorphism for this choice of $\ell$. By (\ref{special:rem:formtate}), this finishes the proof of the theorem.
\end{proof}

The following corollary is inspired from \cite{faltings:rp}*{\S 3}.
\begin{corollary}\label{special:cor:fingentate}
Suppose that the $\ell$-independence assumption (\ref{special:assump:ellind}) holds at every point in $\widetilde{\Ss}(\overline{\Field}_p)$. Let $s\to\widetilde{\Ss}$ be a point defined over a finitely generated extension of $\Field_p$, and let $\overline{s}\to\widetilde{\Ss}$ be a geometric point above $s$. Then, for each prime $\ell\neq p$, the natural map
\[
 L(A^{\KS}_s)\otimes\Rat_{\ell}\rightarrow\bm{V}_{\ell,\overline{s}}^{\Aut(k(\overline{s})/k(s))}
\]
is an isometry of $\Rat_{\ell}$-quadratic spaces.
\end{corollary}
\begin{proof}
We can assume that $k(s)=k(X)$ is the function field of a smooth, geometrically connected variety $X$ over $\Field_q$ equipped with an $\Field_q$-valued rational point $x_0$. We can also arrange things so that $s$ arises from a map $\widetilde{s}:X\to\widetilde{\Ss}$, and thus specializes to an $\Field_q$-valued point $s_0=\widetilde{s}\circ x_0$. By shrinking $X$ if necessary, we can further assume that
\[
 \End\bigl(A^{\KS}_{\widetilde{s}}\bigr)=\End\bigl(A^{\KS}_s\bigr).
\]
By the definition of specialness, we have:
\begin{align}\label{special:eqn:fingensp}
 L(A^{\KS}_s)=\End\bigl(A^{\KS}_{\widetilde{s}}\bigr)\cap L(A^{\KS}_{s_0})\subset\End\bigl(A^{\KS}_{s_0}\bigr).
\end{align}

Therefore from (\ref{special:thm:formtate}), we find, for a geometric point $\overline{s}_0$ lying above $s_0$:
\begin{align}\label{special:eqn:tensorql}
L(A^{\KS}_s)\otimes\Rat_{\ell}=\biggl(\End\bigl(A^{\KS}_{\widetilde{s}}\bigr)\otimes\Rat_{\ell}\biggr)\cap\bm{V}_{\ell,\overline{s}_0}\subset\End\bigl(\bm{H}_{\ell,\overline{s}_0}\bigr)\otimes\Rat_{\ell}.
\end{align}

By \cite{zarhin:tate}, we have, for any $\ell\neq p$:
\begin{align}\label{special:eqn:zarhin}
\End\bigl(A^{\KS}_s\bigr)\otimes\Rat_{\ell}\xrightarrow{\simeq}\End_{\Aut(k(\overline{s})/k(s))}\bigl(\bm{H}_{\ell,\overline{s}}\bigr)\otimes\Rat_{\ell}.
\end{align}

Combining this with (\ref{special:eqn:tensorql}) gives us the result.
\end{proof}
\comment{
Finally, we can prove an $\ell$-independence result for special endomorphisms.
\begin{corollary}\label{special:cor:ell-independence}
Suppose again that the $\ell$-independence assumption (\ref{special:assump:ellind}) holds at every point in $\Ss_K(\overline{\Field}_p)$. Let $T$ be an $\Ss_K$-scheme and suppose that $f\in\End(A^{\KS}_T)_{(p)}$. Then $f$ is special if and only if it is $\ell_0$-special for some prime $\ell_0$.
\end{corollary}
\begin{proof}
  Suppose that, for some prime $\ell_0$, $f$ is $\ell_0$-special. Then, by (\ref{special:lem:complex}) and (\ref{special:thm:formtate}), the locus where $f$ is special is an open and closed sub-scheme of $T$ that contains $T\otimes\Rat$ and $T(\overline{\Field}_p)$, and so must be all of $T$.
\end{proof}
}
\subsection{}\label{ks:subsec:mainproof}
We can now easily prove the Tate conjecture for K3 surfaces:
\begin{proof}[Proof of Theorem~\ref{intro:thm:main}]
After replacing $k$ by a finite separable extension, we can assume that $X$ admits a polarization $\xi$ of degree $2d$, and that $(X,\xi)$ corresponds to a point $s\in\mathsf{M}_{2d}(k)$. After a further finite separable extension of $k$, if necessary, we can assume that it lifts to a point in $\widetilde{\Ss}(L_d)(k)$.

If $k$ is finite, the theorem is immediate from (\ref{ks:thm:ksconst}) and (\ref{special:thm:formtate}). The required $\ell$-independence hypothesis (\ref{special:assump:ellind}) is valid in our case because of the obviously motivic origin of $\bm{V}_{\ell,s}=\bm{P}^2_{\ell,s}(1)$; cf.~(\ref{special:rem:kisin}).

For infinite $k$, the result follows easily from the proof of (\ref{special:cor:fingentate}), once we observe that the argument there only needs a smooth open neighborhood $U\subset\widetilde{\Ss}(L_d)$ of $s$ such that the $\ell$-independence hypothesis holds at some closed point of $U$.
\end{proof}

\subsection{}\label{special:subsec:cubicfourfolds}
We quickly sketch how the above ideas apply to cubic fourfolds. Let $M_0$ be the even rank $2$ $\Int$-lattice equipped with the bi-linear form represented by the matrix $\begin{pmatrix}
   2&1\\
   1&2
   \end{pmatrix}$. Let $M$ be the quadratic $\Int$-lattice:
\[
 M=E_8^{\oplus 2}\oplus U^{\oplus 2}\oplus M_0.
\]
This is a signature $(20,2)$ lattice that is maximal and is self-dual over $\Int[6^{-1}]$.

\comment{
Let $K_M\subset\SO(\widetilde{M})(\Adele_f)$ be the largest compact sub-group that stabilizes $\widetilde{M}_{\widehat{\Int}}$ and acts trivially on $m$ (this is the \defnword{discriminant kernel}); then $K_M$ is in fact a compact open sub-group of $\SO(M)(\Adele_f)$. For every prime $p\neq 2$, and every $K\subset K_M$ small enough with $K_p=K_{M,p}$, just as in Section~\ref{sec:ortho} (but even simpler, since we do not have to consider non-maximal lattices), the theory of \cite{mp:reg} now gives us an orthogonal Shimura variety $\Sh_K$ of the Shimura variety attached to $M$ and the level sub-group $K$ over $\Rat$, and an integral canonical healthy regular model $\Ss_K$ over $\Int_{(p)}$ (it is smooth over $\Int_{(p)}$ if $p\neq 3$).
}
Let $\mathsf{CF}$ be the moduli stack of cubic fourfolds over $\Int[2^{-1}]$.  Over $\Comp$, we have a Kuga-Satake map $\tilde{\mathsf{CF}}_{\Comp}\to\Sh(M)_{\Comp}$ constructed using primitive degree-$4$ cohomology, where, once again, $\tilde{\mathsf{CF}}$ is a two-fold cover of $\mathsf{CF}$ trivializing the determinant of primitive cohomology. Using the fact that this map is given via an absolutely Hodge correspondence~\cite{andre:shaf}*{\S~6}\footnote{This is proven in \emph{loc. cit.} via a monodromy argument, but one should also be able to prove it via Deligne's Principle B and working with amenable points in the moduli space, much as we did with Kummer points in (\ref{ks:prp:hodgemotives}).}, just as in (\ref{ks:cor:kugasatakechar0}), we can descend the Kuga-Satake map over $\Rat$: $\tilde{\mathsf{CF}}_{\Rat}\to\Sh(M)$.

Let $(+1)$ (resp. $(-1)$) be the self-dual odd positive (resp. negative) $\Int$-lattice of rank $1$, and set
\[
 M'=(+1)^{\oplus 21}\oplus(-1)^{\oplus 2}.
\]
This is a self-dual lattice of signature $(21,2)$. It is shown in \cite{hassett}*{2.1.2} that there exists $m\in M'$ with $m\cdot m=3$ such that $M$ is isometric to $\langle m\rangle^{\perp}\subset M'$. Then, for any $p>2$, just as we did for K3 surfaces in \ref{sec:k3}, we can define a notion of $K^p$-level structure for cubic fourfolds over $\Int_{(p)}$ using the lattice $M'$ and the distinguished element $m\in M'$. This gives us a finite \'etale cover $\tilde{\mathsf{CF}}_{K,\Int_{(p)}}\to \tilde{\mathsf{CF}}_{\Int_{(p)}}$. Since $\tilde{\mathsf{CF}}_{K,\Int_{(p)}}$ is smooth over $\Int_{(p)}$, we can again use the theory of integral canonical models to find a natural extension of the Kuga-Satake map over $\Int[2^{-1}]$: $\tilde{\mathsf{CF}}\to\Ss(M)$. We now have:

\begin{thm}\label{ks:thm:cubicfourfolds}
\mbox{}
\begin{enumerate}[itemsep=0.11in]
  \item\label{ks:cubicopen}The period map $\tilde{\mathsf{CF}}\to\Ss(M)$ is \'etale. For any $p>2$ and $K^p$ small enough, the map $\tilde{\mathsf{CF}}_{K,\Int_{(p)}}\to\Ss_K(M)_{(p)}$ is an open immersion.
  \item\label{ks:cubicksconst}Given any cubic fourfold $X$ over a field $k$ of odd characteristic, there exists a finite separable extension $k'/k$ and an abelian variety $A$ over $k'$ such that the numbered assertions of (\ref{ks:thm:ksconst}) hold with $PH^2$ replaced by $PH^4$ and $\Pic(X_{k'})$ replaced by $\on{CH}^2(X_{k'})$.
  \item\label{ks:cubictate}The Tate conjecture for cubic fourfolds holds in co-dimension $2$ over fields of odd characteristic. That is, given a cubic fourfold $X$ over a finitely generated field $k$ of odd characteristic with absolute Galois group $\Gamma=\Gal(k^{\on{sep}}/k)$, the $\ell$-adic cycle class map
      \[
       \on{CH}^2(X)\otimes\Rat_{\ell}\rightarrow H^4_{\et}\bigl(X_{k^{\on{sep}}},\Rat_{\ell}(2)\bigr)^{\Gamma}
      \]
      is an isomorphism for all $\ell\neq p$.
  \item\label{ks:cubicirred}$\mathsf{CF}_{\Field_p}$ is geometrically irreducible for every $p>2$.
\end{enumerate}
\end{thm}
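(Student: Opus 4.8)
The plan is to transcribe the arguments of Sections~\ref{sec:k3}--\ref{sec:ks} to cubic fourfolds, taking advantage of the fact that the lattice $M$ is maximal: as a result the integral model $\Ss_K$ is already healthy regular (smooth away from $p=3$) and there is no need to pass to a ``primitive'' open sub-scheme, nor is any condition on $p$ required beyond $p>2$. The moduli stack $\on{CF}_{K,\Int_{(p)}}$ is smooth over $\Int_{(p)}$---it is a $\PGL_6$-quotient of an open subscheme of a projective space, followed by a finite \'etale level cover---so both the source and the target of the period map $\on{CF}_{K,\Int_{(p)}}\to\Ss_K$ are regular of relative dimension $20$.

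The crucial input, which I expect to be the main obstacle, is the analogue of (\ref{ks:prp:cris}): namely that the isometry over $\Rat$ identifying a Tate twist of the primitive fourth de Rham cohomology $\bm{P}^4_{\dR}$ of the universal cubic fourfold with the tautological filtered bundle $\bm{V}_{\dR}$ on $\Sh_K$ extends to an isometry of filtered vector bundles with integrable connection over all of $\on{CF}_{K,\Int_{(p)}}$. As in Section~\ref{sec:ks}, this rests on the de Rham nature of absolutely Hodge cycles on abelian varieties (Theorem~\ref{hodge:thm:blasius}): one first checks integral compatibility at ordinary points using the integral Hodge--Tate comparison of Bloch--Kato, following the proof of (\ref{ks:lem:ordinary}), and then propagates it off a closed locus of codimension $\ge 2$ using the density of the ordinary locus in $\on{CF}_{K,\Field_p}$ and the normality of $\on{CF}_{K,\Int_{(p)}}$, following (\ref{ks:prp:cris}). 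The genuinely new point is this density statement---the analogue for cubic fourfolds of \cite{ogus:ss}*{2.9}; I would prove it by the crystalline period methods of \emph{loc.\ cit.}, or by relating $\bm{P}^4$ of a cubic fourfold to the (Tate-twisted) second cohomology of its hyperk\"ahler Fano variety of lines, and this is the step to verify most carefully.

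Granting the integral crystalline compatibility, the remaining assertions are formal. The tangent-space computation of (\ref{ks:thm:kugasatakecharp})---comparing the Griffiths-type description of the deformation theory of a cubic fourfold with the analogue of (\ref{ortho:lem:tangentspace})---shows $\on{CF}_{K,\Int_{(p)}}\to\Ss_K$ is \'etale; as $\Ss_K$ is normal and the map is an open immersion on generic fibres by Voisin's global Torelli theorem~\cite{voisin:cubic}, the argument of (\ref{ks:cor:open}) (via \cite{laumon_m-b}*{16.5}) gives (\ref{ks:cubicopen}). For (\ref{ks:cubicksconst}) I would, after a finite separable base change and a lift to the $\GSpin$-cover, attach to a $k$-point of $\on{CF}_{K,\Int_{(p)}}$ its Kuga--Satake abelian variety $A^{\KS}$: properties (\ref{ksconst:h1})--(\ref{ksconst:ph2}) of (\ref{ks:thm:ksconst}) transcribe directly, the integral crystalline compatibility being supplied by the analogue of (\ref{ks:cor:criscomp}), and the identification $\langle h^2\rangle^{\perp}\subset\on{CH}^2(X_{k'})\xrightarrow{\simeq}L(A^{\KS})$ is obtained exactly as in (\ref{ks:thm:ksconst})(\ref{ksconst:special})---the deformation space of a special endomorphism (resp.\ of a primitive codimension-$2$ cycle) has a flat component by \'etaleness and (\ref{ortho:lem:flat}), so one reduces to characteristic $0$, where the Lefschetz $(2,2)$ theorem and the absolutely Hodge nature of the cubic Kuga--Satake correspondence~\cite{andre:shaf}*{\S 6} apply. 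Then (\ref{ks:cubictate}) follows from (\ref{ks:cubicksconst}) precisely as Theorem~\ref{intro:thm:main} follows from Theorem~\ref{intro:thm:ksconst} in (\ref{ks:subsec:mainproof}), the surjectivity of $L(A^{\KS}_s)\otimes\Rat_\ell\to\bm{V}_{\ell,s}^{\Gamma}$ being provided by Kisin's results as packaged in \cite{mp:reg}*{\S 7}.

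Finally, for (\ref{ks:cubicirred}) I would construct, as in (\ref{ortho:prp:compact}), a regular proper toroidal compactification $\overline{\Ss}_K$ over $\Int_{(p)}$ whose boundary is a relative Cartier divisor and whose fibres are geometrically normal---here maximality of $M$ makes this available for every $p>2$, so no hypothesis analogous to $p^2\nmid d$ intervenes---and then invoke Zariski's connectedness theorem as in (\ref{ortho:cor:conncomp}) to put the geometrically connected components of $\Sh_K$ and of $\Ss_{K,\Field_p}$ in natural bijection. Combined with (\ref{ks:cubicopen}), this shows that every geometrically connected component of $\on{CF}_{K,\Field_p}$ is the specialization of one of $\on{CF}_{K,\Rat}$, and hence that $\on{CF}_{\Field_p}$ is geometrically irreducible, since $\on{CF}_{\Comp}$ is an irreducible stack.
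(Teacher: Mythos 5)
Your proposal follows essentially the same route as the paper's sketch: étaleness via Levin's tangent-space description of the deformation theory plus integral crystalline compatibility proved at ordinary points and propagated by density of the ordinary locus, Voisin's Torelli theorem to upgrade to an open immersion, reduction of the Tate conjecture to characteristic $0$ via flat deformations of special endomorphisms, and the toroidal compactification/Zariski-connectedness argument for irreducibility. One correction: there is no ``Lefschetz $(2,2)$ theorem''---the input needed in characteristic $0$ is the Hodge conjecture for codimension-$2$ cycles on cubic fourfolds, which is a genuine theorem (cf.~\cite{zucker:cubic} or \cite{andre:shaf}*{Appendix 2}) and plays exactly the role that Lefschetz $(1,1)$ played for K3 surfaces. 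You are also right to single out the density of the ordinary locus in $\on{CF}_{K,\Field_p}$ as the step requiring the most care; the paper's sketch takes it for granted.
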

\begin{proof}[Sketch of proof]
If we look back at the strategy used for K3 surfaces, we see that the main step is to show that the period map
\[
 \tilde{\mathsf{CF}}\rightarrow\Ss(M)
\]
is \'etale. Indeed, once we know this, the Torelli theorem for cubic fourfolds~\cite{voisin:cubic} will imply that the induced map $\tilde{\mathsf{CF}}_{K,\Int_{(p)}}\to\Ss_K(M)_{(p)}$ is an open immersion for $K^p$ small enough. The remaining statements are proven just as for K3 surfaces. We only note that, for the Tate conjecture, we have to appeal to the Hodge conjecture for co-dimension $2$ cycles on cubic fourfolds over $\Comp$, which is known; cf.~\cite{andre:shaf}*{Appendix 2} or~\cite{zucker:cubic}. This plays the same role for cubic fourfolds as Lefschetz (1,1) did for K3 surfaces.

To prove \'etaleness, we note that $\tilde{\mathsf{CF}}$ is smooth and that the tangent space at any point $s:\Spec k\to \tilde{\mathsf{CF}}$ attached to a cubic fourfold $X/k$ is given by:
\[
 \Def_{X}\bigl(k[\epsilon]\bigr)=\biggl\{\text{Isotropic lines $L\subset P^4_{\dR}(X/k)\otimes k[\epsilon]$ lifting $F^4H^4_{\dR}(X/k)$}\biggr\}.
\]
This is shown in \cite{levin}*{\S~3}. So, just as in the proof of (\ref{ks:thm:kugasatakecharp}), it is enough to prove the integral crystalline compatibility of the Kuga-Satake construction. We do this using the same strategy: prove it directly for ordinary cubic fourfolds as in (\ref{ks:lem:ordinary}) and then propagate it everywhere using the density of ordinary points as in (\ref{ks:prp:cris}).
\end{proof}
\comment{
\begin{rem}\label{ks:rem:charlesint}
In a recent pre-print~\cite{charles_pirutka}, Charles and Pirutka have shown the validity of the Tate conjecture for co-dimension $2$ cycles on cubic fourfolds (over finite fields of characteristic $p\geq 5$) with $\Int_{\ell}$-coefficients in place of $\Rat_{\ell}$-coefficients. This also follows from the above for $\ell\neq 2,3,p$ (allowing $p=3$), since we only have to check it for the corresponding space of special endomorphisms of the associated Kuga-Satake abelian variety.
\end{rem}
}

\appendix

\comment{
\section{Special endomorphisms}\label{app:special}

}
\begin{bibdiv}
\begin{biblist}

\bib{andre:shaf}{article}{
   author={Andr{\'e}, Yves},
   title={On the Shafarevich and Tate conjectures for hyper-K\"ahler
   varieties},
   journal={Math. Ann.},
   volume={305},
   date={1996},
   number={2},
   pages={205--248},
   issn={0025-5831},
   review={\MR{1391213 (97a:14010)}},
   doi={10.1007/BF01444219},
}

\bib{artin-sdyer}{article}{
   author={Artin, M.},
   author={Swinnerton-Dyer, H. P. F.},
   title={The Shafarevich-Tate conjecture for pencils of elliptic curves on
   $K3$ surfaces},
   journal={Invent. Math.},
   volume={20},
   date={1973},
   pages={249--266},
   issn={0020-9910},
   review={\MR{0417182 (54 \#5240)}},
}

\bib{barth_peters_vdv}{book}{
   author={Barth, Wolf P.},
   author={Hulek, Klaus},
   author={Peters, Chris A. M.},
   author={Van de Ven, Antonius},
   title={Compact complex surfaces},
   series={Ergebnisse der Mathematik und ihrer Grenzgebiete. 3. Folge. A
   Series of Modern Surveys in Mathematics [Results in Mathematics and
   Related Areas. 3rd Series. A Series of Modern Surveys in Mathematics]},
   volume={4},
   edition={2},
   publisher={Springer-Verlag},
   place={Berlin},
   date={2004},
   pages={xii+436},
   isbn={3-540-00832-2},
   review={\MR{2030225 (2004m:14070)}},
}

\bib{berthelot_ogus}{book}{
   author={Berthelot, Pierre},
   author={Ogus, Arthur},
   title={Notes on crystalline cohomology},
   publisher={Princeton University Press},
   place={Princeton, N.J.},
   date={1978},
   pages={vi+243},
   isbn={0-691-08218-9},
   review={\MR{0491705 (58 \#10908)}},
}

\bib{blasius}{article}{
   author={Blasius, Don},
   title={A $p$-adic property of Hodge classes on abelian varieties},
   conference={
      title={Motives},
      address={Seattle, WA},
      date={1991},
   },
   book={
      series={Proc. Sympos. Pure Math.},
      volume={55},
      publisher={Amer. Math. Soc.},
      place={Providence, RI},
   },
   date={1994},
   pages={293--308},
   review={\MR{1265557 (95j:14022)}},
}

\bib{bloch_kato}{article}{
   author={Bloch, Spencer},
   author={Kato, Kazuya},
   title={$p$-adic \'etale cohomology},
   journal={Inst. Hautes \'Etudes Sci. Publ. Math.},
   number={63},
   date={1986},
   pages={107--152},
   issn={0073-8301},
   review={\MR{849653 (87k:14018)}},
}

\bib{breuil_messing}{article}{
   author={Breuil, Christophe},
   author={Messing, William},
   title={Torsion \'etale and crystalline cohomologies},
   note={Cohomologies $p$-adiques et applications arithm\'etiques, II},
   journal={Ast\'erisque},
   number={279},
   date={2002},
   pages={81--124},
   issn={0303-1179},
   review={\MR{1922829 (2004k:14027)}},
}

\bib{burns_rapoport}{article}{
   author={Burns, Dan, Jr.},
   author={Rapoport, Michael},
   title={On the Torelli problem for k\"ahlerian $K-3$ surfaces},
   journal={Ann. Sci. \'Ecole Norm. Sup. (4)},
   volume={8},
   date={1975},
   number={2},
   pages={235--273},
   issn={0012-9593},
   review={\MR{0447635 (56 \#5945)}},
}

\bib{charles}{article}{
   author={Charles, Francois},
   title={The Tate conjecture for K3 surfaces over finite fields},
   eprint={http://perso.univ-rennes1.fr/francois.charles/Tate_K3.pdf},
   date={2012},
   pages={20},
}
\comment{
\bib{charles_pirutka}{article}{
   author={Charles, Francois},
   author={Pirutka, Alena},
   title={La conjecture de Tate entière pour les cubiques de dimension 4 sur un corps fini},
   eprint={http://perso.univ-rennes1.fr/francois.charles/Tate_cubiques.pdf},
   date={2013},
   pages={13},
}}

\bib{deligne:eqsdiff}{book}{
   author={Deligne, Pierre},
   title={\'Equations diff\'erentielles \`a points singuliers r\'eguliers},
   language={French},
   series={Lecture Notes in Mathematics, Vol. 163},
   publisher={Springer-Verlag},
   place={Berlin},
   date={1970},
   pages={iii+133},
   review={\MR{0417174 (54 \#5232)}},
}

\bib{deligne:travaux}{article}{
   author={Deligne, Pierre},
   title={Travaux de Shimura},
   language={French},
   conference={
      title={S\'eminaire Bourbaki, 23\`eme ann\'ee (1970/71), Exp. No. 389},
   },
   book={
      publisher={Springer},
      place={Berlin},
   },
   date={1971},
   pages={123--165. Lecture Notes in Math., Vol. 244},
   review={\MR{0498581 (58 \#16675)}},
}

\bib{deligne:k3weil}{article}{
   author={Deligne, Pierre},
   title={La conjecture de Weil pour les surfaces $K3$},
   language={French},
   journal={Invent. Math.},
   volume={15},
   date={1972},
   pages={206--226},
   issn={0020-9910},
   review={\MR{0296076 (45 \#5137)}},
}

\bib{sga7II}{book}{
   title={Groupes de monodromie en g\'eom\'etrie alg\'ebrique. II},
   language={French},
   label={SGA7II},
   series={Lecture Notes in Mathematics, Vol. 340},
   note={S\'eminaire de G\'eom\'etrie Alg\'ebrique du Bois-Marie 1967--1969
   (SGA 7 II);
   Dirig\'e par P. Deligne et N. Katz},
   publisher={Springer-Verlag},
   place={Berlin},
   date={1973},
   pages={x+438},
   review={\MR{0354657 (50 \#7135)}},
}

\bib{deligne:k3liftings}{article}{
   author={Deligne, P.},
   title={Rel\`evement des surfaces $K3$ en caract\'eristique nulle},
   language={French},
   note={Prepared for publication by Luc Illusie},
   conference={
      title={Algebraic surfaces},
      address={Orsay},
      date={1976--78},
   },
   book={
      series={Lecture Notes in Math.},
      volume={868},
      publisher={Springer},
      place={Berlin},
   },
   date={1981},
   pages={58--79},
   review={\MR{638598 (83j:14034)}},
}

\bib{dmos}{book}{
   author={Deligne, Pierre},
   author={Milne, James S.},
   author={Ogus, Arthur},
   author={Shih, Kuang-yen},
   title={Hodge cycles, motives, and Shimura varieties},
   series={Lecture Notes in Mathematics},
   volume={900},
   publisher={Springer-Verlag},
   place={Berlin},
   date={1982},
   pages={ii+414},
   isbn={3-540-11174-3},
   review={\MR{654325 (84m:14046)}},
}

\bib{deligne_mumford}{article}{
   author={Deligne, P.},
   author={Mumford, D.},
   title={The irreducibility of the space of curves of given genus},
   journal={Inst. Hautes \'Etudes Sci. Publ. Math.},
   number={36},
   date={1969},
   pages={75--109},
   issn={0073-8301},
   review={\MR{0262240 (41 \#6850)}},
}

\bib{faltings:rp}{article}{
   author={Faltings, Gerd},
   title={Complements to Mordell},
   conference={
      title={Rational points},
      address={Bonn},
      date={1983/1984},
   },
   book={
      series={Aspects Math., E6},
      publisher={Vieweg},
      place={Braunschweig},
   },
   date={1984},
   pages={203--227},
   review={\MR{766574}},
}

\bib{faltings_igusa}{article}{
   author={Faltings, Gerd},
   title={Crystalline cohomology and $p$-adic Galois-representations},
   conference={
      title={Algebraic analysis, geometry, and number theory (Baltimore, MD,
      1988)},
   },
   book={
      publisher={Johns Hopkins Univ. Press},
      place={Baltimore, MD},
   },
   date={1989},
   pages={25--80},
   review={\MR{1463696 (98k:14025)}},
}

\bib{fontaine_laffaille}{article}{
   author={Fontaine, Jean-Marc},
   author={Laffaille, Guy},
   title={Construction de repr\'esentations $p$-adiques},
   language={French},
   journal={Ann. Sci. \'Ecole Norm. Sup. (4)},
   volume={15},
   date={1982},
   number={4},
   pages={547--608 (1983)},
   issn={0012-9593},
   review={\MR{707328 (85c:14028)}},
}

\bib{fontaine_messing}{article}{
   author={Fontaine, Jean-Marc},
   author={Messing, William},
   title={$p$-adic periods and $p$-adic \'etale cohomology},
   conference={
      title={Current trends in arithmetical algebraic geometry (Arcata,
      Calif., 1985)},
   },
   book={
      series={Contemp. Math.},
      volume={67},
      publisher={Amer. Math. Soc.},
      place={Providence, RI},
   },
   date={1987},
   pages={179--207},
   review={\MR{902593 (89g:14009)}},
   doi={10.1090/conm/067/902593},
}

\bib{friedman}{article}{
   author={Friedman, Robert},
   title={A new proof of the global Torelli theorem for $K3$ surfaces},
   journal={Ann. of Math. (2)},
   volume={120},
   date={1984},
   number={2},
   pages={237--269},
   issn={0003-486X},
   review={\MR{763907 (86k:14028)}},
   doi={10.2307/2006942},
}

\bib{sga2}{book}{
   author={Grothendieck, Alexander},
   title={Cohomologie locale des faisceaux coh\'erents et th\'eor\`emes de
   Lefschetz locaux et globaux (SGA 2)},
   label={SGA2},
   language={French},
   series={Documents Math\'ematiques (Paris) [Mathematical Documents
   (Paris)], 4},
   note={S\'eminaire de G\'eom\'etrie Alg\'ebrique du Bois Marie, 1962;
   Augment\'e d'un expos\'e de Mich\`ele Raynaud. [With an expos\'e by
   Mich\`ele Raynaud];
   With a preface and edited by Yves Laszlo;
   Revised reprint of the 1968 French original},
   publisher={Soci\'et\'e Math\'ematique de France},
   place={Paris},
   date={2005},
   pages={x+208},
   isbn={2-85629-169-4},
   review={\MR{2171939 (2006f:14004)}},
}

\bib{huybrechts}{book}{
   author={Huybrechts, Daniel},
   title={Lectures on K3 surfaces},
   eprint={http://www.math.uni-bonn.de/people/huybrech/K3Global.pdf}
   date={2012},
   pages={175},
}

\bib{jannsen}{article}{
   author={Jannsen, Uwe},
   title={Motives, numerical equivalence, and semi-simplicity},
   journal={Invent. Math.},
   volume={107},
   date={1992},
   number={3},
   pages={447--452},
   issn={0020-9910},
   review={\MR{1150598 (93g:14009)}},
   doi={10.1007/BF01231898},
}

\bib{hassett}{article}{
   author={Hassett, Brendan},
   title={Special cubic fourfolds},
   journal={Compositio Math.},
   volume={120},
   date={2000},
   number={1},
   pages={1--23},
   issn={0010-437X},
   review={\MR{1738215 (2001g:14066)}},
   doi={10.1023/A:1001706324425},
}

\bib{katz:dwork}{article}{
   author={Katz, Nicholas},
   title={Travaux de Dwork},
   language={French, with English summary},
   conference={
      title={S\'eminaire Bourbaki, 24\`eme ann\'ee (1971/1972), Exp. No.
      409},
   },
   book={
      publisher={Springer},
      place={Berlin},
   },
   date={1973},
   pages={167--200. Lecture Notes in Math., Vol. 317},
   review={\MR{0498577 (58 \#16672)}},
}
\comment{
\bib{katz_mazur}{book}{
   author={Katz, Nicholas M.},
   author={Mazur, Barry},
   title={Arithmetic moduli of elliptic curves},
   series={Annals of Mathematics Studies},
   volume={108},
   publisher={Princeton University Press},
   place={Princeton, NJ},
   date={1985},
   pages={xiv+514},
   isbn={0-691-08349-5},
   isbn={0-691-08352-5},
   review={\MR{772569 (86i:11024)}},
}
}
\bib{katz_messing}{article}{
   author={Katz, Nicholas M.},
   author={Messing, William},
   title={Some consequences of the Riemann hypothesis for varieties over
   finite fields},
   journal={Invent. Math.},
   volume={23},
   date={1974},
   pages={73--77},
   issn={0020-9910},
   review={\MR{0332791 (48 \#11117)}},
}

\bib{kis3}{article}{
   author={Kisin, Mark},
   title={Integral models for Shimura varieties of abelian type},
   journal={J. Amer. Math. Soc.},
   volume={23},
   date={2010},
   number={4},
   pages={967--1012},
   issn={0894-0347},
   review={\MR{2669706 (2011j:11109)}},
   doi={10.1090/S0894-0347-10-00667-3},
}

\bib{kis:langrap}{article}{
  author={Kisin, Mark},
  title={Mod $p$ points on some Shimura varieties},
  note={In Preparation},
}

\bib{kulikov}{article}{
   author={Kulikov, Vik. S.},
   title={Degenerations of $K3$ surfaces and Enriques surfaces},
   language={Russian},
   journal={Izv. Akad. Nauk SSSR Ser. Mat.},
   volume={41},
   date={1977},
   number={5},
   pages={1008--1042, 1199},
   issn={0373-2436},
   review={\MR{0506296 (58 \#22087b)}},
}

\bib{laffaille}{article}{
   author={Laffaille, Guy},
   title={Groupes $p$-divisibles et modules filtr\'es: le cas peu ramifi\'e},
   language={French, with English summary},
   journal={Bull. Soc. Math. France},
   volume={108},
   date={1980},
   number={2},
   pages={187--206},
   issn={0037-9484},
   review={\MR{606088 (82i:14028)}},
}

\bib{laumon_m-b}{book}{
   author={Laumon, G{\'e}rard},
   author={Moret-Bailly, Laurent},
   title={Champs alg\'ebriques},
   language={French},
   series={Ergebnisse der Mathematik und ihrer Grenzgebiete. 3. Folge. A
   Series of Modern Surveys in Mathematics [Results in Mathematics and
   Related Areas. 3rd Series. A Series of Modern Surveys in Mathematics]},
   volume={39},
   publisher={Springer-Verlag},
   place={Berlin},
   date={2000},
   pages={xii+208},
   isbn={3-540-65761-4},
   review={\MR{1771927 (2001f:14006)}},
}

\bib{levin}{article}{
   author={Levin, Norman},
   title={The Tate conjecture for cubic fourfolds over a finite field},
   journal={Compositio Math.},
   volume={127},
   date={2001},
   number={1},
   pages={1--21},
   issn={0010-437X},
   review={\MR{1832984 (2002c:14039)}},
   doi={10.1023/A:1017532821467},
}

\bib{lms}{article}{
   author={Lieblich, Max},
   author={Maulik, Davesh},
   author={Snowden, Andrew},
   title={Finiteness of K3 surfaces and the Tate conjecture},
   eprint={http://arxiv.org/pdf/1107.1221v4.pdf}
   date={2012},
   pages={20},
}

\bib{lieblich_olsson}{article}{
   author={Lieblich, Max},
   author={Olsson, Martin},
   title={Fourier-Mukai partners of K3 surfaces in positive characteristic},
   eprint={http://www.marxiv.org/?query=id:1112.5114v2}
   date={2012},
   pages={34},
}

\bib{looijenga_peters}{article}{
   author={Looijenga, Eduard},
   author={Peters, Chris},
   title={Torelli theorems for K\"ahler $K3$ surfaces},
   journal={Compositio Math.},
   volume={42},
   date={1980/81},
   number={2},
   pages={145--186},
   issn={0010-437X},
   review={\MR{596874 (82a:32035)}},
}

\bib{lyons}{article}{
   author={Lyons, Chris},
   title={Large monodromy for a family of surfaces of general type and some arithmetic applications},
   eprint={http://www-personal.umich.edu/~lyonsc/CC_arith.pdf}
   date={2012},
   pages={22},
}

\bib{manin}{article}{
   author={Manin, Ju. I.},
   title={On the classification of formal Abelian groups},
   language={Russian},
   journal={Dokl. Akad. Nauk SSSR},
   volume={144},
   date={1962},
   pages={490--492},
   issn={0002-3264},
   review={\MR{0162802 (29 \#106)}},
}

\bib{mp:toroidal}{article}{
author={Madapusi Pera, Keerthi},
title={Toroidal compactifications of integral models of Shimura varieties of Hodge type},
  note={Preprint},
  eprint={http://www.math.harvard.edu/~keerthi/papers/toroidal.pdf},
  date={2013},
  pages={71}
}

\bib{mp:reg}{article}{
author={Madapusi Pera, Keerthi},
title={Integral canonical models for Spin Shimura varieties},
  note={Preprint},
  eprint={http://www.math.harvard.edu/~keerthi/papers/reg.pdf},
  date={2013},
  pages={45}
}

\bib{matsumoto}{article}{
   author={Matsumoto, Yuya},
   title={On good reduction of some K3 surfaces related to abelian surfaces},
   eprint={http://arxiv.org/pdf/1202.2421v1}
   pages={21},
}

\bib{maulik}{article}{
   author={Maulik, Davesh},
   title={Supersingular K3 surfaces for large primes},
   eprint={http://arxiv.org/abs/1203.2889},
   year={2012},
   pages={44},
}

\bib{milne:artin-tate}{article}{
   author={Milne, J. S.},
   title={On a conjecture of Artin and Tate},
   journal={Ann. of Math. (2)},
   volume={102},
   date={1975},
   number={3},
   pages={517--533},
   issn={0003-486X},
   review={\MR{0414558 (54 \#2659)}},
}

\bib{milne:canonical}{article}{
   author={Milne, J. S.},
   title={Canonical models of (mixed) Shimura varieties and automorphic
   vector bundles},
   conference={
      title={Automorphic forms, Shimura varieties, and $L$-functions, Vol.\
      I (Ann Arbor, MI, 1988)},
   },
   book={
      series={Perspect. Math.},
      volume={10},
      publisher={Academic Press},
      place={Boston, MA},
   },
   date={1990},
   pages={283--414},
   review={\MR{1044823 (91a:11027)}},
}

\bib{milne:motive}{article}{
   author={Milne, J. S.},
   title={Shimura varieties and motives},
   conference={
      title={Motives},
      address={Seattle, WA},
      date={1991},
   },
   book={
      series={Proc. Sympos. Pure Math.},
      volume={55},
      publisher={Amer. Math. Soc.},
      place={Providence, RI},
   },
   date={1994},
   pages={447--523},
   review={\MR{1265562 (95c:11076)}},
}

\bib{moonen}{article}{
   author={Moonen, Ben},
   title={Models of Shimura varieties in mixed characteristics},
   conference={
      title={Galois representations in arithmetic algebraic geometry
      (Durham, 1996)},
   },
   book={
      series={London Math. Soc. Lecture Note Ser.},
      volume={254},
      publisher={Cambridge Univ. Press},
      place={Cambridge},
   },
   date={1998},
   pages={267--350},
   review={\MR{1696489 (2000e:11077)}},
   doi={10.1017/CBO9780511662010.008},
}

\bib{mumford:abvar}{book}{
   author={Mumford, David},
   title={Abelian varieties},
   series={Tata Institute of Fundamental Research Studies in Mathematics,
   No. 5 },
   publisher={Published for the Tata Institute of Fundamental Research,
   Bombay},
   date={1970},
   pages={viii+242},
   review={\MR{0282985 (44 \#219)}},
}

\bib{nygaard}{article}{
   author={Nygaard, N. O.},
   title={The Tate conjecture for ordinary $K3$ surfaces over finite fields},
   journal={Invent. Math.},
   volume={74},
   date={1983},
   number={2},
   pages={213--237},
   issn={0020-9910},
   review={\MR{723215 (85h:14012)}},
   doi={10.1007/BF01394314},
}

\bib{nygaard_ogus}{article}{
   author={Nygaard, Niels},
   author={Ogus, Arthur},
   title={Tate's conjecture for $K3$ surfaces of finite height},
   journal={Ann. of Math. (2)},
   volume={122},
   date={1985},
   number={3},
   pages={461--507},
   issn={0003-486X},
   review={\MR{819555 (87h:14014)}},
   doi={10.2307/1971327},
}

\bib{ogus:transversal}{article}{
   author={Ogus, A.},
   title={Griffiths transversality in crystalline cohomology},
   journal={Ann. of Math. (2)},
   volume={108},
   date={1978},
   number={2},
   pages={395--419},
   issn={0003-486X},
   review={\MR{506993 (80d:14012)}},
   doi={10.2307/1971182},
}

\bib{ogus:ss}{article}{
   author={Ogus, Arthur},
   title={Supersingular $K3$ crystals},
   conference={
      title={Journ\'ees de G\'eom\'etrie Alg\'ebrique de Rennes},
      address={Rennes},
      date={1978},
   },
   book={
      series={Ast\'erisque},
      volume={64},
      publisher={Soc. Math. France},
      place={Paris},
   },
   date={1979},
   pages={3--86},
   review={\MR{563467 (81e:14024)}},
}

\bib{ogus:duke}{article}{
   author={Ogus, Arthur},
   title={$F$-isocrystals and de Rham cohomology. II. Convergent
   isocrystals},
   journal={Duke Math. J.},
   volume={51},
   date={1984},
   number={4},
   pages={765--850},
   issn={0012-7094},
   review={\MR{771383 (86j:14012)}},
   doi={10.1215/S0012-7094-84-05136-6},
}

\bib{ogus:height}{article}{
   author={Ogus, Arthur},
   title={Singularities of the height strata in the moduli of $K3$ surfaces},
   conference={
      title={Moduli of abelian varieties},
      address={Texel Island},
      date={1999},
   },
   book={
      series={Progr. Math.},
      volume={195},
      publisher={Birkh\"auser},
      place={Basel},
   },
   date={2001},
   pages={325--343},
   review={\MR{1827026 (2002g:14055)}},
}

\bib{panchishikin}{article}{
   author={Panchishkin, A. A.},
   title={Motives for absolute Hodge cycles},
   conference={
      title={Motives},
      address={Seattle, WA},
      date={1991},
   },
   book={
      series={Proc. Sympos. Pure Math.},
      volume={55},
      publisher={Amer. Math. Soc.},
      place={Providence, RI},
   },
   date={1994},
   pages={461--483},
   review={\MR{1265539 (95f:14017)}},
}

\bib{persson_pinkham}{article}{
   author={Persson, Ulf},
   author={Pinkham, Henry},
   title={Degeneration of surfaces with trivial canonical bundle},
   journal={Ann. of Math. (2)},
   volume={113},
   date={1981},
   number={1},
   pages={45--66},
   issn={0003-486X},
   review={\MR{604042 (82f:14030)}},
   doi={10.2307/1971133},
}

\bib{pink:thesis}{book}{
   author={Pink, Richard},
   title={Arithmetical compactification of mixed Shimura varieties},
   series={Bonner Mathematische Schriften [Bonn Mathematical Publications],
   209},
   note={Dissertation, Rheinische Friedrich-Wilhelms-Universit\"at Bonn,
   Bonn, 1989},
   publisher={Universit\"at Bonn Mathematisches Institut},
   place={Bonn},
   date={1990},
   pages={xviii+340},
   review={\MR{1128753 (92h:11054)}},
}

\bib{ps_shafarevich}{article}{
   author={Pjatecki{\u\i}-{\v{S}}apiro, I. I.},
   author={{\v{S}}afarevi{\v{c}}, I. R.},
   title={Torelli's theorem for algebraic surfaces of type ${\rm K}3$},
   language={Russian},
   journal={Izv. Akad. Nauk SSSR Ser. Mat.},
   volume={35},
   date={1971},
   pages={530--572},
   issn={0373-2436},
   review={\MR{0284440 (44 \#1666)}},
}

\bib{rapoport:appendix}{article}{
   author={Rapoport, M.},
   title={Compl\'ement \`a l'article de P. Deligne ``La conjecture de Weil
   pour les surfaces $K3$''},
   language={French},
   journal={Invent. Math.},
   volume={15},
   date={1972},
   pages={227--236},
   issn={0020-9910},
   review={\MR{0309943 (46 \#9046)}},
}

\bib{rizov:moduli}{article}{
   author={Rizov, Jordan},
   title={Moduli stacks of polarized $K3$ surfaces in mixed characteristic},
   journal={Serdica Math. J.},
   volume={32},
   date={2006},
   number={2-3},
   pages={131--178},
   issn={1310-6600},
   review={\MR{2263236 (2007i:14037)}},
}

\bib{rizov:kugasatake}{article}{
   author={Rizov, Jordan},
   title={Kuga-Satake abelian varieties of K3 surfaces in mixed
   characteristic},
   journal={J. Reine Angew. Math.},
   volume={648},
   date={2010},
   pages={13--67},
   issn={0075-4102},
   review={\MR{2774304 (2012e:14089)}},
   doi={10.1515/CRELLE.2010.078},
}

\bib{rizov:cm}{article}{
   author={Rizov, Jordan},
   title={Complex Multiplication for K3 Surfaces},
   eprint={http://arxiv.org/abs/math/0508018}
   pages={30},
}

\bib{tate:end}{article}{
   author={Tate, John},
   title={Endomorphisms of abelian varieties over finite fields},
   journal={Invent. Math.},
   volume={2},
   date={1966},
   pages={134--144},
   issn={0020-9910},
   review={\MR{0206004 (34 \#5829)}},
}

\bib{tate:motives1}{article}{
   author={Tate, John},
   title={Conjectures on algebraic cycles in $l$-adic cohomology},
   conference={
      title={Motives},
      address={Seattle, WA},
      date={1991},
   },
   book={
      series={Proc. Sympos. Pure Math.},
      volume={55},
      publisher={Amer. Math. Soc.},
      place={Providence, RI},
   },
   date={1994},
   pages={71--83},
   review={\MR{1265523 (95a:14010)}},
}

\bib{vandergeer_katsura}{article}{
   author={van der Geer, G.},
   author={Katsura, T.},
   title={On a stratification of the moduli of $K3$ surfaces},
   journal={J. Eur. Math. Soc. (JEMS)},
   volume={2},
   date={2000},
   number={3},
   pages={259--290},
   issn={1435-9855},
   review={\MR{1776939 (2001g:14063)}},
   doi={10.1007/s100970000021},
}

\bib{vasiu:zink}{article}{
   author={Vasiu, Adrian},
   author={Zink, Thomas},
   title={Purity results for $p$-divisible groups and abelian schemes over
   regular bases of mixed characteristic},
   journal={Doc. Math.},
   volume={15},
   date={2010},
   pages={571--599},
   issn={1431-0635},
   review={\MR{2679067}},
}

\bib{vasiu:k3}{article}{
   author={Vasiu, Adrian},
   title={Moduli schemes and the Shafarevich conjecture (the arithmetic case) for pseudo-polarized K3 surfaces},
   eprint={http://www.math.binghamton.edu/adrian/K3.dvi}
   pages={46},
}

\bib{voisin:cubic}{article}{
   author={Voisin, Claire},
   title={Th\'eor\`eme de Torelli pour les cubiques de ${\bf P}^5$},
   language={French},
   journal={Invent. Math.},
   volume={86},
   date={1986},
   number={3},
   pages={577--601},
   issn={0020-9910},
   review={\MR{860684 (88g:14006)}},
   doi={10.1007/BF01389270},
}

\bib{voisin:cubic_erratum}{article}{
   author={Voisin, Claire},
   title={Erratum: ``A Torelli theorem for cubics in $\Bbb P^5$''
   (French) [Invent. Math. {\bf 86} (1986), no. 3, 577--601; MR 0860684]},
   language={French},
   journal={Invent. Math.},
   volume={172},
   date={2008},
   number={2},
   pages={455--458},
   issn={0020-9910},
   review={\MR{2390291 (2009j:14010)}},
   doi={10.1007/s00222-008-0116-z},
}

\bib{zarhin:tate}{article}{
   author={Zarhin, Ju. G.},
   title={Abelian varieties in characteristic $p$},
   language={Russian},
   journal={Mat. Zametki},
   volume={19},
   date={1976},
   number={3},
   pages={393--400},
   issn={0025-567X},
   review={\MR{0422287 (54 \#10278)}},
}

\bib{zucker:cubic}{article}{
   author={Zucker, Steven},
   title={The Hodge conjecture for cubic fourfolds},
   journal={Compositio Math.},
   volume={34},
   date={1977},
   number={2},
   pages={199--209},
   issn={0010-437X},
   review={\MR{0453741 (56 \#12001)}},
}

\end{biblist}
\end{bibdiv}

\end{document}